%-----------------------------------------------------------------------
% Beginning of mcom-l-template.tex
%-----------------------------------------------------------------------
%
%     This is a topmatter template file for MCOM for use with AMS-LaTeX.
%
%     Templates for various common text, math and figure elements are
%     given following the \end{document} line.
%
%%%%%%%%%%%%%%%%%%%%%%%%%%%%%%%%%%%%%%%%%%%%%%%%%%%%%%%%%%%%%%%%%%%%%%%%

%     Remove any commented or uncommented macros you do not use.

\documentclass{mcom-l}

%     If you need symbols beyond the basic set, uncomment this command.
\usepackage{amssymb}

%     If your article includes graphics, uncomment this command.
\usepackage{graphicx}
\usepackage{algorithm}
\usepackage{algpseudocode}
\usepackage{verbatim}
\usepackage{tikz}
\usetikzlibrary{calc, shapes, angles, quotes, patterns, decorations.pathreplacing, cd}
\usepackage{subcaption}
\usepackage{bm}
\usepackage{hyperref}
\usepackage{cleveref}

%     Update the information and uncomment if AMS is not the copyright
%     holder.
%\copyrightinfo{2009}{American Mathematical Society}

\newtheorem{theorem}{Theorem}[section]
\newtheorem{lemma}[theorem]{Lemma}
\newtheorem{corollary}[theorem]{Corollary}

\theoremstyle{definition}

\theoremstyle{remark}
\newtheorem{remark}[theorem]{Remark}

\numberwithin{equation}{section}

\crefname{equation}{}{equations}
\crefname{theorem}{Theorem}{Theorems}
\crefname{lemma}{Lemma}{Lemmas}
\crefname{corollary}{Corollary}{Corollaries}
\crefname{remark}{Remark}{Remarks}
\crefname{algorithm}{Algorithm}{Algorithms}
\crefname{figure}{Figure}{Figures}
\crefname{appendix}{Appendix}{Appendices}

\newcommand{\bdd}[1]{ \boldsymbol{#1} }
\newcommand{\unitvec}[1]{\bdd{#1}}
\newcommand{\vertiii}[1]{{\left\vert\kern-0.25ex\left\vert\kern-0.25ex\left\vert #1 
		\right\vert\kern-0.25ex\right\vert\kern-0.25ex\right\vert}}

\DeclareMathOperator{\vcurl}{\mathbf{curl}}

\DeclareMathOperator{\dive}{div}

\DeclareMathOperator{\spann}{span}
\DeclareMathOperator{\supp}{supp}
\DeclareMathOperator{\cond}{cond}

\DeclareMathOperator{\Tr}{Tr}

\begin{document}

% \title[short text for running head]{full title}
\title[Uniform Preconditioners for Linear Elasticity]{Uniform Preconditioners for High Order Finite Element Approximations of Planar Linear Elasticity}

%    Only \author and \address are required; other information is
%    optional.  Remove any unused author tags.

%    author one information
% \author[short version for running head]{name for top of paper}
\author{Mark Ainsworth}
\address{Division of Applied Mathematics, Brown University, Providence, RI}
\email{mark\_ainsworth@brown.edu}

%    author two information
\author{Charles Parker}
\address{Mathematical Institute, University of Oxford, Andrew Wiles Building, Woodstock Road, Oxford OX2 6GG, UK}
\email{charles.parker@maths.ox.ac.uk}
\thanks{This material is based upon work supported by the National Science Foundation under Award No. DMS-2201487.}

%    \subjclass is required.
\subjclass[2020]{Primary 65N30, 65N55, 65F08, 74S05 }

\date{}

%    Abstract is required.
\begin{abstract}
		A new preconditioner is developed for high order finite element approximation of linear elastic problems on triangular meshes in two dimensions. The new preconditioner results in a condition number that is \emph{bounded independently of the degree $p$, the mesh-size $h$ and the ratio $\lambda/\mu$}. The resulting condition number is reduced to roughly $6.0$ for all values of the parameters and discretization parameters on standard test problems. Crucially, the overall cost of the new preconditioner is comparable to the cost of applying standard domain decomposition based preconditioners.
\end{abstract}

\maketitle

%    Text of article.

\section{Introduction}
\label{sec:intro}

We consider conforming degree $p$ finite element discretization of the linear elasticity problem on meshes of triangular elements 
\begin{align}
	\label{eq:le variational}
	\bdd{u} \in \bdd{H}^1_D(\Omega) : \qquad a_{\lambda}(\bdd{u}, \bdd{v}) = L(\bdd{v}) \qquad \forall \bdd{v} \in \bdd{H}^1_D(\Omega),
\end{align}
where $L(\cdot)$ is a continuous linear functional on $\bdd{H}^1_D(\Omega)$ and
\begin{align}
	\label{eq:alam def}
	a_{\lambda}(\bdd{u}, \bdd{v}) := \int_{\Omega} \left\{ 2\mu \bdd{\varepsilon}(\bdd{u}):\bdd{\varepsilon}(\bdd{v})+ \lambda (\dive \bdd{u})(\dive \bdd{v}) \right\} \ d\bdd{x} \qquad \forall \bdd{u}, \bdd{v} \in \bdd{H}^1(\Omega),
\end{align}
where $\bdd{\varepsilon}(\cdot)$ is the strain tensor and $\bdd{H}_D^1(\Omega)$ is the space of admissible displacements. 

While the parameters $\mu$ and $\lambda$ need only be positive, the case where the ratio $\lambda/\mu \gg 1$ arises in several applications, e.g. nearly incompressible linear elasticity, where $\mu$ and $\lambda$ are the Lam\'{e} parameters, or in iterated penalty methods for incompressible flow \cite{AinCP22SCIP,FortinGlow83,Glowinski84}, where $\nu = 2\mu$ is the viscosity and $\lambda$ serves the role of an (artificial) penalty parameter associated with the incompressibility condition. 

In each case, the fact that the ratio $\lambda/\mu \gg 1$ has various implications for the numerical approximation. Firstly, if the polynomial degree $p\in\{1,2,3\}$, then the finite element solution suffers from \textit{locking} in the limit $\lambda/\mu \to \infty$ (see e.g.  \cite{BabSuri92b}). However, if the degree $p\geq 4$ then, under mild conditions on the mesh (see \cite[Theorem 8.1]{AinCP21LE}), both the $h$-version method with $p \geq 4$ and the pure $p$-version will be locking-free. For this reason, we will assume that $p \geq 4$ in what follows. Secondly, a more insidious problem is that the linear systems arising from discretizations of \cref{eq:le variational} become increasingly ill-conditioned as the ratio $\lambda/\mu$ increases, in addition to the usual ill-conditioning due to the mesh-size $h$ and the polynomial degree $p$. The objective of the current work is to develop a preconditioner which does not degenerate in any of the limits $p \to \infty$, $h\to 0$, or $\lambda/\mu \to \infty$.

In order to illustrate both the seriousness of the problem and the fact that standard remedies fall short, we consider a simple example. The geometry for the Cook's membrane problem and the domain are displayed in \cref{fig:cook mesh} where the left hand face is held fixed and a vertical shear is applied on the right hand face. The problem is approximated using elements of degree $p$ in the range $\{4,\ldots,16\}$ on the fixed mesh shown in \cref{fig:cook mesh} with material parameters $\mu=1$ and $\lambda \in \{10^1, 10^3, 10^5, 10^7 \}$. 

\begin{figure}[htb]
	\centering
	\begin{subfigure}[b]{0.49\linewidth}
		\centering
		\includegraphics[height=6cm]{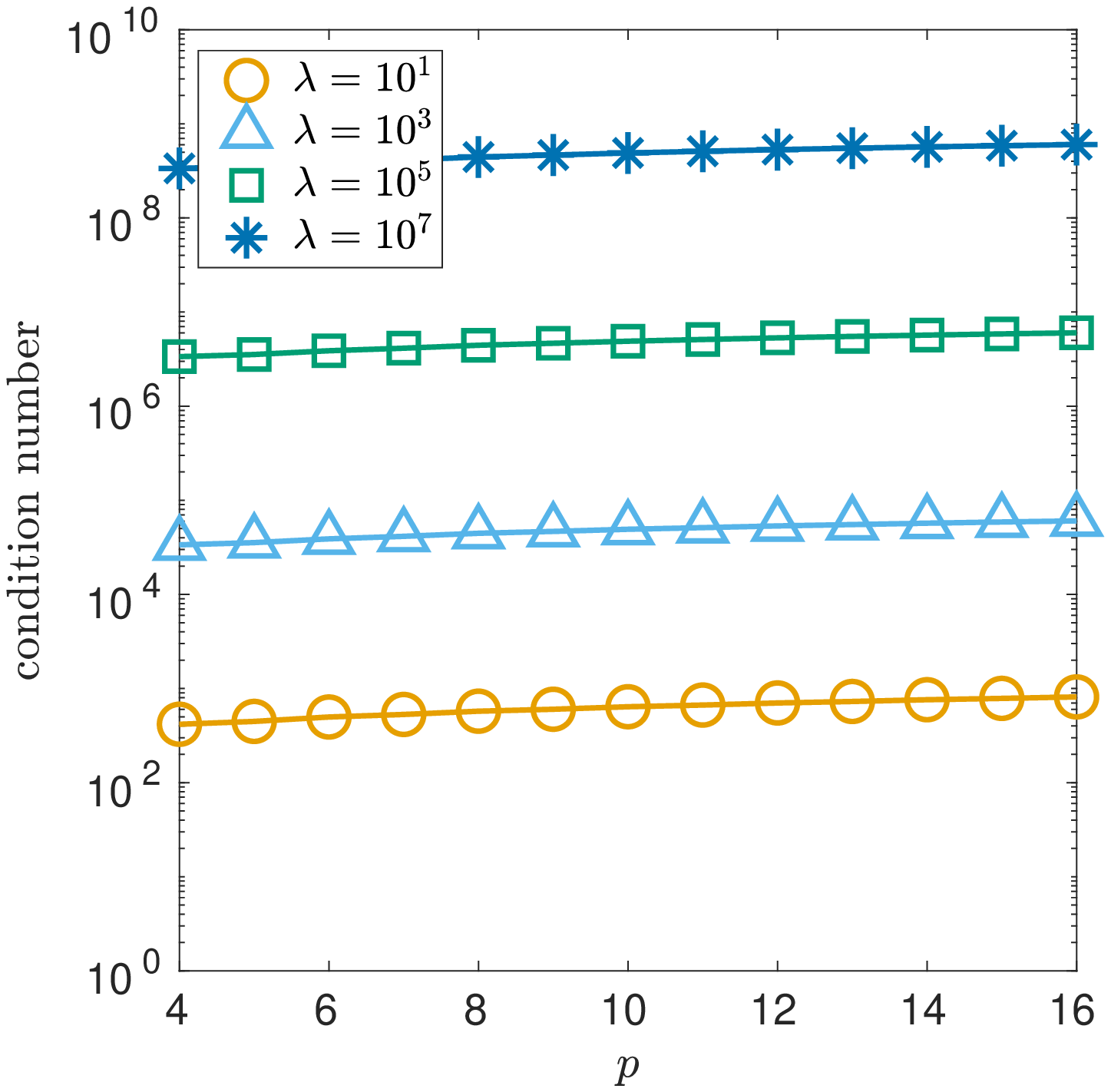}
		\caption{}
		\label{fig:bcmp p1 coarse}
	\end{subfigure}
	\hfill
	\begin{subfigure}[b]{0.49\linewidth}
		\centering
		\includegraphics[height=6cm]{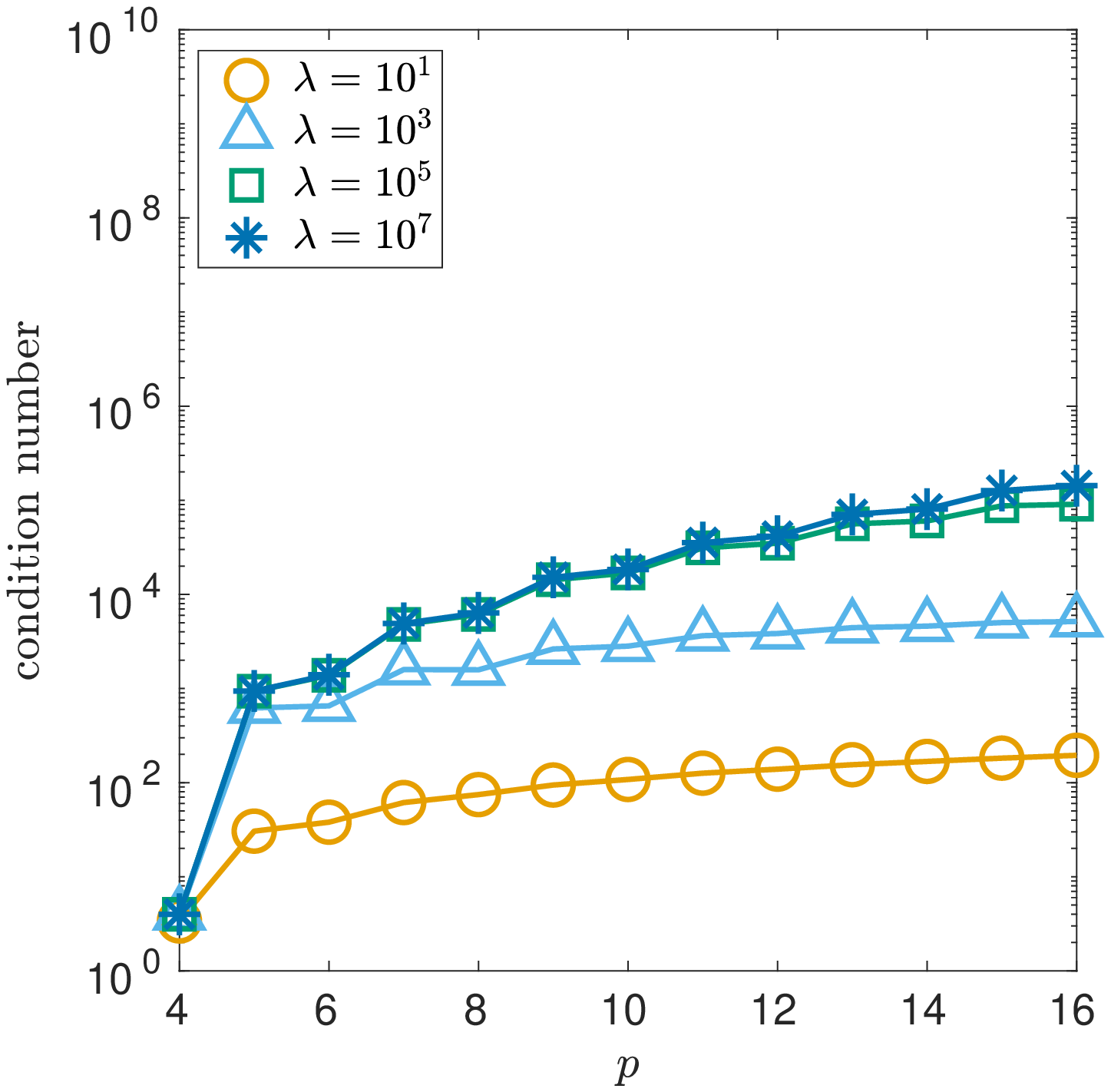}
		\caption{}
		\label{fig:bcmp p4 coarse}
	\end{subfigure}
	\\
	\begin{subfigure}[b]{0.49\linewidth}
		\centering
		\includegraphics[height=6cm]{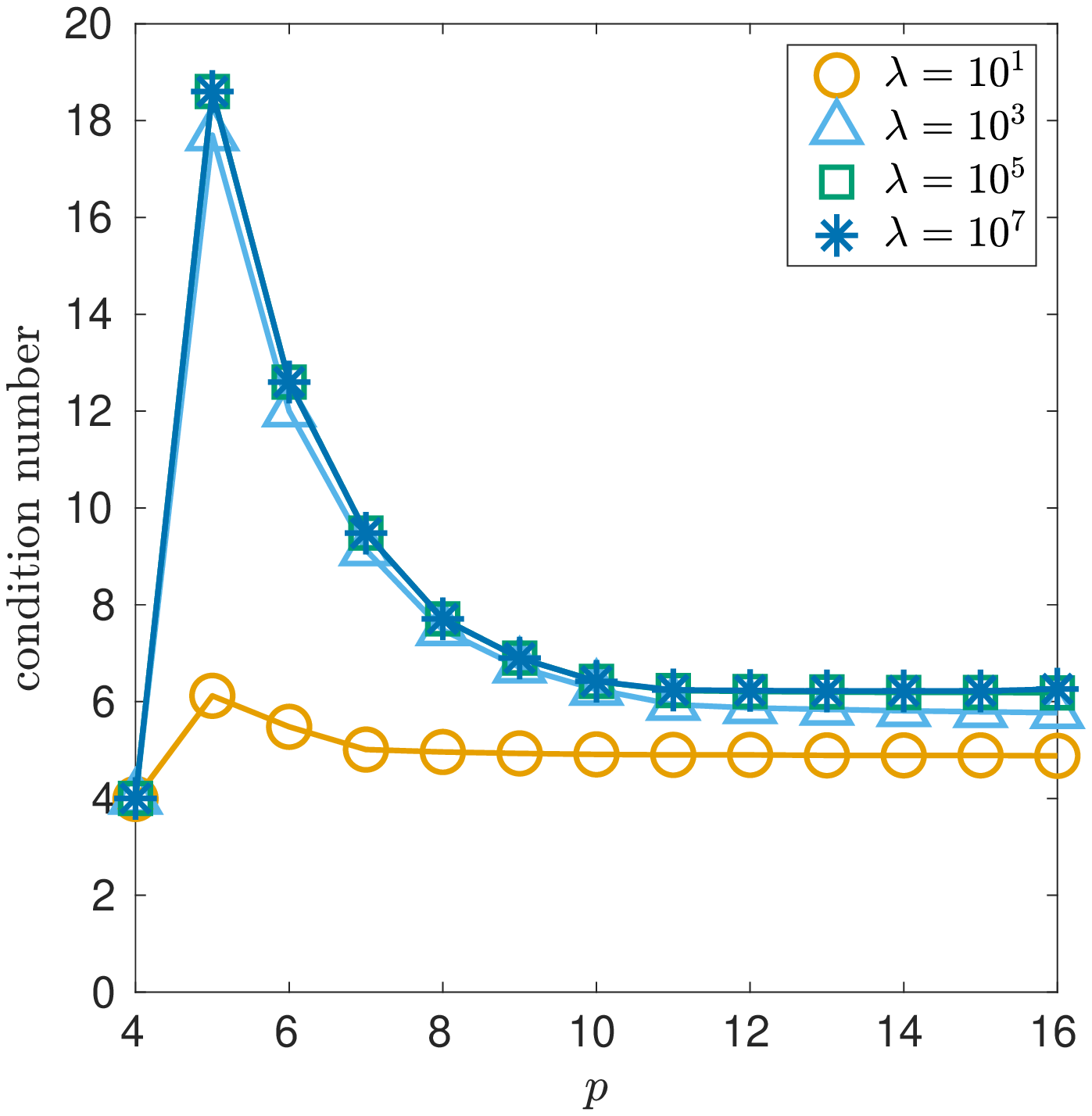}
		\caption{}
		\label{fig:intro cns}
	\end{subfigure}
	\hfill
	\begin{subfigure}[b]{0.49\linewidth}
		\centering
		\includegraphics[height=6cm]{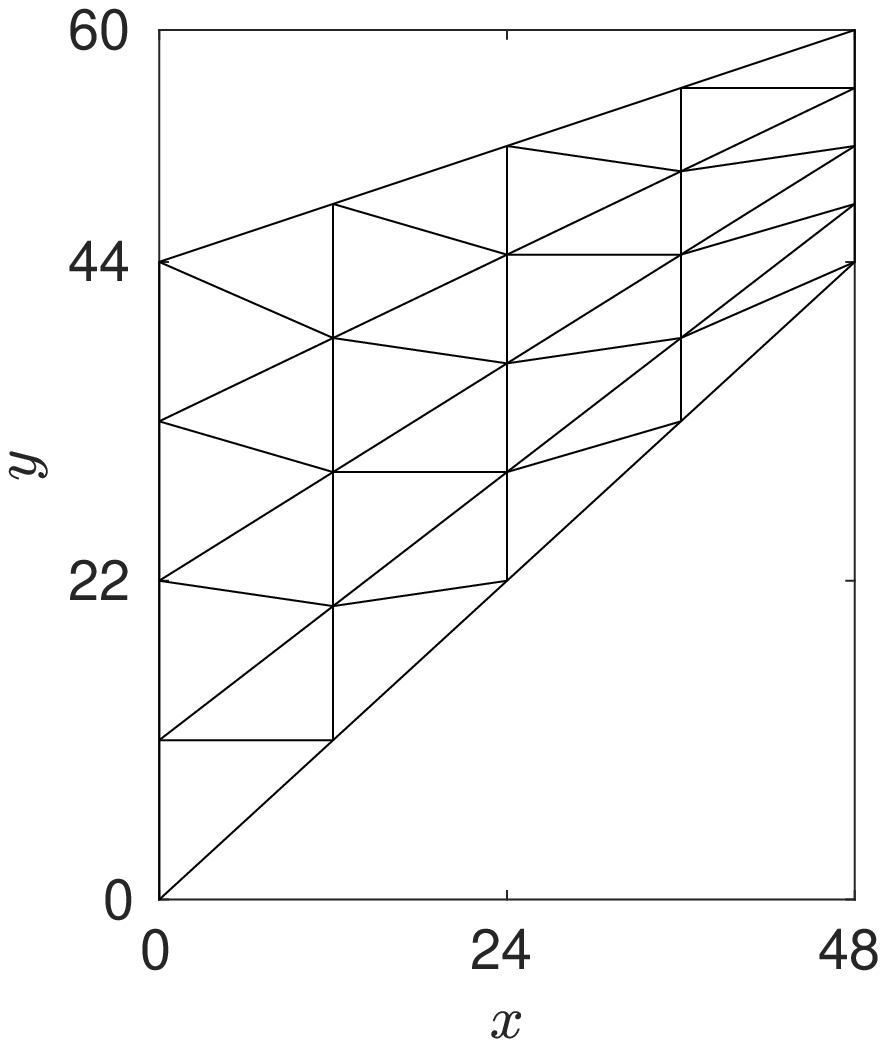}
		\caption{}
		\label{fig:cook mesh}
	\end{subfigure}
	\caption{(a-b) Condition numbers using the preconditioner in \cite{BCMP91} with (a) $p=1$ coarse space and (b) $p=4$ coarse space, (c) condition numbers using the preconditioner in \cref{sec:asm}, and (d) computational mesh. Observe that the condition numbers in (b) are bounded as $\lambda$ grows, but grow more rapidly than those in (a) as a function of $p$, while the condition numbers in (c) are uniformly bounded in $p$ and $\lambda$.}
	\label{fig:bcmp cns}
\end{figure}

\Cref{fig:bcmp p1 coarse} shows the condition number obtained when the standard preconditioner developed by Babu\v{s}ka et al. \cite{BCMP91} is used, which is known to reduce the growth of the condition number to $\mathcal{O}(1+\log^2 p)$ and to be independent of the mesh size $h$. While the results shown in \cref{fig:bcmp p1 coarse} are consistent with this fact, one observes that the actual condition number is proportional to the ratio $\lambda/\mu$ which means that even though the effects of $h$ and $p$ are controlled, the hidden constant in the $\mathcal{O}(1 + \log^2 p)$ estimate degenerates as $\lambda/\mu \to \infty$. 

The preconditioner of Babu\v{s}ka et al. is based on domain decomposition in conjunction with a coarse grid solver using a piecewise linear finite element subspace on the mesh. Interestingly, enlarging the coarse space to consist of piecewise \textit{quartic} functions results in a preconditioner that \emph{does not} degenerate at all as the ratio $\lambda/\mu$ increases as shown in \cref{fig:bcmp p4 coarse}. Unfortunately, the results indicate that the condition number now grows rapidly with the polynomial degree $p$, even though the only change compared with the standard Babu\v{s}ka et al. preconditioner was to enlarge the coarse space.

The current work provides an explanation for these observations but, more importantly, a new preconditioner is developed that gives rise to a condition number that is \emph{bounded independently of the degree $p$, the mesh-size $h$ and the ratio $\lambda/\mu$}. The performance of the new preconditioner on the Cook's membrane problem is illustrated in \cref{fig:intro cns} where it is seen that the condition number is reduced to roughly $6.0$ for all values of the parameters
and discretization parameters. Crucially, the overall cost of the new preconditioner is comparable to the cost of applying standard domain decomposition based preconditioners such as the Babu\v{s}ka et al. scheme. 

While various preconditioners for conforming finite element approximations of  \cref{eq:le variational} have already received some attention in the literature, e.g.~\cite{Lee09,Wu2014}, these works do not attempt to quantify or control the dependence on the polynomial degree $p$. Here, we fully account for the $p$ dependence and show that the resulting condition number is independent of $p$ as well as the mesh size $h$. 

In \cref{sec:constructing asm}, we describe a family of Additive Schwarz preconditioners that control the condition number of the preconditioned system not just with respect to $\mu$, $\lambda$, and $h$, but also with respect to the polynomial degree $p$. The subspace decomposition consists of a coarse space containing the
lowest order $(p = 4)$ elements and, for each vertex, a local solve associated with the functions supported on the patch of elements abutting the vertex. The use of a low order coarse space is standard for high order preconditioners
\cite{Ain96b,Ain96,AinCP19StokesIII,BCMP91,Guo96,Pavarino93,Pavarino98,Pavarino99,SchMelPechZag08}, while the vertex patch spaces, although more common in $H(\mathrm{div})$ and $H(\mathrm{curl})$ applications (see e.g.
\cite{Arn97,Arn00,Hiptmair98,Hiptmair07}), have also previously appeared in the $H^1$ context \cite{Lee09,Pavarino93,SchMelPechZag08,Wu2014}. Using technical results established in \cite{Lee09,Lee07,Wu2014}, we show that the condition number of the preconditioned system is uniform in $\mu$, $\lambda$, $h$, and $p$ for suitable choices of the space of functions associated with element boundaries. Several of these choices are detailed and the effectiveness of the preconditioner is showcased in several numerical examples in \cref{sec:numerics}, while the remainder of the paper is devoted to the analysis.

\section{Problem Setup}

Let $X \subset H^1(\Omega)$ be the space of continuous, piecewise polynomials of degree $p \in \mathbb{N}$ on a triangulation $\mathcal{T}$ of $\Omega$:
\begin{align}
	\label{eq:x fem definition}
	X := \{ v \in C(\bar{\Omega}) : v|_{K} \in \mathcal{P}_{p}(K) \ \forall K \in \mathcal{T} \},
\end{align}
where $\mathcal{P}_{p}(K)$ denotes the space of polynomials of degree at most $p$. In particular, we assume that the triangulation $\mathcal{T}$ is a shape-regular partitioning of the domain $\Omega$ into triangles such that the nonempty intersection of any two distinct elements from $\mathcal{T}$ is either a single common vertex or a single common edge of both elements with mesh size $h := \max_{K \in \mathcal{T}} h_K$ and $h_K := \mathrm{diam}(K)$. We also assume that mesh vertices are placed at the intersections of $\bar{\Gamma}_D$ and $\bar{\Gamma}_N$. The subspace $X_D := X \cap H^1_D(\Omega)$ then consists of functions in $X$ vanishing on the Dirichlet boundary $\Gamma_D$.

We discretize \cref{eq:le variational} using the space $\bdd{X}_D := X_D \times X_D$ as follows:
\begin{align}
	\label{eq:le variational fem}
	\bdd{u}_X \in \bdd{X}_D : \qquad a_{\lambda}(\bdd{u}_X, \bdd{v}) = L(\bdd{v}) \qquad \forall \bdd{v} \in \bdd{X}_D.
\end{align}
By fixing a basis for $\bdd{X}_D$, we may express \cref{eq:le variational fem} as a linear system of equations:
\begin{align}
	\label{eq:matrix system}
	\bdd{A} \vec{u} = \vec{L},
\end{align}
where $\bdd{A}$ is the stiffness matrix, $\vec{L}$ is the load vector, and $\vec{u}$ is the vector of degrees of freedom of $\bdd{u}$. As mentioned in \cref{sec:intro} and shown in the numerical examples in \cref{sec:numerics}, the conditioning of the matrix $\bdd{A}$ generally degenerates rapidly as $\mu \to 0$,  as $\lambda \to \infty$, as $h \to 0$, and/or as $p \to \infty$. In order to help solve the matrix equation \cref{eq:matrix system} efficiently, we seek ASM preconditioners for $\bdd{A}$ which remove the growth of the condition number in these parameters.

\section{Additive Schwarz Preconditioners}
\label{sec:constructing asm}

Let
\begin{align}
	\label{eq:interior space}
	\bdd{X}_I := \bigoplus_{K \in \mathcal{T}} \bdd{X}_I(K), 
	\qquad \text{where} \quad \bdd{X}_I(K) := \{ \bdd{v} \in \bdd{X}_D : \supp \bdd{v} \subseteq K\}
\end{align}
be the space of interior functions, i.e. functions vanishing on element boundaries, and let $\bdd{X}_B \subset \bdd{X}$ be \textit{any} subspace such that the following (direct sum) identity holds:
\begin{align}
	\label{eq:direct sum identity}
	\bdd{X}_D = \bdd{X}_I \oplus \bdd{X}_B.
\end{align}
Each $\bdd{v}_I \in \bdd{X}_I$ vanishes on the element boundaries, i.e. $\bdd{v}_I |_{\partial K} = \bdd{0}$ for all $K \in \mathcal{T}$, and so the space $\bdd{X}_B$ corresponds to the degrees of freedom associated with element boundaries.
\Cref{eq:direct sum identity} means that for each $\bdd{v} \in \bdd{X}_D$, there exist unique $\bdd{v}_I \in \bdd{X}_I$ and $\bdd{v}_B \in \bdd{X}_B$ such that $\bdd{v} = \bdd{v}_I + \bdd{v}_B$. Specifically, the uniqueness of $\bdd{v}_B$ means that one can define a mapping $\mathbb{T}_B : \bdd{X}_D \to \bdd{X}_B$ by the rule $\mathbb{T}_B \bdd{v} = \bdd{v}_B$. However, it is important to distinguish this uniqueness from the fact that the choice of $\bdd{X}_B$ itself is far from unique owing to the considerable freedom in choosing the form of the functions in $\bdd{X}_B$ on the element interiors. Moreover, the effectiveness of the ASM based on the decomposition \cref{eq:direct sum identity} will depend heavily on the choice of $\bdd{X}_B$.  We shall discuss this further along with some suitable choices in \cref{sec:numerics}.

\subsection[Decomposition of Boundary Space]{Decomposition of Boundary Space}
\label{sec:boundary decomposition}

As mentioned above, the space $\bdd{X}_B$ corresponds to the degrees of freedom associated with element vertices $\mathcal{V}$ and edges $\mathcal{E}$. In the interests of efficiency, it is useful to further decompose the boundary space $\bdd{X}_B$ into (overlapping) subspaces $\bdd{X}_C$ and $\{ \bdd{X}_{\bdd{a}} : \bdd{a} \in \mathcal{V} \}$ as follows: The coarse space consists of the lowest order (in this case $p = 4$) elements on the element boundaries:
\begin{align}
	\label{eq:tildexc def}
	\bdd{X}_C &:=  \{ \bdd{v} \in \bdd{X}_B : \bdd{v} |_{\gamma} \in \bm{\mathcal{P}}_4(\gamma) \ \forall \gamma \in \mathcal{E} \}. 
\end{align}
Although $\bdd{X}_C$ consists of piecewise polynomials whose restrictions to element edges are polynomials of degree at most 4, the fact that $ \bdd{X}_C \subseteq \bdd{X}_B$ means their values on the element interiors will be polynomials of degree $p$. The subspace $\bdd{X}_{\bdd{a}}$ associated with a vertex $\bdd{a} \in \mathcal{V}$ consists of functions supported on the patch $\mathcal{T}_{\bdd{a}}$ comprising of elements which abut $\bdd{a}$:
\begin{align}
	\bdd{X}_{\bdd{a}} := \{ \bdd{v} \in \bdd{X}_B : \supp \bdd{v} \subseteq \mathcal{T}_{\bdd{a}} \}, \ \bdd{a} \in \mathcal{V}.
\end{align}
The space $\bdd{X}_B$ then admits the following (overlapping) decomposition:
\begin{align}
	\label{eq:tilde subspace decomp}
	\bdd{X}_B = \bdd{X}_C + \sum_{\bdd{a} \in \mathcal{V}} \bdd{X}_{\bdd{a}}.
\end{align}

\subsection{Preconditioners}
\label{sec:asm}

\begin{algorithm}[thb]
	\caption{Action of ASM Preconditioner $P^{-1}$ on residual $r \in \bdd{X}_D^*$ }
	\label{alg:asm variational}
	\begin{subequations}
		\label{eq:asm variational components}
		\begin{alignat}{3} 
			\label{eq:interior solve}
			\bdd{u}_{I,K} \in \bdd{X}_I(K) &: \qquad & a_{\lambda,K}(\bdd{u}_{I,K}, \bdd{v}) &= r(\bdd{v}) \qquad & &\forall \bdd{v} \in \bdd{X}_I(K), \ \forall K \in \mathcal{T}, \\
			\label{eq:coarse solve}
			\bdd{u}_C \in \bdd{X}_C &: \qquad & a_{\lambda}(\bdd{u}_C, \bdd{v}) &= r(\bdd{v}) \qquad & &\forall \bdd{v} \in \bdd{X}_C, \\
			\label{eq:spdier solve}
			\bdd{u}_{\bdd{a}} \in \bdd{X}_{\bdd{a}} &: \qquad & a_{\lambda}(\bdd{u}_{\bdd{a}}, \bdd{v}) &= r(\bdd{v}) \qquad & &\forall \bdd{v} \in \bdd{X}_{\bdd{a}}, \ \forall \bdd{a} \in \mathcal{V}. 
		\end{alignat}
	\end{subequations}

	$P^{-1} r := \sum_{K \in \mathcal{T}} \bdd{u}_{I,K} + \bdd{u}_C + \sum_{\bdd{a} \in \mathcal{V}} \bdd{u}_{\bdd{a}}$. 	
\end{algorithm}

The subspace decomposition \cref{eq:direct sum identity} along with the further decomposition \cref{eq:tilde subspace decomp} gives rise to an ASM preconditioner $P^{-1} : \bdd{X}_D^* \to \bdd{X}_D$ whose action on a residual $r \in \bdd{X}_D^*$ is defined in \cref{alg:asm variational}. The sequence of interior corrections in \cref{eq:interior solve} is equivalent to a single interior correction
\begin{align*}
	\bdd{u}_I \in \bdd{X}_I : \qquad a_{\lambda}(\bdd{u}_{I}, \bdd{v}) = r(\bdd{v}) \qquad \forall \bdd{v} \in \bdd{X}_I
\end{align*}
thanks to \cref{eq:interior space} and the identity $a_{\lambda}(\bdd{u}, \bdd{v}) = \sum_{K \in \mathcal{T}} a_{\lambda, K}(\bdd{u}, \bdd{v})$, where $a_{\lambda, K}(\cdot,\cdot)$ denotes the restriction of $a_{\lambda}(\cdot,\cdot)$ to an element $K$.

We shall quantify the effectiveness of the preconditioner $P^{-1}$ in terms of two parameters $\beta_X$ and $\tau_B$. The first of these parameters is the inf-sup constant $\beta_X$ for the pair $\bdd{X}_D \times \dive \bdd{X}_D$:
\begin{align}
	\label{eq:inf-sup global}	
	\beta_X := \inf_{0 \neq q \in \dive \bdd{X}_D} \sup_{\bdd{0} \neq \bdd{v} \in \bdd{X}_D} \frac{(q, \dive \bdd{v})}{\|q\| \ \|\bdd{v}\|_1},
\end{align}
where $(\cdot,\cdot)$ denotes the $L^2(\Omega)$ or $\bdd{L}^2(\Omega)$ inner product and $\|\cdot\|_{s}$ denotes the $H^s(\Omega)$ or $\bdd{H}^s(\Omega)$ norm.  For elements of degree $p\geq 4$, it is known \cite[Theorem 5.1]{AinCP21LE} that $\beta_X$ is bounded below by a positive constant, $\beta_0$ independent of $h$ and $p,$ under certain (mild) assumptions on the mesh $\mathcal{T}$; see e.g. \cite[p. 35]{AinCP21LE} for a detailed characterization of the conditions. The inf-sup constant $\beta_X$ is more commonly associated with the mixed FEM discretization of the Stokes problem. However, its appearance in the current setting is to be expected since the linear elasticity problem (formally) converges to the Stokes flow in the incompressible limit $\lambda \to \infty$ with which we are concerned.

The second parameter $\tau_B$ quantifies the effect of the freedom in the choice of boundary space $\bdd{X}_B$ alluded to at the start of this section. Often, the first step in preconditioning high order elements is to \textit{statically condense} or eliminate the interior degrees of freedom \cite{Ain96b,Ain96,AinCP19StokesIII,BCMP91,Guo96,Pavarino98,Pavarino99,SchMelPechZag08}. Static condensation corresponds to defining the values of a function $\bdd{v} \in \bdd{X}_B$ on the interior of an element $K \in \mathcal{T}$ to be the minimum energy extension of the values of $\bdd{v}|_{\partial K}$. More precisely, if we define an operator $\mathbb{H} : \bdd{X}_D \to \bdd{X}_D$ by the rule
\begin{subequations}
	\label{eq:eh definition}
	\begin{alignat}{2}
		\label{eq:eh definition 1}
		a_{\lambda}(\mathbb{H} \bdd{u}, \bdd{v}) &= 0 \qquad & &\forall \bdd{v} \in \bdd{X}_I, \\
		\label{eq:eh definition 2}
		(\mathbb{H} \bdd{u} - \bdd{u})|_{\partial K} &= \bdd{0} \qquad & & \forall K \in \mathcal{T},
	\end{alignat}
\end{subequations}
then the boundary space $\bdd{X}_B$ corresponding to static condensation may be written equally well in any of the following ways:
\begin{align}
	\label{eq:static cond choice}
	\bdd{X}_B = \mathbb{H} \bdd{X}_D = \bdd{X}_D \cap \bdd{X}_I^{\perp} = \{ \bdd{v} \in \bdd{X}_D : a_{\lambda}(\bdd{v}, \bdd{w}) = 0 \ \forall \bdd{w} \in \bdd{X}_I \}.
\end{align}
In this case, the decomposition $\bdd{v} = \bdd{v}_I + \bdd{v}_B = \bdd{v}_I + \mathbb{H} \bdd{v}$ is orthogonal (so that in this case $\mathbb{T}_B = \mathbb{H}$) and satisfies the following estimate for all $\bdd{v} \in \bdd{X}_D$:
\begin{align}
	\notag
	\|\bdd{\varepsilon}(\mathbb{T}_B \bdd{v})\|^2 + \frac{\lambda}{2\mu} \|\dive \mathbb{T}_B \bdd{v}\|^2 = \frac{1}{2\mu} a_{\lambda}(\mathbb{T}_B \bdd{v}, \mathbb{T}_B \bdd{v}) &\leq \frac{1}{2\mu} a_{\lambda}(\bdd{v}, \bdd{v}) \\
	\label{eq:motivation tau static condense}
	&\leq \left(1 + \frac{\lambda}{\mu} \right) \|\bdd{\varepsilon}(\bdd{v})\|^2,
\end{align}
where the final inequality uses $\|\dive \bdd{v}\|^2 \leq 2\|\bdd{\varepsilon}(\bdd{v})\|^2$. Choosing $\bdd{X}_B$ to comprise of minimum energy extensions is natural and, in many cases, largely effective in controlling the growth of the condition number \cite{Ain96b,Ain96,AinCP19StokesIII,BCMP91,Guo96,Pavarino98,Pavarino99,SchMelPechZag08}. However, in the present setting, as we shall see, if the analysis of the preconditioner is based on \cref{eq:motivation tau static condense}, then the presence of $\lambda/\mu$ in the multiplier on the RHS gives rise to a bound on the condition number which \textit{degenerates as} $\lambda/\mu \to \infty$.

In order to obtain estimates for the performance of the preconditioner that are robust with respect to the ratio $\lambda/\mu$, one would like to choose $\bdd{X}_B$ so that the corresponding operator $\mathbb{T}_B$ satisfies an estimate of the form \cref{eq:motivation tau static condense} in which the multiplier appearing on the RHS is independent of $\lambda/\mu$. In general, this is not possible owing to the presence of the term $(\lambda/\mu) \|\dive \mathbb{T}_B \bdd{v}\|^2$ which blows up as $\lambda/\mu \to \infty$. 

Instead, we relax the condition \cref{eq:motivation tau static condense} by replacing the term $(\lambda/\mu) \|\dive \mathbb{T}_B \bdd{v}\|^2$ with a smaller quantity as follows; the parameter $\tau_B$ is defined to be any constant such that the following condition holds:
\begin{align}
	\label{eq:tau condition}
	\| \bdd{\varepsilon}(\mathbb{T}_B \bdd{v}) \|^2 + \lambda\mu^{-1} \| \Pi_I \dive \mathbb{T}_B \bdd{v} \|^2 \leq \tau_B^2 \| \bdd{\varepsilon}(\bdd{v})\|^2 \qquad \forall \bdd{v} \in \bdd{X}_D,
\end{align}
where $\Pi_I : L^2(\Omega) \to \dive \bdd{X}_I$ denotes the $L^2(\Omega)$ projection operator onto $\dive \bdd{X}_I$: i.e.
\begin{align*}
	(\Pi_I q, r) = (q, r) \qquad \forall r \in \dive \bdd{X}_I, \forall q \in L^2(\Omega).
\end{align*}
The key difference between \cref{eq:motivation tau static condense} and \cref{eq:tau condition} is that the full divergence $\dive \mathbb{T}_B \bdd{v}$ has been replaced with its projection onto the interior divergence space $\Pi_I \dive \mathbb{T}_B \bdd{v}$. The presence of the projection operator $\Pi_I$ in the term $\| \Pi_I \dive \mathbb{T}_B \bdd{v} \|$ is reminiscent of reduced integration operators that play an important role in finite element methods for nearly incompressible linear elasticity; see e.g. \cite[\S 8.12]{BoffiBrezziFortin13}.

Our main result quantifies the performance of $P^{-1}$ in terms of the factors $\beta_X$ and $\tau_B$ defined in \cref{eq:inf-sup global} and \cref{eq:tau condition}:
\begin{theorem}
	\label{thm:main result}
	Let $\bdd{P}^{-1}$ denote the matrix form of $P^{-1}$ and suppose that \cref{eq:inf-sup global} and \cref{eq:tau condition} hold. Then,
	\begin{align}
		\label{eq:main result}
		\cond(\bdd{P}^{-1} \bdd{A}) \leq C (1 + \beta_X^{-2}) (1 + \tau_B^2) ,
	\end{align}
	where $C$ is independent of $\beta_X$, $\tau_B$, $\mu$, $\lambda$, $h$, and $p$. 
\end{theorem}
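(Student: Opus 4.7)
The plan is to invoke the standard additive Schwarz theory, which bounds $\cond(\bdd{P}^{-1}\bdd{A})$ by the product of a stable-splitting constant $C_0^2$ and a strengthened Cauchy--Schwarz / colouring constant $\omega$. The constant $\omega$ is $O(1)$ independently of all parameters: the element-interior spaces $\bdd{X}_I(K)$ are mutually $a_\lambda$-orthogonal, there is only one coarse space $\bdd{X}_C$, and by shape regularity each vertex patch $\bdd{X}_{\bdd{a}}$ has bounded overlap with its neighbours. The whole burden is therefore on producing, for every $\bdd{v}\in\bdd{X}_D$, a splitting $\bdd{v}=\bdd{v}_I+\bdd{v}_C+\sum_{\bdd{a}}\bdd{v}_{\bdd{a}}$ with $\bdd{v}_I\in\bdd{X}_I$, $\bdd{v}_C\in\bdd{X}_C$, $\bdd{v}_{\bdd{a}}\in\bdd{X}_{\bdd{a}}$, whose summed local energies are controlled by $C(1+\beta_X^{-2})(1+\tau_B^2)\,a_\lambda(\bdd{v},\bdd{v})$.

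The first step is to set $\bdd{v}_B := \mathbb{T}_B \bdd{v}$ and $\bdd{v}_I := \bdd{v}-\bdd{v}_B \in \bdd{X}_I$. The strain part of $a_\lambda(\bdd{v}_B,\bdd{v}_B)$ is handled directly by \cref{eq:tau condition}. For the divergence, one exploits the identity $(I-\Pi_I)\dive\bdd{v}_B = (I-\Pi_I)\dive\bdd{v}$, which holds for \emph{any} choice of $\bdd{X}_B$ because $\dive \bdd{v}_I\in\dive\bdd{X}_I$ is fixed by $\Pi_I$. Splitting via $\Pi_I$ then gives
\begin{equation*}
\lambda\|\dive\bdd{v}_B\|^2 \;=\; \lambda\|\Pi_I\dive\bdd{v}_B\|^2 + \lambda\|(I-\Pi_I)\dive\bdd{v}\|^2 \;\le\; \mu\tau_B^2\|\bdd{\varepsilon}(\bdd{v})\|^2 + \lambda\|\dive\bdd{v}\|^2,
\end{equation*}
so that $a_\lambda(\bdd{v}_B,\bdd{v}_B)\lesssim(1+\tau_B^2)a_\lambda(\bdd{v},\bdd{v})$, and triangle inequality yields the same bound for $a_\lambda(\bdd{v}_I,\bdd{v}_I)$. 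Note that $\beta_X$ has not yet been invoked.

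The second step is to split $\bdd{v}_B$ into its coarse and vertex-patch components. I would choose $\bdd{v}_C\in\bdd{X}_C$ to be the element whose edge traces are a stable quartic projection of $\bdd{v}_B|_{\mathcal{E}}$ preserving vertex values and low-order edge moments, extended into the element interiors through the operator $\mathbb{T}_B$. The residual $\bdd{v}_B-\bdd{v}_C$ then vanishes at vertices and has zero quartic edge moments, so can be localised by the standard hat-function partition of unity $\{\psi_{\bdd{a}}\}_{\bdd{a}\in\mathcal{V}}$, with each localised piece reinterpreted via $\mathbb{T}_B$ to lie in $\bdd{X}_{\bdd{a}}$. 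The $p$- and $h$-independent strain estimates for such quartic projections and hat-function localisations are supplied by the high-order extension machinery of \cite{Lee09,Lee07,Wu2014}, which the paper explicitly invokes as its technical backbone.

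The divergence control in this second stage is where the inf-sup $\beta_X$ finally enters. Because $\dive\bdd{X}_D$ is paired with $\bdd{X}_D$ through \cref{eq:inf-sup global}, any element of $\dive\bdd{X}_D$ can be preimaged by a velocity whose $H^1$-norm is controlled by $\beta_X^{-1}$ times its $L^2$-norm; applied on each vertex patch, this converts the local divergence residues produced by the partition-of-unity step into bounded strain energies, so that $\lambda\|\dive\bdd{v}_C\|^2 + \sum_{\bdd{a}}\lambda\|\dive\bdd{v}_{\bdd{a}}\|^2$ can be absorbed into $\beta_X^{-2}\lambda\|\dive\bdd{v}_B\|^2$ plus lower-order strain terms already controlled by $\tau_B$. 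This divergence bookkeeping is the principal obstacle: the whole point of replacing \cref{eq:motivation tau static condense} by the relaxed \cref{eq:tau condition} is to prevent a multiplicative $\lambda/\mu$ from cascading through the splitting, and it is the inf-sup that makes this relaxation usable by supplying a discrete Stokes-type preimage at each stage. Threading this argument through the vertex-patch localisation while preserving $p$-independence is the technically delicate part of the proof.
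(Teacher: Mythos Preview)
Your first step---splitting $\bdd{v}=\bdd{v}_I+\bdd{v}_B$ via $\mathbb{T}_B$ and using the identity $(I-\Pi_I)\dive\bdd{v}_B=(I-\Pi_I)\dive\bdd{v}$ together with \cref{eq:tau condition}---is correct and is essentially what the paper does (with the minor variant that the paper routes through the Stokes extension $\mathbb{S}$ and the equivalence \cref{eq:t and s equivalences}).

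The genuine gap is in your second step. Your plan is to build $\bdd{v}_C$ by a quartic edge projection and then localise $\bdd{v}_B-\bdd{v}_C$ with the hat-function partition of unity $\{\psi_{\bdd{a}}\}$, hoping that the inf-sup will clean up the divergence residues. This cannot work as stated. Consider a nearly divergence-free $\bdd{v}_B$, so that $a_\lambda(\bdd{v}_B,\bdd{v}_B)\approx 2\mu\|\bdd{\varepsilon}(\bdd{v}_B)\|^2$. The localised piece $\psi_{\bdd{a}}(\bdd{v}_B-\bdd{v}_C)$ has divergence containing $\nabla\psi_{\bdd{a}}\cdot(\bdd{v}_B-\bdd{v}_C)$, whose $L^2$ norm is of order $\|\bdd{\varepsilon}(\bdd{v}_B)\|$, \emph{not} of order $\|\dive\bdd{v}_B\|$. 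Hence $\lambda\sum_{\bdd{a}}\|\dive\bdd{v}_{\bdd{a}}\|^2$ is of order $\lambda\|\bdd{\varepsilon}(\bdd{v}_B)\|^2$, which blows up relative to $a_\lambda(\bdd{v}_B,\bdd{v}_B)$ as $\lambda/\mu\to\infty$. The inf-sup runs in the wrong direction to fix this: it lets you preimage a divergence by a velocity with controlled $H^1$ norm, but here you need to bound a divergence that was \emph{created} by the localisation, and that divergence is simply too large. Invoking it ``on each vertex patch'' is also not justified, since \cref{eq:inf-sup global} is a global condition.

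The missing idea, which is the core of the paper's argument, is an $a_\lambda$-orthogonal splitting of the boundary part into a divergence-free piece $\bdd{v}\in\tilde{\bdd{N}}_B$ and its complement $\bdd{w}\in\tilde{\bdd{N}}_B^\perp$. The divergence-free piece is decomposed via a stream function in an $H^2$-conforming space (\cref{thm:h2 stable decomp}, \cref{thm:divfree stable decomp}), so that each $\bdd{v}_C,\bdd{v}_{\bdd{a}}$ is itself divergence-free and the $\lambda$ terms simply vanish. For the complement $\bdd{w}$, the global inf-sup yields $\|\bdd{w}\|_1\le C\beta_X^{-1}\|\dive\bdd{w}\|$, which turns a strain-stable decomposition (\cref{thm:h1 stable decomp 2}) into an $a_\lambda$-stable one via $\lambda\|\bdd{\varepsilon}(\bdd{w})\|^2\le C\beta_X^{-2}\lambda\|\dive\bdd{w}\|^2\le C\beta_X^{-2}a_\lambda(\bdd{w},\bdd{w})$. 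This divergence-free/complement dichotomy is precisely what makes the estimate robust in $\lambda/\mu$, and it is absent from your sketch. The references \cite{Lee09,Lee07,Wu2014} you cite supply only $h$-version ingredients; the $p$-robust strain-stable decompositions are built from scratch in the paper via dedicated interpolation operators.
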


\section[Three Choices of Boundary Space]{Three Choices of Boundary Space}
\label{sec:numerics}

In this section, we present three options for choosing the space $\bdd{X}_B$, all of which offer computational benefits in various setting, and will be shown to satisfy \cref{eq:tau condition} with $\tau_B$ independent of $\lambda/\mu$.

\subsection{Static Condensation}
\label{sec:static condensation}

We start by returning to the choice \cref{eq:static cond choice} where $\bdd{X}_B$ is defined in term of static condensation. As already pointed out, the multiplier on the RHS in \cref{eq:motivation tau static condense} degenerates as $\lambda/\mu \to \infty$. However, as we shall show in \cref{thm:elastic harmonic continuity element}, the weakened form \cref{eq:tau condition} \textit{does} hold with $\tau_B$ independent of $\mu$, $\lambda$, $h$, and $p$. \Cref{thm:main result} then gives the bound
\begin{align}
	\label{eq:static cond precon bound}
	\cond(\bdd{P}^{-1} \bdd{A}) \leq C  (1 + \beta_X^{-2}),
\end{align}  
where $C$ is independent of $\beta_X$, $\mu$, $\lambda$, $h$, and $p$.  

In practice, static condensation allows one to compute the solution to \cref{eq:le variational fem} more efficiently than suggested in \cref{alg:asm variational}. The orthogonality condition in \cref{eq:static cond choice} means that the stiffness matrix $\bdd{A}$ and preconditioner $\bdd{P}$ take the form
\begin{align*}
	\bdd{A} = \begin{bmatrix}
		\bdd{A}_{II} & \bdd{0} \\
		\bdd{0} & \bdd{A}_{BB}
	\end{bmatrix} \quad \text{and} \quad \bdd{P}^{-1} = \begin{bmatrix}
		\bdd{A}_{II}^{-1} & \bdd{0} \\
		\bdd{0} & \bdd{P}_{BB}^{-1}
	\end{bmatrix},
\end{align*}
where $\bdd{A}_{II}$ corresponds to the interactions between the interior degrees of freedom and $\bdd{A}_{BB}$ and $\bdd{P}_{BB}^{-1}$ correspond to the element boundary interactions. We partition the solution and load vectors analogously. As a result, the interior and boundary components of the solution $\bdd{u}$ are decoupled:
\begin{align}
	\label{eq:static condensation solve}
	\vec{u} = \begin{bmatrix}
		\vec{u}_I \\
		\vec{u}_B
	\end{bmatrix}, \qquad \text{where} \quad \bdd{A}_{II} \vec{u}_I = \vec{L}_I \quad \text{and} \quad \bdd{A}_{BB} \vec{u}_B = \vec{L}_B.
\end{align}
We choose to invert $\bdd{A}_{II}$ element-by-element as indicated in \cref{eq:interior solve} using a direct method, while we use the conjugate gradient method with preconditioner $\bdd{P}_{BB}^{-1}$ to compute $\vec{u}_B$. The key point is that the interior degrees of freedom are computed only once, and not every iteration of the iterative solver. In addition, the performance of $\bdd{P}_{BB}^{-1}$ is identical to the performance of $\bdd{P}^{-1}$ since in this case $\cond(\bdd{P}_{BB}^{-1} \bdd{A}_{BB}) = \cond(\bdd{P}^{-1} \bdd{A})$. 

\textbf{Cook's Membrane. } We now demonstrate the performance of the preconditioner $\bdd{P}$ on the Cook's membrane problem mentioned in \cref{sec:intro}. More precisely, the boundary conditions are
\begin{alignat*}{2}
	\bdd{u} &= \bdd{0}, \qquad & &\text{on } \{0\} \times (0, 44), \\
	\bdd{\sigma}(\bdd{u})\unitvec{n} &= (1, 0)^T, \qquad & &\text{on } \{48\} \times (44, 60), \\
	\bdd{\sigma}(\bdd{u})\unitvec{n} &= \bdd{0} \qquad & &\text{on the remainder of $\Gamma$},
\end{alignat*}
where $\bdd{\sigma}(\bdd{u})_{ij} := 2\mu \bdd{\varepsilon}(\bdd{u})_{ij} + \lambda (\dive \bdd{u}) \delta_{ij}$ is the stress tensor. The condition numbers of the full stiffness matrix $\bdd{A}$, the boundary component $\bdd{A}_{BB}$, and the preconditioned boundary system $\bdd{P}_{BB}^{-1} \bdd{A}_{BB}$ with $\mu = 1$ and $\lambda \in \{10^1, 10^3, 10^5, 10^7\}$ are shown in \cref{fig:cooks cns}. Consistent with \cref{eq:static cond precon bound}, the condition numbers of $\bdd{P}_{BB}^{-1} \bdd{A}_{BB}$ are uniformly bounded in $\lambda$ and $p$, and the preconditioned conjugate gradient method converges in a constant number of iterations, independently of $p$ and $\lambda$. 

The benefit of using high polynomial degree is demonstrated in \cref{fig:cook von mises}, which displays the log of the von Mises stress for the $p=16$ solution with $\lambda=10^5$. The von Mises stress \cite{vonmises13}, a failure criterion for a material, is given by $\sigma_v^2 := \sigma_{11}^2 + \sigma_{22}^2 + 3\sigma_{12}^2 - \sigma_{11} \sigma_{22}$, and in particular depends on the gradient of the solution. With a coarse mesh, the $p=16$ solution is able to capture both the singularity at $(0, 44)$ and the smooth profile of $\sigma_v$ away from the singularity.

The residual history for $p \in \{4, 7, 10, 13\}$ for the same choices of $\mu$ and $\lambda$ as above are displayed in \cref{fig:cooks resids}. In order to avoid biasing the results by the choice of data, we show an ensemble of curves for each value of $p$ corresponding to 100 random choices of the vector $\vec{b}$ with entries uniformly distributed in $(-1, 1)$ and zero starting vector. Here, and in the remaining examples, the relative residual is defined to be $\|\vec{r}_k\|_{\ell^2} / \|\vec{r}_0\|_{\ell^2}$, where $r_k := \bdd{A}_{BB} \vec{x}_k - \vec{b}$ is the residual vector at the $k$-th step, and iteration is terminated when the relative residual is smaller than $10^{-12}$. Consistent with the theory (see e.g. \cite[p. 636]{Golub}), conjugate gradient converges in a constant number of iterations that is insensitive to the value of $p$ or $\lambda$, and the relative residual decreases at the geometric rate $(\sqrt{\kappa} - 1)/(\sqrt{\kappa}+1)$, where $\kappa = \cond(\bdd{P}_{BB}^{-1} \bdd{A}_{BB}) \simeq 6.0$.

\begin{figure}[ht]
	\centering
	\begin{subfigure}[b]{0.42\linewidth}
		\centering
		\includegraphics[height=5.2cm]{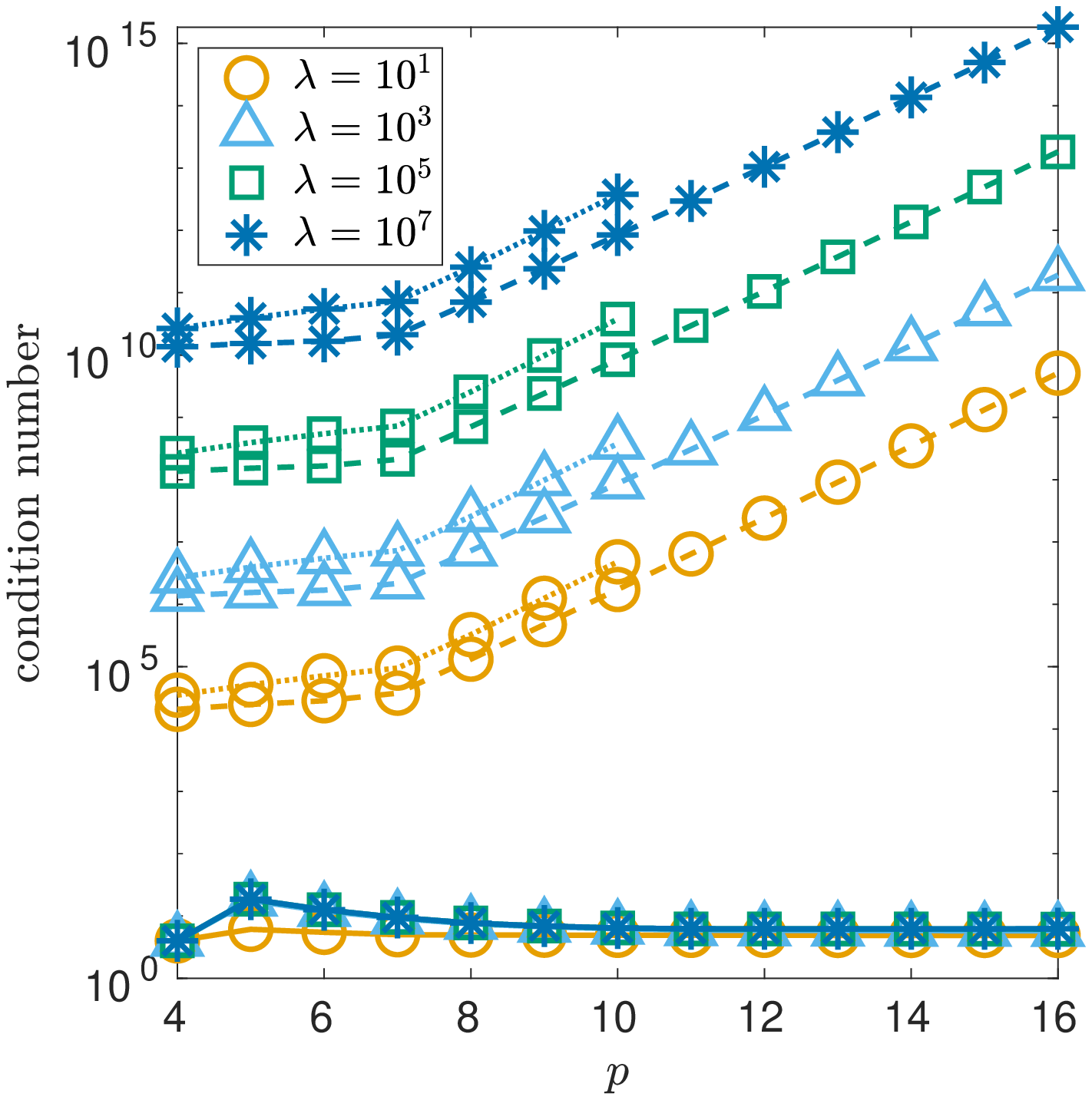}
		\caption{}
		\label{fig:cooks cns}
	\end{subfigure}
	\hfill
	\begin{subfigure}[b]{0.56\linewidth}
		\centering
		\includegraphics[height=5cm]{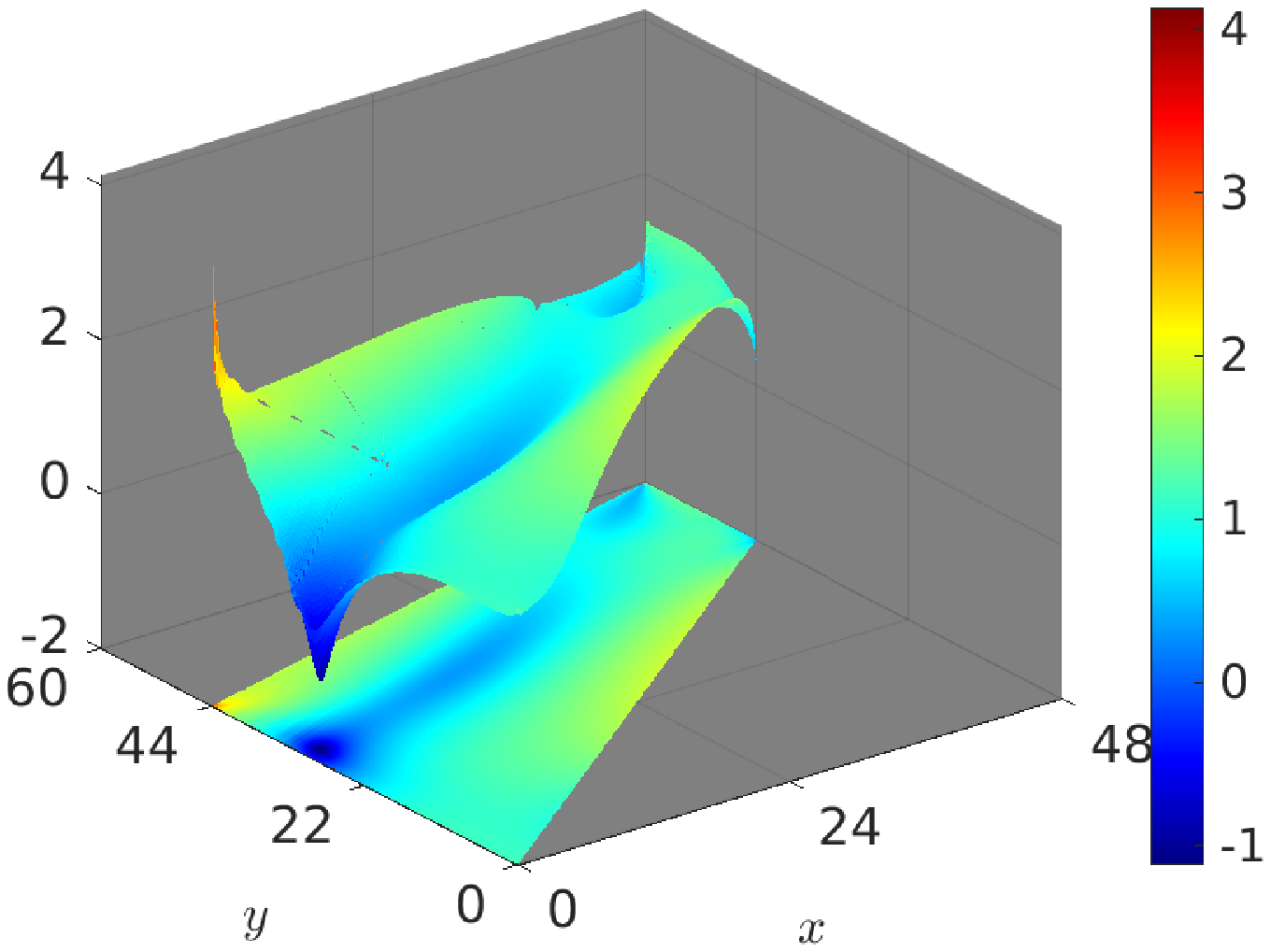}
		\caption{}
		\label{fig:cook von mises}
	\end{subfigure}
	\caption{Cook's membrane problem (a) condition numbers of $\bdd{A}$ (dotted lines), $\bdd{A}_{BB}$ (dashed lines), and $\bdd{P}_{BB}^{-1} \bdd{A}_{BB}$ (solid lines) and (b) log of the von Mises stress for the $p=16$ solution with $\lambda=10^5$. Observe that the condition numbers of $\bdd{P}_{BB}^{-1} \bdd{A}_{BB}$ level out around 6.}
	\label{fig:cooks figures}
\end{figure}

\begin{figure}[ht]
	\centering
	\begin{subfigure}[b]{0.49\linewidth}
		\centering
		\includegraphics[width=\linewidth]{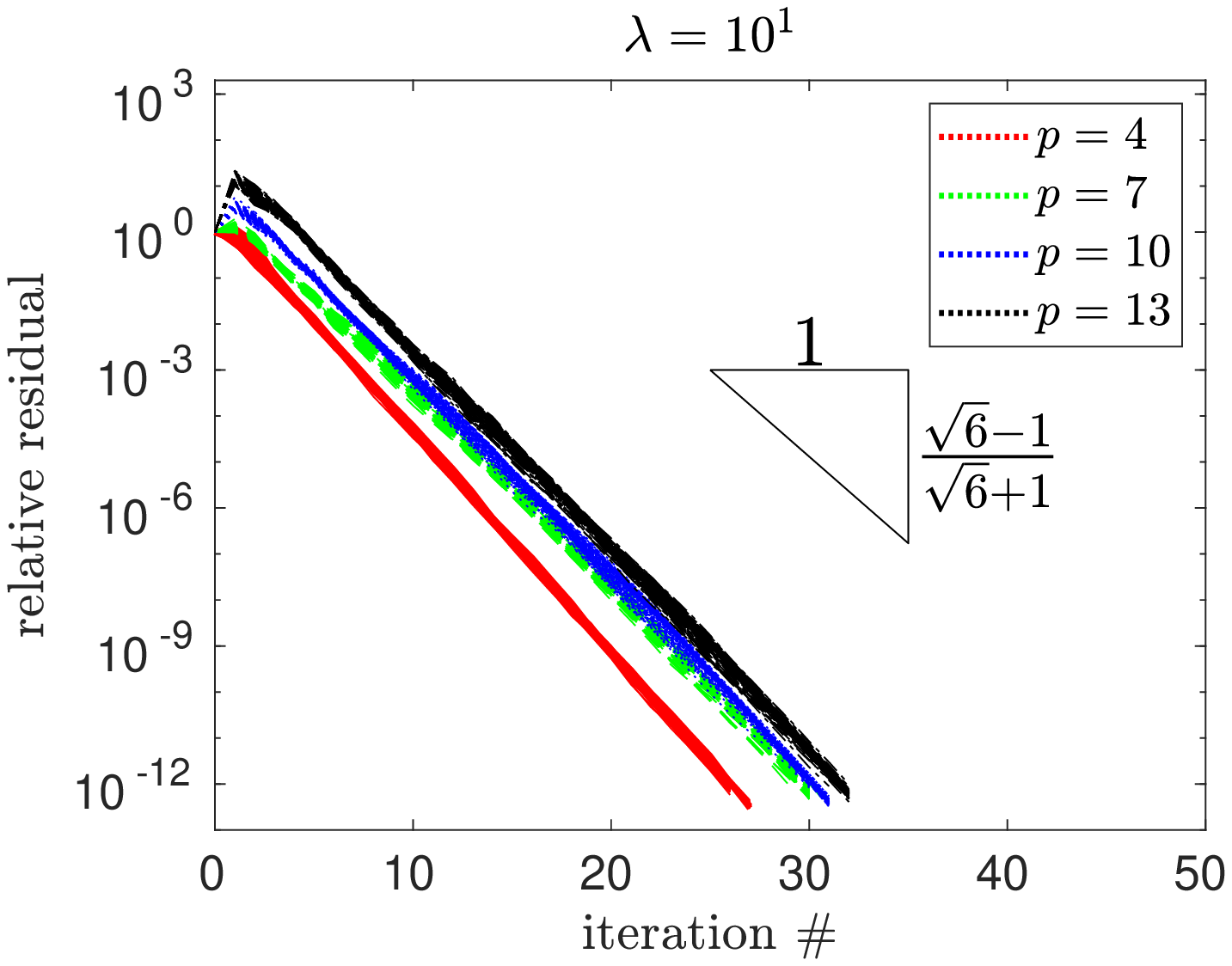}
		\caption{}
		\label{fig:cooks resids 1e1}
	\end{subfigure}
	\hfill
	\begin{subfigure}[b]{0.49\linewidth}
		\centering
		\includegraphics[width=\linewidth]{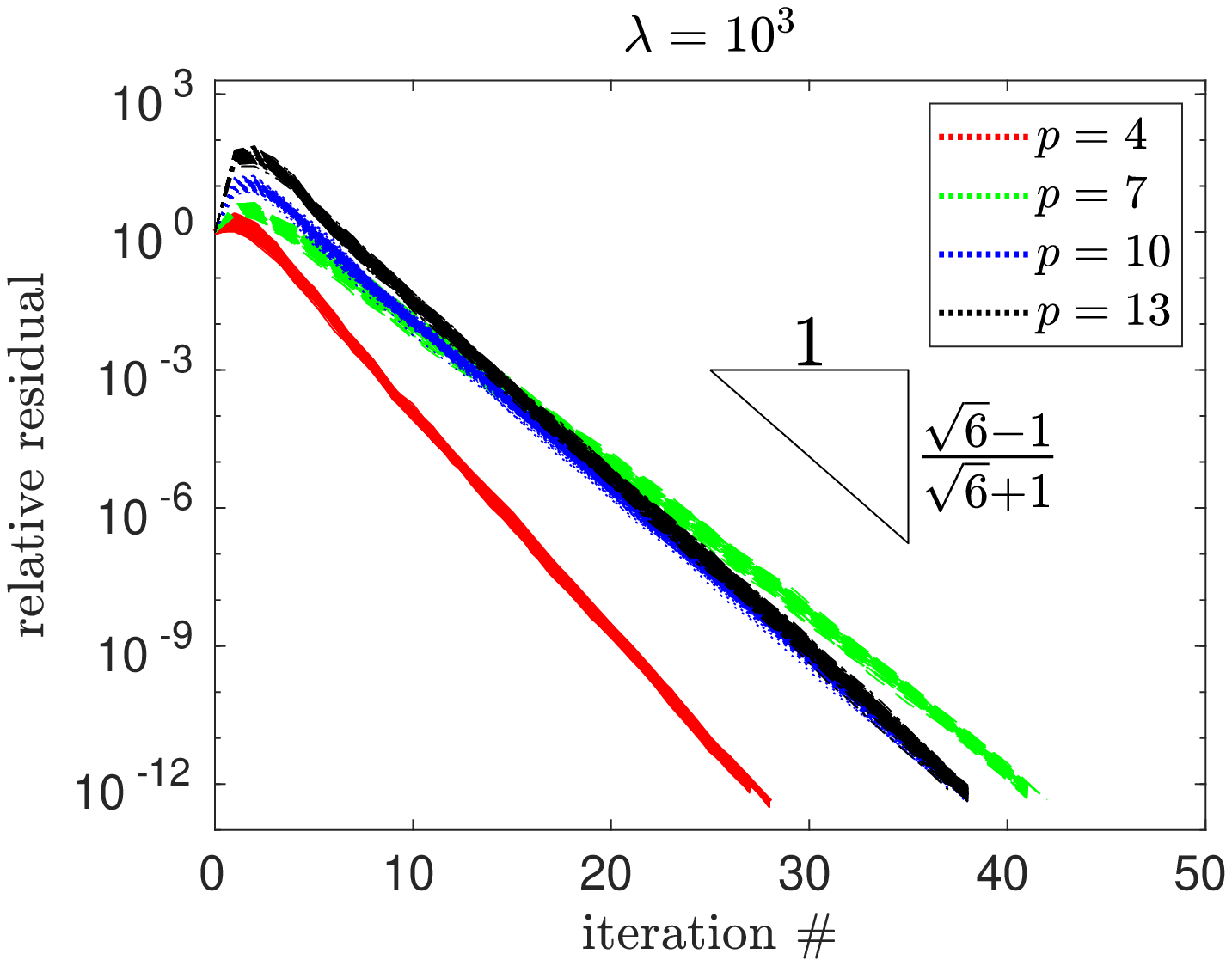}
		\caption{}
		\label{fig:cooks resids 1e3}
	\end{subfigure}
	\\
	\begin{subfigure}[b]{0.49\linewidth}
		\centering
		\includegraphics[width=\linewidth]{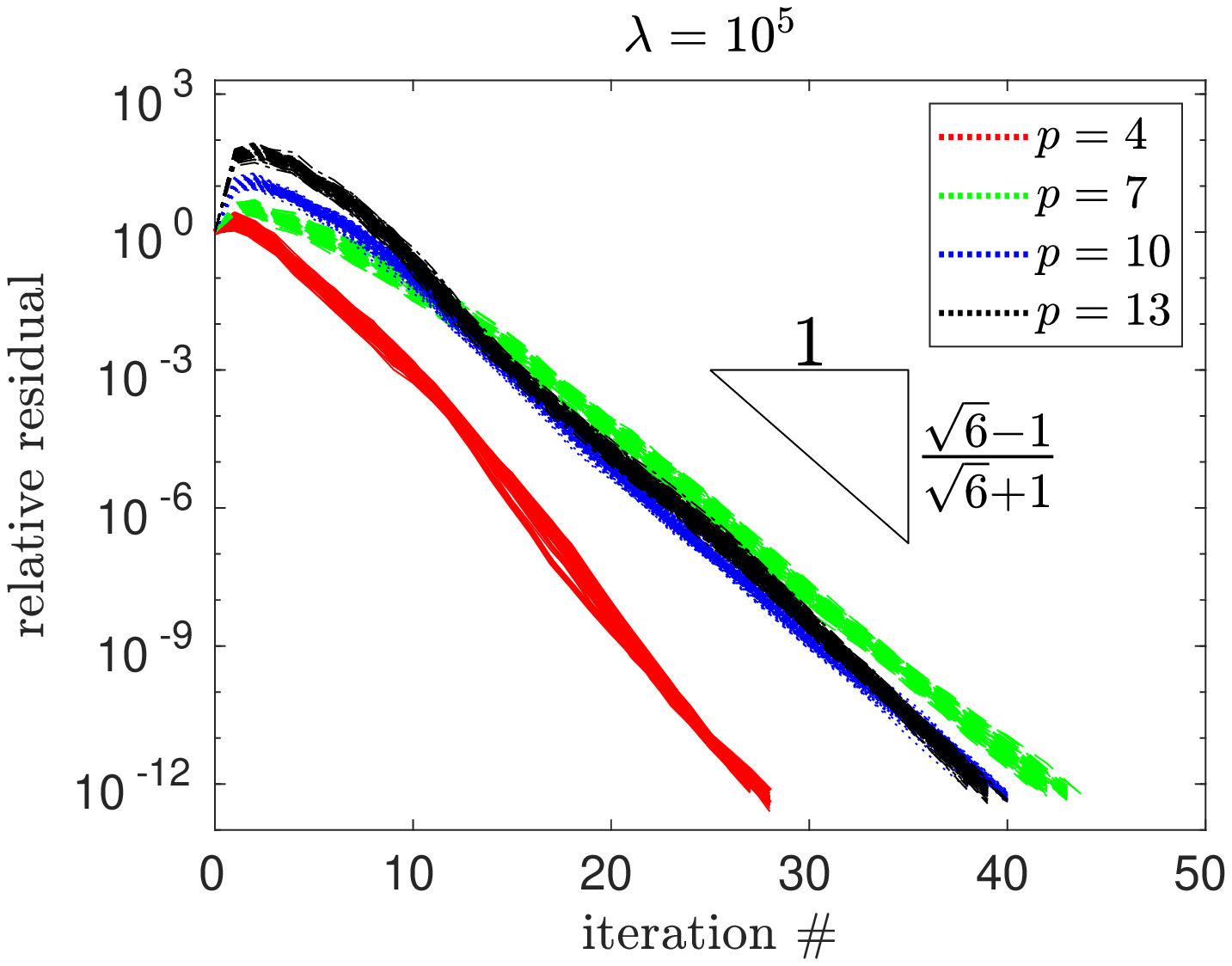}
		\caption{}
		\label{fig:cooks resids 1e5}
	\end{subfigure}
	\hfill
	\begin{subfigure}[b]{0.49\linewidth}
		\centering
		\includegraphics[width=\linewidth]{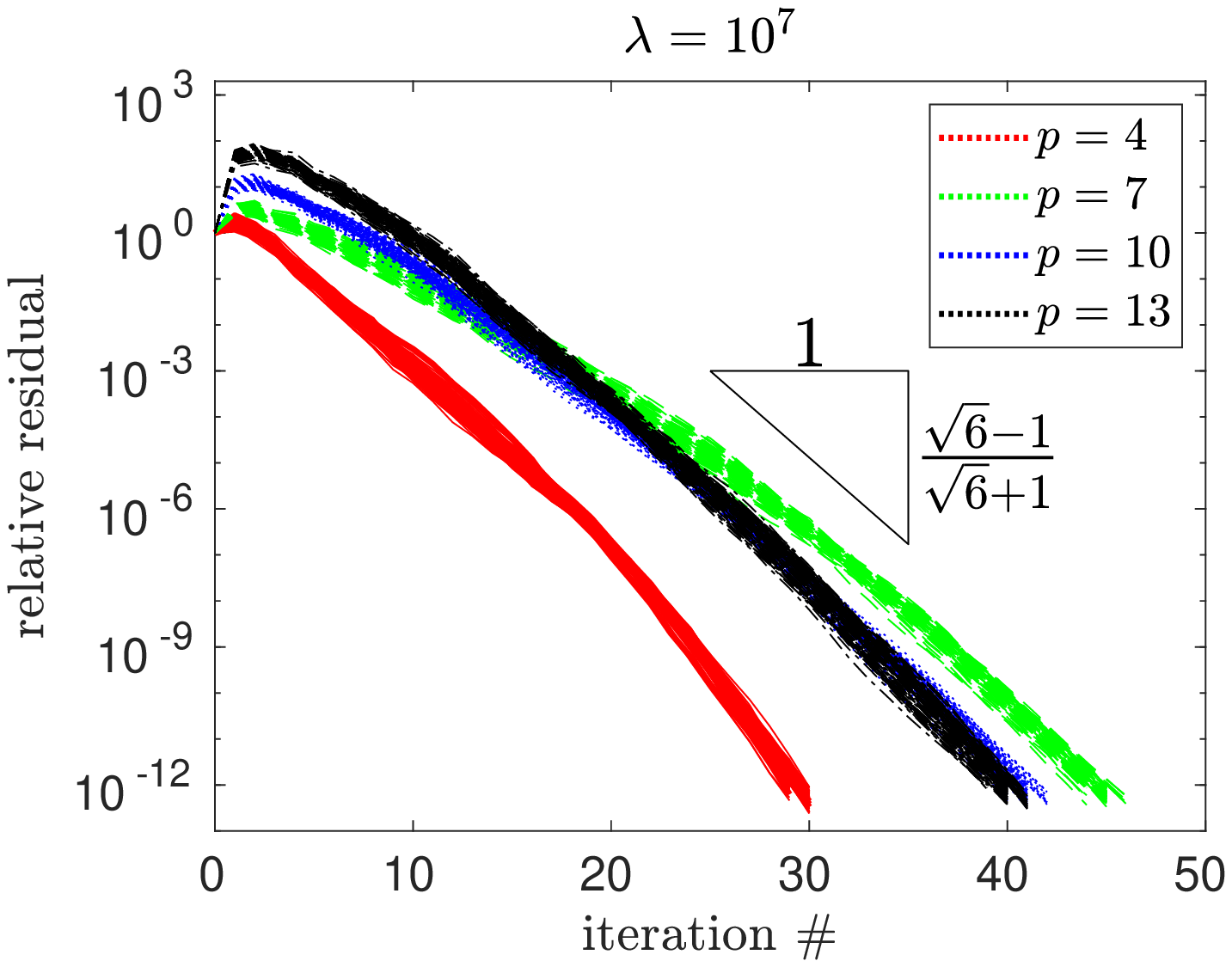}
		\caption{}
		\label{fig:cooks resids 1e7}
	\end{subfigure}
	\caption{Residual history of the conjugate gradient method with preconditioner $\bdd{P}$ applied to the Cook's membrane problem with (a) $\lambda = 10^{1}$, (b) $\lambda = 10^{3}$, (c) $\lambda = 10^{5}$, and (d) $\lambda = 10^{7}$.}
	\label{fig:cooks resids}
\end{figure}

\subsection{The SCIP Method}
\label{sec:scip}

For certain applications \cite{AinCP22SCIP}, it is advantageous to choose the boundary space as follows:
\begin{multline}
	\label{eq:tilde xb def}
	\tilde{\bdd{X}}_B := \{ \bdd{v} \in \bdd{X}_D : a_{\lambda}(\bdd{v}, \bdd{z}) = 0 \ \forall \bdd{z} \in \bdd{X}_I  : \dive \bdd{z} \equiv 0 \\
	\text{and} \quad  (\dive \bdd{v}, \dive \bdd{w}) = 0 \ \forall \bdd{w} \in \bdd{X}_I \}.
\end{multline}
As shown in \cref{rem:tilde tau}, the space $\tilde{\bdd{X}}_B$ satisfies condition \cref{eq:tau condition} with $\tau_B$ independent of $\mu$, $\lambda$, $h$, and $p$ so that \cref{thm:main result} again shows that the associated preconditioner remains effective:
\begin{align}
	\label{eq:scip condition number}
	\cond(\bdd{P}^{-1} \bdd{A}) \leq C  (1 + \beta_X^{-2}).
\end{align}
The space $\tilde{\bdd{X}}_B$ naturally arises in finite element analysis of incompressible flow. For example, as discussed in \cite{AinCP22SCIP}, the pair $\bdd{X}_D \times \dive \bdd{X}_D$, i.e. Scott-Vogelius elements, is an attractive option for high order mixed finite element discretization of Stokes flow: (1) they enforce the divergence-free constraint exactly (i.e. the divergence of the discrete solution vanishes pointwise); and (2) the inf-sup constant $\beta_X$ \cref{eq:inf-sup global} is bounded away from zero uniformly in $h$ and $p$ as mentioned in \cref{sec:asm}. The Statically Condensed Iterated Penalty (SCIP) method \cite{AinCP22SCIP} is an efficient method to compute the discrete solution. The heart of the SCIP algorithm requires the solution of the following variational problem:
\begin{align}
	\label{eq:scip bilinear}
	\tilde{\bdd{u}} \in \tilde{\bdd{X}}_B : \qquad a_{\lambda}(\tilde{\bdd{u}}, \bdd{v}) = \tilde{L}(\bdd{v}) \qquad \forall \bdd{v} \in \tilde{\bdd{X}}_B,
\end{align}
which must be solved with data $\tilde{L}$ that changes at each iteration. To apply the preconditioner $P^{-1}$ to problem \cref{eq:scip bilinear}, only steps \cref{eq:coarse solve,eq:spdier solve} of \cref{alg:asm variational} are required since only the boundary component is sought.

\textbf{Moffatt Eddy Problem. } We now demonstrate the performance of the preconditioner $P^{-1}$ on the SCIP bilinear form \cref{eq:scip bilinear} on a problem due to Moffatt \cite{Moffatt64}. The geometry of the problem and the computational mesh are displayed in \cref{fig:moffatt mesh}, while the boundary conditions are
\begin{align*}
	\bdd{u}(x, 0) = (1-x^2, 0)^T, \quad -1 < x < 1, \quad \text{and} \quad \bdd{u} = \bdd{0} \quad \text{on the remainder of $\Gamma$}.
\end{align*}
The numerical results in \cite{AinCP22SCIP} show that high order elements nicely capture the range of scales exhibited in the true solution. The condition numbers of the stiffness matrix $\bdd{A}$ corresponding to \cref{eq:scip bilinear} and the preconditioned system $\bdd{P}^{-1} \bdd{A}$ are displayed in \cref{fig:moffatt cns}, while the residual histories for the conjugate gradient method with the same setup as in Cook's membrane example in \cref{sec:static condensation} is shown in \cref{fig:moffatt resids}. Consistent with \cref{eq:scip condition number}, the condition numbers are uniformly bounded in $p$ and $\lambda$, and the conjugate gradient method again converges within a fixed number of iterations independent of $p$ and $\lambda$. In particular, the relative residual decreases geometrically as $(\sqrt{\kappa} - 1)/(\sqrt{\kappa}+1)$, where $\kappa = \cond(\bdd{P}^{-1} \bdd{A}) \simeq 5.0$.

\begin{figure}[ht]
	\centering
	\begin{subfigure}[b]{0.46\linewidth}
		\centering
		\includegraphics[height=6.4cm]{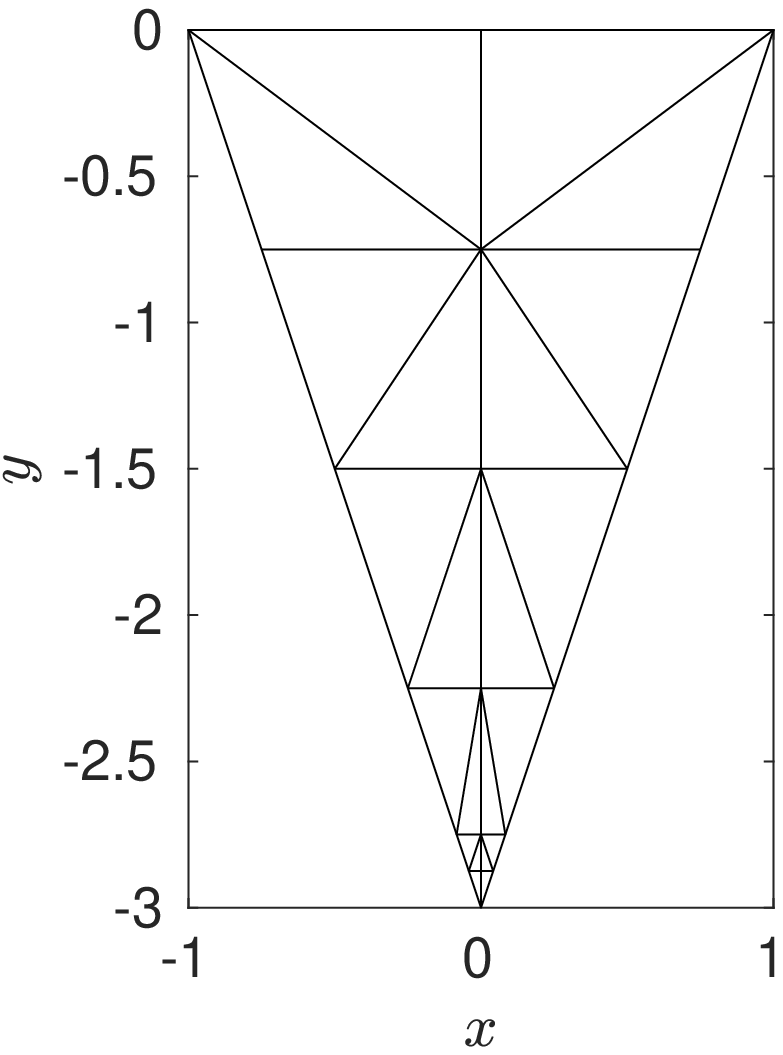}
		\caption{}
		\label{fig:moffatt mesh}
	\end{subfigure}
	\hfill
	\begin{subfigure}[b]{0.51\linewidth}
		\centering
		\includegraphics[height=6.4cm]{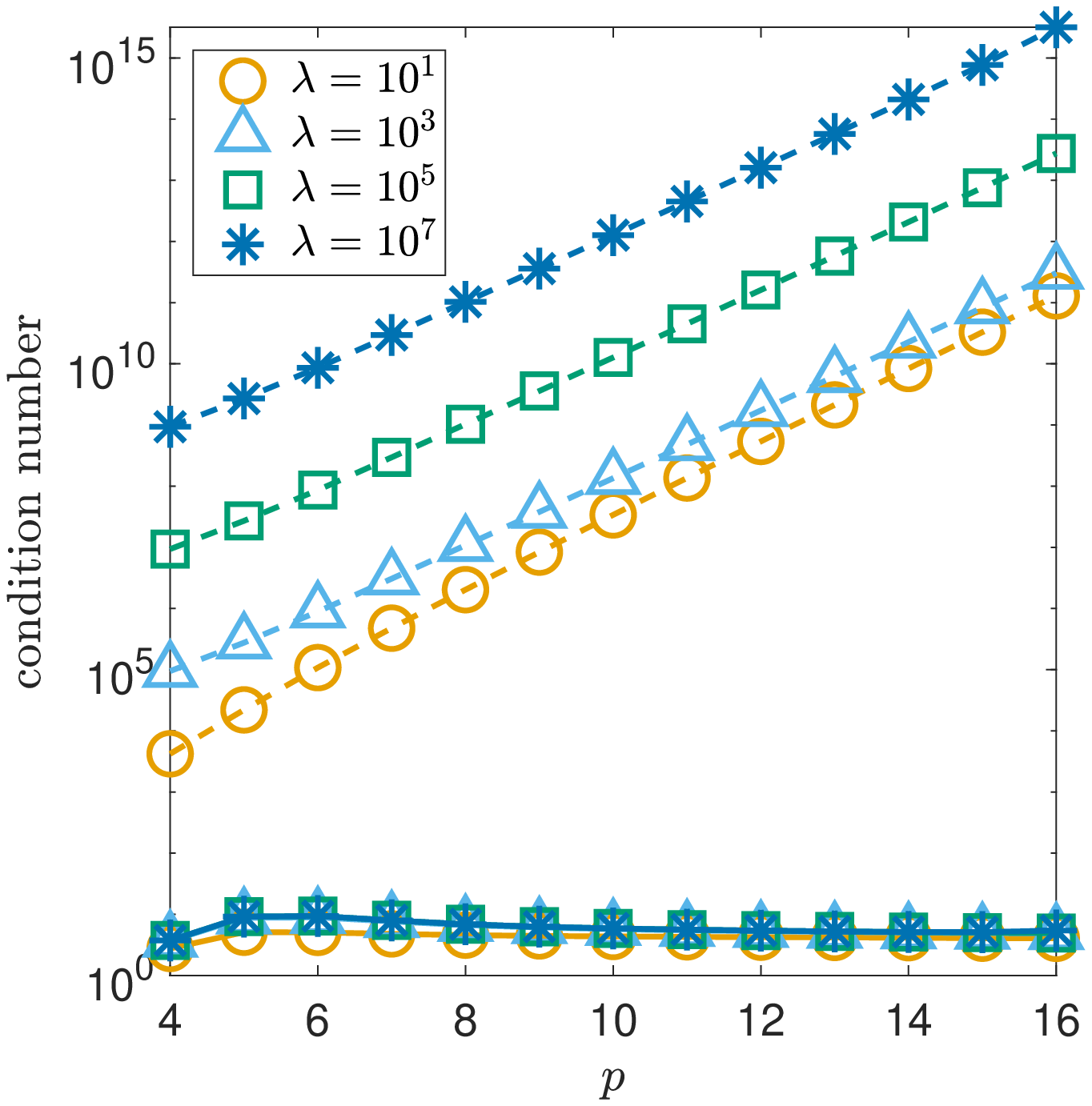}
		\caption{}
		\label{fig:moffatt cns}
	\end{subfigure}
	\caption{Moffatt eddies problem (a) mesh and (b) condition numbers of $\bdd{A}$ (dashed lines) and $\bdd{P}^{-1} \bdd{A}$ (solid lines). Observe that the condition numbers of $\bdd{P}^{-1} \bdd{A}$ level out around 5.}
	\label{fig:moffatt figures}
\end{figure}

\begin{figure}[ht]
	\centering
	\begin{subfigure}[b]{0.49\linewidth}
		\centering
		\includegraphics[width=\linewidth]{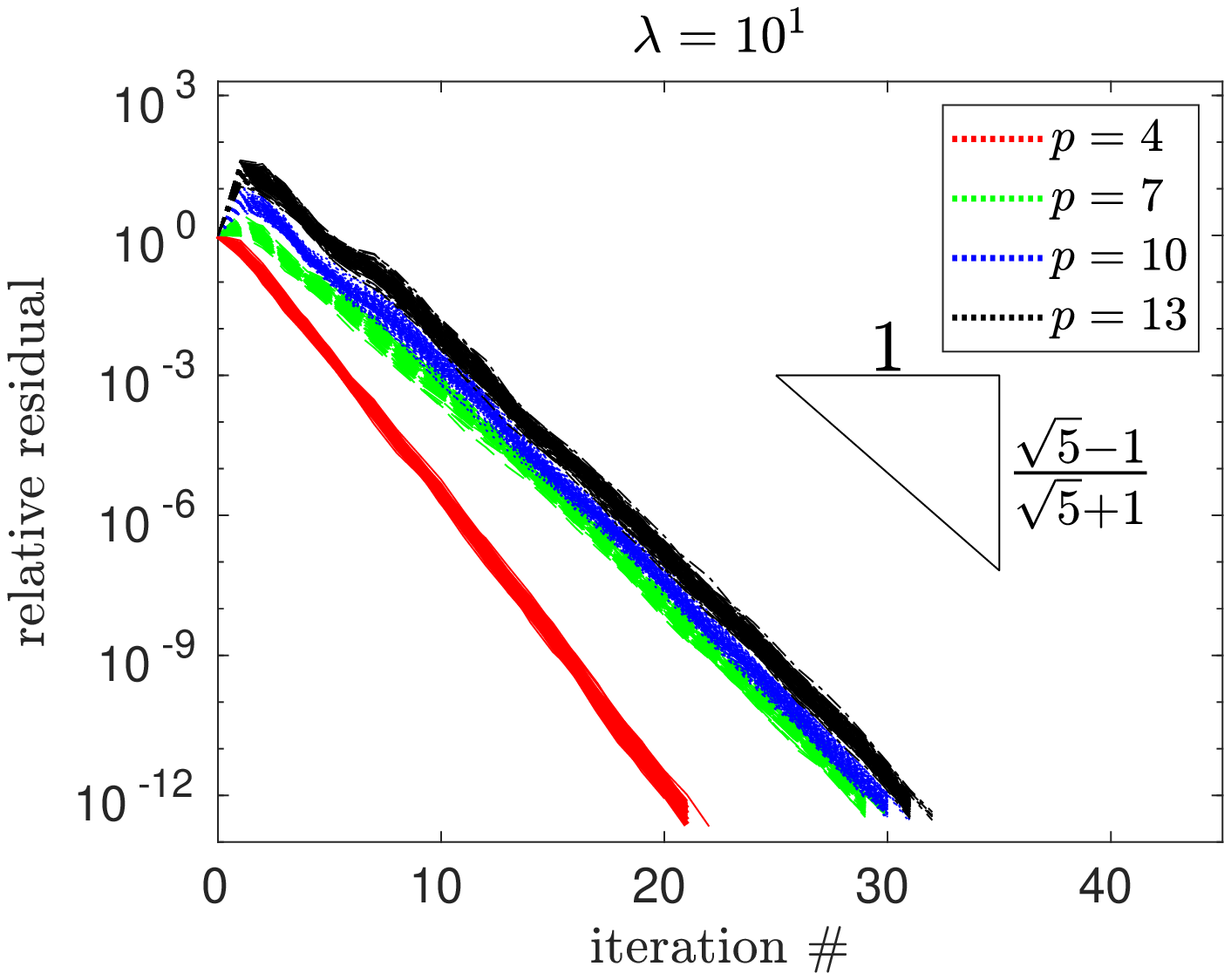}
		\caption{}
		\label{fig:moffatt resids 1e1}
	\end{subfigure}
	\hfill
	\begin{subfigure}[b]{0.49\linewidth}
		\centering
		\includegraphics[width=\linewidth]{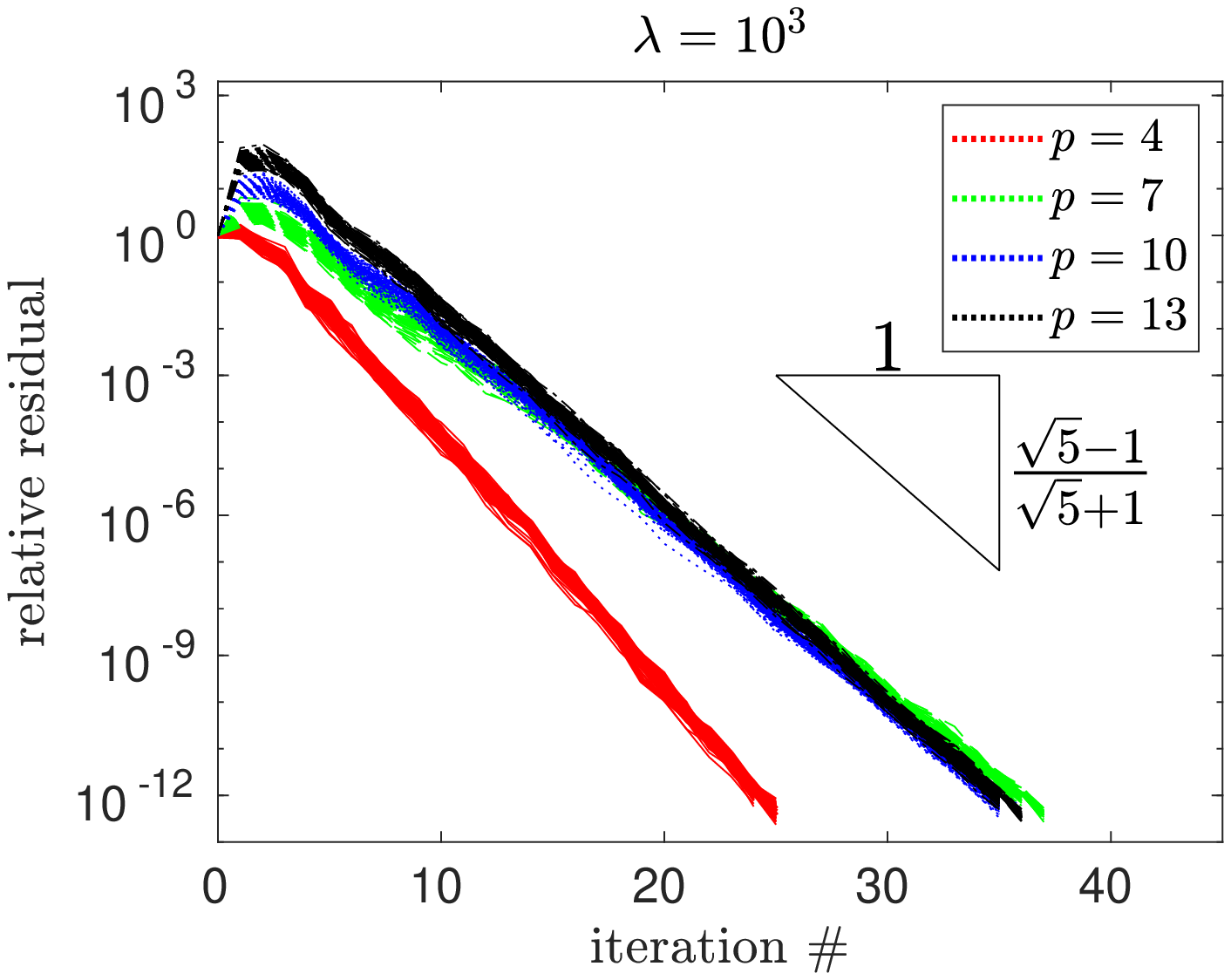}
		\caption{}
		\label{fig:moffatt resids 1e3}
	\end{subfigure}
	\\
	\begin{subfigure}[b]{0.49\linewidth}
		\centering
		\includegraphics[width=\linewidth]{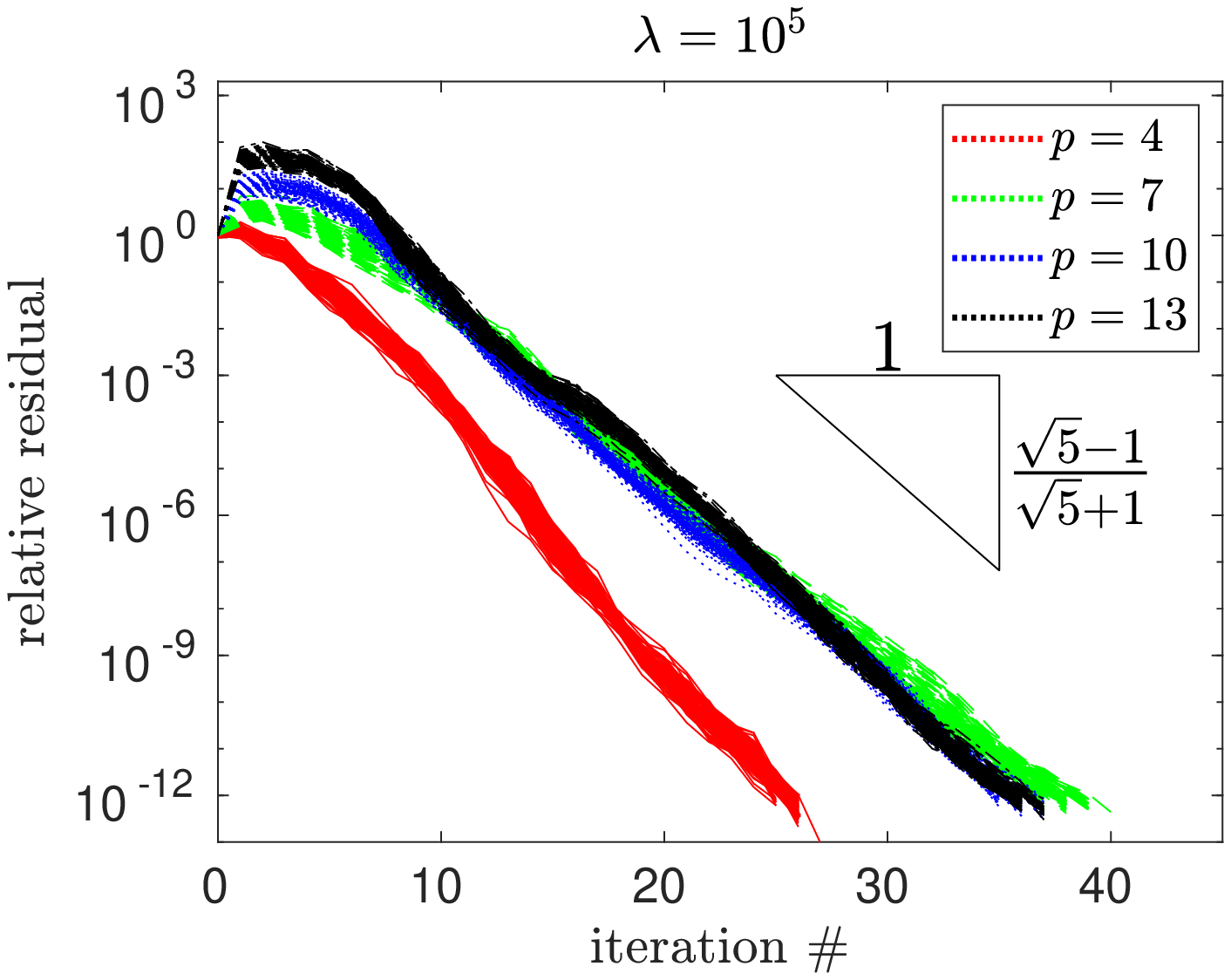}
		\caption{}
		\label{fig:moffatt resids 1e5}
	\end{subfigure}
	\hfill
	\begin{subfigure}[b]{0.49\linewidth}
		\centering
		\includegraphics[width=\linewidth]{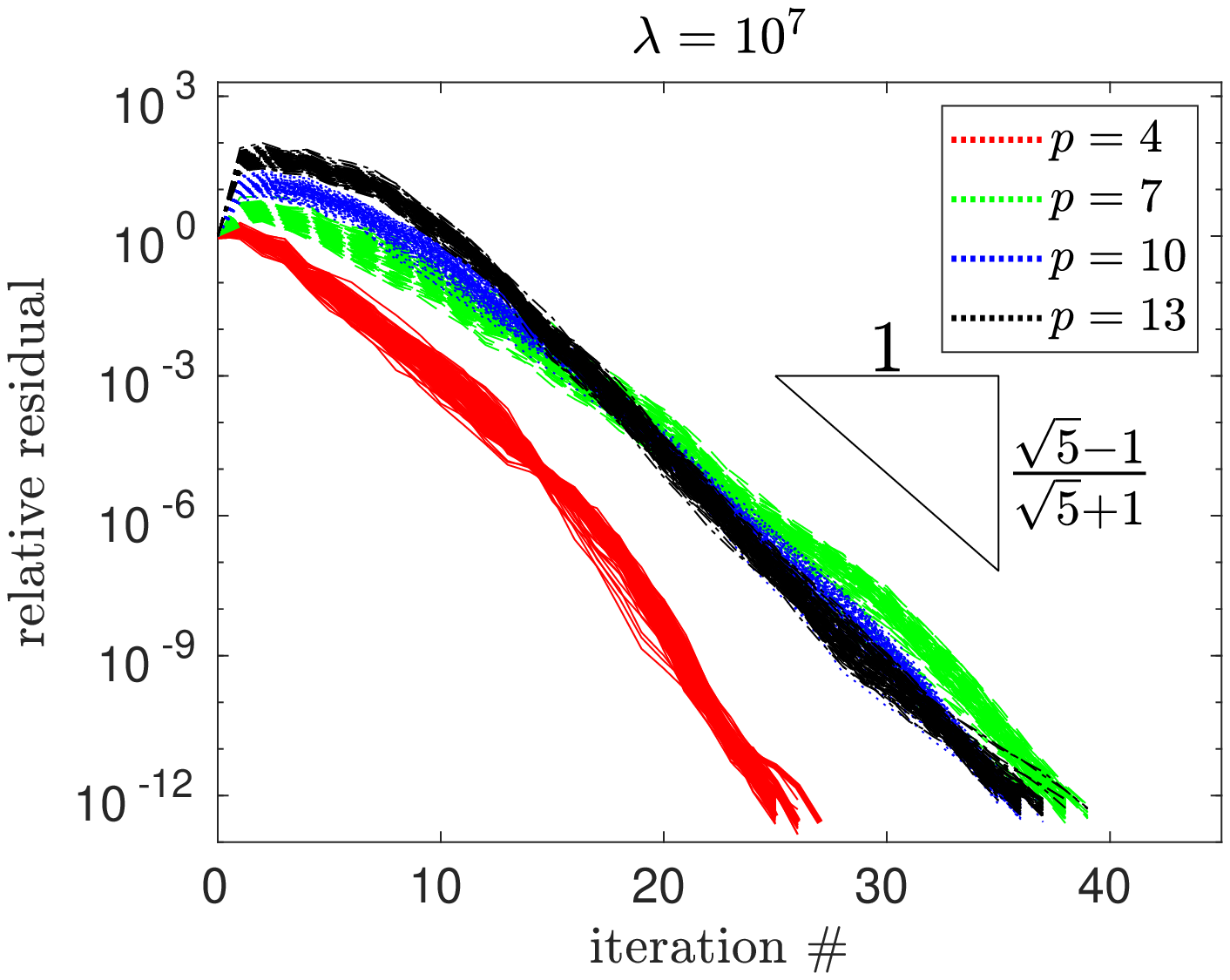}
		\caption{}
		\label{fig:moffatt resids 1e7}
	\end{subfigure}
	\caption{Residual history of the conjugate gradient method with preconditioner $\bdd{P}$ applied to the Moffatt eddies problem with (a) $\lambda = 10^{1}$, (b) $\lambda = 10^{3}$, (c) $\lambda = 10^{5}$, and (d) $\lambda = 10^{7}$.}
	\label{fig:moffatt resids}
\end{figure}

\subsection{Inexact Interior Solves}
\label{sec:inexact}

The most computationally expensive step of static condensation described in \cref{sec:static condensation} is the interior solve $\bdd{A}_{II} \vec{u}_I = \vec{L}_I$ in \cref{eq:static condensation solve}, which incurs a one-time cost of $\mathcal{O}(|\mathcal{T}| p^6)$ operations to factor the block diagonal matrix $\bdd{A}_{II}$ along with $\mathcal{O}(|\mathcal{T}| p^4)$ operations to apply the action of $\bdd{A}_{II}^{-1}$. One way in which to reduce the cost, at the expense of solving for the interior degrees of freedom on every iteration, is to replace $\bdd{A}_{II}$ with a matrix $\bdd{B}_{II}$ which can be inverted efficiently.

Let $b_{\lambda}(\cdot,\cdot)$ denote the bilinear form associated with such a matrix $\bdd{B}_{II}$ and suppose that
\begin{align}
	\label{eq:blam equiv alam}
	C_1 b_{\lambda}(\bdd{v}, \bdd{v}) \leq a_{\lambda}(\bdd{v}, \bdd{v}) \leq C_2 b_{\lambda}(\bdd{v}, \bdd{v}) \qquad \forall \bdd{v} \in \bdd{X}_I,
\end{align}
where $C_1$ and $C_2$ are independent of $\mu$, $\lambda$, $h$, and $p$. The corresponding boundary space is then
\begin{align}
	\label{eq:inexact interior choice}
	\check{\bdd{X}}_B := \{ \bdd{v} \in \bdd{X}_D : b_{\lambda}(\bdd{v}, \bdd{w}) = 0 \ \forall \bdd{w} \in \bdd{X}_I \}. 
\end{align}
We additionally replace $a_{\lambda, K}(\cdot,\cdot)$ with $b_{\lambda, K}(\cdot,\cdot)$ in \cref{eq:interior solve} in \cref{alg:asm variational} to obtain a new matrix preconditioner $\check{\bdd{P}}^{-1}$. We may argue similarly to the proof of \cref{thm:main result} to show that
\begin{align}
	\label{eq:inexact bilinear element perf}
	\cond(\check{\bdd{P}}^{-1} \bdd{A}) \leq C  (1 + \beta_X^{-2})(1 + \tau_B^2),
\end{align}  
where $C$ is independent of $\beta_X$, $\tau_B$, $\mu$, $\lambda$, $h$, and $p$. In summary, the use of the inexact interior solve $\bdd{B}_{II}$ in conjunction with the corresponding boundary space $\check{\bdd{X}}_B$ results in a preconditioner $\check{\bdd{P}}^{-1}$ whose effectiveness is the same as static condensation. One possible choice for $b_{\lambda}(\cdot,\cdot)$ satisfying the above conditions is described in \cref{sec:inexact interior}.

\section{The Stokes Extension Operator}
\label{sec:stokes extension}

Given a measurable subset $\omega$ of $\Omega$ or $\partial \Omega$, let $(\cdot,\cdot)_{\omega}$ denote the $L^2(\omega)$ or $\bdd{L}^2(\omega)$ inner product. More generally,	$|\cdot|_{s,\omega}$ and $\|\cdot\|_{s,\omega}$ denote the $H^s(\omega)$ or $\bdd{H}^s(\omega)$ semi-norm and norm, respectively.  We omit the subscript $\omega$ when $\omega = \Omega$. Here, and in what follows, $C > 0$ denotes a generic constant independent of $\mu$, $\lambda$, $h$, and $p$.

Let $\bdd{X} := X \times X$ and define the discrete Stokes extension pair $\mathbb{S} : \bdd{X} \to \bdd{X}$ and $\mathbb{Q} : \bdd{X} \to \dive \bdd{X}_I$ \cite[\S 6]{AinCP22SCIP} as follows: for $\bdd{u} \in \bdd{X}$ and $K \in \mathcal{T}$,
\begin{subequations}
	\label{eq:stokes extension}
	\begin{alignat}{2}
		\label{eq:stokes extension 1}
		a_K(\mathbb{S} \bdd{u}, \bdd{v}) - (\mathbb{Q} \bdd{u}, \dive \bdd{v})_K &= 0 \qquad & & \forall \bdd{v} \in \bdd{X}_I(K), \\
		\label{eq:stokes extension 2}
		-(r, \dive \mathbb{S}\bdd{u})_K &= 0 \qquad & & \forall r \in \dive \bdd{X}_I(K), \\
		\label{eq:stokes extension 3}
		(\bdd{u} - \mathbb{S} \bdd{u})|_{\partial K} &= \bdd{0}, \qquad & &
	\end{alignat}
\end{subequations}
where $a_K(\cdot, \cdot) = 2\mu(\bdd{\varepsilon}(\cdot), \bdd{\varepsilon}(\cdot))_K$. For instance, given any rigid body motion $\bdd{r} \in \bdd{RM} := \mathbb{R}^2 \oplus \spann\{ (-y, x)^T \}$, the pair  $(\bdd{r}, 0)$ satisfies \cref{eq:stokes extension}, and so $(\mathbb{S} \bdd{r}, \mathbb{Q} \bdd{r}) = (\bdd{r}, 0)$.

\begin{remark}
	\label{rem:stokes extension boundary}
	Condition \cref{eq:stokes extension 3} shows that for $\bdd{u} \in \bdd{X}$, the Stokes extension pair $(\mathbb{S} \bdd{u}, \mathbb{Q} \bdd{u})$ really only depends on the values of $\bdd{u}|_{\partial K}$ for $K \in \mathcal{T}$, which lie in the trace space
	\begin{align*}
		\Tr \bdd{X} &:= \left\{ \bdd{f} : \bigcup_{K \in \mathcal{T}} \partial K \to \mathbb{R}^2 : \text{ $\bdd{f}|_{\partial K} = \bdd{u}|_{\partial K}$ for some $\bdd{u} \in \bdd{X}$} \ \forall K \in \mathcal{T}  \right\} \\
		&= \left\{ \bdd{f} : \bigcup_{K \in \mathcal{T}} \partial K \to \mathbb{R}^2 : \bdd{f}|_{\gamma} \in \bm{\mathcal{P}}_{p}(\gamma) \ \forall \gamma \in \mathcal{E} \text{ and $\bdd{f}$ is continuous} \right\}.
	\end{align*}
	Here, the second equality is a consequence of the following fact:
	Given a function $\bdd{g} \in  \{ \bdd{f} : \cup_{K \in \mathcal{T}} \partial K \to \mathbb{R}^2 : \bdd{f}|_{\gamma} \in \bm{\mathcal{P}}_{p}(\gamma) \text{ and $\bdd{f}$ is continuous} \}$, applying \cite[Theorem 7.4]{BCMP91} element-by-element shows that there exists $\bdd{u} \in \bdd{X}$ satisfying $\bdd{u}|_{\partial K} = \bdd{g}|_{\partial K}$ for all $K \in \mathcal{T}$. As a result, the operators $(\mathbb{S}, \mathbb{Q})$ could be defined equally well as mapping $\Tr \bdd{X}$ to $(\bdd{X}, \dive \bdd{X})$. Henceforth, we shall not distinguish whether the domain of $(\mathbb{S}, \mathbb{Q})$ is $\bdd{X}$ or $\Tr \bdd{X}$. 
\end{remark}

The following properties of the operator $\mathbb{S}$ play a key role in the analysis of the preconditioner $P^{-1}$:
\begin{theorem}
	\label{thm:stokes continuity element}
	For all $K \in \mathcal{T}$, the discrete Stokes extension satisfies
	\begin{alignat}{2}
		\label{eq:stokes seminorm continuity element}
		\| \bdd{\varepsilon}(\mathbb{S} \bdd{u})\|_{K} + \mu^{-1} \|  \mathbb{Q} \bdd{u}\|_{K} &
		\leq C \inf_{ \substack{ \bdd{w} \in \bm{\mathcal{P}}_{p}(K) \\ \bdd{w}|_{\partial K} = \bdd{u}|_{\partial K} } } \| \bdd{\varepsilon}(\bdd{w})\|_{K} \qquad & &\forall \bdd{u} \in \bdd{X}, \\
		\label{eq:stokes continuity element}
		h_K^{-1} \| \mathbb{S} \bdd{u}\|_{K} + | \mathbb{S} \bdd{u}|_{1,K} + \mu^{-1} \| \mathbb{Q} \bdd{u}\|_{K} 
		&\leq C \left\{ h_{K}^{-1/2} \|\bdd{u}\|_{\partial K} + |\bdd{u}|_{1/2,\partial K} \right\} \qquad & &\forall \bdd{u} \in \bdd{X}, \\
		\label{eq:stokes approximation}
		h_K^{-1} \| \bdd{u} - \mathbb{S} \bdd{u} \|_K + | \bdd{u} - \mathbb{S} \bdd{u} |_{1,K} &\leq C \| \bdd{\varepsilon}(\bdd{u})\|_K \qquad & &\forall \bdd{u} \in \bdd{X}.
	\end{alignat}
\end{theorem}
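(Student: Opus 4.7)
The plan is to reformulate the three equations \eqref{eq:stokes extension 1}--\eqref{eq:stokes extension 3} as a standard Stokes-type saddle-point problem with homogeneous boundary conditions on each element. Set $\bdd{u}_0 := \bdd{u} - \mathbb{S}\bdd{u}$, which belongs to $\bdd{X}_I(K)$ by \eqref{eq:stokes extension 3}, and write $q := -\mathbb{Q}\bdd{u} \in \dive \bdd{X}_I(K)$. Substituting into \eqref{eq:stokes extension 1}--\eqref{eq:stokes extension 2} produces the saddle-point system $a_K(\bdd{u}_0, \bdd{v}) + (q, \dive \bdd{v})_K = a_K(\bdd{u}, \bdd{v})$ for all $\bdd{v} \in \bdd{X}_I(K)$, and $(r, \dive \bdd{u}_0)_K = (r, \dive \bdd{u})_K$ for all $r \in \dive \bdd{X}_I(K)$.

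Next, I would apply Brezzi's theorem to this system. Coercivity of $a_K$ on $\bdd{X}_I(K)$ follows from Korn's inequality for functions vanishing on $\partial K$, with constant independent of $h_K$ and $p$ on shape-regular elements. The crucial ingredient is an element-wise inf-sup condition for the pair $\bdd{X}_I(K) \times \dive \bdd{X}_I(K)$ with constant independent of $p$; this is the main technical obstacle, but it is established in the prior works \cite{AinCP22SCIP,AinCP21LE} to which the paper appeals. Brezzi then yields $\|\bdd{\varepsilon}(\bdd{u}_0)\|_K + \mu^{-1}\|\mathbb{Q}\bdd{u}\|_K \leq C(\|\bdd{\varepsilon}(\bdd{u})\|_K + \|\dive \bdd{u}\|_K) \leq C\|\bdd{\varepsilon}(\bdd{u})\|_K$, using $\|\dive \bdd{v}\|_K \leq \sqrt{2}\|\bdd{\varepsilon}(\bdd{v})\|_K$. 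The triangle inequality gives the same bound for $\mathbb{S}\bdd{u}$. Because \eqref{eq:stokes extension} depends only on $\bdd{u}|_{\partial K}$ (Remark \ref{rem:stokes extension boundary}), one may replace $\bdd{u}$ on the right with any $\bdd{w} \in \bm{\mathcal{P}}_p(K)$ sharing the same trace, and taking the infimum delivers \eqref{eq:stokes seminorm continuity element}.

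For \eqref{eq:stokes continuity element}, I would invoke a classical $p$-version polynomial extension result (a vector-valued version of \cite[Theorem 7.4]{BCMP91}) guaranteeing a $\bdd{w} \in \bm{\mathcal{P}}_p(K)$ with $\bdd{w}|_{\partial K} = \bdd{u}|_{\partial K}$ and $\|\bdd{w}\|_{1,K} \leq C(h_K^{-1/2}\|\bdd{u}\|_{\partial K} + |\bdd{u}|_{1/2,\partial K})$, then feed this into \eqref{eq:stokes seminorm continuity element} to control $\|\bdd{\varepsilon}(\mathbb{S}\bdd{u})\|_K + \mu^{-1}\|\mathbb{Q}\bdd{u}\|_K$. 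To recover the full $H^1$ bound on $\mathbb{S}\bdd{u}$, I would split $\mathbb{S}\bdd{u} = \bdd{w} + (\mathbb{S}\bdd{u} - \bdd{w})$; the difference lies in $\bdd{X}_I(K)$, so Poincar\'e and the second Korn inequality on zero-trace functions yield $h_K^{-1}\|\mathbb{S}\bdd{u} - \bdd{w}\|_K + |\mathbb{S}\bdd{u} - \bdd{w}|_{1,K} \leq C\|\bdd{\varepsilon}(\mathbb{S}\bdd{u} - \bdd{w})\|_K$, which combines with the previous strain bound and the extension estimate for $\bdd{w}$ to close the argument.

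Finally, \eqref{eq:stokes approximation} is essentially immediate: $\bdd{u} - \mathbb{S}\bdd{u} \in \bdd{X}_I(K)$ by \eqref{eq:stokes extension 3}, so Poincar\'e and Korn for zero-trace functions give $h_K^{-1}\|\bdd{u} - \mathbb{S}\bdd{u}\|_K + |\bdd{u} - \mathbb{S}\bdd{u}|_{1,K} \leq C\|\bdd{\varepsilon}(\bdd{u} - \mathbb{S}\bdd{u})\|_K$, and the triangle inequality together with \eqref{eq:stokes seminorm continuity element} applied with $\bdd{w} = \bdd{u}$ bounds this by $C\|\bdd{\varepsilon}(\bdd{u})\|_K$. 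The only genuinely new ingredient beyond standard Brezzi/Korn machinery is the $p$-robust inf-sup condition on the interior bubble space, which is the single difficulty the proof must import from elsewhere.
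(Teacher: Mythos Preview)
Your argument is correct and, for \eqref{eq:stokes seminorm continuity element} and \eqref{eq:stokes continuity element}, essentially coincides with the paper's: the paper also tests \eqref{eq:stokes extension 1} with $\mathbb{S}\bdd{u}-\bdd{w}$, uses Cauchy--Schwarz, and invokes the $p$-robust interior inf-sup (their reference is \cite[Theorem~3.3]{AinCP19StokesIII}) to control $\|\mathbb{Q}\bdd{u}\|_K$ by $\mu\|\bdd{\varepsilon}(\mathbb{S}\bdd{u})\|_K$; your packaging of the same two ingredients as an application of Brezzi's theorem is equivalent. For \eqref{eq:stokes continuity element} both you and the paper use the polynomial extension of \cite[Theorem~7.4]{BCMP91}; the paper gets the lower-order terms via a Korn-type inequality with a boundary term (their \eqref{eq:proof:korn inequality boundary}), which amounts to the same splitting you describe.

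Where you differ is \eqref{eq:stokes approximation}. The paper proves it by combining \eqref{eq:stokes continuity element} with the trace theorem and then subtracting a rigid motion (using $\mathbb{S}\bdd{r}=\bdd{r}$) and the Korn inequality \eqref{eq:h1 korn inequality}. Your route---observe that $\bdd{u}-\mathbb{S}\bdd{u}\in\bdd{X}_I(K)$, apply Korn/Poincar\'e for zero-trace functions, then bound $\|\bdd{\varepsilon}(\bdd{u}-\mathbb{S}\bdd{u})\|_K$ by the triangle inequality and \eqref{eq:stokes seminorm continuity element} with $\bdd{w}=\bdd{u}$---is more direct and avoids the detour through boundary norms and rigid-body quotients. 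Both are valid; yours is shorter.
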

\begin{proof}
	Let $K \in \mathcal{T}$. For $\bdd{u} \in \bdd{X}$, let $\bdd{w} \in \bm{\mathcal{P}}_p(K)$ be any polynomial with $\bdd{w}|_{\partial K} = \bdd{u}|_{\partial K}$. Choosing $\bdd{v} = \mathbb{S} \bdd{u} - \bdd{w}$ in \cref{eq:stokes extension 1} gives
	\begin{align*}
		2\mu \| \bdd{\varepsilon}(\mathbb{S} \bdd{u})\|_{K}^2 = a_K(\mathbb{S} \bdd{u}, \mathbb{S} \bdd{u}) = a_K(\mathbb{S} \bdd{u}, \bdd{w}) - (\mathbb{Q} \bdd{u}, \dive \bdd{w})_K,
	\end{align*}
	which, on applying the Cauchy-Schwarz inequality, gives
	\begin{align*}
		2\mu \| \bdd{\varepsilon}(\mathbb{S} \bdd{u})\|_{K}^2 \leq  \left( 2\mu \| \bdd{\varepsilon}(\mathbb{S} \bdd{u})\|_{K} + \sqrt{2} \| \mathbb{Q} \bdd{u}\|_K \right) \| \bdd{\varepsilon}(\bdd{w})\|_{K}.
	\end{align*}
	Thanks to \cite[Theorem 3.3]{AinCP19StokesIII}, there exists $\bdd{v} \in \bdd{X}_I(K)$ satisfying $\dive \bdd{v} = \mathbb{Q} \bdd{u}$ and $|\bdd{v}|_{1,K} \leq C \|\mathbb{Q} \bdd{u}\|_K$. As a result, there holds
	\begin{align*}
		C^{-1} \|\mathbb{Q} \bdd{u}\|_{K} \leq \frac{(\mathbb{Q} \bdd{u}, \dive \bdd{v})}{|\bdd{v}|_{1,K}} = \frac{a_K(\mathbb{S} \bdd{u}, \bdd{v})}{|\bdd{v}|_{1,K}} \leq 2\mu \| \bdd{\varepsilon}(\mathbb{S} \bdd{u})\|_{K},
	\end{align*}
	where we used \cref{eq:stokes extension 1} and that $\|\bdd{\varepsilon}(\bdd{v})\|_{K} \leq |\bdd{v}|_{1,K}$. Collecting results, we have
	\begin{align}
		\label{eq:proof:seminorm est}
		\| \bdd{\varepsilon}(\mathbb{S} \bdd{u})\|_{K} + \mu^{-1} \| \mathbb{Q} \bdd{u}\|_{K} \leq C\| \bdd{\varepsilon}(\bdd{w})\|_{K},
	\end{align}
	which completes the proof of \cref{eq:stokes seminorm continuity element}. 
	
	Thanks to the relation 	
	\begin{align}
		\label{eq:proof:korn inequality boundary}
		h_K^{-1} \|\bdd{v}\|_{K} + |\bdd{v}|_{1,K} \leq C \left( h_K^{-1/2} \|\bdd{v}\|_{\partial K} +	\| \bdd{\varepsilon}(\bdd{v})\|_{K} \right) \qquad \forall \bdd{v} \in \bdd{H}^1(K),
	\end{align}	
	which follows from a standard scaling argument using Korn's inequality (see e.g \cite[Theorem 11.2.6]{Brenner08}) and the compactness of the embedding $\bdd{H}^1(K) \hookrightarrow \bdd{L}^2(K)$, we obtain
	\begin{align*}
		h_K^{-1} \| \mathbb{S} \bdd{u}\|_{K} + | \mathbb{S} \bdd{u}|_{1,K} + \mu^{-1} \| \mathbb{Q} \bdd{u}\|_{K}  \leq C \left( h_K^{-1/2} \|\bdd{u}\|_{\partial K} + \|\bdd{\varepsilon}(\bdd{w})\|_{K} \right).
	\end{align*}
	By \cite[Theorem 7.4]{BCMP91} and a standard scaling argument, $\bdd{w}$ may be chosen so that
	\begin{align*}
		%\label{eq:proof:w properties}
		h_K^{-1} \|\bdd{w}\|_{K} + |\bdd{w}|_{1, K} \leq C \left\{ h_K^{-1/2} \|\bdd{u}\|_{ \partial K} + |\bdd{u}|_{1/2, \partial K} \right\},
	\end{align*}
	which completes the proof of \cref{eq:stokes continuity element}.
	
	Finally, we prove \cref{eq:stokes approximation} using \cref{eq:stokes continuity element} and the trace theorem to obtain
	\begin{align*}
		\sum_{j=0}^{1} h_K^{j-1} |\mathbb{S} \bdd{u} |_{j,K} &\leq C \left\{ h_{K}^{-1/2} \|\bdd{u}\|_{\partial K} + |\bdd{u}|_{1/2,\partial K} \right\}  \leq C  \sum_{j=0}^{1} h_K^{j-1} | \bdd{u}|_{j,K},
	\end{align*}
	and hence
	\begin{align*}
		\sum_{j=0}^{1} h_K^{j-1} |\bdd{u} - \mathbb{S} \bdd{u} |_{j,K} \leq C  \sum_{j=0}^{1} h_K^{j-1} | \bdd{u}|_{j,K}.
	\end{align*}
	As mentioned above, $\mathbb{S} \bdd{r} = \bdd{r}$ for $\bdd{r} \in \bdd{RM}$, and so
	\begin{align*}
		\sum_{j=0}^{1} h_K^{j-1} | \bdd{u} - \mathbb{S} \bdd{u} |_{j,K} \leq C  \inf_{\bdd{r} \in \bdd{RM}} \sum_{j=0}^{1} h_K^{j-1} | \bdd{u} - \bdd{r}|_{j,K} \leq C  \| \bdd{\varepsilon}(\bdd{u})\|_K,
	\end{align*}
	where we used \cref{eq:h1 korn inequality} in the last step. 
\end{proof}

\Cref{rem:stokes extension boundary} shows that the Stokes extension $\mathbb{S} \bdd{u}$ depends only on the values of $\bdd{u}$ on element boundaries, while \cref{eq:stokes seminorm continuity element} means that the operator $\mathbb{S}$ is stable in strain. In a similar vein, the operator $\mathbb{T}_B \bdd{u}$ defined in \cref{sec:constructing asm} also only depends on $\bdd{u}$ on element boundaries, and condition \cref{eq:tau condition} states that $\mathbb{T}_B$ is also stable in strain. This suggests that the strain of $\mathbb{S} \bdd{u}$ and $\mathbb{T}_B \bdd{u}$ should be comparable. The following lemma shows more, namely, that the \textit{energies} of $\mathbb{S} \bdd{u}$ and $\mathbb{T}_B \bdd{u}$ are comparable:
\begin{lemma}
	\label{lem:eq:t and s equivalences}
	For all $\bdd{v} \in \bdd{X}_D$, there holds
	\begin{align}
		\label{eq:t and s equivalences}
		C a_{\lambda}(\mathbb{S} \bdd{v}, \mathbb{S} \bdd{v})  \leq a_{\lambda}(\mathbb{T}_B \bdd{v}, \mathbb{T}_B \bdd{v}) \leq (1+\tau_B^2) a_{\lambda}(\mathbb{S} \bdd{v}, \mathbb{S} \bdd{v}),
	\end{align}
	where $\mathbb{T}_B : \bdd{X}_D \to \bdd{X}_B$ is the operator defined in \cref{sec:constructing asm} and $\tau_B$ is defined in \cref{eq:tau condition}.
\end{lemma}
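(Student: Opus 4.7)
The plan is to exploit the fact that $\mathbb{S}\bdd{v}$ and $\mathbb{T}_B\bdd{v}$ share the same element-boundary trace, namely $\bdd{v}|_{\partial K}$ on each $K \in \mathcal{T}$. Because the interior summand in $\bdd{X}_D = \bdd{X}_I \oplus \bdd{X}_B$ vanishes on element boundaries, the difference $\mathbb{S}\bdd{v} - \mathbb{T}_B\bdd{v}$ lies in $\bdd{X}_I$, so by uniqueness of the direct sum one obtains the identity $\mathbb{T}_B(\mathbb{S}\bdd{v}) = \mathbb{T}_B\bdd{v}$. Note that $\mathbb{S}\bdd{v} \in \bdd{X}_D$ since $\mathbb{S}$ preserves the trace of $\bdd{v}$ and hence its homogeneous Dirichlet condition. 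Applying \cref{eq:tau condition} with test function $\mathbb{S}\bdd{v}$ in place of $\bdd{v}$ therefore yields
\begin{equation*}
\|\bdd{\varepsilon}(\mathbb{T}_B\bdd{v})\|^2 + \frac{\lambda}{\mu}\|\Pi_I \dive \mathbb{T}_B\bdd{v}\|^2 \leq \tau_B^2 \|\bdd{\varepsilon}(\mathbb{S}\bdd{v})\|^2.
\end{equation*}

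The next step is to extract a Pythagorean identity for the divergence. The mixed condition \cref{eq:stokes extension 2} says that $\dive \mathbb{S}\bdd{v}$ is $L^2(K)$-orthogonal to $\dive \bdd{X}_I(K)$ for every element, which globally gives $\Pi_I \dive \mathbb{S}\bdd{v} = 0$. Combined with $\mathbb{T}_B\bdd{v} - \mathbb{S}\bdd{v} \in \bdd{X}_I$, which implies $\dive(\mathbb{T}_B\bdd{v} - \mathbb{S}\bdd{v}) \in \dive \bdd{X}_I$, this produces the $L^2$-orthogonal splitting $\dive \mathbb{T}_B\bdd{v} = \dive \mathbb{S}\bdd{v} + \Pi_I \dive \mathbb{T}_B\bdd{v}$, whence
\begin{equation*}
\|\dive \mathbb{T}_B\bdd{v}\|^2 = \|\dive \mathbb{S}\bdd{v}\|^2 + \|\Pi_I \dive \mathbb{T}_B\bdd{v}\|^2.
\end{equation*}

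For the upper bound, I would substitute this identity into $a_\lambda(\mathbb{T}_B\bdd{v},\mathbb{T}_B\bdd{v}) = 2\mu\|\bdd{\varepsilon}(\mathbb{T}_B\bdd{v})\|^2 + \lambda\|\dive \mathbb{T}_B\bdd{v}\|^2$, multiply the first inequality above by $2\mu$ (so that the resulting RHS is dominated by $\tau_B^2 a_\lambda(\mathbb{S}\bdd{v},\mathbb{S}\bdd{v})$), and then absorb the remaining $\lambda\|\dive\mathbb{S}\bdd{v}\|^2 \leq a_\lambda(\mathbb{S}\bdd{v},\mathbb{S}\bdd{v})$ to produce the factor $1 + \tau_B^2$. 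The lower bound is the easy direction: applying the minimum-strain estimate \cref{eq:stokes seminorm continuity element} on each $K$ with the competing polynomial extension $\bdd{w} := \mathbb{T}_B\bdd{v}|_K$ gives $\|\bdd{\varepsilon}(\mathbb{S}\bdd{v})\|_K \leq C\|\bdd{\varepsilon}(\mathbb{T}_B\bdd{v})\|_K$, while the Pythagorean identity immediately forces $\|\dive\mathbb{S}\bdd{v}\| \leq \|\dive\mathbb{T}_B\bdd{v}\|$; summing over elements and combining with the obvious bounds $2\mu\|\bdd{\varepsilon}(\cdot)\|^2, \lambda\|\dive\cdot\|^2 \leq a_\lambda(\cdot,\cdot)$ delivers the left-hand inequality.

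The only genuinely delicate step is recognizing that \cref{eq:stokes extension 2} provides the $L^2$-orthogonality $\Pi_I \dive \mathbb{S}\bdd{v} = 0$ that upgrades the divergence decomposition to a Pythagorean \emph{identity} (rather than an inequality); once this is in hand, the result follows by linear-algebraic bookkeeping together with the identification $\mathbb{T}_B(\mathbb{S}\bdd{v}) = \mathbb{T}_B\bdd{v}$ and the already-established properties of $\mathbb{S}$ from \cref{thm:stokes continuity element}.
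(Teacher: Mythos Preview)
Your proposal is correct and follows essentially the same approach as the paper: both arguments hinge on the Pythagorean splitting $\|\dive \mathbb{T}_B\bdd{v}\|^2 = \|\dive \mathbb{S}\bdd{v}\|^2 + \|\Pi_I \dive \mathbb{T}_B\bdd{v}\|^2$ obtained from $\Pi_I \dive \mathbb{S}\bdd{v} = 0$ and $\mathbb{T}_B\bdd{v} - \mathbb{S}\bdd{v} \in \bdd{X}_I$, then combine it with \cref{eq:tau condition} (applied, as you do, via $\mathbb{T}_B(\mathbb{S}\bdd{v}) = \mathbb{T}_B\bdd{v}$, which is equivalent to the paper's infimum formulation) for the upper bound and with \cref{eq:stokes seminorm continuity element} taking $\bdd{w} = \mathbb{T}_B\bdd{v}$ for the lower bound.
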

\begin{proof}
	Let $\bdd{v} \in \bdd{X}_D$. Since $\mathbb{T}_B \bdd{v} = \bdd{v}$ on $\partial K$ for all $K \in \mathcal{T}$, we have
	\begin{align}
		\label{eq:proof:tb inf bound}
		\| \bdd{\varepsilon}(\mathbb{T}_B \bdd{v}) \|^2 + \lambda\mu^{-1} \| \Pi_I \dive \mathbb{T}_B \bdd{v} \|^2 \leq \tau_B^2 \inf_{ \substack{\bdd{w} \in \bdd{X}_D \\ \bdd{w}|_{\partial K} = \bdd{v}|_{\partial K} \ \forall K \in \mathcal{T}}}  \| \bdd{\varepsilon}(\bdd{w})\|^2
	\end{align}
	and
	\begin{align}
		\label{eq:piiperp def}
		\Pi_I^{\perp} \dive \mathbb{T}_B \bdd{v} =  	\Pi_I^{\perp} \dive \bdd{v} + \Pi_I^{\perp} \underbrace{\dive (\mathbb{T}_B \bdd{v} - \bdd{v})}_{\in \dive \bdd{X}_I} = \Pi_I^{\perp} \dive \bdd{v}, \quad \text{where } \Pi_I^{\perp} := I - \Pi_I.
	\end{align}
	The same argument with $\mathbb{T}_B = \mathbb{S}$ shows that $\Pi_I^{\perp} \dive \mathbb{S} \bdd{v} = \Pi_I^{\perp} \dive \bdd{v}$, while \cref{eq:stokes extension 2} gives $\Pi_I \dive \mathbb{S} \bdd{u} = 0$. Consequently,
	\begin{align}
		\label{eq:div stokes ext id}
		\dive \mathbb{S} \bdd{v} = \Pi_I^{\perp} \dive \bdd{v},	
	\end{align}
	and so
	\begin{align*}
		a_{\lambda}(\mathbb{T}_B \bdd{v}, \mathbb{T}_B \bdd{v}) &= 2\mu \| \bdd{\varepsilon}(\mathbb{T}_B \bdd{v}) \|^2 + \lambda \| \Pi_I \mathbb{T}_B \dive \bdd{v} \|^2 + \lambda \| \Pi_I^{\perp} \dive \bdd{v} \|^2 \\
		&= 2\mu \| \bdd{\varepsilon}(\mathbb{T}_B \bdd{v}) \|^2 + \lambda \| \Pi_I \mathbb{T}_B \dive \bdd{v} \|^2 + \lambda \| \dive \mathbb{S} \bdd{v} \|^2. 
	\end{align*}
	Choosing $\bdd{w} = \mathbb{S} \bdd{v}$ on the RHS of \cref{eq:proof:tb inf bound} give the rightmost inequality in \cref{eq:t and s equivalences}. Choosing $\bdd{w} = \mathbb{T}_B \bdd{v}$ in \cref{eq:stokes seminorm continuity element}, summing over the elements, and using the above relation gives the leftmost inequality in \cref{eq:t and s equivalences}.
\end{proof}

Property \cref{eq:t and s equivalences} of the operator $\mathbb{S}$ will play a central role in the stability properties of the subspace decomposition \cref{eq:tilde subspace decomp}. The next result relates $\mathbb{S}$ to the space $\tilde{\bdd{X}}_B$ in a similar way that $\mathbb{H}$ was related to $\bdd{X}_B$ in \cref{eq:static cond choice}.
\begin{lemma}
	\label{rem:tilde tau}
	There holds $\tilde{\bdd{X}}_B = \mathbb{S} \bdd{X}_D$. Additionally, $\bdd{X}_D = \bdd{X}_I \oplus \tilde{\bdd{X}}_B$ and the operator $\mathbb{S}$ satisfies \cref{eq:tau condition} with $\tau_B$ independent of $\mu$, $\lambda$, $h$, and $p$.
\end{lemma}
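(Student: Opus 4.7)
The plan is to establish the three claims in order, leveraging the properties of $\mathbb{S}$ already proved in \cref{thm:stokes continuity element} and the defining relations \cref{eq:stokes extension}. The key observation is that $\tilde{\bdd{X}}_B$ is exactly characterized, element-by-element, as the set of $\bdd{v} \in \bdd{X}_D$ that can appear as the velocity component of a local discrete Stokes problem with prescribed boundary data.

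First I would verify the inclusion $\mathbb{S} \bdd{X}_D \subseteq \tilde{\bdd{X}}_B$. For $\bdd{v} \in \bdd{X}_D$, the function $\mathbb{S} \bdd{v}$ lies in $\bdd{X}_D$ since, by \cref{eq:stokes extension 3}, it agrees with $\bdd{v}$ on $\bigcup_K \partial K$, ensuring both continuity across element edges and vanishing on $\Gamma_D$. Summing \cref{eq:stokes extension 2} over $K \in \mathcal{T}$ shows $(\dive \mathbb{S} \bdd{v}, \dive \bdd{w}) = 0$ for every $\bdd{w} \in \bdd{X}_I$. For test functions $\bdd{z} \in \bdd{X}_I$ with $\dive \bdd{z} \equiv 0$, summing \cref{eq:stokes extension 1} gives $a(\mathbb{S} \bdd{v}, \bdd{z}) = (\mathbb{Q} \bdd{v}, \dive \bdd{z}) = 0$, and since $\dive \bdd{z} = 0$ this equals $a_{\lambda}(\mathbb{S} \bdd{v}, \bdd{z})$. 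Hence $\mathbb{S} \bdd{v} \in \tilde{\bdd{X}}_B$.

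The reverse inclusion $\tilde{\bdd{X}}_B \subseteq \mathbb{S} \bdd{X}_D$ is the main obstacle and I would handle it locally. Given $\bdd{v} \in \tilde{\bdd{X}}_B$ and $K \in \mathcal{T}$, the orthogonality $(\dive \bdd{v}, \dive \bdd{w}) = 0$ tested against $\bdd{w} \in \bdd{X}_I(K) \subset \bdd{X}_I$ yields \cref{eq:stokes extension 2} for $\bdd{v}|_K$. For \cref{eq:stokes extension 1}, the defining property of $\tilde{\bdd{X}}_B$ shows that the linear functional $\bdd{z} \mapsto a_K(\bdd{v}, \bdd{z})$ vanishes on the kernel of $\dive : \bdd{X}_I(K) \to \dive \bdd{X}_I(K)$ (since such $\bdd{z}$ extended by zero lies in $\bdd{X}_I$ with $\dive \bdd{z} = 0$, so $a_\lambda(\bdd{v},\bdd{z}) = a_K(\bdd{v},\bdd{z}) = 0$). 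Since $\dive$ is surjective onto $\dive \bdd{X}_I(K)$, the quotient/Riesz argument produces $q \in \dive \bdd{X}_I(K)$ with $a_K(\bdd{v}, \bdd{z}) = (q, \dive \bdd{z})_K$ for every $\bdd{z} \in \bdd{X}_I(K)$. Thus $(\bdd{v}|_K, q)$ solves the same local Stokes system as $(\mathbb{S} \bdd{v}|_K, \mathbb{Q} \bdd{v}|_K)$, with identical boundary trace $\bdd{v}|_{\partial K}$; uniqueness of the velocity component (only the trivial rigid body motion vanishes on the full boundary of a triangle) forces $\bdd{v}|_K = \mathbb{S} \bdd{v}|_K$. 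Varying $K$ gives $\bdd{v} = \mathbb{S} \bdd{v}$.

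For the direct sum decomposition, I would use the identity $\bdd{v} = (\bdd{v} - \mathbb{S} \bdd{v}) + \mathbb{S} \bdd{v}$: the first summand vanishes on every $\partial K$ by \cref{eq:stokes extension 3} and hence lies in $\bdd{X}_I$, while the second lies in $\tilde{\bdd{X}}_B$. Directness follows because any element of $\bdd{X}_I \cap \tilde{\bdd{X}}_B$ has zero boundary trace on each element, so by \cref{rem:stokes extension boundary} its Stokes extension is zero; combined with $\bdd{v} = \mathbb{S} \bdd{v}$ from the previous paragraph, this forces $\bdd{v} = \bdd{0}$. Finally, for \cref{eq:tau condition} with $\mathbb{T}_B = \mathbb{S}$, the second term vanishes identically: \cref{eq:stokes extension 2} exactly asserts $\Pi_I \dive \mathbb{S} \bdd{v} = 0$. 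For the strain term, I would apply \cref{eq:stokes seminorm continuity element} element-by-element with the admissible choice $\bdd{w} = \bdd{v}|_K$ and sum over $K \in \mathcal{T}$, obtaining $\|\bdd{\varepsilon}(\mathbb{S} \bdd{v})\|^2 \le C \|\bdd{\varepsilon}(\bdd{v})\|^2$ with $C$ independent of $\mu, \lambda, h, p$, which yields \cref{eq:tau condition} with $\tau_B = C$.
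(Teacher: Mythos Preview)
Your proof is correct and proceeds along the same lines as the paper's: the direct-sum decomposition via $\bdd{v} = (\bdd{v} - \mathbb{S}\bdd{v}) + \mathbb{S}\bdd{v}$ and the verification of \cref{eq:tau condition} from $\Pi_I \dive \mathbb{S}\bdd{v} = 0$ together with \cref{eq:stokes seminorm continuity element} are exactly what the paper does. The one difference is that the paper outsources the identity $\tilde{\bdd{X}}_B = \mathbb{S}\bdd{X}_D$ to \cite[Lemma~6.2]{AinCP22SCIP}, whereas you supply a self-contained argument (the quotient/Riesz construction of the local pressure $q$ and uniqueness of the discrete Stokes velocity), which is precisely the content of that cited lemma.
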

\begin{proof}
	The identity $\tilde{\bdd{X}}_B = \mathbb{S} \bdd{X}_D$ follows from \cite[Lemma 6.2]{AinCP22SCIP}. Moreover, for $\bdd{v} \in \bdd{X}_D$, $\bdd{v} = \mathbb{S} \bdd{v} + \bdd{v}_I$, where $\bdd{v}_I := \bdd{v} - \mathbb{S} \bdd{v}$ satisfies $\bdd{v} \in \bdd{X}_I$ by \cref{eq:stokes extension 3}. As shown in the proof of \cref{lem:eq:t and s equivalences}, $\Pi_I \dive \mathbb{S} \bdd{u} = 0$, and \cref{eq:tau condition} follows by squaring \cref{eq:stokes seminorm continuity element} and summing over the elements.
\end{proof}
\Cref{rem:tilde tau} shows that the choice of boundary space $\tilde{\bdd{X}}_B$ for the SCIP method is simply the image of $\bdd{X}_D$ under the Stokes extension operator $\mathbb{S}$. By way of contrast, static condensation corresponds to choosing the boundary space $\bdd{X}_B$ to be the image of $\bdd{X}_D$ under the extension operator $\mathbb{H}$ as in \cref{eq:static cond choice}. In the case of static condensation, the boundary space was decomposed further as in \cref{eq:tilde subspace decomp}. By the same token, we further decompose the SCIP space $\tilde{\bdd{X}}_B$ into subspaces
\begin{align}
	\label{eq:tilde subspace decomp 2}
	\tilde{\bdd{X}}_B = \tilde{\bdd{X}}_C + \sum_{ \bdd{a} \in \mathcal{V}} \tilde{\bdd{X}}_{\bdd{a}},
\end{align}
where
\begin{align*}
	\tilde{\bdd{X}}_C := \{ \bdd{v} \in \tilde{\bdd{X}}_B : \bdd{v} |_{\gamma} \in \bm{\mathcal{P}}_4(\gamma) \ \forall \gamma \in \mathcal{E} \} \quad \text{and} \quad \tilde{\bdd{X}}_{\bdd{a}} :=  \{ \bdd{v} \in \tilde{\bdd{X}}_B : \supp \bdd{v} \subseteq \mathcal{T}_{\bdd{a}} \}.
\end{align*} 
The next section is concerned with proving that the decomposition \cref{eq:tilde subspace decomp 2} is stable in the following sense:
\begin{theorem}
	\label{thm:h1 stable decomp 2}
	For every $\bdd{u} \in \tilde{\bdd{X}}_B$, there exist $\bdd{u}_C \in \tilde{\bdd{X}}_C$ and $\bdd{u}_{\bdd{a}} \in \tilde{\bdd{X}}_{\bdd{a}}$, $\bdd{a} \in \mathcal{V}$, such that
	\begin{align}
		\label{eq:h1 stable decomp decomp 2}
		\bdd{u} =  \bdd{u}_C + \sum_{ \bdd{a} \in \mathcal{V}} \bdd{u}_{\bdd{a}} \quad \text{and} \quad \|\bdd{\varepsilon}(\bdd{u}_C) \|^2  + \sum_{ \bdd{a} \in \mathcal{V}} \| \bdd{\varepsilon}(\bdd{u}_{\bdd{a}}) \|^2 
		\leq C \|\bdd{\varepsilon}( \bdd{u})\|^2.
	\end{align}
\end{theorem}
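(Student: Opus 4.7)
Since $\tilde{\bdd{X}}_B = \mathbb{S}\bdd{X}_D$ by \cref{rem:tilde tau}, every $\bdd{u} \in \tilde{\bdd{X}}_B$ satisfies $\bdd{u} = \mathbb{S}\bdd{u}$, and \cref{eq:stokes seminorm continuity element} tells us that the strain of $\bdd{u}$ on $K$ is comparable to the minimum strain over polynomial extensions of $\bdd{u}|_{\partial K}$. The plan is therefore to reduce \cref{eq:h1 stable decomp decomp 2} to a stable decomposition of the skeleton trace of $\bdd{u}$, construct each summand as the Stokes extension of a piece of that trace, and then transfer stability back to $\Omega$ via \cref{thm:stokes continuity element} applied piece by piece.

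\textbf{Coarse component.} Fix a $p$-uniform, edgewise $H^{1/2}$-stable interpolant $\Pi_4$ on $\bigcup_{K \in \mathcal{T}} \partial K$ which preserves vertex values and produces a quartic on each edge (for example the edgewise $L^2(\gamma)$-projection onto $\bm{\mathcal{P}}_4(\gamma)$ corrected to match vertex values). Set $\bdd{u}_C := \mathbb{S}(\Pi_4 \bdd{u})$; this lies in $\tilde{\bdd{X}}_C$ because the skeleton trace of $\Pi_4\bdd{u}$ is quartic on every $\gamma \in \mathcal{E}$. The residual $\bdd{w} := \bdd{u} - \bdd{u}_C = \mathbb{S}(\bdd{u} - \Pi_4 \bdd{u})$ is again a Stokes extension, and because $\Pi_4$ preserves vertex values, the trace $\bdd{w}|_{\gamma}$ vanishes at both endpoints of every edge $\gamma$.

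\textbf{Vertex-patch components.} For each edge $\gamma \in \mathcal{E}$ with endpoints $\bdd{a}_1, \bdd{a}_2$, split the edge-bubble trace as $\bdd{w}|_{\gamma} = \bdd{w}_{\bdd{a}_1,\gamma} + \bdd{w}_{\bdd{a}_2,\gamma}$ with each $\bdd{w}_{\bdd{a}_i,\gamma} \in \bm{\mathcal{P}}_p(\gamma)$ vanishing at both endpoints, using the polynomial-preserving vertex localization of \cite{Lee07,Lee09,Wu2014}, which is $H^{1/2}_{00}(\gamma)$-stable with constants independent of $p$ and $h$. For each $\bdd{a} \in \mathcal{V}$, define $\bdd{g}_{\bdd{a}}$ on the skeleton by $\bdd{w}_{\bdd{a},\gamma}$ on every edge $\gamma$ incident to $\bdd{a}$ and by $\bdd{0}$ on all other edges. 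Each $\bdd{g}_{\bdd{a}}$ is continuous (both summands vanish at $\bdd{a}$ itself since they are edge bubbles, and $\bdd{g}_{\bdd{a}}$ is identically zero outside $\mathcal{T}_{\bdd{a}}$), so $\bdd{g}_{\bdd{a}} \in \Tr\bdd{X}$ by \cref{rem:stokes extension boundary}, and $\bdd{u}_{\bdd{a}} := \mathbb{S}\bdd{g}_{\bdd{a}}$ is supported on $\mathcal{T}_{\bdd{a}}$, hence in $\tilde{\bdd{X}}_{\bdd{a}}$. The identity $\bdd{u} = \bdd{u}_C + \sum_{\bdd{a}\in\mathcal{V}} \bdd{u}_{\bdd{a}}$ holds on every element boundary by construction, and therefore on all of $\Omega$ by linearity of $\mathbb{S}$ and boundary-determination (\cref{rem:stokes extension boundary}).

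\textbf{Stability and main obstacle.} For the quantitative bound, I would apply \cref{eq:stokes seminorm continuity element} element by element to dominate $\|\bdd{\varepsilon}(\bdd{u}_C)\|_K$ and $\|\bdd{\varepsilon}(\bdd{u}_{\bdd{a}})\|_K$ by the strain of the minimum-energy polynomial extension of the respective boundary data, then sum using the $H^{1/2}$-stability of $\Pi_4$, the edge-level stability of the vertex splitting, and the finite overlap of the patches $\{\mathcal{T}_{\bdd{a}}\}$. A final application of \cref{eq:stokes seminorm continuity element} bounds the total by $C\|\bdd{\varepsilon}(\mathbb{S}\bdd{u})\|^2 = C\|\bdd{\varepsilon}(\bdd{u})\|^2$. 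The principal difficulty is the edge-bubble splitting in the preceding paragraph: multiplication by an affine partition of unity raises the polynomial degree and is not $H^{1/2}_{00}$-stable uniformly in $p$, so the uniform constants must come from the polynomial-preserving localization lemmas of \cite{Lee07,Lee09,Wu2014}, possibly requiring a componentwise extension to the vector-valued elasticity setting. Once that trace-level bound is in place, the rest of the argument is essentially a bookkeeping exercise built on \cref{thm:stokes continuity element}.
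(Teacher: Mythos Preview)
Your overall strategy---work on the skeleton and push back to $\Omega$ via the Stokes extension $\mathbb{S}$---is exactly what the paper does, and you correctly identify the $p$-uniform $H^{1/2}_{00}$ localization as the crux. However, there is a genuine gap in your coarse step.

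You require $\Pi_4$ to be an \emph{edgewise quartic} interpolant that simultaneously (i) preserves vertex values and (ii) is $H^{1/2}$-stable on each edge with constants independent of $p$. No such operator exists: pointwise evaluation at an endpoint is not $p$-uniformly bounded on $\mathcal{P}_p(\gamma)$ in the $H^{1/2}$ norm (take $\bdd{u}=\sum_{k\le p}P_k$ on $[-1,1]$: then $\bdd{u}(1)=p+1$ while $\|\bdd{u}\|_{1/2,I}\sim\sqrt{p}$). Consequently the specific example you give (edgewise $L^2$-projection plus vertex correction) fails, and so does any alternative satisfying (i) and (ii). Without (ii) the chain you outline for $\|\bdd{\varepsilon}(\bdd{u}_C)\|$ breaks, and without (i) the residual $\bdd{w}$ is not an edge bubble, so the rest of your construction does not apply.

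The paper avoids this obstruction by separating the two roles. The coarse component is $\mathcal{I}_C:=\mathbb{S}\,\mathcal{I}_{SZ}$ with $\mathcal{I}_{SZ}$ the piecewise-linear Scott--Zhang interpolant (\cref{lem:coarse comp 2}); this does \emph{not} preserve vertex values but \emph{is} strain-stable on patches because it reproduces rigid motions, which combines with Korn to give the needed bound. Vertex values are then captured by a separate degree-$p$ vertex operator $\mathcal{I}_{\bdd{a}}$ (\cref{lem:c0 vertex comp aux}) built from a Hardy-type averaging operator $\mathcal{B}_R$ (\cref{lem:hardy00 properties}) satisfying the key $p$-uniform estimate
\[
|\xi_{\bdd{a}}\bdd{u}-\mathcal{I}_{\bdd{a}}\bdd{u}|_{\bdd{H}^{1/2}_{00}(\gamma)}\leq C\{\,|\gamma|^{-1/2}\|\bdd{u}\|_{\gamma}+|\bdd{u}|_{1/2,\gamma}\,\}.
\]
Only after this are the remaining edge pieces $z_\gamma$ folded into vertex patches by the trivial rule $y_{\bdd{a}}:=z_{\bdd{a}}+\tfrac12\sum_{\gamma\in\mathcal{E}_{\bdd{a}}}z_\gamma$---no further ``edge-bubble splitting'' is needed, since each edge already lies in two vertex patches. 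The references you cite for that splitting do not provide a polynomial-preserving, $p$-uniform $H^{1/2}_{00}$ localization in the form you need; the paper constructs it from scratch in Appendix~E.
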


\section{Stable Decomposition with Respect to Strain}
\label{sec:h1 stable decomp 2}

Each edge $\gamma \in \mathcal{E}$ is assigned an arbitrary but fixed orientation and let $\unitvec{t}_{\gamma}$ and $\unitvec{n}_{\gamma}$ denote the unit tangent and normal vectors on $\gamma$; when $\gamma$ lies on the domain boundary $\Gamma$, $\unitvec{n}_{\gamma}$ coincides with the outward unit normal to $\Gamma$. Given an element $K \in \mathcal{T}$, let $\mathcal{V}_K$, $\mathcal{E}_K$, and $\mathcal{T}_K$ denote the vertices of, edges of, and elements abutting $K$.  Given an edge $\gamma \in \mathcal{E}$, let $\mathcal{T}_{\gamma}$  be the set of elements sharing the edge $\gamma$. 

\begin{lemma}
	\label{lem:coarse comp 2}
	There exists a linear operator $\mathcal{I}_C : \bdd{X}_D \to \tilde{\bdd{X}}_C$ 
	satisfying
	\begin{align}
		\label{eq:coarse continuity 2}
		\|\bdd{\varepsilon}(\mathcal{I}_C \bdd{u})\|_{K} + h_K^{-1} \| \bdd{u} - \mathcal{I}_C \bdd{u}\|_{K} + | \bdd{u} - \mathcal{I}_C \bdd{u}|_{1,K} \leq C \|\bdd{\varepsilon}(\bdd{u})\|_{\mathcal{T}_K} \quad \forall K \in \mathcal{T}, \forall \bdd{u} \in \bdd{X}_D.
	\end{align}
\end{lemma}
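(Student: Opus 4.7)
The plan is to define $\mathcal{I}_C$ as the composition of a Scott--Zhang-type quasi-interpolant onto the lowest-order ($p=4$) continuous piecewise polynomial subspace with the discrete Stokes extension $\mathbb{S}$. Set $\bdd{X}_{4,D} := \{\bdd{v} \in \bdd{X}_D : \bdd{v}|_K \in \bm{\mathcal{P}}_4(K) \ \forall K \in \mathcal{T}\}$ and let $\mathcal{I}_4 : \bdd{X}_D \to \bdd{X}_{4,D}$ be a Scott--Zhang-type interpolant that preserves the Dirichlet boundary condition, reproduces rigid-body motions on each averaging patch, and satisfies the Korn-type bound
\[
\|\bdd{\varepsilon}(\mathcal{I}_4 \bdd{u})\|_K + h_K^{-1}\|\bdd{u} - \mathcal{I}_4 \bdd{u}\|_K + |\bdd{u} - \mathcal{I}_4 \bdd{u}|_{1,K} \leq C\|\bdd{\varepsilon}(\bdd{u})\|_{\mathcal{T}_K}.
\]
Such interpolants are standard in the elasticity literature and are essentially constructed in \cite{Lee07,Lee09,Wu2014}: one averages against local weighted functionals after subtracting a patchwise rigid-body motion so that $\mathcal{I}_4$ is the identity on $\bdd{RM}$ over each averaging patch; a standard scaling argument then upgrades the usual $H^1$-seminorm stability to the strain bound via Korn's second inequality on shape-regular patches. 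I then define $\mathcal{I}_C \bdd{u} := \mathbb{S}(\mathcal{I}_4 \bdd{u})$.

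Containment $\mathcal{I}_C \bdd{u} \in \tilde{\bdd{X}}_C$ will be immediate: \cref{rem:tilde tau} yields $\mathcal{I}_C \bdd{u} \in \mathbb{S}\bdd{X}_D = \tilde{\bdd{X}}_B$, while \cref{eq:stokes extension 3} gives $(\mathcal{I}_C \bdd{u})|_\gamma = (\mathcal{I}_4 \bdd{u})|_\gamma \in \bm{\mathcal{P}}_4(\gamma)$ for every $\gamma \in \mathcal{E}$, and the Dirichlet boundary is preserved since $\Gamma_D$ is a union of element edges whose traces are fixed by $\mathbb{S}$. The strain-stability half of \cref{eq:coarse continuity 2} will then follow from \cref{eq:stokes seminorm continuity element} by choosing the competitor $\bdd{w} = \mathcal{I}_4 \bdd{u}|_K \in \bm{\mathcal{P}}_4(K) \subseteq \bm{\mathcal{P}}_p(K)$ in the infimum, giving $\|\bdd{\varepsilon}(\mathcal{I}_C \bdd{u})\|_K \leq C\|\bdd{\varepsilon}(\mathcal{I}_4 \bdd{u})\|_K \leq C\|\bdd{\varepsilon}(\bdd{u})\|_{\mathcal{T}_K}$.

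For the approximation half, I will split
\[
\bdd{u} - \mathcal{I}_C \bdd{u} = (\bdd{u} - \mathcal{I}_4 \bdd{u}) + (\mathcal{I}_4 \bdd{u} - \mathbb{S}\mathcal{I}_4 \bdd{u}),
\]
bound the first summand directly from the Korn-type bound on $\mathcal{I}_4$, and bound the second summand using \cref{eq:stokes approximation} applied to $\mathcal{I}_4 \bdd{u}$, followed by the strain stability of $\mathcal{I}_4$ to control $\|\bdd{\varepsilon}(\mathcal{I}_4 \bdd{u})\|_K$ by $\|\bdd{\varepsilon}(\bdd{u})\|_{\mathcal{T}_K}$. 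The triangle inequality then assembles these bounds into \cref{eq:coarse continuity 2}.

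The principal obstacle is the existence of the Korn-type quasi-interpolant $\mathcal{I}_4$ itself: a plain Scott--Zhang operator only delivers $H^1$-seminorm stability, so one must weave in the local rigid-body reproduction property and invoke a patchwise Korn inequality on shape-regular patches $\mathcal{T}_K$ to replace $|\bdd{u}|_{1,\mathcal{T}_K}$ by $\|\bdd{\varepsilon}(\bdd{u})\|_{\mathcal{T}_K}$. Once this building block is lifted from the cited works, the remaining steps involving the Stokes extension become routine bookkeeping with \cref{thm:stokes continuity element}.
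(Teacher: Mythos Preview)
Your proposal is correct and follows essentially the same route as the paper: define $\mathcal{I}_C = \mathbb{S}\circ(\text{Scott--Zhang interpolant})$, obtain the strain bound from rigid-body reproduction together with the patchwise Korn inequality, and finish with \cref{eq:stokes seminorm continuity element} and \cref{eq:stokes approximation}. The only cosmetic difference is that the paper uses the \emph{piecewise linear} Scott--Zhang operator $\mathcal{I}_{SZ}$ (which already reproduces $\bdd{RM}\subset\bm{\mathcal{P}}_1$, so no special ``subtract a patchwise rigid-body motion'' modification is needed) rather than a degree-$4$ variant; since $\bm{\mathcal{P}}_1(\gamma)\subset\bm{\mathcal{P}}_4(\gamma)$, the image still lands in $\tilde{\bdd{X}}_C$ and the argument is identical.
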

\begin{proof}
	Let $\bdd{u} \in \bdd{X}_D$ be given. Let $\mathcal{I}_{SZ} : \bdd{X} \to \{ \bdd{u} \in \bdd{X} : \bdd{u}|_{K} \in \bm{\mathcal{P}}_{1}(K) \ \forall K \in \mathcal{T} \}$ denote the piecewise-linear Scott-Zhang interpolant \cite{Scott90} satisfying \cite[eq. (4.3)]{Scott90}
	\begin{align*}
		\|\bdd{u} - \mathcal{I}_{SZ} \bdd{u}\|_{K} + h_K |\bdd{u} -  \mathcal{I}_{SZ} \bdd{u}|_{1,K} \leq C h_K |\bdd{u}|_{1,\mathcal{T}_K} \qquad \forall K \in \mathcal{T}.
	\end{align*}
	Since $\mathcal{I}_{SZ} \bdd{r} = \bdd{r}$ for any rigid body motion $\bdd{r} \in \bdd{RM}$, \cref{eq:h1 korn inequality patch} gives
	\begin{align*}
		\|\bdd{u} - \mathcal{I}_{SZ} \bdd{u}\|_{K} + h_K |\bdd{u} -  \mathcal{I}_{SZ} \bdd{u}|_{1,K} \leq C h_K \inf_{\bdd{r} \in \bdd{RM}}|\bdd{u} - \bdd{r}|_{1,\mathcal{T}_K} \leq C h_K \|\bdd{\varepsilon}(\bdd{u})\|_{\mathcal{T}_K}.
	\end{align*}
	The bound $\| \bdd{\varepsilon}(\mathcal{I}_{SZ} \bdd{u})\|_{K} \leq C \|\bdd{\varepsilon}(\bdd{u})\|_{K}$ then follows from the triangle inequality. The linear operator $\mathcal{I}_C : \bdd{X}_D \to \tilde{\bdd{X}}_C$ defined by the rule $\mathcal{I}_C \bdd{u} := \mathbb{S} \mathcal{I}_{SZ} \bdd{u}$ satisfies \cref{eq:coarse continuity 2} thanks to \cref{eq:stokes approximation} and the triangle inequality. Moreover, $\bdd{u} \in \bdd{X}_D$, $\mathcal{I}_{SZ} \bdd{u} \in \bdd{X}_D$ by \cite[Theorem 2.1]{Scott90}, and so $\mathcal{I}_C \bdd{u} \in \tilde{\bdd{X}}_C$.
\end{proof}

Given a vertex $\bdd{a} \in \mathcal{V}$, let $\mathcal{E}_{\bdd{a}}$ denote the set of edges sharing $\bdd{a}$ as a vertex and let $\xi_{\bdd{a}}$ denote the barycentric coordinate corresponding to $\bdd{a}$ on $K \in \mathcal{T}_{\bdd{a}}$. Additionally, given an edge $\gamma$, let $H^{1/2}_{00}(\gamma)$ and $\bdd{H}^{1/2}_{00}(\gamma)$ denote the usual interpolation spaces defined in \cite{Lions12}.

\begin{lemma}
	\label{lem:c0 vertex comp aux}
	For each $\bdd{a} \in \mathcal{V}$, there exists a linear operator $\mathcal{I}_{\bdd{a}} : \bdd{X}_D \to \tilde{\bdd{X}}_{\bdd{a}}$ such that for all $\bdd{u} \in \bdd{X}_D$:
	\begin{alignat}{2}
		\label{eq:c0 vertex comp interp}
		\mathcal{I}_{\bdd{a}} \bdd{u}(\bdd{a}) &= \bdd{u}(\bdd{a}), \quad & &\\
		\label{eq:c0 vertex comp aux h1 cont}
		h_K^{-1} \|\mathcal{I}_{\bdd{a}} \bdd{u}\|_{K} + |\mathcal{I}_{\bdd{a}} \bdd{u}|_{1,K} &\leq C \{ h_K^{-1} \|\bdd{u}\|_{K} + |\bdd{u}|_{1,K} \} \ & & \forall K \in \mathcal{T}_{\bdd{a}}, \\
		\label{eq:c0 vertex comp aux h1200 cont}
		|\gamma|^{-1/2} \|\xi_{\bdd{a}} \bdd{u} - \mathcal{I}_{\bdd{a}} \bdd{u}\|_{\gamma} + |\xi_{\bdd{a}} \bdd{u} - \mathcal{I}_{\bdd{a}} \bdd{u}|_{\bdd{H}^{1/2}_{00}(\gamma)} &\leq C \{ |\gamma|^{-1/2} \|\bdd{u}\|_{\gamma} + |\bdd{u}|_{1/2, \gamma} \} \  & &\forall \gamma \in \mathcal{E}_{\bdd{a}}.
	\end{alignat}
\end{lemma}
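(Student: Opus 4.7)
I would construct $\mathcal{I}_{\bdd{a}}$ by first prescribing its values on element boundaries in $\mathcal{T}_{\bdd{a}}$ and then taking the Stokes extension $\mathbb{S}$ of \cref{sec:stokes extension}. For each edge $\gamma \in \mathcal{E}_{\bdd{a}}$, letting $\bdd{b}_\gamma$ denote the vertex opposite $\bdd{a}$, I decompose $\xi_{\bdd{a}} \bdd{u}|_\gamma = \bdd{u}(\bdd{a})\, \xi_{\bdd{a}}|_\gamma + \xi_{\bdd{a}}(\bdd{u} - \bdd{u}(\bdd{a}))|_\gamma$, where the second summand is a polynomial of degree $p+1$ vanishing at both endpoints of $\gamma$. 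Expanding it in a basis $\{\phi_k^\gamma\}_{k=2}^{p+1}$ of edge bubbles that is orthogonal in $\bdd{H}^{1/2}_{00}(\gamma)$ (for instance the integrated Jacobi polynomials of \cite[Theorem 7.4]{BCMP91}) as $\sum_{k=2}^{p+1} \bdd{d}_k^\gamma \phi_k^\gamma$, I set
\begin{align*}
\bdd{g}_\gamma := \bdd{u}(\bdd{a})\, \xi_{\bdd{a}}|_\gamma + \sum_{k=2}^{p} \bdd{d}_k^\gamma \phi_k^\gamma \in \bm{\mathcal{P}}_p(\gamma),
\end{align*}
so that $\bdd{g}_\gamma(\bdd{a}) = \bdd{u}(\bdd{a})$ and $\bdd{g}_\gamma(\bdd{b}_\gamma) = \bdd{0}$. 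On every edge of $K \in \mathcal{T}_{\bdd{a}}$ outside $\mathcal{E}_{\bdd{a}}$ I put $\bdd{g}_\gamma = \bdd{0}$; the resulting data on $\bigcup_{K \in \mathcal{T}_{\bdd{a}}} \partial K$ is continuous (value $\bdd{u}(\bdd{a})$ at $\bdd{a}$, else $\bdd{0}$) and so lies in $\Tr \bdd{X}$ by \cref{rem:stokes extension boundary}. I then define $\mathcal{I}_{\bdd{a}} \bdd{u} := \mathbb{S} \bdd{g}$ on $\mathcal{T}_{\bdd{a}}$ and $\bdd{0}$ outside; by \cref{rem:tilde tau}, $\mathcal{I}_{\bdd{a}} \bdd{u} \in \tilde{\bdd{X}}_{\bdd{a}}$.

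Property \cref{eq:c0 vertex comp interp} is immediate from $\bdd{g}(\bdd{a}) = \bdd{u}(\bdd{a})$ and \cref{eq:stokes extension 3}. For \cref{eq:c0 vertex comp aux h1 cont}, the continuity estimate \cref{eq:stokes continuity element} reduces matters to bounding $h_K^{-1/2} \|\bdd{g}\|_{\partial K} + |\bdd{g}|_{1/2,\partial K}$; the $p$-uniform stability of the truncation bounds this by the analogous norms of $\xi_{\bdd{a}} \bdd{u}$ on the contributing edges, which are controlled by $h_K^{-1}\|\bdd{u}\|_K + |\bdd{u}|_{1,K}$ via the multiplier property of the Lipschitz function $\xi_{\bdd{a}}$ together with the trace theorem.

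The crux is \cref{eq:c0 vertex comp aux h1200 cont}. On $\gamma \in \mathcal{E}_{\bdd{a}}$ the error is the single dropped mode $\xi_{\bdd{a}} \bdd{u}|_\gamma - \bdd{g}_\gamma = \bdd{d}_{p+1}^\gamma \phi_{p+1}^\gamma$, a bubble of degree $p+1$ lying in $\bdd{H}^{1/2}_{00}(\gamma)$. By the $\bdd{H}^{1/2}_{00}$-orthogonality of $\{\phi_k^\gamma\}$,
\begin{align*}
|\bdd{d}_{p+1}^\gamma \phi_{p+1}^\gamma|^2_{\bdd{H}^{1/2}_{00}(\gamma)} \leq \sum_{k=2}^{p+1} |\bdd{d}_k^\gamma \phi_k^\gamma|^2_{\bdd{H}^{1/2}_{00}(\gamma)} = |\xi_{\bdd{a}}(\bdd{u} - \bdd{u}(\bdd{a}))|^2_{\bdd{H}^{1/2}_{00}(\gamma)},
\end{align*}
and since $\xi_{\bdd{a}}$ is Lipschitz with $\xi_{\bdd{a}}(\bdd{b}_\gamma) = 0$, multiplication by $\xi_{\bdd{a}}$ sends $\bdd{H}^{1/2}(\gamma)$-functions vanishing at $\bdd{a}$ boundedly into $\bdd{H}^{1/2}_{00}(\gamma)$, whence $|\xi_{\bdd{a}}(\bdd{u} - \bdd{u}(\bdd{a}))|_{\bdd{H}^{1/2}_{00}(\gamma)} \leq C |\bdd{u}|_{1/2,\gamma}$. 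The $|\gamma|^{-1/2} L^2$ half follows analogously using the $p$-uniform scaling $|\gamma|^{-1/2}\|\phi_{p+1}^\gamma\|_\gamma / |\phi_{p+1}^\gamma|_{\bdd{H}^{1/2}_{00}(\gamma)} \leq C$. The main obstacle I anticipate is verifying the $p$-uniform existence of the $\bdd{H}^{1/2}_{00}$-orthogonal bubble basis with the required scaling; this is a delicate one-dimensional Jacobi-polynomial question but is essentially contained in the polynomial-lifting analysis of \cite[Theorem 7.4]{BCMP91}.
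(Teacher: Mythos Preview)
Your construction has a genuine gap in the $p$-uniformity of \cref{eq:c0 vertex comp aux h1200 cont}. The step ``multiplication by $\xi_{\bdd{a}}$ sends $\bdd{H}^{1/2}(\gamma)$-functions vanishing at $\bdd{a}$ boundedly into $\bdd{H}^{1/2}_{00}(\gamma)$'' is false with a $p$-independent constant. Parametrize $\gamma$ by $I=(-1,1)$ with $-1\mapsto\bdd{a}$. Near $\bdd{a}$ one has $\xi_{\bdd{a}}\approx 1$, so the $H^{1/2}_{00}$ weight there is not helped by $\xi_{\bdd{a}}$; you are really asserting that a polynomial $v\in\mathcal{P}_p(I)$ with $v(-1)=0$ satisfies $\int_I |v(t)|^2/(1+t)\,dt \leq C\|v\|_{H^{1/2}(I)}^2$ uniformly in $p$. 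Take $v=L_p-(-1)^p$ with $L_p$ the Legendre polynomial: then $\|v\|_{H^{1/2}(I)}\sim 1$, while $\int_{-1+p^{-2}}^{0}|v|^2/(1+t)\,dt \gtrsim \int_{-1+p^{-2}}^{0}(1+t)^{-1}\,dt \sim \log p$, since $|L_p(t)-(-1)^p|$ is bounded below on a set of positive density there. Thus your key inequality degenerates at least like $\sqrt{\log p}$. The same obstruction affects your argument for \cref{eq:c0 vertex comp aux h1 cont}, because the term $|\bdd{u}(\bdd{a})|\,\|\xi_{\bdd{a}}\|_{H^{1/2}(\partial K)}$ cannot be bounded $p$-uniformly by $\|\bdd{u}\|_{H^{1/2}(\partial K)}$: point evaluation is not continuous on $H^{1/2}$, and for polynomials the constant blows up.

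The paper circumvents this by never subtracting a point value. It builds the edge trace via a Hardy averaging operator $\mathcal{A}_R v(t)=(1+t)^{-1}\int_{-1}^t v\,ds - \tfrac{1+t}{2}\bar{v}$ (modified to $\mathcal{B}_R$ to fix derivative values), for which the crucial mapping properties $\mathcal{A}_R:H^{1/2}\to H^{1/2}_R$, $(I-\mathcal{A}_R):H^{1/2}\to H^{1/2}_L$, and $(\mathcal{M}_R-\mathcal{A}_R):H^{1/2}\to H^{1/2}_{00}$ are bounded \emph{as operators on all of $H^{1/2}$}, proved by interpolation between $L^2$ and $H^1$. These bounds are therefore automatically $p$-uniform. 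The element operator $\mathcal{I}_K^{\bdd{a}}$ is then the polynomial lifting of these traces, and $\mathcal{I}_{\bdd{a}}\bdd{u}:=\mathbb{S}\bdd{v}$ as you proposed. If you want to salvage a modal-truncation idea, you would need to replace the decomposition $\xi_{\bdd{a}}\bdd{u}=\bdd{u}(\bdd{a})\xi_{\bdd{a}}+\xi_{\bdd{a}}(\bdd{u}-\bdd{u}(\bdd{a}))$ by one whose pieces are each controlled in $H^{1/2}$ by $\bdd{u}$ alone; the Hardy operator is precisely such a device.
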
 
\begin{proof}
	Let $\bdd{u} \in \bdd{X}_D$, $\bdd{a} \in \mathcal{V}$. We define the function $\bdd{v}$ by the rule $\bdd{v}|_{K} = \mathcal{I}_K^{\bdd{a}} \bdd{u}$ for $K \in \mathcal{T}_{\bdd{a}}$ and $\bdd{v}|_{K} \equiv 0$ for $K \in \mathcal{T} \setminus \mathcal{T}_{\bdd{a}}$, where $\mathcal{I}_K^{\bdd{a}}$ is the linear operator in \cref{thm:c0 vertex interp element}. By \cref{thm:c0 vertex interp element} (1) and (4), $\bdd{v} \in \bdd{X}_D$. 	The operator $\mathcal{I}_{\bdd{a}} : \bdd{X}_D \to \tilde{\bdd{X}}_{\bdd{a}}$ is then defined by the rule $\mathcal{I}_{\bdd{a}} \bdd{u} := \mathbb{S} \bdd{v}$. Then, \cref{eq:c0 vertex comp interp} follows from \cref{eq:stokes extension 3} and \cref{thm:c0 vertex interp element} (1). Moreover, \cref{eq:c0 vertex comp aux h1 cont} follows from \cref{eq:stokes continuity element}, the trace theorem, and \cref{thm:c0 vertex interp element} (3), while \cref{eq:c0 vertex comp aux h1200 cont} follows from \cref{thm:c0 vertex interp element} (4).
\end{proof}

\begin{lemma}
	\label{lem:c0 edge interp}
	Let $\bdd{X}_{\mathcal{E}} := \{ \bdd{u} \in \bdd{X}_D : \bdd{u}(\bdd{a}) = \bdd{0} \ \forall \bdd{a} \in \mathcal{V} \}$. For each $\gamma \in \mathcal{E}$, there exists a linear operator $\mathcal{I}_{\gamma} : \bdd{X}_{\mathcal{E}} \to \tilde{\bdd{X}}_B$ such that for all $\bdd{u} \in \bdd{X}_{\mathcal{E}}$:
	\begin{subequations}
		\label{eq:c0 edge interp prop}
		\begin{alignat}{2}
			\mathcal{I}_{\gamma} \bdd{u}(\bdd{a}) &= \bdd{0} \qquad & &\forall \bdd{a} \in \mathcal{V}, \\
			\mathcal{I}_{\gamma} \bdd{u} &= \bdd{u} \qquad & & \text{on } \gamma, \\
			\supp 	\mathcal{I}_{\gamma} \bdd{u} &\subseteq \mathcal{T}_{\gamma}, \qquad & &
		\end{alignat}
	\end{subequations}
	and
	\begin{align}
		\label{eq:c0 edge interp continuity}
		h_K^{-1} \|	\mathcal{I}_{\gamma} \bdd{u}\|_{K} + |	\mathcal{I}_{\gamma} \bdd{u}|_{1,K} &\leq C  |\bdd{u}|_{\bdd{H}^{1/2}_{00}(\gamma)}
		\qquad \forall K \in \mathcal{T}_{\gamma}.
	\end{align}	
\end{lemma}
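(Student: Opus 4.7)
The plan is to define $\mathcal{I}_\gamma \bdd{u} := \mathbb{S} \bdd{v}$, where $\bdd{v} \in \bdd{X}_D$ is an auxiliary piecewise polynomial whose trace on each $\partial K$ carries exactly the required edge data: $\bdd{u}|_\gamma$ on $\gamma$, and $\bdd{0}$ on every other edge of every $K \in \mathcal{T}_\gamma$, with $\bdd{v}$ extended by zero outside $\mathcal{T}_\gamma$. The inclusion $\mathcal{I}_\gamma \bdd{u} \in \tilde{\bdd{X}}_B$ is then automatic from $\tilde{\bdd{X}}_B = \mathbb{S} \bdd{X}_D$ (\cref{rem:tilde tau}), and the continuity estimate \cref{eq:c0 edge interp continuity} will come from feeding a bound on $\bdd{v}$ into \cref{eq:stokes continuity element}. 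The overall scheme parallels the proofs of \cref{lem:coarse comp 2,lem:c0 vertex comp aux}.

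For each $K \in \mathcal{T}_\gamma$, the prescribed trace $\bdd{g}_K$ on $\partial K$ is continuous and piecewise polynomial of degree $p$, because $\bdd{u} \in \bdd{X}_{\mathcal{E}}$ vanishes at both endpoints of $\gamma$ and hence matches the zero data imposed on the two adjacent edges of $K$. Applying \cite[Theorem 7.4]{BCMP91}, after standard scaling to $K$, produces a polynomial extension $\bdd{v}|_K \in \bm{\mathcal{P}}_p(K)$ with $\bdd{v}|_{\partial K} = \bdd{g}_K$ satisfying $h_K^{-1}\|\bdd{v}\|_K + |\bdd{v}|_{1,K} \leq C\left( h_K^{-1/2}\|\bdd{g}_K\|_{\partial K} + |\bdd{g}_K|_{1/2,\partial K}\right)$. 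Setting $\bdd{v} \equiv \bdd{0}$ on elements outside $\mathcal{T}_\gamma$ preserves continuity across every shared edge (both sides vanish there) and on $\Gamma_D$, so $\bdd{v} \in \bdd{X}_D$; since the construction is linear in $\bdd{u}$, so is $\mathcal{I}_\gamma$.

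The only nontrivial step, and in my view the main technical obstacle, is the norm equivalence $h_K^{-1/2}\|\bdd{g}_K\|_{\partial K} + |\bdd{g}_K|_{1/2,\partial K} \leq C |\bdd{u}|_{\bdd{H}^{1/2}_{00}(\gamma)}$. This is exactly the classical statement, proved by splitting the $H^{1/2}(\partial K)$ double integral into its $\gamma \times \gamma$ and mixed $\gamma \times (\partial K\setminus\gamma)$ pieces, that extension by zero from $\gamma$ to $\partial K$ is continuous $\bdd{H}^{1/2}_{00}(\gamma) \hookrightarrow \bdd{H}^{1/2}(\partial K)$ uniformly in $h_K$ after the usual scaling. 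This is precisely the point where the hypothesis $\bdd{u}(\bdd{a}) = \bdd{0}$ at the endpoints of $\gamma$ is indispensable. Combining the equivalence with the previous bound and then with \cref{eq:stokes continuity element} applied on each $K \in \mathcal{T}_\gamma$ yields \cref{eq:c0 edge interp continuity}.

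Properties \cref{eq:c0 edge interp prop} follow directly from the construction together with \cref{eq:stokes extension 3}: $\mathcal{I}_\gamma \bdd{u}|_\gamma = \bdd{v}|_\gamma = \bdd{u}|_\gamma$, and $\mathcal{I}_\gamma \bdd{u}(\bdd{a}) = \bdd{v}(\bdd{a}) = \bdd{0}$ at every vertex $\bdd{a} \in \mathcal{V}$. Finally, for any $K \notin \mathcal{T}_\gamma$ one has $\bdd{v}|_{\partial K} = \bdd{0}$, and uniqueness of the local Stokes problem \cref{eq:stokes extension} forces $(\mathbb{S} \bdd{v}, \mathbb{Q} \bdd{v})|_K = (\bdd{0}, 0)$, confirming $\supp \mathcal{I}_\gamma \bdd{u} \subseteq \mathcal{T}_\gamma$.
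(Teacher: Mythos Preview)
Your proposal is correct and follows essentially the same route as the paper: define $\mathcal{I}_\gamma \bdd{u} := \mathbb{S}\bdd{v}$ for a $\bdd{v}\in\bdd{X}_D$ supported on $\mathcal{T}_\gamma$ with trace $\bdd{u}|_\gamma$ on $\gamma$ and zero elsewhere, then apply \cref{eq:stokes continuity element} together with the extension-by-zero bound $\bdd{H}^{1/2}_{00}(\gamma)\hookrightarrow \bdd{H}^{1/2}(\partial K)$ and Poincar\'e's inequality. The only superfluous step is your explicit BCMP91 bound on $\bdd{v}$ itself: since $\mathbb{S}$ depends only on traces (\cref{rem:stokes extension boundary}) and \cref{eq:stokes continuity element} is already stated in terms of $\|\bdd{u}\|_{\partial K}$ and $|\bdd{u}|_{1/2,\partial K}$, any such $\bdd{v}$ works and no quantitative control of $\bdd{v}$ on element interiors is needed.
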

\begin{proof}
	Let $\gamma \in \mathcal{E}$ and $\bdd{u} \in  \bdd{X}_{\mathcal{E}}$.
	Let $\bdd{v} \in \bdd{X}_D$ be any function supported on $\mathcal{T}_{\gamma}$ with $\bdd{v}|_{\gamma} = \bdd{u}|_{\gamma}$. Thanks to \cref{rem:stokes extension boundary}, $\mathbb{S} \bdd{v} \in \tilde{\bdd{X}}_B$ and hence the operator $\mathcal{I}_{\gamma} : \bdd{X}_D \to \tilde{\bdd{X}}_B$ given by the rule $\mathcal{I}_{\gamma} \bdd{u} := \mathbb{S} \bdd{v}$ is well-defined. Moreover, using \cref{eq:stokes continuity element} and Poincar\'{e}'s inequality, we obtain
	\begin{align*}
		h_K^{-1} \|\mathcal{I}_{\gamma} \bdd{u}\|_{K} + |\mathcal{I}_{\gamma} \bdd{u}|_{1,K} &\leq C \left\{ h_K^{-1/2} \|\mathcal{I}_{\gamma} \bdd{u}\|_{\partial K} + |\mathcal{I}_{\gamma} \bdd{u}|_{1/2,\partial K} \right\}  \\
		&\leq C \left\{ h_K^{-1/2} \| \bdd{u} \|_{\gamma} + | \bdd{u} |_{\bdd{H}^{1/2}_{00}(\gamma)} \right\} \\
		&\leq C | \bdd{u} |_{\bdd{H}^{1/2}_{00}(\gamma)}
	\end{align*}
	for all $K \in \mathcal{T}_{\gamma}$.
\end{proof}

\begin{proof}[Proof of \cref{thm:h1 stable decomp 2}]
	\Cref{eq:h1 stable decomp decomp 2} is a now a special case of \cref{lem:general stability}.
\end{proof}

\section{Stable Decomposition of Divergence Free Functions}
\label{sec:divfree stable decomp}

The subspace of $\tilde{\bdd{X}}_B$ consisting of divergence free functions 
\begin{align}
	\label{eq:tildenb definition}
	\tilde{\bdd{N}}_B := \{ \tilde{\bdd{v}} \in \tilde{\bdd{X}}_B : \dive  \tilde{\bdd{v}} \equiv 0 \}
\end{align}
will play a key role in the proof of \cref{thm:main result}. It is useful to first characterize the space $\tilde{\bdd{N}}_B$ as the curl of a suitable $H^2$-conforming finite element space. Let $\{ \Gamma_{D,j} \}_{j=1}^{J}$ denote the connected components of $\Gamma_D$ and define
\begin{align*}
	H^2_D(\Omega) &:= \{ \psi \in H^2(\Omega) : \psi|_{\Gamma_{D,1}} = 0, \ \text{$\psi|_{\Gamma_{D,j}}$ is constant, $2 \leq j \leq J$, $\partial_n \psi|_{\Gamma_D} = 0$} \},
\end{align*}
so that $\vcurl H^2_D(\Omega) \subset \bdd{H}^1_D(\Omega)$, where $\vcurl \phi = (\partial_y \phi, -\partial_x \phi)^T$. The corresponding discrete space
\begin{align}
	\label{eq:sigmad def}
	\Sigma_D = \Sigma \cap H^2_D(\Omega) \qquad \text{where} \quad \Sigma := \{ \psi \in C^1(\bar{\Omega}) : \psi|_{K} \in \mathcal{P}_{p+1}(K) \ \forall K \in \mathcal{T} \},
\end{align}
satisfies $\vcurl \Sigma_D = \{ \bdd{v} \in \bdd{X}_D : \dive \bdd{v} \equiv 0 \}$ thanks to \cref{thm:full complex}. Let
\begin{align}
	\label{eq:interior sigma spaces}
	\Sigma_I := \bigoplus_{K \in \mathcal{T}} \Sigma_I(K), \qquad \text{where} \quad 	\Sigma_I(K) := \{ \psi \in \Sigma : \supp \psi \subseteq K  \},
\end{align}
denote the subspace consisting of interior functions, while the boundary space is given by
\begin{align}
	\label{eq:boundary sigma spaces}
	\tilde{\Sigma}_B := \{ \psi \in \Sigma_D : a_{\lambda}(\vcurl \psi, \vcurl \rho) = a_0(\vcurl \psi, \vcurl \rho) = 0 \ \forall \rho \in \Sigma_I \}.
\end{align}
\Cref{thm:full complex} then shows that the space $\Sigma_D$ admits a decomposition similar to \cref{eq:direct sum identity}: $\Sigma_D = \Sigma_I \oplus \tilde{\Sigma}_B$, and that space $\tilde{\bdd{N}}_B$ has the following characterization:
\begin{lemma}
	\label{lem:tildenb characterization}
	There holds $\tilde{\bdd{N}}_B = \vcurl \tilde{\Sigma}_B$.
\end{lemma}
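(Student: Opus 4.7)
The identity has two inclusions. For both, the workhorse is the local de Rham property implicit in \cref{thm:full complex}, namely the identifications
$\vcurl \Sigma_D = \{ \bdd{v} \in \bdd{X}_D : \dive \bdd{v} \equiv 0 \}$
at the global level and, at the element level, that for every $K \in \mathcal{T}$ the curl operator maps $\Sigma_I(K)$ onto $\{ \bdd{v} \in \bdd{X}_I(K) : \dive \bdd{v} \equiv 0 \}$. The latter uses that any $\rho \in \Sigma_I(K)$ satisfies $\rho|_{\partial K} = 0$ and $\nabla \rho|_{\partial K} = \bdd{0}$ (so $\vcurl \rho$ lies in $\bdd{X}_I(K)$) together with the exactness of the discrete complex.

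For the inclusion $\vcurl \tilde{\Sigma}_B \subseteq \tilde{\bdd{N}}_B$, take $\psi \in \tilde{\Sigma}_B$ and set $\tilde{\bdd{v}} := \vcurl \psi$. Then $\tilde{\bdd{v}} \in \bdd{X}_D$ with $\dive \tilde{\bdd{v}} \equiv 0$, so the second condition defining $\tilde{\bdd{X}}_B$ is automatic. For the first condition, let $\bdd{z} \in \bdd{X}_I$ with $\dive \bdd{z} \equiv 0$. Decomposing $\bdd{z} = \sum_{K} \bdd{z}_K$ with $\bdd{z}_K \in \bdd{X}_I(K)$ and $\dive \bdd{z}_K \equiv 0$, the local exactness yields $\rho_K \in \Sigma_I(K)$ with $\bdd{z}_K = \vcurl \rho_K$. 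Summing and using $\psi \in \tilde{\Sigma}_B$ gives $a_\lambda(\vcurl \psi, \bdd{z}) = \sum_K a_\lambda(\vcurl \psi, \vcurl \rho_K) = 0$, so $\tilde{\bdd{v}} \in \tilde{\bdd{X}}_B$, and since it is divergence-free, $\tilde{\bdd{v}} \in \tilde{\bdd{N}}_B$.

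For the reverse inclusion $\tilde{\bdd{N}}_B \subseteq \vcurl \tilde{\Sigma}_B$, take $\tilde{\bdd{v}} \in \tilde{\bdd{N}}_B \subseteq \bdd{X}_D$ with $\dive \tilde{\bdd{v}} \equiv 0$. By \cref{thm:full complex} there exists $\psi \in \Sigma_D$ (unique up to an element of the kernel of $\vcurl$) with $\tilde{\bdd{v}} = \vcurl \psi$. It remains to verify $\psi \in \tilde{\Sigma}_B$. For any $\rho \in \Sigma_I$, $\vcurl \rho \in \bdd{X}_I$ and $\dive \vcurl \rho \equiv 0$, so using $\tilde{\bdd{v}} \in \tilde{\bdd{X}}_B$ we get $a_\lambda(\vcurl \psi, \vcurl \rho) = a_\lambda(\tilde{\bdd{v}}, \vcurl \rho) = 0$. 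Since $\dive \vcurl \psi = \dive \vcurl \rho = 0$, the $\lambda$-term vanishes and the same value equals $a_0(\vcurl \psi, \vcurl \rho)$, so both orthogonality conditions in \cref{eq:boundary sigma spaces} hold, proving $\psi \in \tilde{\Sigma}_B$.

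The only step that is not a direct unfolding of definitions is the lift $\bdd{z} \mapsto \rho_K$ in the first inclusion; the main obstacle is therefore confirming the \emph{local} surjectivity of $\vcurl : \Sigma_I(K) \to \{\bdd{z} \in \bdd{X}_I(K) : \dive \bdd{z} \equiv 0\}$, which should be available as part of the complex machinery referenced via \cref{thm:full complex}.
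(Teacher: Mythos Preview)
Your argument is correct and is essentially what underlies the paper's proof. The paper simply invokes \cref{thm:full complex}: exactness of the third row \cref{eq:exact sequence exterior spaces} at $\tilde{\bdd{X}}_B$ is precisely the statement $\tilde{\bdd{N}}_B = \ker(\dive|_{\tilde{\bdd{X}}_B}) = \vcurl \tilde{\Sigma}_B$, and the paper in turn defers the proof of that exactness to an external reference. What you have written is exactly the kind of direct verification one would give for that exactness, using the already-established first and second rows (the global lift $\tilde{\bdd{v}} = \vcurl \psi$ from \cref{eq:exact sequence full spaces 2} and the local lift $\bdd{z}_K = \vcurl \rho_K$ from \cref{eq:exact sequence interior spaces 2}). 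So the content is the same; you have simply unpacked the citation. One small remark: in your first inclusion you use $a_\lambda(\vcurl\psi,\vcurl\rho_K)=0$ without saying why this coincides with the $a_0$-orthogonality defining $\tilde{\Sigma}_B$; you do make this explicit in the second inclusion, and of course it is immediate since both arguments are divergence-free.
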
 
We also need the following subspaces of $\tilde{\Sigma}_B$:
\begin{align*}
	\tilde{\Sigma}_C &:= \{ \psi \in \tilde{\Sigma}_{B} : \psi|_{\gamma} \in \mathcal{P}_5(\gamma) \ \forall \gamma \in \mathcal{E} \}  \  \text{and} \ 
	\tilde{\Sigma}_{\bdd{a}} := \{ \psi \in \tilde{\Sigma}_B : \supp \psi \subseteq \mathcal{T}_{\bdd{a}} \}, \ \bdd{a} \in \mathcal{V},
\end{align*}
which, as an immediate consequence of \cref{lem:tildenb characterization}, satisfy the inclusions
\begin{align}
	\label{eq:tildesigma curls inclusions}
	\vcurl \tilde{\Sigma}_C \subseteq \tilde{\bdd{X}}_C \cap \tilde{\bdd{N}}_B \quad \text{and} \quad \vcurl \tilde{\Sigma}_{\bdd{a}} \subseteq \tilde{\bdd{X}}_{\bdd{a}} \cap \tilde{\bdd{N}}_B \qquad \forall \bdd{a} \in \mathcal{V}.
\end{align}
These subspaces give an overlapping decomposition of the boundary space
\begin{align*}
	\tilde{\Sigma}_B = \tilde{\Sigma}_C + \sum_{ \bdd{a} \in \mathcal{V}} \tilde{\Sigma}_{\bdd{a}}
\end{align*}
which is stable with respect to strain:
\begin{theorem}
	\label{thm:h2 stable decomp}
	For every $\psi \in \tilde{\Sigma}_B$, there exists $\psi_C \in \tilde{\Sigma}_C$ and $\psi_{\bdd{a}} \in \tilde{\Sigma}_{\bdd{a}}$, $\bdd{a} \in \mathcal{V}$, such that
	\begin{align}
		\label{eq:h2 stable decomp decomp}		
		\psi &= \psi_C + \sum_{ \bdd{a} \in \mathcal{V}} \psi_{\bdd{a}} \ \ \text{and} \ \ \|\bdd{\varepsilon}(\vcurl \psi_C) \|^2  + \sum_{ \bdd{a} \in \mathcal{V}} \| \bdd{\varepsilon}(\vcurl \psi_{\bdd{a}}) \|^2 
		\leq C \|\bdd{\varepsilon}( \vcurl \psi)\|^2.
	\end{align}
\end{theorem}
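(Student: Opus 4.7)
The plan is to mirror the strategy used for \cref{thm:h1 stable decomp 2} at the level of the stream function space $\tilde{\Sigma}_B$. I would construct three interpolation operators---a coarse interpolant $\mathcal{J}_C : \Sigma_D \to \tilde{\Sigma}_C$, a vertex interpolant $\mathcal{J}_{\bdd{a}} : \Sigma_D \to \tilde{\Sigma}_{\bdd{a}}$ for each $\bdd{a} \in \mathcal{V}$, and an edge interpolant $\mathcal{J}_{\gamma}$ for each $\gamma \in \mathcal{E}$---and then assemble them via the same abstract partition-of-unity result (\cref{lem:general stability}) that handled the $H^1$ case. The crucial observation enabling the transfer between the two settings is that $\nabla(\vcurl \psi)$ equals the rotated Hessian of $\psi$, so $|\vcurl \psi|_{1,K} = |\psi|_{2,K}$, and Korn's inequality applied to $\vcurl \psi$ yields $|\psi|_{2,K} \leq C \|\bdd{\varepsilon}(\vcurl \psi)\|_K$ modulo the four-dimensional kernel consisting of the preimage of $\bdd{RM}$ under $\vcurl$ (namely $\mathcal{P}_1 \oplus \spann\{x^2 + y^2\}$).

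The coarse operator $\mathcal{J}_C$ is obtained by composing a $C^1$ Scott-Zhang-type quasi-interpolant landing in the piecewise quintic $C^1$ subspace of $\Sigma_D$ with the harmonic extension implicit in \cref{eq:boundary sigma spaces}. Because the extension preserves edge traces, the image lies in $\tilde{\Sigma}_C$, and the standard $H^2$ approximation estimate for the quasi-interpolant combined with the identity above yields
\begin{equation*}
  \|\bdd{\varepsilon}(\vcurl \mathcal{J}_C \psi)\|_K + h_K^{-1}|\psi - \mathcal{J}_C \psi|_{1,K} + |\psi - \mathcal{J}_C \psi|_{2,K} \leq C \|\bdd{\varepsilon}(\vcurl \psi)\|_{\mathcal{T}_K}.
\end{equation*}
The vertex operator $\mathcal{J}_{\bdd{a}}$ follows the template of \cref{lem:c0 vertex comp aux}: I would first define a patch-localized function via an element-level $C^1$ vertex interpolant (an $H^2$ analog of \cref{thm:c0 vertex interp element}) that preserves both $\psi(\bdd{a})$ and $\nabla \psi(\bdd{a})$ and vanishes at all other vertices and edges of the patch to sufficient order, and then apply the extension. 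The edge operator $\mathcal{J}_{\gamma}$ is constructed analogously on functions whose zeroth- and first-order data vanish at every vertex.

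The main obstacle will be the construction of the element-level $C^1$ building blocks: a Scott-Zhang-type quasi-interpolant into piecewise quintic $C^1$ functions respecting the $H^2_D$ boundary data, and an element $C^1$ vertex cutoff that reproduces both $\psi(\bdd{a})$ and $\nabla \psi(\bdd{a})$ while localizing support to a single vertex. These are more delicate than their $H^1$ analogs in \cref{sec:h1 stable decomp 2} because $C^1$ continuity forces one to control values and first derivatives simultaneously at vertices and along edges, and to match Dirichlet and normal-derivative conditions on $\Gamma_D$. Once these building blocks are in place, all remaining estimates reduce, via $|\vcurl \psi|_{1,K} = |\psi|_{2,K}$ and Korn's inequality applied to $\vcurl \psi$, to bounds on the strain of $\vcurl \psi$, so the $h$- and $p$-independent constants established in the $H^1$ proof transfer without modification.
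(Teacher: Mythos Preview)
Your proposal is correct and follows the paper's approach: the paper constructs exactly the operators $\mathcal{J}_C$, $\mathcal{J}_{\bdd{a}}$, $\mathcal{J}_{\gamma}$ you describe (via a Girault--Scott $C^1$ quasi-interpolant plus a vertex-value correction, an element-level $C^1$ vertex cutoff built from the averaging operator $\mathcal{B}_R$, and the biharmonic extension $\mathbb{B}$) and then invokes \cref{lem:general stability}. One detail to adjust: because the partition-of-unity estimate in \cref{lem:general stability} is applied at the level of $\nabla\psi$ (the relevant $H^{1/2}_{00}$ bound is on $\xi_{\bdd{a}}\nabla\psi - \nabla\mathcal{J}_{\bdd{a}}\psi$), the vertex interpolant must reproduce $\nabla\psi(\bdd{a})$ \emph{and} the elementwise Hessian $D^2\psi|_K(\bdd{a})$ rather than $\psi(\bdd{a})$ and $\nabla\psi(\bdd{a})$, and the edge operator correspondingly acts on functions with second-order vanishing at vertices.
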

The proof of \cref{thm:h2 stable decomp} is given at the end of this section. However, an immediate consequence of \cref{lem:tildenb characterization}, \cref{thm:h2 stable decomp}, and \cref{eq:tildesigma curls inclusions} is that the decomposition \cref{eq:tilde subspace decomp} respects divergence free functions in the following sense:
\begin{corollary}
	\label{thm:divfree stable decomp}
	For every $\bdd{v} \in \tilde{\bdd{N}}_B$, there exist $\bdd{v}_C \in \tilde{\bdd{X}}_C \cap \tilde{\bdd{N}}_B$ and $\bdd{v}_{\bdd{a}} \in \tilde{\bdd{X}}_{\bdd{a}} \cap \tilde{\bdd{N}}_B$, $\bdd{a} \in \mathcal{V}$, such that
	\begin{align}
		\label{eq:divfree stable decomp decomp}
		\bdd{v} =  \bdd{v}_C + \sum_{ \bdd{a} \in \mathcal{V}} \bdd{v}_{\bdd{a}} \quad \text{and} \quad 	a_{\lambda}(\bdd{v}_C, \bdd{v}_C) + \sum_{ \bdd{a} \in \mathcal{V}} a_{\lambda}(\bdd{v}_{\bdd{a}}, \bdd{v}_{\bdd{a}}) 
		\leq C a_{\lambda}(\bdd{v}, \bdd{v}).
	\end{align}
\end{corollary}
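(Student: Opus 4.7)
The plan is to lift the problem to the potential space $\tilde{\Sigma}_B$ via the characterization $\tilde{\bdd{N}}_B = \vcurl \tilde{\Sigma}_B$, apply the already-stated decomposition result \cref{thm:h2 stable decomp} on the potential side, and push the decomposition back through $\vcurl$. The key simplification is that every function in $\tilde{\bdd{N}}_B$ (and every component of the decomposition we build) is divergence free, so the $a_\lambda$ norm reduces to $2\mu\|\bdd{\varepsilon}(\cdot)\|^2$ and the strain-based estimate on potentials is exactly what is needed.

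First I would fix $\bdd{v} \in \tilde{\bdd{N}}_B$ and, using \cref{lem:tildenb characterization}, write $\bdd{v} = \vcurl \psi$ for some $\psi \in \tilde{\Sigma}_B$. Next I would invoke \cref{thm:h2 stable decomp} to obtain $\psi_C \in \tilde{\Sigma}_C$ and $\{\psi_{\bdd{a}}\}_{\bdd{a} \in \mathcal{V}}$ with $\psi_{\bdd{a}} \in \tilde{\Sigma}_{\bdd{a}}$ such that $\psi = \psi_C + \sum_{\bdd{a} \in \mathcal{V}} \psi_{\bdd{a}}$ together with the strain-stability estimate in \cref{eq:h2 stable decomp decomp}. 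I would then set
\begin{equation*}
\bdd{v}_C := \vcurl \psi_C, \qquad \bdd{v}_{\bdd{a}} := \vcurl \psi_{\bdd{a}} \quad (\bdd{a} \in \mathcal{V}).
\end{equation*}
Applying $\vcurl$ term-by-term preserves the sum, so $\bdd{v} = \bdd{v}_C + \sum_{\bdd{a} \in \mathcal{V}} \bdd{v}_{\bdd{a}}$. The inclusions \cref{eq:tildesigma curls inclusions} immediately place $\bdd{v}_C \in \tilde{\bdd{X}}_C \cap \tilde{\bdd{N}}_B$ and $\bdd{v}_{\bdd{a}} \in \tilde{\bdd{X}}_{\bdd{a}} \cap \tilde{\bdd{N}}_B$, giving the desired subspace memberships.

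For the stability estimate, I would use that each piece is a curl, hence divergence free, so
\begin{equation*}
a_\lambda(\bdd{v}_C,\bdd{v}_C) = 2\mu \|\bdd{\varepsilon}(\vcurl \psi_C)\|^2, \qquad a_\lambda(\bdd{v}_{\bdd{a}},\bdd{v}_{\bdd{a}}) = 2\mu \|\bdd{\varepsilon}(\vcurl \psi_{\bdd{a}})\|^2,
\end{equation*}
and similarly $a_\lambda(\bdd{v},\bdd{v}) = 2\mu\|\bdd{\varepsilon}(\vcurl \psi)\|^2$ since $\dive \bdd{v} \equiv 0$. Summing and inserting the strain estimate from \cref{thm:h2 stable decomp} yields \cref{eq:divfree stable decomp decomp} with a constant independent of $\mu$, $\lambda$, $h$, and $p$.

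Since the three ingredients are all supplied earlier, this is essentially bookkeeping; no real obstacle remains once one observes the algebraic identity $a_\lambda(\cdot,\cdot) = 2\mu(\bdd{\varepsilon}(\cdot),\bdd{\varepsilon}(\cdot))$ on divergence-free fields, which is precisely what makes the reduction from $a_\lambda$-stability to strain-stability lossless in this setting.
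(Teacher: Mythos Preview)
Your proposal is correct and matches the paper's approach exactly: the paper states the corollary as an ``immediate consequence of \cref{lem:tildenb characterization}, \cref{thm:h2 stable decomp}, and \cref{eq:tildesigma curls inclusions}'', which is precisely the lift-to-potential, decompose, and push-forward argument you have written out, together with the observation that $a_\lambda$ reduces to $2\mu\|\bdd{\varepsilon}(\cdot)\|^2$ on divergence-free fields.
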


\subsection{An $H^2$-Stable Extension Operator}

Let  $\mathbb{B} : \Sigma \to \Sigma$ denote the extension operator defined elementwise as follows: for $\psi \in \Sigma$ and $K \in \mathcal{T}$, 
\begin{subequations}
	\label{eq:biharmonic extension}
	\begin{alignat}{2}
		\label{eq:biharmonic extension 1}
		a_K(\vcurl \mathbb{B} \psi, \vcurl \rho) &= 0 \qquad & & \forall \rho \in \Sigma_I(K), \\
		\label{eq:biharmonic extension 2}
		(\psi - \mathbb{B} \psi)|_{\partial K} &= 0, \qquad & & \\
		\label{eq:biharmonic extension 3}
		\partial_n (\psi - \mathbb{B} \psi)|_{\partial K} &= 0. \qquad & & 
	\end{alignat}
\end{subequations}
\begin{remark}
	\label{rem:biharmonic traces}
	A discussion similar to \cref{rem:stokes extension boundary} shows that $\mathbb{B} \psi|_{K}$ depends only on the traces $\psi|_{\partial K}$ and $\partial_n \psi|_{\partial K}$ for $K \in \mathcal{T}$. The relation $\tilde{\Sigma}_B = \mathbb{B} \Sigma_D$ is also readily verified.
\end{remark}
Moreover, we have the following properties:
\begin{lemma}
	For all $K \in \mathcal{T}$, there holds
	\begin{align}
		\label{eq:biharmonic continuity element}
		h_K^{-2} \|\mathbb{B} \psi\|_{K} &+ h_K^{-1} |\mathbb{B} \psi|_{1, K} + |\mathbb{B} \psi|_{2, K} \qquad & & \\ 
		&\leq C \left\{ h_K^{-3/2} \|\psi\|_{\partial K} + h_K^{-1/2} \|\nabla \psi\|_{\partial K} + |\nabla \psi|_{1/2,\partial K} \right\} \qquad & &\forall \psi \in \Sigma \notag
	\end{align}
	and
	\begin{align}
		\label{eq:biharmonic approximation}
		h_K^{-2} \|\psi - \mathbb{B} \psi \|_{K} + h_K^{-1} |\psi - \mathbb{B} \psi |_{1,K} + |\psi - \mathbb{B} \psi |_{2,K} &\leq C \| \bdd{\varepsilon}(\vcurl \psi)\|_K \qquad & &\forall \psi \in \Sigma.
	\end{align}
\end{lemma}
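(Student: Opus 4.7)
My plan is to mirror the proof of \cref{thm:stokes continuity element}, replacing the identity by $\vcurl$ and the Stokes pair by the biharmonic extension. Throughout, I fix $K \in \mathcal{T}$ and $\psi \in \Sigma$.

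For the continuity bound \cref{eq:biharmonic continuity element}, I would first exploit Galerkin orthogonality: given any $\rho \in \mathcal{P}_{p+1}(K)$ with $\rho|_{\partial K} = \psi|_{\partial K}$ and $\partial_n \rho|_{\partial K} = \partial_n \psi|_{\partial K}$, the difference $\mathbb{B}\psi - \rho$ lies in $\Sigma_I(K)$ by \cref{eq:biharmonic extension 2,eq:biharmonic extension 3}, so testing \cref{eq:biharmonic extension 1} against it and applying Cauchy--Schwarz yields $\| \bdd{\varepsilon}(\vcurl \mathbb{B}\psi) \|_K \leq \| \bdd{\varepsilon}(\vcurl \rho) \|_K \leq | \rho |_{2,K}$. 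Applying the Korn-type bound \cref{eq:proof:korn inequality boundary} to the vector field $\vcurl \mathbb{B}\psi$ then converts strain control into $|\vcurl \mathbb{B}\psi|_{1,K} = |\mathbb{B}\psi|_{2,K}$ and $\|\vcurl \mathbb{B}\psi\|_K = |\mathbb{B}\psi|_{1,K}$ bounds, where the boundary contribution involves only $\|\vcurl\psi\|_{\partial K} \leq \|\nabla\psi\|_{\partial K}$ by \cref{rem:biharmonic traces}. A subsequent scaled Poincar\'{e} step using $\mathbb{B}\psi|_{\partial K} = \psi|_{\partial K}$ produces the $h_K^{-2}\|\mathbb{B}\psi\|_K$ term.

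The main technical obstacle is constructing a $C^1$ polynomial extension $\rho$ satisfying
\begin{align*}
|\rho|_{2,K} \leq C \left\{ h_K^{-3/2}\|\psi\|_{\partial K} + h_K^{-1/2}\|\nabla\psi\|_{\partial K} + |\nabla\psi|_{1/2, \partial K} \right\}
\end{align*}
with a constant independent of $p$. This is the Hermite analogue of \cite[Theorem 7.4]{BCMP91} used in the proof of \cref{eq:stokes continuity element}, and I expect either to cite an existing result from the $hp$-FEM literature on $C^1$ elements or to construct it directly by lifting $\psi|_{\partial K}$ and $\partial_n \psi|_{\partial K}$ edgewise on a reference triangle.

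For the approximation bound \cref{eq:biharmonic approximation}, the key observation is that $\mathbb{B}$ reproduces the subspace
\begin{align*}
Q := \{ r \in \mathcal{P}_2 : \bdd{\varepsilon}(\vcurl r) \equiv 0 \} = \spann\{1,\ x,\ y,\ x^2 + y^2\},
\end{align*}
since for $r \in Q$ the choice $\mathbb{B}r = r$ trivially satisfies \cref{eq:biharmonic extension 1,eq:biharmonic extension 2,eq:biharmonic extension 3}. Writing $\psi - \mathbb{B}\psi = (\psi - r) - \mathbb{B}(\psi - r)$, applying \cref{eq:biharmonic continuity element} to $\psi - r$, and using the standard scaled trace inequality to absorb boundary norms into volume norms gives
\begin{align*}
\sum_{j=0}^{2} h_K^{j-2} | \psi - \mathbb{B}\psi |_{j, K} \leq C \inf_{r \in Q} \sum_{j=0}^{2} h_K^{j-2} | \psi - r |_{j, K}.
\end{align*}
I would then bound the infimum by applying the scaled second Korn inequality to the $H^1$ vector field $\vcurl \psi$ to produce $\bdd{r} \in \bdd{RM}$ with $h_K^{-1}\|\vcurl \psi - \bdd{r}\|_K + |\vcurl \psi - \bdd{r}|_{1, K} \leq C \|\bdd{\varepsilon}(\vcurl \psi)\|_K$, lifting $\bdd{r} = \vcurl r_1$ for some $r_1 \in Q$ (a direct $4$-dimensional computation), and finally adjusting $r_1$ by an additive constant via Poincar\'{e} to control the $L^2$ term.
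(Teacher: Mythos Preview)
Your proposal is correct and follows essentially the same approach as the paper's proof. The $C^1$ polynomial extension you anticipate is supplied by \cite[Corollary~A.1]{AinCP19Precon}, and your final Korn-plus-lifting argument for the approximation estimate (your space $Q$ is the paper's $\mathcal{U}$) is precisely the content of \cref{lem:h2 korn inequality}, which the paper factors out as a separate appendix lemma and then invokes.
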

\begin{proof}
	Let $K \in \mathcal{T}$. For $\psi \in \Sigma$, there exists $\chi \in \mathcal{P}_{p+1}(K)$ satisfying $D^{\alpha} (\phi - \chi)|_{\partial K} = 0$ for $|\alpha| \leq 1$ and
	\begin{align}
		\label{eq:proof:h2ext thm}
		|\chi|_{2,K} \leq C \left\{ h_K^{-3/2} \|\psi\|_{\partial K} + h_K^{-1/2} \|\nabla \psi\|_{\partial K} + |\nabla \psi|_{1/2,\partial K}\right\}
	\end{align}
	thanks to \cite[Corollary A.1]{AinCP19Precon}. Choosing $\rho = \mathbb{B}\psi - \chi$ in \cref{eq:biharmonic extension 1} and applying the Cauchy-Schwarz inequality gives 
	\begin{align*}
		\| \bdd{\varepsilon}(\vcurl \mathbb{B} \psi) \|_{K}^2 = (2\mu)^{-1} a_K(\vcurl \mathbb{B} \psi, \vcurl \mathbb{B} \chi) \leq \|  \bdd{\varepsilon}(\vcurl \mathbb{B} \psi) \|_{K} \|  \bdd{\varepsilon}(\vcurl \chi) \|_{K}.
	\end{align*}
	Consequently, there holds
	\begin{align*}
		\| \bdd{\varepsilon}(\vcurl \mathbb{B} \psi) \|_{K} \leq  \|  \bdd{\varepsilon}(\vcurl \chi) \|_{K} \leq \sqrt{2} |\chi|_{2,K}.
	\end{align*}
	Using the relation
	\begin{align*}
		h_K^{-1} \| \rho \|_{K} \leq C \{ h_K^{-1/2} \|\rho\|_{\partial K} + \|\vcurl \rho\|_K \} \qquad \forall \rho \in H^1(K),
	\end{align*}
	which follows from the compactness of the embedding $H^1(K) \hookrightarrow L^2(K)$ and a standard scaling argument, together with \cref{eq:proof:korn inequality boundary} applied to $\vcurl \psi$, we obtain
	\begin{multline*}
		h_K^{-2} \|\psi\|_{K} + h_K^{-1} \|\vcurl \psi\|_{K} \\
			\leq C \{ h_K^{-3/2} \|\psi\|_{\partial K} + h_K^{-1/2} \|\vcurl \psi\|_{\partial K} + \|\bdd{\varepsilon}(\vcurl \psi)\|_{K} \}. 
	\end{multline*}
	Collecting results gives
	\begin{align*}
		\sum_{j=0}^{2} h_K^{j-2}  |\mathbb{B} \psi|_{j, K} &\leq C \left\{ h_K^{-3/2} \|\mathbb{B} \psi\|_{\partial K} + h_K^{-1/2} \|\vcurl \mathbb{B} \psi\|_{\partial K} + \| \bdd{\varepsilon}(\vcurl \mathbb{B} \psi) \|_{K} \right\} \\
		&\leq C \left\{ h_K^{-3/2} \|\psi\|_{\partial K} + h_K^{-1/2} \|\vcurl \psi\|_{\partial K} + |\chi|_{2,K} \right\}.
	\end{align*}
	Since $|\vcurl \psi| = |\nabla \psi|$, \cref{eq:biharmonic continuity element} now follows from \cref{eq:proof:h2ext thm}.
	
	Using \cref{eq:biharmonic continuity element} and the trace theorem, we obtain
	\begin{align*}
		\sum_{j=0}^{2} h_K^{j-2} \|\mathbb{B} \psi \|_{j,K} &\leq C  \left\{ h_K^{-3/2} \|\psi\|_{\partial K} + h_K^{-1/2} \|\nabla \psi\|_{\partial K} + |\nabla \psi|_{1/2, \partial K} \right\} \\
		&\leq C \sum_{j=0}^{2} h_K^{j-2} \|\psi \|_{j,K},
	\end{align*}
	and hence
	\begin{align*}
		\sum_{j=0}^{2} h_K^{j-2} \|\psi - \mathbb{B} \psi \|_{j,K} \leq C \sum_{j=0}^{2} h_K^{j-2} \|\psi \|_{j,K}.
	\end{align*}
	Since $\mathbb{B} u = u$ for any $u \in \mathcal{U}$ defined in \cref{lem:h2 korn inequality}, there holds
	\begin{align*}
		\sum_{j=0}^{2} h_K^{j-2} \|\psi - \mathbb{B} \psi \|_{j,K} \leq C  \inf_{u \in \mathcal{U}}\sum_{j=0}^{2} h_K^{j-2} \|\psi - u \|_{j,K} \leq C \| \bdd{\varepsilon}(\vcurl \psi)\|_K,
	\end{align*}
	where we used \cref{eq:h2 korn inequality} in the last step.
\end{proof}

\subsection{Nodal Interpolation Operators}

\begin{lemma}
	\label{lem:h2 coarse comp}
	There exists a linear nodal interpolation operator $\mathcal{J}_C : \Sigma_D \to \tilde{\Sigma}_C$ with the following properties:
	\begin{alignat}{2}
		\label{eq:h2 coarse interp vertices}
		\mathcal{J}_C \psi(\bdd{a}) &= \psi(\bdd{a}) \qquad & &\forall \bdd{a} \in \mathcal{V},  
	\end{alignat}
	and, for all $K \in \mathcal{T}$,
	\begin{multline}
		\label{eq:h2 coarse continuity}
		h_K^{-2} \|\psi - \mathcal{J}_C \psi\|_{K} + h_K^{-1} |\psi - \mathcal{J}_C \psi|_{1,K} + |\psi - \mathcal{J}_C \psi|_{2,K} + \| \bdd{\varepsilon}(\vcurl \mathcal{J}_C \psi)\|_K \\
		\leq C \|\bdd{\varepsilon}(\vcurl \psi)\|_{\mathcal{T}_K}.
	\end{multline}
\end{lemma}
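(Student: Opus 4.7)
My plan is to construct $\mathcal{J}_C$ as the composition of an $H^2$-conforming quasi-interpolant onto the lowest-order Argyris (quintic $C^1$) finite element space with the biharmonic extension operator $\mathbb{B}$, in direct analogy with the construction $\mathcal{I}_C = \mathbb{S}\mathcal{I}_{SZ}$ used in the proof of \cref{lem:coarse comp 2}. Let $\Sigma^{(5)}_D := \{\psi \in \Sigma_D : \psi|_K \in \mathcal{P}_5(K)\ \forall K \in \mathcal{T}\}$ be the Argyris subspace of $\Sigma_D$. Following Girault--Scott-type constructions of $C^1$ quasi-interpolants, I would invoke a linear operator $\mathcal{J}_{SZ} : \Sigma_D \to \Sigma^{(5)}_D$ which preserves vertex values and vertex gradients of $\psi$, reproduces locally the kernel $\mathcal{U}$ of $\bdd{\varepsilon}\circ\vcurl$ (here $\mathcal{U}$ is the space appearing in \cref{lem:h2 korn inequality}, namely $\mathrm{span}\{1,x,y,x^2+y^2\}$, which lies in $\mathcal{P}_2 \subset \mathcal{P}_5$), and satisfies the standard quasi-local approximation estimate
\begin{align*}
\sum_{j=0}^2 h_K^{j-2} |\psi - \mathcal{J}_{SZ}\psi|_{j,K} \leq C\,|\psi|_{2,\mathcal{T}_K} \qquad \forall K \in \mathcal{T}.
\end{align*}
I then set $\mathcal{J}_C \psi := \mathbb{B}\mathcal{J}_{SZ}\psi$.

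To verify that $\mathcal{J}_C \psi \in \tilde{\Sigma}_C$, observe that $\mathcal{J}_{SZ}\psi|_\gamma \in \mathcal{P}_5(\gamma)$ for every edge $\gamma$, and $\mathbb{B}$ preserves boundary traces by \cref{eq:biharmonic extension 2,eq:biharmonic extension 3}, so $\mathcal{J}_C\psi|_\gamma \in \mathcal{P}_5(\gamma)$ as well. Since $\mathbb{B}\Sigma_D = \tilde{\Sigma}_B$ by \cref{rem:biharmonic traces}, it follows that $\mathcal{J}_C\psi \in \tilde{\Sigma}_C$. The vertex preservation property \cref{eq:h2 coarse interp vertices} is an immediate consequence of the vertex preservation by $\mathcal{J}_{SZ}$ together with $\mathbb{B}\phi(\bdd{a}) = \phi(\bdd{a})$ for each vertex $\bdd{a}$, which again follows from \cref{eq:biharmonic extension 2}.

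For the continuity estimate \cref{eq:h2 coarse continuity}, I apply the triangle inequality
\begin{align*}
|\psi - \mathcal{J}_C \psi|_{j,K} \leq |\psi - \mathcal{J}_{SZ}\psi|_{j,K} + |\mathcal{J}_{SZ}\psi - \mathbb{B}\mathcal{J}_{SZ}\psi|_{j,K}, \quad j \in \{0,1,2\},
\end{align*}
bound the first term by the Scott-Zhang estimate and the second by \cref{eq:biharmonic approximation} applied to $\mathcal{J}_{SZ}\psi$. After summing with the weights $h_K^{j-2}$, this produces a bound of the form $C|\psi|_{2,\mathcal{T}_K} + C\|\bdd{\varepsilon}(\vcurl \mathcal{J}_{SZ}\psi)\|_K$. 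A further triangle inequality and the Scott-Zhang estimate control the second summand by $\|\bdd{\varepsilon}(\vcurl \psi)\|_{\mathcal{T}_K}$. Since $\mathcal{J}_{SZ}$ reproduces $\mathcal{U}$, I replace $\psi$ by $\psi - u$ with $u \in \mathcal{U}$ in the first summand, take the infimum over $\mathcal{U}$, and apply the $H^2$-Korn inequality of \cref{lem:h2 korn inequality} to convert $|\psi|_{2,\mathcal{T}_K}$ into $\|\bdd{\varepsilon}(\vcurl \psi)\|_{\mathcal{T}_K}$. The remaining strain term $\|\bdd{\varepsilon}(\vcurl \mathcal{J}_C\psi)\|_K \leq C|\mathcal{J}_C\psi|_{2,K}$ is then controlled by $|\psi|_{2,K} + |\psi - \mathcal{J}_C\psi|_{2,K}$ and the same Korn argument.

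The main obstacle is the existence of the operator $\mathcal{J}_{SZ}$ meeting all three requirements simultaneously: $C^1$-conformity in the Argyris space, conformity with the Dirichlet data encoded in $H^2_D(\Omega)$, and reproduction of $\mathcal{U}$ (and, more delicately, of enough polynomials to enable the Korn step on patches touching $\Gamma_D$, where $\mathcal{U}$ need not lie in $\Sigma_D$). On such boundary patches one relies instead on the boundary-adapted variant of the $H^2$-Korn inequality, which is what \cref{lem:h2 korn inequality} is presumably designed to provide. The construction of $C^1$ Scott-Zhang-type operators with these properties is by now standard in the $C^1$-approximation literature and can be assembled via local averaging against dual basis functions supported on vertex patches.
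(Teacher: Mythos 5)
Your overall architecture matches the paper's: compose a $C^1$ quasi-interpolant onto the quintic space with the biharmonic extension $\mathbb{B}$, then control the remainder via $\mathcal{U}$-reproduction and the $H^2$-Korn inequality of \cref{lem:h2 korn inequality}. That blueprint is correct, and your verification that $\mathcal{J}_C\psi \in \tilde{\Sigma}_C$, the triangle-inequality splitting, and the Korn step all line up with what is needed.

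The genuine gap is the existence of the operator you call $\mathcal{J}_{SZ}$. You need a single $C^1$-conforming quasi-interpolant that (i) lands in the quintic space, (ii) satisfies the quasi-local $H^2$-approximation estimate with $h$- and $p$-uniform constants, (iii) reproduces $\mathcal{U}$, (iv) respects the Dirichlet data in $\Sigma_D$, and (v) interpolates exactly at mesh vertices. The Girault--Scott operator $\mathcal{J}_{GS}$ that the paper invokes supplies (i)--(iv) but \emph{not} (v): it is a regularization/averaging operator and does not preserve point values in general, so citing ``Girault--Scott-type constructions'' as evidence for (v) is circular, and I am not aware of any off-the-shelf $C^1$ quasi-interpolant that delivers all of (i)--(v) at once. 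The paper fills this gap explicitly: it introduces a quintic correction $\mathcal{I}_5$, defined by matching the vertex values $\omega(\bdd{a}) = \rho(\bdd{a})$ while zeroing the higher-order vertex DOFs and the mean edge normal derivatives, and satisfying $\sum_{j} h_K^{j-2}|\omega|_{j,K} \leq C \max_{\bdd{a} \in \mathcal{V}_K}|\omega(\bdd{a})| \leq C \sum_{j} h_K^{j-2}|\rho|_{j,K}$ by norm equivalence on $\mathcal{P}_5(K)$ together with $H^2(K) \hookrightarrow C^0(\bar{K})$. Setting $\rho_C := \mathcal{J}_{GS}\psi + \mathcal{I}_5(\psi - \mathcal{J}_{GS}\psi)$ then yields vertex-value preservation without spoiling the approximation estimate, and $\mathcal{J}_C := \mathbb{B}\rho_C$. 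Incidentally, you also ask $\mathcal{J}_{SZ}$ to preserve vertex gradients, which the lemma does not claim, the paper's construction does not deliver (the correction $\mathcal{I}_5$ deliberately zeros those DOFs), and the proof does not need; requiring it would only make the postulated operator harder to obtain.
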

\begin{proof}
	Let $\psi \in \Sigma_D$ be given. Let $\mathcal{J}_{GS} : H^2(\Omega) \to \Sigma_5 := \{ \rho \in \Sigma : \rho|_{K} \in \mathcal{P}_{5}(K) \ \forall K \in \mathcal{T} \}$ denote the projection operator defined by Girault \& Scott  \cite{Girault02}. By \cite[eq. (7.9)]{Girault02}, there holds
	\begin{align*}
		\sum_{j=0}^{2} h_K^{j-2} | \psi - \mathcal{J}_{GS} \psi |_{j,K} + \| \bdd{\varepsilon}(\vcurl \mathcal{J}_{GS} \psi)\|_K  \leq C |\psi|_{2,\mathcal{T}_K}.
	\end{align*}	
	Since $\mathcal{J}_{GS} u = u$ for any $u \in \mathcal{U}$, where $\mathcal{U}$ is defined in \cref{lem:h2 korn inequality}, we may replace $\psi$ with $\psi - u$, take the infimum over all such $u$, and apply \cref{eq:h2 korn inequality patch} to obtain
	\begin{align}
		\label{eq:proof:h2 scottzhang cont}
		\sum_{j=0}^{2} h_K^{j-2 } | \psi - \mathcal{J}_{GS} \psi |_{j,K} + \| \bdd{\varepsilon}(\vcurl \mathcal{J}_{GS} \psi)\|_K  \leq C \|\bdd{\varepsilon}(\vcurl \psi)\|_{\mathcal{T}_K}.
	\end{align}
	
	Given $\rho \in \Sigma$, we construct $\omega \in \Sigma_5$ by assigning the degrees of freedom \cite{MorganScott75} as follows:
	\begin{subequations}
		\begin{alignat}{2}
			\label{eq:proof:h2 vertex dofs}
			\omega(\bdd{a}) &= \rho(\bdd{a}) \qquad & &\forall \bdd{a} \in \mathcal{V},\\
			D^{\alpha} \omega(\bdd{a}) &= 0 \qquad & &\forall \bdd{a} \in \mathcal{V}, \ \forall 1 \leq |\alpha| \leq 2, \\
			\int_{\gamma} \partial_n \omega \ dS &= 0 \qquad & &\forall \gamma \in \mathcal{E}.
		\end{alignat}
	\end{subequations}
	It is easily seen that if $\rho \in \Sigma_D$, then $\omega \in \Sigma_5 \cap \Sigma_D$. Additionally, a standard norm equivalence argument on the finite dimensional space $\mathcal{P}_5(K)$, together with the embedding $H^2(K) \hookrightarrow C^0(\bar{K})$, shows that
	\begin{align*}
		\sum_{j=0}^{2} h_K^{j-2} |\omega|_{j,K} \leq C \max_{\bdd{a} \in \mathcal{V}_K} |\omega(\bdd{a})| \leq C \sum_{j=0}^{2} h_K^{j-2} |\rho|_{j,K}.
	\end{align*}
	Finally, we may define $\mathcal{I}_5 : \Sigma \to \Sigma_5$ by the rule $\mathcal{I}_5 \rho := \omega$. 
	
	Let $\rho_C := \mathcal{J}_{GS} \psi + \mathcal{I}_5 (\psi - \mathcal{J}_{GS} \psi)$. Thanks to \cref{eq:proof:h2 scottzhang cont,eq:proof:h2 vertex dofs}, there holds
	\begin{align*}
		\sum_{j=0}^{2} h_K^{j-2} |\psi - \rho_C|_{j,K} + \| 	\bdd{\varepsilon}(\vcurl \rho_C)\|_K \leq C \|\bdd{\varepsilon}(\vcurl \psi)\|_{\mathcal{T}_K}.
	\end{align*}
	The operator $\mathcal{J}_C : \Sigma_D \to \tilde{\Sigma}_C$ defined by the rule $\mathcal{J}_C \psi := \mathbb{B} \rho_C$ satisfies \cref{eq:h2 coarse interp vertices} thanks \cref{eq:biharmonic extension 2,eq:biharmonic extension 3} and \cref{eq:h2 coarse continuity} thanks to \cref{eq:biharmonic continuity element}, the triangle inequality, and the trace theorem. Moreover, since $\psi \in \Sigma_D$, $\mathcal{J}_{GS} \psi \in \Sigma_D$ by \cite[Theorem 5.1]{Girault02} and so $\mathcal{J}_C \psi \in \tilde{\Sigma}_B$. The inclusion $\mathcal{J}_C \psi \in \tilde{\Sigma}_C$  follows directly from construction.	
\end{proof}

\begin{lemma}
	\label{lem:c1 vertex comp aux}
	Let $\bdd{a} \in \mathcal{V}$, then there exists a linear nodal interpolation operator $\mathcal{J}_{\bdd{a}} : \Sigma_D \to \tilde{\Sigma}_{\bdd{a}}$ such that for all $\psi \in \Sigma_D$:
	\begin{enumerate}
		\item[(i)] 	$\mathcal{J}_{\bdd{a}} \psi(\bdd{a}) = 0$, $\nabla \mathcal{J}_{\bdd{a}} \psi(\bdd{a}) = \nabla \psi(\bdd{a})$, and $D^2 \mathcal{J}_{\bdd{a}} \psi|_{K}(\bdd{a}) = D^2 \psi|_{K}(\bdd{a})$  $\forall K \in \mathcal{T}$.
		
		\item[(ii)] For all $K \in \mathcal{T}_{\bdd{a}}$,
		\begin{multline}
			\label{eq:c1 vertex comp aux h2 cont}
			h_K^{-2} \| \mathcal{J}_{\bdd{a}} \psi \|_{K} + h_K^{-1} |\mathcal{J}_{\bdd{a}} \psi|_{1, K} + |\mathcal{J}_{\bdd{a}} \psi|_{2, K} \\
			\leq C \{ 	h_K^{-2} \| \psi \|_{K} + h_K^{-1} |\psi|_{1, K} + |\psi|_{2, K} \}.
		\end{multline}
		
		\item[(iv)] For all $\gamma \in \mathcal{E}_{\bdd{a}}$,
		\begin{multline}
			\label{eq:c1 vertex comp aux h1200 cont}
			|\gamma|^{-1/2} \|\xi_{\bdd{a}} \nabla \psi - \nabla \mathcal{J}_{\bdd{a}} \psi\|_{\gamma} + |\xi_{\bdd{a}} \nabla \psi - \nabla \mathcal{J}_{\bdd{a}} \psi|_{\bdd{H}^{1/2}_{00}(\gamma)} \\
			\leq C \{ |\gamma|^{-1/2} \|\nabla \psi\|_{\gamma} + |\nabla \psi|_{1/2, \gamma} \}.
		\end{multline}
	\end{enumerate}
\end{lemma}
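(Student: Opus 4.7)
The plan is to mimic the proof of \cref{lem:c0 vertex comp aux}, defining $\mathcal{J}_{\bdd{a}} \psi := \mathbb{B} v$ where $v \in \Sigma_D$ is assembled elementwise from a $C^1$-conforming element-level vertex interpolant, and then using the boundary-preservation and stability properties of the biharmonic extension $\mathbb{B}$ to transfer all required properties from the element level to the patch level.

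First I would invoke a $C^1$-conforming analogue of \cref{thm:c0 vertex interp element} which, for each $K \in \mathcal{T}_{\bdd{a}}$, provides a linear operator $\mathcal{J}_K^{\bdd{a}} : \Sigma \to \mathcal{P}_{p+1}(K)$ with the following properties: (a) $\mathcal{J}_K^{\bdd{a}} \psi(\bdd{a}) = 0$, $\nabla \mathcal{J}_K^{\bdd{a}} \psi(\bdd{a}) = \nabla \psi(\bdd{a})$, and $D^2 \mathcal{J}_K^{\bdd{a}} \psi(\bdd{a}) = D^2 \psi|_K(\bdd{a})$; (b) both $\mathcal{J}_K^{\bdd{a}} \psi$ and $\partial_n \mathcal{J}_K^{\bdd{a}} \psi$ vanish on edges of $K$ not incident to $\bdd{a}$; (c) an elementwise $H^2$-continuity estimate of the same form as \cref{eq:c1 vertex comp aux h2 cont}; and (d) an $\bdd{H}^{1/2}_{00}(\gamma)$ estimate of the form \cref{eq:c1 vertex comp aux h1200 cont} on edges $\gamma \in \mathcal{E}_{\bdd{a}} \cap \mathcal{E}_K$.

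Next, I would define $v \in \Sigma$ by $v|_K = \mathcal{J}_K^{\bdd{a}} \psi$ for $K \in \mathcal{T}_{\bdd{a}}$ and $v|_K \equiv 0$ otherwise. Property (a) ensures $v$, $\nabla v$, and $D^2 v$ agree at $\bdd{a}$ across elements in $\mathcal{T}_{\bdd{a}}$, which together with agreement of tangential derivatives along shared edges yields $C^1$-continuity across interior patch edges; property (b) ensures $v$ and $\partial_n v$ vanish on the patch boundary, so the extension by zero outside $\mathcal{T}_{\bdd{a}}$ remains $C^1$. Since the Dirichlet-type conditions defining $\Sigma_D$ are inherited from $\psi$, we obtain $v \in \Sigma_D$. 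Setting $\mathcal{J}_{\bdd{a}} \psi := \mathbb{B} v$ and noting $\supp v \subseteq \mathcal{T}_{\bdd{a}}$, \cref{rem:biharmonic traces} gives $\mathcal{J}_{\bdd{a}} \psi \in \tilde{\Sigma}_{\bdd{a}}$.

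The three properties then follow from the boundary-preservation and continuity of $\mathbb{B}$. Property (i) is immediate from \cref{eq:biharmonic extension 2,eq:biharmonic extension 3}, which force $\mathcal{J}_{\bdd{a}} \psi$ and $v$ to agree in values and normal derivatives on every element boundary; this fixes the value and gradient of $\mathcal{J}_{\bdd{a}} \psi$ at $\bdd{a}$, and the Hessian at $\bdd{a}$ on $K$ is likewise determined because the two edges of $K$ meeting at $\bdd{a}$ have linearly independent tangent vectors, so the two tangential second derivatives plus a mixed derivative (recovered from the tangential derivative of $\partial_n v$ along one edge) determine all three independent components of $D^2$. Property (ii) follows from \cref{eq:biharmonic continuity element}, the trace theorem, and (c), while property (iv) follows from \cref{eq:biharmonic extension 3} together with (d). The main technical obstacle is establishing the element-level interpolant satisfying property (d), the $\xi_{\bdd{a}}$-weighted $\bdd{H}^{1/2}_{00}(\gamma)$ estimate on the normal derivative; this requires carefully designed edge bubble functions tailored to remove the contribution of $\nabla \psi$ at the vertex $\bdd{a}$ in a $p$-uniform way, and is the crux of the whole construction.
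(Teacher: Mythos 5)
Your proposal mirrors the paper's proof exactly: assemble an element-level $C^1$ interpolant (the paper's \cref{thm:c1 vertex interp element}, playing the role of your ``$C^1$-conforming analogue''), patch it into a function $v \in \Sigma_D$ supported on $\mathcal{T}_{\bdd{a}}$ using properties (1) and (4) of that theorem, set $\mathcal{J}_{\bdd{a}}\psi := \mathbb{B} v$, and transfer the estimates through the biharmonic extension via \cref{eq:biharmonic extension 2,eq:biharmonic extension 3,eq:biharmonic continuity element} together with the element-level bounds and \cref{eq:c1 vertex interp h1200}. One small slip worth noting: $D^2 v$ does \emph{not} agree at $\bdd{a}$ across elements of $\mathcal{T}_{\bdd{a}}$ (only $C^1$ continuity is required, and $D^2 \psi|_K(\bdd{a})$ is element-dependent since $\psi$ is merely $C^1$), but this does not affect your argument because the conformity check only requires matching traces and normal derivatives across edges, which your construction does ensure.
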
 
\begin{proof}
	Let $\psi \in \Sigma_D$ and $\bdd{a} \in \mathcal{V}$. We define the function $\rho$ by the rule $\rho|_{K} = \mathcal{J}_{K}^{\bdd{a}} \psi$ for $K \in \mathcal{T}_{\bdd{a}}$ and $\rho|_{K} \equiv 0$ for $K \in \mathcal{T} \setminus \mathcal{T}_{\bdd{a}}$, where $\mathcal{J}_{K}^{\bdd{a}}$ is the linear operator in \cref{thm:c1 vertex interp element}. By \cref{thm:c1 vertex interp element} (1) and (4), $\rho \in \Sigma_D$. Define $\mathcal{J}_{\bdd{a}} : \Sigma_D \to \tilde{\Sigma}_{\bdd{a}}$ by the rule $\mathcal{J}_{\bdd{a}} \psi := \mathbb{B} \rho$. (1) now follows from \cref{eq:biharmonic extension 2}, \cref{eq:biharmonic extension 3}, and \cref{thm:c1 vertex interp element} (1). The trace theorem, \cref{eq:biharmonic continuity element}, and \cref{thm:c1 vertex interp element} (2) give \cref{eq:c1 vertex comp aux h2 cont}. Finally, \cref{eq:c1 vertex comp aux h1200 cont} follows from \cref{eq:c1 vertex interp h1200}.
\end{proof}

\begin{lemma}
	\label{lem:h2 edge comp}
	Let $\Sigma_{\mathcal{E}} := \{ \psi \in \Sigma_D : D^{\alpha} \psi(\bdd{a}) = 0 \ \forall |\alpha| \leq 2, \ \bdd{a} \in \mathcal{V} \}$. For each $\gamma$, there exists a linear interpolation operator $\mathcal{J}_{\gamma} : \Sigma_{\mathcal{E}} \to \tilde{\Sigma}_B$ with the following properties for $\psi \in \Sigma_{\mathcal{E}}$:
	\begin{subequations}
		\label{eq:h2 edge comp prop}
		\begin{alignat}{2}
			D^{\alpha} \mathcal{J}_{\gamma} \psi(\bdd{a}) &= 0 \qquad & &\forall \bdd{a} \in \mathcal{V}, \ \forall |\alpha| \leq 2, \\
			D^{\beta}\mathcal{J}_{\gamma} \psi &= D^{\beta}  \psi \qquad & & \text{on } \gamma \ \forall |\beta| \leq 1, \\
			\supp \mathcal{J}_{\gamma} \psi &\subseteq \mathcal{T}_{\gamma}, \qquad & &
		\end{alignat}
	\end{subequations}
	and
	\begin{align}
		\label{eq:h2 edge comp continuity}
		h_K^{-2} \|\mathcal{J}_{\gamma} \psi\|_{K} + h_K^{-1} |\mathcal{J}_{\gamma} \psi|_{1, K} + |\mathcal{J}_{\gamma} \psi|_{2, K} &\leq C  |\nabla \psi|_{\bdd{H}^{1/2}_{00}(\gamma)} \qquad \forall K \in \mathcal{T}_{\gamma}.
	\end{align}
\end{lemma}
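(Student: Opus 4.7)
The plan is to mirror the construction of $\mathcal{I}_\gamma$ in \cref{lem:c0 edge interp}, with the biharmonic extension $\mathbb{B}$ playing the role previously played by the Stokes extension $\mathbb{S}$. Given $\psi \in \Sigma_{\mathcal{E}}$, I would first choose any $\rho \in \Sigma_D$ with $\supp \rho \subseteq \mathcal{T}_\gamma$ and $D^\beta \rho|_\gamma = D^\beta \psi|_\gamma$ for $|\beta| \leq 1$, and then set $\mathcal{J}_\gamma \psi := \mathbb{B} \rho$. By \cref{rem:biharmonic traces}, the resulting function lies in $\tilde{\Sigma}_B$.

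To construct $\rho$, I would work element-by-element on $\mathcal{T}_\gamma$, prescribing the local $C^1$ degrees of freedom on $K$ as follows: on the edge $\gamma$, match $\psi|_\gamma$ and $\nabla \psi|_\gamma$; on each of the other two edges of $K$ and at their endpoints, impose vanishing values, normal derivatives, and second derivatives. Compatibility at the two endpoints of $\gamma$ is automatic because $\psi \in \Sigma_\mathcal{E}$ forces $D^\alpha \psi(\bdd{a}) = \bdd{0}$ for $|\alpha| \leq 2$ there, matching the zero data prescribed on the remaining edges of $K$; $C^1$-continuity across interior edges of $\mathcal{T}_\gamma$ is ensured because the corresponding traces and normal derivatives are all zero. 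With $\rho$ in hand, the properties \cref{eq:h2 edge comp prop} follow directly: (iii) from the support of $\rho$; (ii) from \cref{eq:biharmonic extension 2,eq:biharmonic extension 3}; and (i) because $D^\alpha \rho(\bdd{a}) = \bdd{0}$ for $|\alpha| \leq 2$ at every vertex, a property preserved by $\mathbb{B}$.

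For the continuity estimate \cref{eq:h2 edge comp continuity}, I would apply \cref{eq:biharmonic continuity element} to obtain
\begin{align*}
    h_K^{-2} \|\mathcal{J}_\gamma \psi\|_K + h_K^{-1} |\mathcal{J}_\gamma \psi|_{1,K} + |\mathcal{J}_\gamma \psi|_{2,K} \leq C\left\{ h_K^{-3/2}\|\rho\|_{\partial K} + h_K^{-1/2}\|\nabla \rho\|_{\partial K} + |\nabla \rho|_{1/2,\partial K}\right\}.
\end{align*}
Since $\rho$ and $\nabla \rho$ vanish on $\partial K \setminus \gamma$, these trace quantities reduce to the corresponding norms of $\psi$ and $\nabla \psi$ on $\gamma$, with the $H^{1/2}(\partial K)$ seminorm of $\nabla \rho$ equivalent to $|\nabla \psi|_{\bdd{H}^{1/2}_{00}(\gamma)}$ by the characterization of $\bdd{H}^{1/2}_{00}(\gamma)$ as the space of functions whose extension by zero to $\partial K$ lies in $\bdd{H}^{1/2}(\partial K)$. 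Because $\nabla \psi$ vanishes at the endpoints of $\gamma$, a Poincar\'{e}-type inequality for $\bdd{H}^{1/2}_{00}(\gamma)$ gives $\|\nabla \psi\|_\gamma \leq C|\gamma|^{1/2} |\nabla \psi|_{\bdd{H}^{1/2}_{00}(\gamma)}$, and chaining with the vanishing of $\psi$ itself at the endpoints yields $\|\psi\|_\gamma \leq C|\gamma|\,\|\nabla \psi\|_\gamma$. Using $|\gamma| \simeq h_K$ then bounds the right hand side above by $C|\nabla \psi|_{\bdd{H}^{1/2}_{00}(\gamma)}$, as required.

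The main obstacle will be bookkeeping: verifying that the local $C^1$ data prescribed on each $K \in \mathcal{T}_\gamma$ really can be assembled into a single $\rho \in \Sigma_D$ supported on $\mathcal{T}_\gamma$, and that the endpoint vanishing of $\psi$ and $\nabla \psi$ is strong enough to make \emph{all} of the trace quantities in \cref{eq:biharmonic continuity element} controllable by the single quantity $|\nabla \psi|_{\bdd{H}^{1/2}_{00}(\gamma)}$. As in the proof of \cref{lem:c0 edge interp}, the interior behaviour of $\rho$ on $K$ need not be estimated separately, since $\mathbb{B}$ only depends on $\rho$ through its boundary traces and normal derivatives and any admissible polynomial extension of those traces will do.
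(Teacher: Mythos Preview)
Your proposal is correct and follows essentially the same route as the paper: define $\mathcal{J}_\gamma\psi := \mathbb{B}\rho$ for any $\rho$ supported on $\mathcal{T}_\gamma$ matching $\psi|_\gamma$ and $\partial_n\psi|_\gamma$, then apply \cref{eq:biharmonic continuity element} and use that the traces reduce to $\gamma$ together with Poincar\'e's inequality to finish. The paper is terser about constructing $\rho$ (it simply invokes existence via \cref{rem:biharmonic traces}), but your element-by-element description is a valid way to flesh out that step.
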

\begin{proof}
	Let $\gamma \in \mathcal{E}$, $K \in \mathcal{T}_{\gamma}$ and $\psi \in \Sigma_{\mathcal{E}}$. Let $\rho$ be any function supported on $\mathcal{T}_{\gamma}$ with $\rho|_{\gamma} = \psi|_{\gamma}$ and $\partial_n \rho|_{\gamma} = \partial_n \psi|_{\gamma}$. Thanks to \cref{rem:biharmonic traces}, the operator $\mathcal{J}_{\gamma} : \Sigma_{\mathcal{E}} \to \tilde{\Sigma}_B$ given by the rule $\mathcal{J}_{\gamma} \psi = \mathbb{B} \rho$ is well-defined. Moreover, there holds
	\begin{align}
		\label{eq:proof:rhok boundary cont}
		&h_K^{-2} \|\mathcal{J}_{\gamma} \psi\|_{K} + h_K^{-1} |\mathcal{J}_{\gamma} \psi|_{1, K} + |\mathcal{J}_{\gamma} \psi|_{2, K} \\
		&\qquad \leq C \left\{h_K^{-3/2} \|\rho\|_{\partial K} + h_K^{-1/2} \| \nabla \rho \|_{\partial K} + |\nabla \rho |_{1/2,\partial K} \right\} \notag \\
		&\qquad \leq C \left\{h_K^{-3/2} \|\psi\|_{\gamma} + h_K^{-1/2} \| \nabla \psi \|_{\gamma} + | \nabla \psi |_{\bdd{H}^{1/2}_{00}(\gamma)} \right\} \notag \\
		&\qquad= C |\nabla \psi|_{\bdd{H}^{1/2}_{00}(\gamma)} \notag
	\end{align}
	for each $K \in \mathcal{T}_{\gamma}$ by \cref{eq:biharmonic continuity element} thanks to Poincar\'{e}'s inequality.
\end{proof}

\begin{proof}[Proof of \cref{thm:h2 stable decomp}]
	\Cref{eq:h2 stable decomp decomp} is a now a special case of \cref{lem:general stability}.
\end{proof}

\section{Proof of Main Result}
\label{sec:proof of main result}

The following stable decomposition result plays a crucial role in the proof of \cref{thm:main result}: 
\begin{theorem}
	\label{thm:stokes stable decomp lam}
	For every $\bdd{u} \in \tilde{\bdd{X}}_B$, there exist $\bdd{u}_C \in \tilde{\bdd{X}}_C $ and $\bdd{u}_{\bdd{a}} \in \tilde{\bdd{X}}_{\bdd{a}}$, $\bdd{a} \in \mathcal{V}$, such that
	\begin{align}
		\label{eq:stokes stable decomp lam decomp}
		\bdd{u} =  \bdd{u}_C +  \sum_{ \bdd{a} \in \mathcal{V}} \bdd{u}_{\bdd{a}} \quad \text{and} \quad a_{\lambda}(\bdd{u}_C, \bdd{u}_C)  + \sum_{ \bdd{a} \in \mathcal{V}} a_{\lambda}(\bdd{u}_{\bdd{a}}, \bdd{u}_{\bdd{a}})  \leq C  a_{\lambda}(\bdd{u}, \bdd{u}).
	\end{align}
\end{theorem}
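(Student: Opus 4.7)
\medskip

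\noindent\textbf{Proof proposal.} The strategy is to split $\bdd{u} \in \tilde{\bdd{X}}_B$ as $\bdd{u} = \bdd{w} + \bdd{u}_0$, where $\bdd{w}$ absorbs the divergence of $\bdd{u}$ in a strain-controlled way and $\bdd{u}_0$ is divergence free. One then applies the strain-stable decomposition (Theorem 5.3) to $\bdd{w}$ and the $a_\lambda$-stable divergence-free decomposition (Corollary 6.3) to $\bdd{u}_0$, adding the corresponding pieces to get a decomposition of $\bdd{u}$ itself. Concretely, I would proceed in the following steps.

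First, I construct a divergence lift $\bdd{w} \in \tilde{\bdd{X}}_B$ with $\dive \bdd{w} = \dive \bdd{u}$. Given $q := \dive \bdd{u} \in \dive \bdd{X}_D$, the global inf-sup condition \cref{eq:inf-sup global} produces $\bdd{v} \in \bdd{X}_D$ with $\dive \bdd{v} = q$ and $\|\bdd{v}\|_1 \le C\beta_X^{-1}\|q\|$. Setting $\bdd{w} := \mathbb{S}\bdd{v}$, Remark 4.1 gives $\bdd{w} \in \tilde{\bdd{X}}_B$, identity \cref{eq:div stokes ext id} together with $\bdd{u} \in \tilde{\bdd{X}}_B$ (so that $\Pi_I \dive \bdd{u} = 0$) yields $\dive \bdd{w} = \Pi_I^\perp \dive \bdd{v} = \Pi_I^\perp q = q = \dive \bdd{u}$, and the boundary continuity estimate \cref{eq:stokes continuity element} combined with the trace theorem gives $\|\bdd{\varepsilon}(\bdd{w})\| \le C\|\bdd{w}\|_1 \le C\|\bdd{v}\|_1 \le C\beta_X^{-1}\|\dive \bdd{u}\| \le C\beta_X^{-1}\|\bdd{\varepsilon}(\bdd{u})\|$. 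Set $\bdd{u}_0 := \bdd{u} - \bdd{w} \in \tilde{\bdd{N}}_B$, so that $\|\bdd{\varepsilon}(\bdd{u}_0)\| \le C(1+\beta_X^{-1})\|\bdd{\varepsilon}(\bdd{u})\|$.

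Next, apply Theorem 5.3 to $\bdd{w}$ to obtain $\bdd{w} = \bdd{w}_C + \sum_{\bdd{a}\in\mathcal{V}} \bdd{w}_{\bdd{a}}$ with $\|\bdd{\varepsilon}(\bdd{w}_C)\|^2 + \sum_{\bdd{a}} \|\bdd{\varepsilon}(\bdd{w}_{\bdd{a}})\|^2 \le C\|\bdd{\varepsilon}(\bdd{w})\|^2$, and apply Corollary 6.3 to $\bdd{u}_0 \in \tilde{\bdd{N}}_B$ to obtain $\bdd{u}_0 = \bdd{u}_{0,C} + \sum_{\bdd{a}} \bdd{u}_{0,\bdd{a}}$ with divergence-free pieces and $a_\lambda(\bdd{u}_{0,C},\bdd{u}_{0,C}) + \sum_{\bdd{a}} a_\lambda(\bdd{u}_{0,\bdd{a}},\bdd{u}_{0,\bdd{a}}) \le C a_\lambda(\bdd{u}_0,\bdd{u}_0) = 2\mu C\|\bdd{\varepsilon}(\bdd{u}_0)\|^2$. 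Define $\bdd{u}_C := \bdd{w}_C + \bdd{u}_{0,C} \in \tilde{\bdd{X}}_C$ and $\bdd{u}_{\bdd{a}} := \bdd{w}_{\bdd{a}} + \bdd{u}_{0,\bdd{a}} \in \tilde{\bdd{X}}_{\bdd{a}}$; these sum to $\bdd{u}$.

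Finally, to verify \cref{eq:stokes stable decomp lam decomp}, observe that the divergence-free pieces from Corollary 6.3 contribute nothing to the divergence of each combined piece, so $\dive \bdd{u}_C = \dive \bdd{w}_C$ and $\dive \bdd{u}_{\bdd{a}} = \dive \bdd{w}_{\bdd{a}}$. Using the elementary bound $\|\dive \bdd{v}\|^2 \le 2\|\bdd{\varepsilon}(\bdd{v})\|^2$ on each piece yields
\begin{align*}
\lambda\Bigl(\|\dive \bdd{u}_C\|^2 + \sum_{\bdd{a}}\|\dive \bdd{u}_{\bdd{a}}\|^2\Bigr)
&\le 2\lambda\Bigl(\|\bdd{\varepsilon}(\bdd{w}_C)\|^2 + \sum_{\bdd{a}}\|\bdd{\varepsilon}(\bdd{w}_{\bdd{a}})\|^2\Bigr) \\
&\le C\lambda\|\bdd{\varepsilon}(\bdd{w})\|^2 \le C\beta_X^{-2}\lambda\|\dive \bdd{u}\|^2 \le C\beta_X^{-2}\,a_\lambda(\bdd{u},\bdd{u}),
\end{align*}
while the strain contribution is handled by the triangle inequality, the strain bounds from Theorem 5.3 and Corollary 6.3, and the estimates of Step 1, giving $2\mu(\|\bdd{\varepsilon}(\bdd{u}_C)\|^2 + \sum_{\bdd{a}}\|\bdd{\varepsilon}(\bdd{u}_{\bdd{a}})\|^2) \le C(1+\beta_X^{-2})\cdot 2\mu\|\bdd{\varepsilon}(\bdd{u})\|^2 \le C(1+\beta_X^{-2})\,a_\lambda(\bdd{u},\bdd{u})$. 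The main obstacle I anticipate is Step 1: justifying an inf-sup lifting that remains inside $\tilde{\bdd{X}}_B$ (rather than general $\bdd{X}_D$) requires using both the global inf-sup constant $\beta_X$ and the fact that the Stokes extension $\mathbb{S}$ preserves $\Pi_I^\perp\dive$—once this is in hand, the rest is accounting for how the two subordinate decompositions combine.
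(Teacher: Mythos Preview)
Your proof is correct and follows the same overall architecture as the paper's: split $\bdd{u}=\bdd{u}_0+\bdd{w}$ with $\bdd{u}_0$ divergence free and $\bdd{w}$ carrying the divergence with strain controlled by $\beta_X^{-1}\|\dive\bdd{u}\|$, then apply Corollary~6.3 to $\bdd{u}_0$ and Theorem~5.3 to $\bdd{w}$, and recombine. The one genuine difference is in how the splitting is produced. The paper takes the $a_\lambda$-orthogonal decomposition $\tilde{\bdd{X}}_B=\tilde{\bdd{N}}_B\oplus\tilde{\bdd{N}}_B^{\perp}$ and then invokes an external result (\cite[Lemma~7.1]{AinCP22SCIP}) asserting that any $\bdd{w}\in\tilde{\bdd{N}}_B^{\perp}$ satisfies $\|\bdd{w}\|_1\le C\beta_X^{-1}\|\dive\bdd{w}\|$. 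You instead build $\bdd{w}$ explicitly: lift $\dive\bdd{u}$ via the global inf-sup condition to some $\bdd{v}\in\bdd{X}_D$, then set $\bdd{w}=\mathbb{S}\bdd{v}$ and use the identity $\dive\mathbb{S}\bdd{v}=\Pi_I^{\perp}\dive\bdd{v}$ together with $\Pi_I\dive\bdd{u}=0$ to conclude $\dive\bdd{w}=\dive\bdd{u}$. Your route is arguably more self-contained, since the needed strain bound on $\bdd{w}$ comes directly from \cref{eq:stokes seminorm continuity element} (a cleaner citation than \cref{eq:stokes continuity element}, which would introduce an unwanted $h_K^{-1}$ weight) rather than from an external lemma; the paper's route has the minor advantage that orthogonality gives $a_\lambda(\bdd{u},\bdd{u})=a_\lambda(\bdd{v},\bdd{v})+a_\lambda(\bdd{w},\bdd{w})$ exactly. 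After the split, the two arguments are essentially identical.
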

\begin{proof}
	The space $\tilde{\bdd{N}}_B$ \cref{eq:tildenb definition} is a closed subspace of $\tilde{\bdd{X}}_B$ and $a_{\lambda}( \cdot, \cdot)$ is an inner product on $\tilde{\bdd{X}}_B$ thanks to Korn's inequality (see e.g. \cite[Corollary 11.2.22]{Brenner08}), and so the space $\tilde{\bdd{X}}_B$ admits the following orthogonal decomposition with respect to $a_{\lambda}(\cdot, \cdot)$:
	\begin{align}
		\label{eq:stokes divfree orthog decomp}
		\tilde{\bdd{X}}_B = \tilde{\bdd{N}}_B \oplus \tilde{\bdd{N}}_B^{\perp}, \quad \text{where} \quad \tilde{\bdd{N}}_B^{\perp} := \{ \bdd{w} \in  \tilde{\bdd{X}}_B :  a_{\lambda}(\bdd{w}, \bdd{v}) = 0 \ \forall \bdd{v} \in \tilde{\bdd{N}}_B\}.
	\end{align} 
	
	Let $\bdd{u} \in \tilde{\bdd{X}}_B$ be given. Thanks to the decomposition \cref{eq:stokes divfree orthog decomp}, there exist $\bdd{v} \in \tilde{\bdd{N}}_B$ and $\bdd{w} \in \tilde{\bdd{N}}_B^{\perp}$ satisfying
	\begin{align}
		\label{eq:proof:tilde u orthog decomp}
		\bdd{u} = \bdd{v} + \bdd{w} \quad \text{and} \quad a_{\lambda}(\bdd{u}, \bdd{u}) = a_{\lambda}(\bdd{v}, \bdd{v}) + a_{\lambda}(\bdd{w}, \bdd{w}).	
	\end{align}	
	Let $\bdd{v}_C \in \tilde{\bdd{X}}_C \cap \tilde{\bdd{N}}_B$ and $\bdd{v}_{\bdd{a}} \in \tilde{\bdd{X}}_{\bdd{a}} \cap \tilde{\bdd{N}}_B$, $\bdd{a} \in \mathcal{V}$, be chosen as in \cref{thm:divfree stable decomp} and $\bdd{w}_C \in \tilde{\bdd{X}}_C$ and $\bdd{w}_{\bdd{a}} \in \tilde{\bdd{X}}_{\bdd{a}}$, $\bdd{a} \in \mathcal{V}$, as in \cref{thm:h1 stable decomp 2}. Choosing
	\begin{align*}
		\bdd{u}_C := \bdd{v}_C + \bdd{w}_C \quad \text{and} \quad \bdd{u}_{\bdd{a}} := \bdd{v}_{\bdd{a}} + \bdd{w}_{\bdd{a}}
	\end{align*}
	gives the first identity in \cref{eq:stokes stable decomp lam decomp} and
	\begin{multline}
		a_{\lambda}(\bdd{u}_C, \bdd{u}_C) + \sum_{ \bdd{a} \in \mathcal{V}} a_{\lambda}(\bdd{u}_{\bdd{a}}, \bdd{u}_{\bdd{a}}) 
		= a_{\lambda}(\bdd{v}_C, \bdd{v}_C) + \sum_{ \bdd{a} \in \mathcal{V}} a_{\lambda}( \bdd{v}_{\bdd{a}}, \bdd{v}_{\bdd{a}})   \\
		+ a_{\lambda}(\bdd{w}_C, \bdd{w}_C)  + \sum_{ \bdd{a} \in \mathcal{V}} a_{\lambda}( \bdd{w}_{\bdd{a}}, \bdd{w}_{\bdd{a}})  .
	\end{multline}
	Thanks to \cref{eq:h1 stable decomp decomp 2}, there holds
	\begin{align*}
		a_{\lambda}(\bdd{w}_C, \bdd{w}_C)  + \sum_{ \bdd{a} \in \mathcal{V}} a_{\lambda}( \bdd{w}_{\bdd{a}}, \bdd{w}_{\bdd{a}}) &\leq  2(\mu + \lambda) \left\{ \|\bdd{\varepsilon}(\bdd{w}_C) \|^2  + \sum_{ \bdd{a} \in \mathcal{V}} \| \bdd{\varepsilon}( \bdd{w}_{\bdd{a}}) \|^2 \right\} \\
		&\leq C \left\{ a(\bdd{w}, \bdd{w}) + \lambda \| \bdd{\varepsilon}(\bdd{w})\|^2 \right\}.
	\end{align*}
	Applying \cite[Lemma 7.1]{AinCP22SCIP}, we obtain $\| \bdd{w}\|_1 \leq C \beta_X^{-1} \|\dive \bdd{w}\| $, and so
	\begin{align*}
		a_{\lambda}(\bdd{w}_C, \bdd{w}_C)  + \sum_{ \bdd{a} \in \mathcal{V}} a_{\lambda}( \bdd{w}_{\bdd{a}}, \bdd{w}_{\bdd{a}})  
		\leq C (1 + \beta_X^{-2}) a_{\lambda}(\bdd{w}, \bdd{w}).
	\end{align*} 
	The inequality in \cref{eq:stokes stable decomp lam decomp} now follows from \cref{eq:proof:tilde u orthog decomp,eq:divfree stable decomp decomp}.
\end{proof}

With \cref{thm:stokes stable decomp lam} in hand, we are in a position to prove \cref{thm:main result}:

\begin{proof}[Proof of \cref{thm:main result}]
	
	Let $\bdd{u} \in \bdd{X}_D$ be given.
	
	\textbf{Step 1: Cauchy-Schwarz inequality.} 
	Let $\bdd{u}_I \in \bdd{X}_I$, $\bdd{u}_C \in \bdd{X}_C$, and $\bdd{u}_{\bdd{a}} \in \bdd{X}_{\bdd{a}}$,  $\bdd{a} \in \mathcal{V}$, be any functions such that
	\begin{align*}
		\bdd{u} = \bdd{u}_I + \bdd{u}_C + \sum_{ \bdd{a} \in \mathcal{V}} \bdd{u}_{\bdd{a}}.
	\end{align*}
	The Cauchy-Schwarz inequality gives for each $K \in \mathcal{T}$,
	\begin{multline*}
		a_{\lambda, K} \left( \bdd{u}_I + \bdd{u}_C  + \sum_{ \bdd{a} \in \mathcal{V}_K} \bdd{u}_{\bdd{a}}, \bdd{u}_I + \bdd{u}_C  + \sum_{ \bdd{a} \in \mathcal{V}_K} \bdd{u}_{\bdd{a}} \right) \\
		\leq 5 \left\{ a_{\lambda, K}(\bdd{u}_I, \bdd{u}_I) +  a_{\lambda, K}( \bdd{u}_C, \bdd{u}_C ) + \sum_{ \bdd{a} \in \mathcal{V}_K}  a_{\lambda, K}( \bdd{u}_{\bdd{a}}, \bdd{u}_{\bdd{a}}) \right\}
	\end{multline*}
	and the bound
	\begin{align*}
		a_{\lambda}(\bdd{u}, \bdd{u}) \leq  5 \left\{ a_{\lambda}(\bdd{u}_I, \bdd{u}_I) +  a_{\lambda}( \bdd{u}_C, \bdd{u}_C ) + \sum_{ \bdd{a} \in \mathcal{V}}  a_{\lambda}( \bdd{u}_{\bdd{a}}, \bdd{u}_{\bdd{a}}) \right\}
	\end{align*}
	follows on summing over all elements.
	
	\textbf{Step 2: Stability for $\tilde{\bdd{X}}_B$.} Let $\tilde{\bdd{u}}_C \in \tilde{\bdd{X}}_C $ and $\tilde{\bdd{u}}_{\bdd{a}} \in \tilde{\bdd{X}}_{\bdd{a}}$, $\bdd{a} \in \mathcal{V}$, be given by \cref{thm:stokes stable decomp lam} applied to $\mathbb{S} \bdd{u} \in \tilde{\bdd{X}}_B$ and let $\bdd{u}_I = \bdd{u} - \mathbb{S} \bdd{u} \in \bdd{X}_I$. By \cref{eq:stokes stable decomp lam decomp}, we have
	\begin{align*}
		\bdd{u} = \bdd{u}_I + \tilde{\bdd{u}}_C + \sum_{ \bdd{a} \in \mathcal{V}} \tilde{\bdd{u}}_{\bdd{a}}.
	\end{align*}
	\Cref{eq:stokes seminorm continuity element,eq:div stokes ext id} then give
	\begin{align*}
		a_{\lambda}(\mathbb{S} \bdd{u}, \mathbb{S} \bdd{u}) = 2\mu \| \bdd{\varepsilon}(\mathbb{S} \bdd{u}) \|^2 + \lambda \| \Pi_I^{\perp} \dive \bdd{u} \|^2 \leq C \mu \| \bdd{\varepsilon}(\bdd{u}) \|^2 + \lambda \| \dive \bdd{u} \|^2 \leq C a_{\lambda}(\bdd{u}, \bdd{u}),
	\end{align*}
	where $\Pi_I^{\perp}$ is defined in \cref{eq:piiperp def}. Additionally,
	\begin{align*}
		a_{\lambda}(\bdd{u}_I, \bdd{u}_I) \leq 2 \left\{ a_{\lambda}(\bdd{u}, \bdd{u}) + a_{\lambda}(\mathbb{S}\bdd{u}, \mathbb{S}\bdd{u}) \right\} \leq C a_{\lambda}(\bdd{u}, \bdd{u}).
	\end{align*}
	Thus, \cref{eq:stokes stable decomp lam decomp} then gives
	\begin{align}
		\label{eq:stable decomp lam continuity}
		a_{\lambda}(\bdd{u}_I, \bdd{u}_I) + a_{\lambda}(\tilde{\bdd{u}}_C, \tilde{\bdd{u}}_C) + \sum_{ \bdd{a} \in \mathcal{V}} a_{\lambda}(\tilde{\bdd{u}}_{\bdd{a}}, \tilde{\bdd{u}}_{\bdd{a}}) 
		\leq C (1 + \beta_X^{-2}) a_{\lambda}(\bdd{u}, \bdd{u}).
	\end{align}
	\Cref{thm:main result} in the case $\bdd{X}_B = \tilde{\bdd{X}}_B$ now follows from Step 1 and \cref{eq:stable decomp lam continuity} by \cite[Remark 3.1]{Zhang92multilevel}.
	
	\textbf{Step 3: Stability for general $\bdd{X}_B$.} Let $\bdd{u}_C = \mathbb{T}_B \tilde{\bdd{u}}_C$ and $\bdd{u}_{\bdd{a}} = \mathbb{T}_B \tilde{\bdd{u}}_{\bdd{a}}$, $\bdd{a} \in \mathcal{V}$, where the $\tilde{\bdd{u}}_C$ and $\tilde{\bdd{u}}_{\bdd{a}}$ are given in Step 2. Since $(\bdd{v} - \mathbb{T}_B \bdd{v})|_{\partial K} = \bdd{0}$ for all $K \in \mathcal{T}$, $\bdd{u}_C \in \bdd{X}_C$ and $\bdd{u}_{\bdd{a}} \in \bdd{X}_{\bdd{a}}$. As a result, 
	\begin{align*}
		\mathbb{T}_B \bdd{u} = \bdd{u}_C + \sum_{ \bdd{a} \in \mathcal{V}} \bdd{u}_{\bdd{a}}
	\end{align*}
	thanks to \cref{eq:stokes stable decomp lam decomp}. Moreover, \cref{eq:t and s equivalences} and \cref{eq:stokes stable decomp lam decomp} give
	\begin{align*}
		a_{\lambda}(\bdd{u}_C, \bdd{u}_C) + \sum_{ \bdd{a} \in \mathcal{V}} a_{\lambda}(\bdd{u}_{\bdd{a}}, \bdd{u}_{\bdd{a}})  & \leq (1 + \tau_B^2) \left\{ a_{\lambda}(\tilde{\bdd{u}}_C, \tilde{\bdd{u}}_C) + \sum_{ \bdd{a} \in \mathcal{V}} a_{\lambda}(\tilde{\bdd{u}}_{\bdd{a}}, \tilde{\bdd{u}}_{\bdd{a}})  \right\} \\
		&\leq C (1 + \beta_X^{-2})(1 + \tau_B^2) a_{\lambda}(\bdd{u}, \bdd{u}).
	\end{align*}
	Letting $\bdd{u}_I := \bdd{u} - \mathbb{T}_B \bdd{u}$ and arguing similarly as in Step 2 gives
	\begin{align*}
		a_{\lambda}(\bdd{u}_I, \bdd{u}_I) \leq 2(1 + \tau_B^2)\{ a_{\lambda}(\bdd{u}, \bdd{u}) + a_{\lambda}(\mathbb{S}\bdd{u}, \mathbb{S}\bdd{u}) \} \leq C(1 + \tau_B^2) a_{\lambda}(\bdd{u}, \bdd{u}).
	\end{align*}
	Collecting results, we have
	\begin{align*}
		\bdd{u} = \bdd{u}_I + \bdd{u}_C + \sum_{ \bdd{a} \in \mathcal{V}} \bdd{u}_{\bdd{a}}
	\end{align*}
	with 
	\begin{align*}
		a_{\lambda}(\bdd{u}_I, \bdd{u}_I) + a_{\lambda}(\bdd{u}_C, \bdd{u}_C) + \sum_{ \bdd{a} \in \mathcal{V}} a_{\lambda}(\bdd{u}_{\bdd{a}}, \bdd{u}_{\bdd{a}})		\leq C (1 + \beta_X^{-2})(1 + \tau_B^2) a_{\lambda}(\bdd{u}, \bdd{u}).
	\end{align*}
	\Cref{eq:main result} now follows from the above relation and Step 1 thanks to \cite[Remark 3.1]{Zhang92multilevel}.
\end{proof}

\appendix

\section{Korn Inequalities}

The following Korn inequalities follow from \cite[Corollary 4.3]{Brenner04} and standard scaling arguments:
\begin{lemma}
	For every $K \in \mathcal{T}$, there holds
	\begin{alignat}{2}
		\label{eq:h1 korn inequality}
		\inf_{\bdd{r} \in \bdd{RM}} \left\{ h_K^{-1} \|\bdd{u} - \bdd{r}\|_{ K} + |\bdd{u} - \bdd{r}|_{1, K} \right\}  &\leq C \|\bdd{\varepsilon}(\bdd{u})\|_{K} \qquad & &\forall \bdd{u} \in \bdd{H}^1(K), \\
		\label{eq:h1 korn inequality patch}
		\inf_{\bdd{r} \in \bdd{RM}} \left\{ h_K^{-1} \|\bdd{v} - \bdd{r}\|_{\mathcal{T}_K} + |\bdd{v} - \bdd{r}|_{1, \mathcal{T}_K} \right\} &\leq C \|\bdd{\varepsilon}(\bdd{v})\|_{\mathcal{T}_K} \qquad & &\forall \bdd{v} \in \bdd{H}^1(\mathcal{T}_K),
	\end{alignat}
	where $C$ depends only on the shape regularity of $\mathcal{T}$.
\end{lemma}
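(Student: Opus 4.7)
The strategy is a standard scaling reduction to a reference element (resp.\ reference patch), on which Korn's inequality is provided by \cite[Corollary 4.3]{Brenner04}. I plan to treat \cref{eq:h1 korn inequality} and \cref{eq:h1 korn inequality patch} in parallel; only the domain changes.

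Fix $K \in \mathcal{T}$ and introduce the homothety $F_K(\hat{\bdd{x}}) = h_K \hat{\bdd{x}} + \bdd{b}_K$ carrying a reference triangle $\hat{K}$ of unit diameter onto $K$. I deliberately restrict to a pure translation-plus-scaling (rather than a general affine map) precisely so that the strain tensor transforms cleanly: setting $\hat{\bdd{u}}(\hat{\bdd{x}}) := \bdd{u}(F_K(\hat{\bdd{x}}))$, a direct computation yields $\bdd{\varepsilon}(\hat{\bdd{u}}) = h_K\, \bdd{\varepsilon}(\bdd{u}) \circ F_K$, which combined with the Jacobian $|\det J_K| = h_K^2$ gives
\begin{align*}
    \|\bdd{\varepsilon}(\bdd{u})\|_K = \|\bdd{\varepsilon}(\hat{\bdd{u}})\|_{\hat{K}}, \qquad |\bdd{u}|_{1,K} = |\hat{\bdd{u}}|_{1,\hat{K}}, \qquad \|\bdd{u}\|_K = h_K \|\hat{\bdd{u}}\|_{\hat{K}}.
\end{align*}
Moreover, $\bdd{r} \in \bdd{RM}$ if and only if $\hat{\bdd{r}} := \bdd{r} \circ F_K \in \bdd{RM}$, so infima over rigid body motions are preserved. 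Applying \cite[Corollary 4.3]{Brenner04} on $\hat{K}$ furnishes a constant $C_{\hat{K}}$ with $\inf_{\hat{\bdd{r}} \in \bdd{RM}} \|\hat{\bdd{u}} - \hat{\bdd{r}}\|_{1,\hat{K}} \leq C_{\hat{K}} \|\bdd{\varepsilon}(\hat{\bdd{u}})\|_{\hat{K}}$, and unscaling delivers \cref{eq:h1 korn inequality} with constant $C_{\hat{K}}$.

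The only genuine subtlety, and the step I expect to require the most care, is showing that $C_{\hat{K}}$ can be replaced by a single constant $C$ depending solely on the shape-regularity constant of $\mathcal{T}$. For this I would appeal to a compactness argument: shape regularity bounds the ratio $h_K/\rho_K$ uniformly, so after translation and scaling by $h_K^{-1}$, the triangles $\hat{K}$ range over a compact family in the (three-dimensional, modulo rigid motions) space of triangles with unit diameter and inradius bounded below. Since the Korn constant on a fixed Lipschitz domain depends continuously on the domain under bi-Lipschitz perturbations of a reference configuration, this compactness yields a uniform $C = \sup_{\hat{K}} C_{\hat{K}} < \infty$.

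For the patch inequality \cref{eq:h1 korn inequality patch}, the argument is identical after replacing $K$ by $\mathcal{T}_K$: shape regularity bounds both the cardinality of $\mathcal{T}_K$ and the ratios of mesh sizes of neighboring elements, so the rescaled patches $\hat{\mathcal{T}}_K$ again lie in a compact family of Lipschitz domains. Brenner's Korn inequality on each such reference patch, followed by the same homothety-based rescaling, yields \cref{eq:h1 korn inequality patch} with a constant depending only on shape regularity.
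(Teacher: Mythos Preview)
Your proposal is correct and follows precisely the route the paper indicates: the paper does not give a detailed proof but simply states that the inequalities ``follow from \cite[Corollary 4.3]{Brenner04} and standard scaling arguments,'' which is exactly the homothety-plus-compactness argument you have written out. Your extra care in using a pure dilation so that the strain tensor scales cleanly, together with the compactness argument over unit-diameter shape-regular triangles and patches, is the standard way to make ``standard scaling arguments'' rigorous here.
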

\noindent 
A similar result holds for $H^2$ functions:
\begin{lemma}
	\label{lem:h2 korn inequality}
	Let $\mathcal{U} := \spann\{ 1, x, y, x^2 + y^2 \}$. For every $K \in \mathcal{T}$, $\psi \in H^2(K)$, and $\rho \in H^2(\mathcal{T}_K)$, there holds
	\begin{alignat}{1}
		\label{eq:h2 korn inequality}
		\inf_{u \in \mathcal{U}} \left\{ h_K^{-2} \|\psi - u\|_{K} + h_K^{-1} |\psi - u|_{1, K} + |\psi - u|_{2, K} \right\}   &\leq C \|\bdd{\varepsilon}(\vcurl \psi)\|_{K}, \\
		\label{eq:h2 korn inequality patch}
		\inf_{u \in \mathcal{U}} \left\{ h_K^{-2} \|\rho - u\|_{K} + h_K^{-1} |\rho - u|_{1, K} + |\rho - u|_{2, K} \right\} &\leq C \|\bdd{\varepsilon}(\vcurl \rho)\|_{\mathcal{T}_K}, 
	\end{alignat}
	where $C$ depends only on the shape regularity of $\mathcal{T}$.
\end{lemma}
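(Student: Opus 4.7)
The plan is to reduce the $H^2$-Korn estimate for $\psi$ to the $H^1$-Korn estimate \cref{eq:h1 korn inequality} applied to the vector field $\bdd{v} := \vcurl \psi$, exploiting the pointwise identity $|\vcurl \psi| = |\nabla \psi|$ and hence the norm identifications $\|\vcurl \psi\|_K = |\psi|_{1,K}$ and $|\vcurl \psi|_{1,K} = |\psi|_{2,K}$. First, I would apply \cref{eq:h1 korn inequality} to $\bdd{v} \in \bdd{H}^1(K)$ to produce $\bdd{r} \in \bdd{RM}$ with
\begin{align*}
h_K^{-1}\|\vcurl\psi - \bdd{r}\|_K + |\vcurl \psi - \bdd{r}|_{1,K} \leq C\|\bdd{\varepsilon}(\vcurl\psi)\|_K.
\end{align*}

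Next, I would identify the kernel by showing $\bdd{RM} = \vcurl \mathcal{U}$. Writing a generic $\bdd{r} = (a - cy, b + cx)^T \in \bdd{RM}$, a one-line integration gives $\bdd{r} = \vcurl \tilde u$ for $\tilde u := a y - b x - \tfrac{1}{2} c (x^2+y^2) \in \mathcal{U}$. Substituting $\bdd{r} = \vcurl \tilde u$ into the estimate above and using the norm identifications converts the previous display into
\begin{align*}
h_K^{-1}|\psi - \tilde u|_{1,K} + |\psi - \tilde u|_{2,K} \leq C\|\bdd{\varepsilon}(\vcurl\psi)\|_K.
\end{align*}

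To close \cref{eq:h2 korn inequality}, I would use the fact that $1 \in \mathcal{U}$ and choose a constant $c_0$ via the scaled Poincar\'e inequality on $K$ so that $\|\psi - \tilde u - c_0\|_K \leq C h_K |\psi - \tilde u|_{1,K}$; setting $u := \tilde u + c_0 \in \mathcal{U}$ then absorbs the missing $h_K^{-2}\|\cdot\|_K$ term. The patch version \cref{eq:h2 korn inequality patch} follows by the identical argument starting from \cref{eq:h1 korn inequality patch} together with a Poincar\'e estimate on $\mathcal{T}_K$, which is valid since shape regularity controls the patch geometry uniformly. The argument is essentially bookkeeping: the only substantive check is that the kernel identification $\bdd{RM} = \vcurl \mathcal{U}$ is exhaustive, which the explicit antiderivative above exhibits, so I do not anticipate a genuine obstacle — the result is a packaging of Brenner's $H^1$-Korn inequality applied to $\vcurl \psi$ together with a standard Poincar\'e estimate for the remaining scalar constant.
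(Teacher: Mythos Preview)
Your proposal is correct and follows essentially the same route as the paper: reduce to the $H^1$-Korn inequality applied to $\vcurl\psi$ via the identification $\vcurl\mathcal{U}=\bdd{RM}$ and the norm identity $|\vcurl\psi|_{j,K}=|\psi|_{j+1,K}$, then recover the missing $L^2$ term by exploiting $1\in\mathcal{U}$. The only cosmetic difference is that the paper packages the last step as an appeal to the Bramble--Hilbert lemma rather than an explicit Poincar\'e inequality, but the content is the same.
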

\begin{proof}
	Let $K \in \mathcal{T}$ and $\rho \in H^2(\mathcal{T}_K)$. By the Bramble-Hilbert Lemma (see e.g. \cite[Lemma 4.3.8]{Brenner08}), there holds
	\begin{align*}
		\inf_{u \in \mathcal{U}} \sum_{j=0}^{2} h_K^{j-2}  |\rho - u|_{j, \mathcal{T}_K} \leq C \inf_{u \in \mathcal{U}} \sum_{j=0}^{1} h_K^{j-1}  |\vcurl \rho - \vcurl u|_{j, \mathcal{T}_K},
	\end{align*}
	where we used that $|\vcurl (\rho - u)|_{j,\mathcal{T}_K} = |\rho - u|_{j+1,\mathcal{T}_K}$. Since $\vcurl \mathcal{U} = \bdd{RM}$, we obtain
	\begin{align*}
		\inf_{u \in \mathcal{U}} \sum_{j=0}^{1} h_K^{j-1}  |\vcurl \rho - \vcurl u|_{j, \mathcal{T}_K} &= \inf_{\bdd{r} \in \bdd{RM}} \sum_{j=0}^{1} h_K^{j-1}  |\vcurl \rho - \bdd{r}|_{j, \mathcal{T}_K} 
		\\
		&\leq C \|\bdd{\varepsilon}(\vcurl \rho)\|_{\mathcal{T}_K}
	\end{align*}
	by \cref{eq:h1 korn inequality patch}. The proof of \cref{eq:h2 korn inequality} follows similar lines.
\end{proof}

\section{Discrete Exact Sequence Properties}

The structure of the various finite element spaces used in the foregoing work and their inter-relations played a key role in the analysis. The first result shows that the finite element spaces satisfy exact sequence properties:
\begin{theorem}
	\label{thm:full complex}
	Each row of the following complex is an exact sequence
	\begin{subequations}
		\label{eq:full complex}
		\begin{alignat}{9}
			\label{eq:exact sequence full spaces 2}
			&0 \quad&& \xrightarrow{ \ \ \ \subset\ \ \ } \quad && \Sigma_ {D} \quad && \xrightarrow{\ \ \vcurl \ \ } \quad && \bdd{X}_{D} \quad && \xrightarrow{ \ \ \dive \ \ } \quad && \dive \bdd{X}_{D} \quad && \xrightarrow{\qquad} \quad && 0, \\
			& && && \shortparallel_{\phantom{0}} && && \shortparallel_{\phantom{0}} && && \shortparallel_{\phantom{0}} && && \notag \\
			\label{eq:exact sequence interior spaces 2}
			&0 && \xrightarrow{ \ \ \ \subset\ \ \ } && \Sigma_ {I} && \xrightarrow{\ \ \vcurl \ \ } && \bdd{X}_{I} && \xrightarrow{ \ \ \dive \ \ } && \dive \bdd{X}_{I} && \xrightarrow{\qquad} && 0, \\
			&  && && \oplus_{\phantom{0}} && && \oplus_{\phantom{0}} && && \oplus_{\phantom{0}} && && \notag \\
			\label{eq:exact sequence exterior spaces}
			&0 && \xrightarrow{ \ \ \ \subset\ \ \ } && \tilde{\Sigma}_ {B} && \xrightarrow{\ \ \vcurl \ \ } && \tilde{\bdd{X}}_{B} && \xrightarrow{ \ \ \dive \ \ } && \dive \tilde{\bdd{X}}_{B} && \xrightarrow{\qquad} && 0,
		\end{alignat}
	\end{subequations}
	where $\Sigma_D$ \cref{eq:sigmad def}, $\Sigma_I$ \cref{eq:interior sigma spaces} and $\tilde{\Sigma}_B$ \cref{eq:boundary sigma spaces} are discrete $H^2(\Omega)$-conforming spaces.
\end{theorem}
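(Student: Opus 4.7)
The plan is to establish the three rows separately and then verify the column identities. The top row is the key ingredient: injectivity of $\vcurl$ on $\Sigma_D$ is immediate since $\vcurl\psi = 0$ forces $\psi$ constant, which combined with $\psi|_{\Gamma_{D,1}} = 0$ gives $\psi \equiv 0$; surjectivity of $\dive$ onto $\dive \bdd{X}_D$ is tautological. Exactness in the middle---that every divergence-free $\bdd{v} \in \bdd{X}_D$ equals $\vcurl\phi$ for some $\phi \in \Sigma_D$---is the classical Scott--Vogelius discrete Stokes complex, which holds under the mild mesh assumptions on $\mathcal{T}$ already invoked in \cref{sec:asm} to guarantee $\beta_X \geq \beta_0 > 0$ for $p \geq 4$. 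Importing this result in the precise form adapted to the Dirichlet data encoded in $\Sigma_D$ and $\bdd{X}_D$ will be the main obstacle.

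The interior row decouples elementwise, reducing to a local complex $0 \to \Sigma_I(K) \to \bdd{X}_I(K) \to \dive \bdd{X}_I(K) \to 0$ on a single triangle $K$. Injectivity of $\vcurl$ is immediate, since $\vcurl\phi = 0$ together with $D^\alpha \phi|_{\partial K} = 0$ for $|\alpha|\leq 1$ forces $\phi \equiv 0$. For exactness in the middle, a divergence-free polynomial on the simply connected domain $K$ admits a stream function $\phi \in \mathcal{P}_{p+1}(K)$ with $\bdd{v} = \vcurl\phi$; the vanishing of $\bdd{v}$ on $\partial K$ translates, via tangential and normal components, into $\partial_t \phi = \partial_n \phi = 0$ on $\partial K$, so $\phi$ (after subtracting a constant) lies in $\Sigma_I(K)$. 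Surjectivity of $\dive$ onto its image is again tautological.

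For the column identities, I would use the extension operators: the splitting $\psi = (\psi - \mathbb{B}\psi) + \mathbb{B}\psi$ realizes $\Sigma_D = \Sigma_I \oplus \tilde{\Sigma}_B$, with uniqueness because $\mathbb{B}$ annihilates $\Sigma_I$ and is the identity on $\tilde{\Sigma}_B = \mathbb{B}\Sigma_D$. The analogous argument with the Stokes extension $\mathbb{S}$ gives $\bdd{X}_D = \bdd{X}_I \oplus \tilde{\bdd{X}}_B$, and applying $\dive$ yields $\dive \bdd{X}_D = \dive \bdd{X}_I + \dive \tilde{\bdd{X}}_B$; the $L^2$-orthogonality of the two summands, and hence the direct-sum property, is exactly the second clause of the definition \cref{eq:tilde xb def} of $\tilde{\bdd{X}}_B$. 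Finally, the bottom row is inherited from the top: for a divergence-free $\bdd{v} \in \tilde{\bdd{X}}_B$, top-row exactness produces $\phi \in \Sigma_D$ with $\bdd{v} = \vcurl\phi$, and testing $a_\lambda(\vcurl \phi, \vcurl \rho) = a_\lambda(\bdd{v}, \vcurl \rho) = 0$ against $\rho \in \Sigma_I$, noting that $\vcurl\rho \in \bdd{X}_I$ is divergence-free, places $\phi$ in $\tilde{\Sigma}_B$; injectivity and surjectivity in this row are inherited from the arguments above.
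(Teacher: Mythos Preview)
Your proposal is correct and follows essentially the same route as the paper: the paper cites external references for the exactness of the first two rows and defers to an analogous argument for the third, whereas you spell out the underlying arguments directly, and both you and the paper obtain the column identities from the extension operators $\mathbb{B}$ and $\mathbb{S}$ (the paper phrases the $\Sigma_D = \Sigma_I \oplus \tilde{\Sigma}_B$ splitting via ellipticity of $a_0(\vcurl\cdot,\vcurl\cdot)$ on $\Sigma_I$, which is the same projection as $\mathbb{B}$). There is no substantive difference in strategy.
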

\begin{proof}
	(i) Let $\phi \in \Sigma_D$ be given. By Poincar\'{e}'s inequality the bilinear form \\
	\noindent $a_0(\vcurl \cdot, \vcurl \cdot)$ is elliptic on $\Sigma_I$, and so there exists $\phi_I \in \Sigma_I$ such that $a(\vcurl (\phi - \phi_I), \vcurl \psi) = 0$ for all $\psi \in \Sigma_I$. The function $\tilde{\phi} := \phi - \phi_I$ then satisfies $\tilde{\phi} \in \tilde{\Sigma}_B$ by construction. Thus, $\Sigma_D = \Sigma_I \oplus \tilde{\Sigma}_B$. The relations $\bdd{X}_D = \bdd{X}_I \oplus \tilde{\bdd{X}}_B$ and $\dive \bdd{X}_D = \dive \bdd{X}_I \oplus \dive \tilde{\bdd{X}}_B$ follow immediately from \cref{rem:tilde tau}.
	
	(ii) The exactness of the first row \cref{eq:exact sequence full spaces 2} is a consequence of \cite[Lemma A.2]{AinCP22SCIP}, while the exactness of the second row \cref{eq:exact sequence interior spaces 2} follows from \cite[Theorem 3.1]{AinCP19StokesIII}. Finally, the exactness of the last row \cref{eq:exact sequence exterior spaces} may be proved along similar lines to \cite[Theorem 3.5]{AinCP19StokesIII}.
\end{proof}

\noindent The next result shows that operators $\mathbb{B}$ \cref{eq:biharmonic extension}, $\mathbb{S}$ \cref{eq:stokes extension}, and $\Pi_I^{\perp}$ \cref{eq:piiperp def} satisfy a commuting diagram property: 
\begin{lemma}
	The following diagram commutes:
	\begin{equation}
		\label{eq:stokes ext curl}
		\begin{tikzcd}[row sep=huge,column sep=large]
			\Sigma_D \arrow{r}{\vcurl} \arrow[swap]{d}{\mathbb{B}} & \bdd{X}_D \arrow{r}{\dive} \arrow[swap]{d}{\mathbb{S}} & \dive \bdd{X}_D \arrow[swap]{d}{\Pi_I^{\perp}} \\
			\tilde{\Sigma}_B \arrow{r}{\vcurl} & \tilde{\bdd{X}}_B \arrow{r}{\dive} & \dive \tilde{\bdd{X}}_B
		\end{tikzcd} 
	\end{equation}
\end{lemma}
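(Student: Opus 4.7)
The plan is to verify commutativity of each square separately. The right-hand square amounts to the identity $\dive \mathbb{S} \bdd{u} = \Pi_I^{\perp} \dive \bdd{u}$ on $\bdd{X}_D$, which was already established as \cref{eq:div stokes ext id} in the proof of \cref{lem:eq:t and s equivalences}. The codomain constraint $\Pi_I^{\perp} \dive \bdd{u} \in \dive \tilde{\bdd{X}}_B$ follows from $\mathbb{S} \bdd{u} \in \tilde{\bdd{X}}_B$ (\cref{rem:tilde tau}), so no further work is needed there.

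The substantive task is to prove $\vcurl \mathbb{B} \phi = \mathbb{S} \vcurl \phi$ for every $\phi \in \Sigma_D$. My approach is to show that $\vcurl \mathbb{B}\phi$ is the function part of the solution to the mixed system \cref{eq:stokes extension} with data $\bdd{u} = \vcurl \phi$, paired with an appropriate pressure multiplier. Working element-by-element: the boundary condition \cref{eq:stokes extension 3} is satisfied because \cref{eq:biharmonic extension 2,eq:biharmonic extension 3} force both $\phi - \mathbb{B}\phi$ and $\partial_n(\phi - \mathbb{B}\phi)$ to vanish on $\partial K$, hence all first-order partials do, so $\vcurl(\phi - \mathbb{B}\phi)|_{\partial K} = \bdd{0}$. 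The divergence condition \cref{eq:stokes extension 2} is trivial since $\dive \vcurl \mathbb{B}\phi \equiv 0$. For \cref{eq:stokes extension 1}, I use the exactness of the second row of \cref{eq:full complex} on each $K$ to identify $\ker(\dive) \cap \bdd{X}_I(K) = \vcurl \Sigma_I(K)$, so the linear functional $\bdd{v} \mapsto a_K(\vcurl \mathbb{B}\phi, \bdd{v})$ vanishes on this kernel by \cref{eq:biharmonic extension 1}. The inf-sup stability of the discrete pair $(\bdd{X}_I(K), \dive \bdd{X}_I(K))$ then produces a unique $q \in \dive \bdd{X}_I(K)$ satisfying $a_K(\vcurl \mathbb{B}\phi, \bdd{v}) = (q, \dive \bdd{v})_K$ for every $\bdd{v} \in \bdd{X}_I(K)$, so $(\vcurl \mathbb{B}\phi, q)$ solves the mixed system. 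Uniqueness of the Stokes extension forces its function component to equal $\mathbb{S} \vcurl \phi$.

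The main obstacle is precisely this reduction from the saddle-point formulation to the curl-orthogonality relation \cref{eq:biharmonic extension 1}, and its success depends entirely on the exact-sequence identification provided by \cref{thm:full complex}. Without that structural fact, one would be stuck trying to prove $a_K(\vcurl \mathbb{B}\phi, \bdd{v}) = 0$ for all $\bdd{v} \in \bdd{X}_I(K)$, which is false in general; the correct statement is only orthogonality against the divergence-free subspace. Finally, the membership $\vcurl \mathbb{B}\phi \in \tilde{\bdd{X}}_B$ needed so that the bottom row of \cref{eq:stokes ext curl} is well-posed follows from $\mathbb{B}\phi \in \tilde{\Sigma}_B$ (\cref{rem:biharmonic traces}) combined with the inclusion $\vcurl \tilde{\Sigma}_B \subseteq \tilde{\bdd{X}}_B$ invoked in \cref{lem:tildenb characterization}, completing the verification.
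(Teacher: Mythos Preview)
Your proof is correct and reaches the same conclusion, but via a different route than the paper. For the left square, the paper argues more abstractly: the difference $\bdd{e} := \vcurl\mathbb{B}\psi - \mathbb{S}\vcurl\psi$ lies in $\tilde{\bdd{X}}_B$ (since both terms do, using $\vcurl\tilde{\Sigma}_B \subseteq \tilde{\bdd{X}}_B$ from the third row of \cref{eq:full complex}) and also in $\bdd{X}_I$ (by the boundary-matching argument you give); since $\bdd{X}_I \cap \tilde{\bdd{X}}_B = \{0\}$ from the direct-sum decomposition in \cref{thm:full complex}, $\bdd{e} = \bdd{0}$. Your approach instead verifies directly that $\vcurl\mathbb{B}\phi$, paired with a suitable pressure, solves the defining mixed system \cref{eq:stokes extension}, appealing to the second (interior) row of \cref{eq:full complex} to identify the divergence-free kernel and then producing the Lagrange multiplier by a closed-range argument. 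The paper's route is shorter and sidesteps the need to construct the pressure at all; your route is more explicit and shows precisely which equation of \cref{eq:stokes extension} is enforced by which defining property of $\mathbb{B}$. Both hinge on \cref{thm:full complex}, just on different rows. A minor terminological remark: what you invoke as ``inf-sup stability'' is really only the trivial surjectivity of $\dive$ onto its own image together with finite-dimensional duality---no quantitative inf-sup constant is needed here.
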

\begin{proof}
	Let $\psi \in \Sigma_D$ and $\bdd{e} := \vcurl \mathbb{B} \psi  - \mathbb{S} \vcurl \psi$. Since $\vcurl \tilde{\Sigma}_B \subseteq \tilde{\bdd{X}}_B$ by \cref{eq:exact sequence exterior spaces},  $\bdd{e} \in \tilde{\bdd{X}}_B$ by linearity. Thanks to \cref{eq:stokes extension 3,eq:biharmonic extension 2,eq:biharmonic extension 3}, $\bdd{e} \in \bdd{X}_I$, and so $\bdd{e} \equiv \bdd{0}$ by the first column of \cref{eq:full complex}. The commutativity of \cref{eq:stokes ext curl} now follows from \cref{eq:div stokes ext id}.
\end{proof}

\section{Static Condensation satisfies Condition \cref{eq:tau condition}}

\begin{theorem}
	\label{thm:elastic harmonic continuity element}
	For $\bdd{u} \in \bdd{X}_D$ and $K \in \mathcal{T}$, the operator $\mathbb{H}$ defined in \cref{eq:eh definition} satisfies
	\begin{align}
		\label{eq:eh seminorm continuity element}
		\| \bdd{\varepsilon}(\mathbb{H} \bdd{u})\|_{K}^2 + \lambda \mu^{-1} \| \Pi_I \dive \mathbb{H} \bdd{u}\|_{K}^2 &
		\leq 4 \| \bdd{\varepsilon}(\mathbb{S} \bdd{u})\|_{K}^2 \leq C \|\bdd{\varepsilon}(\bdd{u})\|_{K}^2,
	\end{align}
	where $\mathbb{S}$ is defined in \cref{eq:stokes extension}. Consequently, \cref{eq:tau condition} holds with $\bdd{X}_B$ chosen as in \cref{eq:static cond choice} with $\tau_B$ independent of $\mu$, $\lambda$, $h$, and $p$.
\end{theorem}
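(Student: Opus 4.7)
The plan is to compare the static-condensation extension $\mathbb{H}\bdd{u}$ against the Stokes extension $\mathbb{S}\bdd{u}$ element-by-element, exploiting that both extensions coincide with $\bdd{u}$ on $\partial K$. Since $a_{\lambda}(\mathbb{H}\bdd{u},\bdd{v})=0$ for all $\bdd{v}\in\bdd{X}_I$ and $\bdd{X}_I=\bigoplus_K\bdd{X}_I(K)$, the restriction $\mathbb{H}\bdd{u}|_K$ is the $a_{\lambda,K}$-orthogonal projection of any polynomial matching $\bdd{u}|_{\partial K}$. A one-line Galerkin-orthogonality argument then yields
$$a_{\lambda,K}(\mathbb{H}\bdd{u},\mathbb{H}\bdd{u})\le a_{\lambda,K}(\bdd{w},\bdd{w})\qquad\forall\bdd{w}\in\bm{\mathcal{P}}_p(K):\bdd{w}|_{\partial K}=\bdd{u}|_{\partial K},$$
and I would choose $\bdd{w}=\mathbb{S}\bdd{u}|_K$, which is admissible by \cref{eq:stokes extension 3}.

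The crucial observation is the following divergence-orthogonality: write $\bdd{z}:=\mathbb{H}\bdd{u}-\mathbb{S}\bdd{u}$ on $K$. Both extensions share the trace $\bdd{u}|_{\partial K}$, so $\bdd{z}\in\bdd{X}_I(K)$ and hence $\dive\bdd{z}\in\dive\bdd{X}_I(K)$, while \cref{eq:stokes extension 2} says $\dive\mathbb{S}\bdd{u}$ is $L^2(K)$-orthogonal to $\dive\bdd{X}_I(K)$. Because functions in $\dive\bdd{X}_I(K)$ are supported in the open element $K$, the spaces $\dive\bdd{X}_I(K)$ for distinct $K$ are pairwise $L^2(\Omega)$-orthogonal, so $\Pi_I$ acts element-by-element and
$$\Pi_I\dive\mathbb{H}\bdd{u}\big|_K=\Pi_{I,K}\big(\dive\mathbb{S}\bdd{u}+\dive\bdd{z}\big)=\dive\bdd{z},\qquad\|\dive\mathbb{H}\bdd{u}\|_K^2=\|\dive\mathbb{S}\bdd{u}\|_K^2+\|\dive\bdd{z}\|_K^2.$$

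Substituting the Pythagorean identity into the energy comparison and cancelling the troublesome term $\lambda\|\dive\mathbb{S}\bdd{u}\|_K^2$ from both sides produces
$$2\mu\|\bdd{\varepsilon}(\mathbb{H}\bdd{u})\|_K^2+\lambda\|\dive\bdd{z}\|_K^2\le 2\mu\|\bdd{\varepsilon}(\mathbb{S}\bdd{u})\|_K^2;$$
dividing by $\mu$ and using $\dive\bdd{z}=\Pi_I\dive\mathbb{H}\bdd{u}|_K$ gives the first inequality in \cref{eq:eh seminorm continuity element} (with constant $2$, hence also with $4$). The second inequality follows by applying \cref{eq:stokes seminorm continuity element} with $\bdd{w}=\bdd{u}|_K$. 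Summing over $K$ and noting that static condensation corresponds to $\mathbb{T}_B=\mathbb{H}$ by \cref{eq:static cond choice} immediately yields \cref{eq:tau condition} with a $\tau_B$ depending only on the constant from \cref{eq:stokes seminorm continuity element}, which is independent of $\mu,\lambda,h,p$.

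The main obstacle, and the whole point of formulating \cref{eq:tau condition} with the projection $\Pi_I$ rather than the full divergence, is precisely the cancellation step: without the orthogonality $\dive\mathbb{S}\bdd{u}\perp\dive\bdd{X}_I(K)$ the uncontrolled term $\lambda\|\dive\mathbb{S}\bdd{u}\|_K^2$ would survive and force a $\lambda/\mu$ factor on the right-hand side, as already witnessed by the naive bound \cref{eq:motivation tau static condense}. Everything else is bookkeeping once one has identified $\mathbb{S}\bdd{u}$ as the right admissible competitor for the minimum-energy extension.
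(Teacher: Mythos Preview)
Your proof is correct and uses the same core ingredients as the paper's argument: compare $\mathbb{H}\bdd{u}$ against the Stokes extension $\mathbb{S}\bdd{u}$ on each element and exploit the orthogonality $\dive\mathbb{S}\bdd{u}\perp\dive\bdd{X}_I(K)$ from \cref{eq:stokes extension 2}. The paper instead bounds the difference $\bdd{u}_I=\mathbb{H}\bdd{u}-\mathbb{S}\bdd{u}$ via the variational equation and Young's inequality, then recovers $\mathbb{H}\bdd{u}$ by the triangle inequality, which is why it lands on the constant $4$; your route through the minimum-energy property together with the Pythagorean cancellation is slightly more direct and yields the sharper constant $2$.
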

\begin{proof}
	Let $\bdd{u} \in \bdd{X}_D$ and $K \in \mathcal{T}$. The function $\bdd{u}_I := \mathbb{H} \bdd{u}|_K - \mathbb{S} \bdd{u}|_K$ satisfies $\bdd{u}_I \in \bdd{X}_I(K)$ and
	\begin{align}
		\label{eq:proof:ui equation}
		a_{\lambda,K}(\bdd{u}_I, \bdd{v}) &= -2\mu( \bdd{\varepsilon}(\mathbb{S} \bdd{u}), \bdd{\varepsilon}(\bdd{v}) )_K \qquad \forall \bdd{v} \in \bdd{X}_I(K)
	\end{align}
	since $(\dive \mathbb{S} \bdd{u}, \dive \bdd{v})_K = 0$ for all $\bdd{v} \in \bdd{X}_I(K)$ by \cref{eq:stokes extension 2}. Choosing $\bdd{v} = \bdd{u}_I$ in \cref{eq:proof:ui equation} and using the Cauchy-Schwarz inequality then gives
	\begin{align*}
		%\label{eq:proof:ui eps bound}
		\| \bdd{\varepsilon}(\bdd{u}_I) \|_{K}^2 + \frac{\lambda}{2\mu} \|\dive \bdd{u}_I\|_K^2  \leq \| \bdd{\varepsilon}(\mathbb{S} \bdd{u}) \|_{K} \| \bdd{\varepsilon}(\bdd{u}_I) \|_{K} \leq \frac{1}{2} \left\{  \| \bdd{\varepsilon}(\mathbb{S} \bdd{u}) \|_{K}^2 +    \| \bdd{\varepsilon}(\bdd{u}_I) \|_{K}^2 \right\}. %\implies \| \bdd{\varepsilon}(\bdd{u}_I) \|_{K} \leq \| \bdd{\varepsilon}(\mathbb{S} \bdd{u}) \|_{K}.
	\end{align*}
	Moreover, $\Pi_I \dive \mathbb{H} \bdd{u}|_{K} = \dive \bdd{u}_I$ since $\Pi_I \dive \mathbb{S} \bdd{u} = 0$ by \cref{eq:stokes extension 2}. \Cref{eq:eh seminorm continuity element} now follows from the Schwarz inequality and \cref{eq:stokes seminorm continuity element}.
\end{proof}	

\section{An Efficient Inexact Interior Solver}
\label{sec:inexact interior}

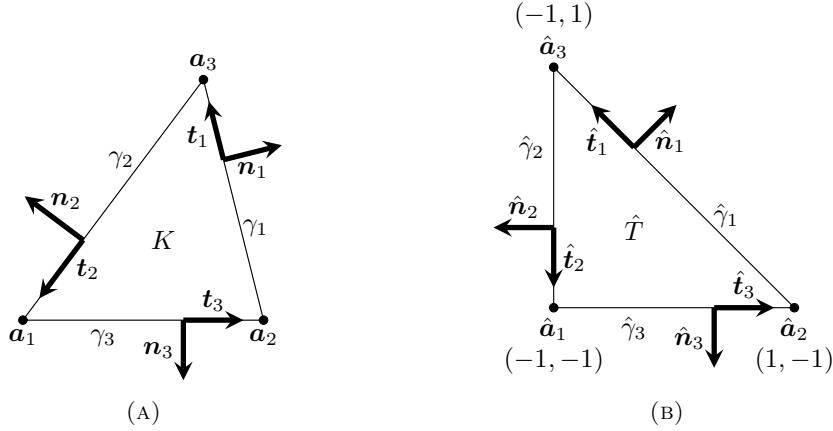
\begin{figure}[hbt]
	\centering
	\begin{subfigure}[b]{0.45\linewidth}
		\centering
		\begin{tikzpicture}[scale=0.8]
			\filldraw (-2,0) circle (2pt) node[align=center,below]{$\bdd{a}_1$}
			-- (2,0) circle (2pt) node[align=center,below]{$\bdd{a}_2$}	
			-- (1, 4) circle (2pt) node[align=center,above]{$\bdd{a}_3$}
			-- (-2,0);
			
			\coordinate (a1) at (-2,0);
			\coordinate (a2) at (2,0);
			\coordinate (a3) at (1, 4);
			
			\coordinate (e113) at ($(a2)!1/3!(a3)$);
			\coordinate (e123) at ($(a2)!2/3!(a3)$);
			\coordinate (e1231) at ($(a2)!2/3+1/sqrt(17)!(a3)$);
			
			\coordinate (e213) at ($(a3)!1/3!(a1)$);
			\coordinate (e223) at ($(a3)!2/3!(a1)$);
			\coordinate (e2231) at ($(a3)!2/3+1/sqrt(17)!(a1)$);
			
			\coordinate (e313) at ($(a1)!1/3!(a2)$);
			\coordinate (e323) at ($(a1)!2/3!(a2)$);
			\coordinate (e3231) at ($(a1)!2/3+1/4!(a2)$);
			
			% edge labels
			\draw ($(e113)+(0.2,0.2)$) node[align=center]{$\gamma_1$};
			\draw ($(e213)+(0,0)$) node[align=center,left]{$\gamma_2$};
			\draw ($(e313)+(0,0)$) node[align=center,below]{$\gamma_3$};
			
			% tangents and normals
			% t0
			\draw[line width=2, -stealth] (e123) -- (e1231);
			\draw ($($(e123)!0.5!(e1231)$)+(-0.3,-0.1)$) node[align=center]{$\unitvec{t}_1$};
			% n0
			\draw[line width=2, -stealth] (e123) -- ($(e123)!1!-90:(e1231)$);
			\draw ($($(e123)!0.5!-90:(e1231)$)+(0.,-0.3)$) node[align=center]{$\unitvec{n}_1$};
			% t1
			\draw[line width=2, -stealth] (e223) -- (e2231);
			\draw ($($(e223)!0.5!(e2231)$)+(0.1,0)$) node[align=center, right]{$\unitvec{t}_2$};
			% n1
			\draw[line width=2, -stealth] (e223) -- ($(e223)!1!-90:(e2231)$);
			\draw ($($(e223)!0.5!-90:(e2231)$)+(0.2,0)$) node[align=center, above]{$\unitvec{n}_2$};
			% t2
			\draw[line width=2, -stealth] (e323) -- (e3231);
			\draw ($($(e323)!0.5!(e3231)$)+(0,0)$) node[align=center, above]{$\unitvec{t}_3$};
			% n2
			\draw[line width=2, -stealth] (e323) -- ($(e323)!1!-90:(e3231)$);
			\draw ($($(e323)!0.5!-90:(e3231)$)+(0,0)$) node[align=center, left]{$\unitvec{n}_3$};
			
			% element label
			\draw (1/3, 4/3) node(T){$K$};
			
		\end{tikzpicture}	
		\caption{}
		\label{fig:general triangle}
	\end{subfigure}
	\hfill
	\begin{subfigure}[b]{0.45\linewidth}
		\centering
		\begin{tikzpicture}[scale=0.8]
			\filldraw (0,0) circle (2pt) node[align=center,below]{$\hat{\bdd{a}}_1$ \\ $(-1,-1)$}
			-- (4,0) circle (2pt) node[align=center,below]{$\hat{\bdd{a}}_2$\\ $(1,-1)$}	
			-- (0,4) circle (2pt) node[align=center,above]{$(-1,1)$ \\ $\hat{\bdd{a}}_3$}
			-- (0,0);
			
			\coordinate (a1) at (0,0);
			\coordinate (a2) at (4,0);
			\coordinate (a3) at (0,4);
			
			\coordinate (e113) at ($(a2)!1/3!(a3)$);
			\coordinate (e123) at ($(a2)!2/3!(a3)$);
			\coordinate (e1231) at ($(a2)!2/3+1/sqrt(32)!(a3)$);
			
			\coordinate (e213) at ($(a3)!1/3!(a1)$);
			\coordinate (e223) at ($(a3)!2/3!(a1)$);
			\coordinate (e2231) at ($(a3)!2/3+1/4!(a1)$);
			
			\coordinate (e313) at ($(a1)!1/3!(a2)$);
			\coordinate (e323) at ($(a1)!2/3!(a2)$);
			\coordinate (e3231) at ($(a1)!2/3+1/4!(a2)$);
			
			% edge labels
			\draw ($(e113)+(0.2,0.2)$) node[align=center]{$\hat{\gamma}_1$};
			\draw ($(e213)+(0,0)$) node[align=center,left]{$\hat{\gamma}_2$};
			\draw ($(e313)+(0,0)$) node[align=center,below]{$\hat{\gamma}_3$};
			
			% tangents and normals
			% t0
			\draw[line width=2, -stealth] (e123) -- (e1231);
			\draw ($($(e123)!0.5!(e1231)$)+(-0.25,-0.25)$) node[align=center]{$\hat{\unitvec{t}}_1$};
			% n0
			\draw[line width=2, -stealth] (e123) -- ($(e123)!1!-90:(e1231)$);
			\draw ($($(e123)!0.5!-90:(e1231)$)+(0.25,-0.25)$) node[align=center]{$\hat{\unitvec{n}}_1$};
			% t1
			\draw[line width=2, -stealth] (e223) -- (e2231);
			\draw ($($(e223)!0.5!(e2231)$)+(0,0)$) node[align=center, right]{$\hat{\unitvec{t}}_2$};
			% n1
			\draw[line width=2, -stealth] (e223) -- ($(e223)!1!-90:(e2231)$);
			\draw ($($(e223)!0.5!-90:(e2231)$)+(0,0)$) node[align=center, above]{$\hat{\unitvec{n}}_2$};
			% t2
			\draw[line width=2, -stealth] (e323) -- (e3231);
			\draw ($($(e323)!0.5!(e3231)$)+(0,0)$) node[align=center, above]{$\hat{\unitvec{t}}_3$};
			% n2
			\draw[line width=2, -stealth] (e323) -- ($(e323)!1!-90:(e3231)$);
			\draw ($($(e323)!0.5!-90:(e3231)$)+(0,0)$) node[align=center, left]{$\hat{\unitvec{n}}_3$};

			% element label
			\draw (4/3, 4/3) node(T){$\hat{T}$};
		\end{tikzpicture}		
		\caption{}
		\label{fig:reference triangle}
	\end{subfigure}
	\caption{Notation for (a) general triangle $K$ and (b) reference triangle $\hat{T}$.}
\end{figure}

For each element $K \in \mathcal{T}$, let $\bdd{F}_K : \hat{T} \to K$ denote an invertible affine mapping with Jacobian $D\bdd{F}_K$, where $\hat{T}$ is the reference triangle shown in \cref{fig:reference triangle}. Given a function $\bdd{u} \in \bdd{X}_D$ and element $K \in \mathcal{T}$, let $\check{\bdd{u}}_K := D\bdd{F}_K^{-1} \bdd{u} \circ \bdd{F}_K$ be the Piola transformation of $\bdd{u}$ and introduce the following bilinear form
\begin{align}
	\label{eq:inexact bilinear def}
	b_{\lambda}(\bdd{u}, \bdd{v}) := \sum_{K \in \mathcal{T}} b_{\lambda, K}(\bdd{u}, \bdd{v}),
\end{align}
where
\begin{align}
	\label{eq:inexact bilinear element def}
	b_{\lambda, K}(\bdd{u}, \bdd{v}) := |K| \int_{\hat{T}} \{ 2\mu \bdd{\varepsilon}(\check{\bdd{u}}_K) : \bdd{\varepsilon}(\check{\bdd{v}}_K ) + \lambda (\dive \check{\bdd{u}}_K)(\dive \check{\bdd{v}}_K) \} \ d\bdd{x} \qquad \forall \bdd{u}, \bdd{v} \in \bdd{X}_D.
\end{align}
Later, in \cref{lem:blamk equivalences}, we show that the bilinear form $b_{\lambda, K}(\cdot,\cdot)$ is bounded and coercive on $\bdd{X}_I(K)$, and so the problems 
\begin{align}
	\label{eq:local interior inexact}
	\bdd{u}_{I,K} \in \bdd{X}_I(K) : \qquad b_{\lambda, K}(\bdd{u}_{I,K}, \bdd{v}) = r(\bdd{v}) \qquad \forall \bdd{v} \in \bdd{X}_I(K), \ K \in \mathcal{T},
\end{align}
are uniquely solvable. Consequently, the boundary space $\check{\bdd{X}}_B$ defined in \cref{eq:inexact interior choice} is well-defined for this choice \cref{eq:inexact bilinear def}. 

Similarly to \cref{eq:static cond choice} in the case of $\bdd{X}_B$, we may characterize $\check{\bdd{X}}_B$ as the image of $\bdd{X}_D$ under the operator $\check{\mathbb{H}} : \bdd{X} \to \bdd{X}$ defined by the rule
\begin{subequations}
	\label{eq:ehhat definition}
	\begin{alignat}{2}
		\label{eq:ehhat definition 1}
		b_{\lambda}( \check{\mathbb{H}} \bdd{u}, \bdd{v} ) &= 0 \qquad & &\forall \bdd{v} \in \bdd{X}_I, \\
		\label{eq:ehhat definition 2}
		(\check{\mathbb{H}} \bdd{u} - \bdd{u})|_{\partial K} &= \bdd{0} \qquad 
		& & \forall K \in \mathcal{T}.
	\end{alignat}
\end{subequations}
Now let $\bdd{u} \in \bdd{X}_D$ be given. Then, $\bdd{u} = \bdd{u}_I + \check{\mathbb{H}} \bdd{u}$, where $\bdd{u}_I := \bdd{u} - \check{\mathbb{H}} \bdd{u} \in \bdd{X}_I$ by \cref{eq:ehhat definition 2}. $\check{\mathbb{H}} \bdd{u} \in \check{\bdd{X}}_B$ by \cref{eq:ehhat definition 1} and so $\bdd{X}_D = \bdd{X}_I \oplus \check{\bdd{X}}_B$.
In other words, $\bdd{X}_D =  \bdd{X}_I \oplus \check{\bdd{X}}_B$ for $\check{\bdd{X}}_B$ given in \cref{eq:inexact interior choice}, and $\check{\mathbb{H}}$ is the operator $\mathbb{T}_B$ for the choice \cref{eq:inexact interior choice} with $b_{\lambda}(\cdot,\cdot)$ defined in \cref{eq:inexact bilinear def}. Moreover, the operator $\check{\mathbb{H}}$ satisfies \cref{eq:tau condition} with $\tau_B$ independent of $\mu$, $\lambda$, $h$, and $p$:
\begin{theorem}
	\label{thm:elastic harmonic hat continuity element}
	For $\bdd{u} \in \bdd{X}$ and $K \in \mathcal{T}$, the operator $\check{\mathbb{H}}$ satisfies
	\begin{align}
		\label{eq:ehhat seminorm continuity element}
		\| \bdd{\varepsilon}(\check{\mathbb{H}} \bdd{u})\|_{K}^2 + \lambda \mu^{-1} \| \Pi_I \dive \check{\mathbb{H}} \bdd{u}\|_{K}^2 &
		\leq C \| \bdd{\varepsilon}(\mathbb{S} \bdd{u})\|_{K}^2 \leq C \|\bdd{\varepsilon}(\bdd{u})\|_{K}^2,
	\end{align}
	where $\mathbb{S}$ is defined in \cref{eq:stokes extension}.
\end{theorem}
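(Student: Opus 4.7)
The plan is to mimic the proof of \cref{thm:elastic harmonic continuity element}, working with the bilinear form $b_{\lambda, K}$ on the reference element rather than $a_{\lambda, K}$ on $K$. I set $\bdd{w} := \check{\mathbb{H}} \bdd{u}|_K - \mathbb{S} \bdd{u}|_K$. Since $\bdd{u}|_{\partial K}$ coincides with both $\check{\mathbb{H}}\bdd{u}|_{\partial K}$ and $\mathbb{S}\bdd{u}|_{\partial K}$ by \cref{eq:stokes extension 3} and \cref{eq:ehhat definition 2}, we have $\bdd{w} \in \bdd{X}_I(K)$, and \cref{eq:ehhat definition 1} yields the Galerkin-type identity
\begin{align*}
	b_{\lambda, K}(\bdd{w}, \bdd{v}) = -b_{\lambda, K}(\mathbb{S} \bdd{u}, \bdd{v}) \qquad \forall \bdd{v} \in \bdd{X}_I(K).
\end{align*}

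The first step would be to show that the divergence contribution on the right-hand side vanishes. Since the Piola transformation satisfies $\dive \check{\bdd{v}}_K = (\dive \bdd{v}) \circ \bdd{F}_K$, a change of variables shows that the reference-element pairing $|K| \int_{\hat{T}} (\dive \check{\mathbb{S}\bdd{u}}_K)(\dive \check{\bdd{v}}_K)$ is a constant multiple of $(\dive \mathbb{S}\bdd{u}, \dive \bdd{v})_K$, which is zero by \cref{eq:stokes extension 2}. The second step exploits an affine-kernel property of the remaining strain pairing: for any rigid body motion $\bdd{r}_0 \in \bdd{RM}$, the Piola pullback $\check{\bdd{r}}_{0,K}$ is affine on $\hat{T}$, so $\bdd{\varepsilon}(\check{\bdd{r}}_{0,K})$ is a constant symmetric matrix. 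Since $\check{\bdd{v}}_K$ vanishes on $\partial \hat{T}$ whenever $\bdd{v} \in \bdd{X}_I(K)$, integration by parts gives $\int_{\hat{T}} \bdd{\varepsilon}(\check{\bdd{r}}_{0,K}) : \bdd{\varepsilon}(\check{\bdd{v}}_K) \, d\hat{\bdd{x}} = 0$, so we may freely replace $\mathbb{S}\bdd{u}$ by $\mathbb{S}\bdd{u} - \bdd{r}_0$ on the right-hand side.

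Testing with $\bdd{v} = \bdd{w}$ and applying Cauchy--Schwarz together with Young's inequality then yields
\begin{align*}
	\mu |K| \| \bdd{\varepsilon}(\check{\bdd{w}}_K)\|_{\hat{T}}^2 + \lambda |K| \| \dive \check{\bdd{w}}_K\|_{\hat{T}}^2 \leq \mu |K| \| \bdd{\varepsilon}(\check{(\mathbb{S}\bdd{u} - \bdd{r}_0)}_K) \|_{\hat{T}}^2.
\end{align*}
A standard scaling argument using the shape-regularity of $\bdd{F}_K$ bounds the right-hand side by $C \mu \|\nabla(\mathbb{S}\bdd{u} - \bdd{r}_0)\|_K^2$; minimizing over $\bdd{r}_0 \in \bdd{RM}$ via \cref{eq:h1 korn inequality} then reduces it to $C \mu \|\bdd{\varepsilon}(\mathbb{S}\bdd{u})\|_K^2$. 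Invoking the equivalence \cref{eq:blam equiv alam} on $\bdd{w} \in \bdd{X}_I(K)$ converts the left-hand side to the physical norm and gives $\mu \|\bdd{\varepsilon}(\bdd{w})\|_K^2 + \lambda \|\dive \bdd{w}\|_K^2 \leq C \mu \|\bdd{\varepsilon}(\mathbb{S}\bdd{u})\|_K^2$.

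Finally, the triangle inequality applied to $\check{\mathbb{H}}\bdd{u}|_K = \mathbb{S}\bdd{u}|_K + \bdd{w}$ disposes of the strain term. For the divergence term, $\Pi_I \dive \mathbb{S}\bdd{u} = 0$ by \cref{eq:stokes extension 2} and $\dive \bdd{w} \in \dive \bdd{X}_I(K)$, so $\Pi_I \dive \check{\mathbb{H}}\bdd{u}|_K = \dive \bdd{w}$, and the required bound on the projected divergence follows from the previous display. The outer estimate $\|\bdd{\varepsilon}(\mathbb{S}\bdd{u})\|_K \leq C \|\bdd{\varepsilon}(\bdd{u})\|_K$ is exactly \cref{eq:stokes seminorm continuity element}. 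The main obstacle is the affine-kernel observation of the second step: without the freedom to subtract $\bdd{r}_0$, the right-hand side would contain $\|\nabla \mathbb{S}\bdd{u}\|_K^2$, which cannot be controlled by $\|\bdd{\varepsilon}(\mathbb{S}\bdd{u})\|_K^2$ since $\mathbb{S}\bdd{u}$ need not vanish on $\partial K$, and the uniform-in-$\lambda/\mu$ estimate would then collapse.
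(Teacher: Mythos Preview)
Your proposal is correct and follows essentially the same approach as the paper: both set $\bdd{w} = \check{\mathbb{H}}\bdd{u} - \mathbb{S}\bdd{u} \in \bdd{X}_I(K)$, use \cref{eq:stokes extension 2} together with the Piola identity $\dive \check{\bdd{v}}_K = (\dive \bdd{v})\circ\bdd{F}_K$ to kill the divergence term in $b_{\lambda,K}(\mathbb{S}\bdd{u},\bdd{v})$, and rely on the observation that $b_{\lambda,K}(\bdd{r},\bdd{v}) = 0$ for $\bdd{r} \in \bdd{RM}$ and $\bdd{v} \in \bdd{X}_I(K)$ to pass from $|\mathbb{S}\bdd{u}|_{1,K}$ to $\|\bdd{\varepsilon}(\mathbb{S}\bdd{u})\|_K$ via Korn. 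The only cosmetic difference is that the paper postpones the rigid-motion subtraction until after the triangle inequality (phrasing it as $\check{\mathbb{H}}\bdd{r} = \bdd{r}$, $\mathbb{S}\bdd{r} = \bdd{r}$), whereas you insert it directly into the strain pairing before testing; and the reference you want for the local norm equivalence is \cref{eq:blamk equivlances} rather than the global \cref{eq:blam equiv alam}.
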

In summary, the space $\check{\bdd{X}}_B$ may be used in place of the choice $\bdd{X}_B$ corresponding to static condensation \cref{eq:static cond choice} without degrading the performance of the preconditioner (up to a constant $C$).

The main advantage of choosing the bilinear form as in \cref{eq:inexact bilinear def} is that the solutions to \cref{eq:local interior inexact} may be computed efficiently as follows. Let $\bdd{X}_I(\hat{T}) =  \spann \{\hat{\psi}_1 \unitvec{e}_1, \ldots,$ $\hat{\psi}_n \unitvec{e}_1, \hat{\psi}_1 \unitvec{e}_2,  \ldots, \hat{\psi}_n \unitvec{e}_2 \}$, where $\{ \hat{\psi}_i \}_{i=1}^{n}$ is a basis for $\mathcal{P}_{p}(\hat{T}) \cap H^1_0(\hat{T})$ and $\{\unitvec{e}_i\}$ is the standard basis for $\mathbb{R}^2$. A basis for the space $\bdd{X}_I(K)$ is then obtained by taking the standard pullback of the basis for $\bdd{X}_I(\hat{T})$. Let $\widehat{\bdd{B}}$ be the stiffness matrix corresponding to the reference bilinear form:
\begin{align*}
	\vec{v}^T \widehat{\bdd{B}} \vec{u} = \int_{\hat{T}} \left\{ 2\mu \bdd{\varepsilon}(\bdd{u}) : \bdd{\varepsilon}(\bdd{v}) + \lambda (\dive \bdd{u})(\dive \bdd{v}) \right\} \ d\bdd{x} \qquad \forall \bdd{u}, \bdd{v} \in \bdd{X}_I(\hat{T}),
\end{align*}
where $\vec{u}$ and $\vec{v}$ are the degrees of freedom of $\bdd{u}$ and $\bdd{v}$. Direct verification then shows that
\begin{align*}
	b_{\lambda,K}(\bdd{u}, \bdd{v}) = |K| \vec{v}^T \bdd{G}^{-1} \widehat{\bdd{B}} \bdd{G}^{-T} \vec{u} \qquad \forall \bdd{u}, \bdd{v} \in \bdd{X}_I(K),
\end{align*}
where $\bdd{G}^{-1} := D\bdd{F}_K^{-1} \otimes \bdd{I}_{n}$, $\otimes$ denotes the Kronecker product, and $\bdd{I}_n$ is the $n\times n $ identity matrix. Thus, the solutions to the problems \cref{eq:local interior inexact} may be computed by applying the action of the matrix 
\begin{align}
	\label{eq:inexact matrix}
	|K|^{-1} \bdd{G}^{T} \widehat{\bdd{B}}^{-1} \bdd{G}, \quad \text{where } \bdd{G} = D\bdd{F}_K \otimes \bdd{I}_{n}.
\end{align}
The main cost associated with \cref{eq:inexact matrix} is to compute the action of $\widehat{\bdd{B}}^{-1}$. The fact that $\widehat{\bdd{B}}$ is independent of the element geometry means that factoring the matrix $\widehat{\bdd{B}}$ incurs a one time setup cost of $\mathcal{O}(p^6)$. Computing the action of the matrix  \cref{eq:inexact matrix} on each element requires $\mathcal{O}(p^2)$ operations to apply $\bdd{G}$ and $\bdd{G}^T$ and in addition $\mathcal{O}(p^4)$ operations to apply $\widehat{\bdd{B}}^{-1}$ using the factored form, giving an overall application cost of $\mathcal{O}(|\mathcal{T}| p^4)$ operations.

As shown in \cref{thm:elastic harmonic hat continuity element}, $b_{\lambda}(\cdot,\cdot)$ satisfies \cref{eq:blam equiv alam}  with $\tau_B$ independent of $\mu$, $\lambda$, $h$, and $p$, and we show below in \cref{lem:blamk equivalences} that $b_{\lambda}(\cdot,\cdot)$ satisfies \cref{eq:blam equiv alam}. Consequently, $b_{\lambda}(\cdot,\cdot)$ is a valid choice for an inexact interior solve as described in \cref{sec:inexact}, and the solution to \cref{eq:local interior inexact} may be computed more efficiently than using an exact interior solve $\bdd{A}_{II}^{-1}$ in static condensation. 

\subsection{Stability of Inexact Interior Solve}

We now turn to the proofs of the aforementioned claims.
\begin{lemma}
	\label{lem:blamk equivalences}
	For any $\bdd{v} \in \bdd{X}_I$ and $K \in \mathcal{T}$, there holds
	\begin{align}
		\label{eq:blamk equivlances}
		C_1 a_{\lambda,K}(\bdd{v}, \bdd{v}) \leq
		b_{\lambda,K}(\bdd{v}, \bdd{v}) \leq C_2 a_{\lambda,K}(\bdd{v}, \bdd{v}),
	\end{align}
	where $C_1$ and $C_2$ depend only on shape regularity. Consequently, $b_{\lambda}(\cdot,\cdot)$ defined in \cref{eq:inexact bilinear def} satisfies \cref{eq:blam equiv alam}.
\end{lemma}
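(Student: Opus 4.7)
The plan is to pull everything back to the reference triangle $\hat{T}$ and reduce the problem to a comparison of two quadratic forms on $\bdd{H}^1_0(\hat{T})$. Writing $A := D\bdd{F}_K^{-1}$ and $G := \nabla(\bdd{u}\circ \bdd{F}_K)$, the chain rule gives $\bdd{\varepsilon}(\bdd{u})\circ \bdd{F}_K = \tfrac{1}{2}(GA + A^TG^T)$ and $\dive \bdd{u}\circ \bdd{F}_K = \Tr(GA)$, while the definition $\check{\bdd{u}}_K = A\,(\bdd{u}\circ \bdd{F}_K)$ yields $\bdd{\varepsilon}(\check{\bdd{u}}_K) = \tfrac{1}{2}(AG + G^TA^T)$ and $\dive \check{\bdd{u}}_K = \Tr(AG)$. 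The Jacobian change of variables turns both $a_{\lambda,K}(\bdd{v},\bdd{v})$ and $b_{\lambda,K}(\bdd{v},\bdd{v})$ into $|K|$ (up to the universal constant $|\hat{T}|$) times an integral over $\hat{T}$ of a quadratic form in $G$. Cyclicity of the trace gives $\Tr(GA) = \Tr(AG)$ pointwise, so the $\lambda$-dependent divergence pieces of the two forms agree exactly up to the $|\hat{T}|$ prefactor. All that remains is to show that the two strain integrals $\int_{\hat{T}} \lvert\tfrac{1}{2}(GA+A^TG^T)\rvert^2 \,d\hat{\bdd{x}}$ and $\int_{\hat{T}} \lvert\tfrac{1}{2}(AG+G^TA^T)\rvert^2\,d\hat{\bdd{x}}$ are equivalent with constants depending only on $\cond(A) = \cond(D\bdd{F}_K)$, for every $\hat{\bdd{u}} := \bdd{u}\circ \bdd{F}_K \in \bdd{H}^1_0(\hat{T})$.

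The key tool is the sharp $\bdd{H}^1_0$ form of Korn's first inequality, which is genuinely \emph{domain-independent}: two integrations by parts on any bounded $\Omega$ give the identity $\|\bdd{\varepsilon}(\bdd{w})\|_\Omega^2 = \tfrac{1}{2}\|\nabla \bdd{w}\|_\Omega^2 + \tfrac{1}{2}\|\dive\bdd{w}\|_\Omega^2$ for every $\bdd{w}\in \bdd{H}^1_0(\Omega)$, whence $\|\nabla \bdd{w}\|_\Omega^2 \leq 2\|\bdd{\varepsilon}(\bdd{w})\|_\Omega^2$ with the universal constant $2$. Applied first to $\bdd{w}:=A\hat{\bdd{u}} \in \bdd{H}^1_0(\hat{T})$, whose gradient equals $AG$, this yields $\int_{\hat{T}} |AG|^2 \,d\hat{\bdd{x}} \leq 2\int_{\hat{T}} \lvert\tfrac{1}{2}(AG+G^TA^T)\rvert^2\,d\hat{\bdd{x}}$. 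Applied instead to the pullback $\bdd{w}(\bdd{y}):=\hat{\bdd{u}}(A\bdd{y})$ on the domain $A^{-1}\hat{T}$, whose gradient is $G(A\bdd{y})\,A$, and followed by the linear change of variables $\hat{\bdd{x}} = A\bdd{y}$ (in which $|\det A|$ cancels from both sides), it yields $\int_{\hat{T}} |GA|^2\,d\hat{\bdd{x}} \leq 2\int_{\hat{T}} \lvert\tfrac{1}{2}(GA+A^TG^T)\rvert^2\,d\hat{\bdd{x}}$. Combining these with the elementary operator-norm bound $|AG|^2 \leq \|A\|^2|G|^2 \leq \|A\|^2\|A^{-1}\|^2|GA|^2$ and its reverse delivers the required equivalence with constant $2\cond(A)^2$. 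Shape regularity supplies $\cond(D\bdd{F}_K)\leq C$ and $C^{-1}h_K^2 \leq |K| \leq Ch_K^2$, so the element-wise equivalence \eqref{eq:blamk equivlances} follows with $C_1,C_2$ depending only on shape regularity, and summation over $K \in \mathcal{T}$ yields \eqref{eq:blam equiv alam}.

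The main obstacle is the non-commutativity of $A$ and $G$: $\tfrac{1}{2}(GA+A^TG^T)$ and $\tfrac{1}{2}(AG+G^TA^T)$ are in general distinct quadratic forms in $G$ with different one-dimensional kernels, so no pointwise comparison can hold. The domain-independent $\bdd{H}^1_0$ Korn inequality, combined with the linear change-of-variables trick, is precisely what lets us exchange left- and right-multiplication by $A$ inside the strain at the cost of a factor $\cond(A)^2$; the remainder of the argument is routine bookkeeping with Jacobians and shape-regularity bounds.
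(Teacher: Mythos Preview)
Your proof is correct and follows essentially the same approach as the paper. Both arguments hinge on the two observations that (i) the divergence is preserved exactly under the Piola-type transformation $\check{\bdd{v}}_K = D\bdd{F}_K^{-1}\,\bdd{v}\circ\bdd{F}_K$ (your cyclicity-of-trace computation $\Tr(GA)=\Tr(AG)$ is the paper's ``$\dive\check{\bdd{v}} = (\dive\bdd{v})\circ\bdd{F}_K$''), so the $\lambda$-terms match up to the fixed factor $|\hat{T}|$, and (ii) the two strain terms are compared via the domain-independent sharp Korn inequality $\|\nabla\bdd{w}\|\le\sqrt{2}\,\|\bdd{\varepsilon}(\bdd{w})\|$ for $\bdd{w}\in\bdd{H}^1_0$, together with the shape-regularity bound on $\cond(D\bdd{F}_K)$. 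Your detour through the auxiliary domain $A^{-1}\hat{T}$ to obtain $\int_{\hat{T}}|GA|^2\le 2\int_{\hat{T}}|\tfrac12(GA+A^TG^T)|^2$ is exactly the paper's direct application of Korn to $\bdd{v}\in\bdd{H}^1_0(K)$ after an affine change of variables; the paper just skips the intermediate step and writes $|\bdd{v}|_{1,K}\le\sqrt{2}\,\|\bdd{\varepsilon}(\bdd{v})\|_K$ on $K$ itself.
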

\begin{proof}
	Let $\bdd{v} \in \bdd{X}_I$ be given and  $\check{\bdd{v}} = D\bdd{F}_K^{-1} \bdd{v} \circ \bdd{F}_K$. Then,
	\begin{align*}
		|K|^{-1} b_{\lambda,K}(\bdd{v}, \bdd{v}) = 2\mu \| \bdd{\varepsilon}(\check{\bdd{v}})\|_{\hat{T}}^2 + \lambda \|\dive \check{\bdd{v}} \|_{\hat{T}}^2.
	\end{align*}
	The chain rule shows that $\dive \check{\bdd{v}} = (\dive \bdd{v}) \circ \bdd{F}_K$ on $\hat{T}$ and so $|K|^{1/2} \|\dive \check{\bdd{v}} \|_{\hat{T}} =  \|\dive \bdd{v} \|_{K}$. Applying the chain rule once more and using shape regularity gives
	\begin{align}
		\label{eq:proof:hatv one sided eps bound}
		\| \bdd{\varepsilon}(\check{\bdd{v}})\|_{\hat{T}}^2 \leq | \check{\bdd{v}}|_{1, \hat{T}}^2 \leq C h_K^{-2} | \bdd{v}|_{1, K}^2 \leq C |K|^{-1} | \bdd{v}|_{1, K}^2.
	\end{align}
	Since $\bdd{v}|_{K} \in \bdd{H}^1_0(K)$, using \cite[eq. (3.19), p. 299]{Braess07} gives $|\bdd{v}|_{1,K} \leq \sqrt{2} \|\bdd{\varepsilon}(\bdd{v})\|_K$, and so $b_{\lambda,K}(\bdd{v}, \bdd{v}) \leq C a_{\lambda,K}(\bdd{v}, \bdd{v})$. Conversely, using the relation $\bdd{v} = D\bdd{F}_K \check{\bdd{v}} \circ \bdd{F}_K^{-1}$ gives
	\begin{align}
		\label{eq:proof:hatv one sided eps bound 2}
		\| \bdd{\varepsilon}(\bdd{v})\|_{K}^2 \leq |\bdd{v}|_{1,K}^2 \leq C h_K^2  |\check{\bdd{v}}|_{1,\hat{T}}^2 \leq C |K|  \| \bdd{\varepsilon}(\check{\bdd{v}})\|_{\hat{T}}^2, 
	\end{align}
	and so $a_{\lambda,K}(\bdd{v}, \bdd{v}) \leq C b_{\lambda,K}(\bdd{v}, \bdd{v})$.
\end{proof}

\begin{proof}[Proof of \cref{thm:elastic harmonic hat continuity element}]
	Let $\bdd{u} \in \bdd{X}$ and $K \in \mathcal{T}$. The function $\bdd{u}_I := \check{\mathbb{H}} \bdd{u}|_K - \mathbb{S} \bdd{u}|_K$ satisfies $\bdd{u}_I \in \bdd{X}_I(K)$ and
	\begin{align}
		\label{eq:proof:uihat equation}
		b_{\lambda,K}(\bdd{u}_I, \bdd{v}) &= -2\mu( \bdd{\varepsilon}( \check{\bdd{s}} ), \bdd{\varepsilon}(\check{\bdd{v}}) )_{\hat{T}} \qquad \forall \bdd{v} \in \bdd{X}_I(K),
	\end{align}
	where $\bdd{s} := \mathbb{S} \bdd{u}$ and we used that 
	\begin{align*}
		(\dive \check{\bdd{s}}, \dive \check{\bdd{v}})_{\hat{T}} = (\dive (\mathbb{S} \bdd{u}) \circ \bdd{F}_K, (\dive \bdd{v}) \circ \bdd{F}_K)_{\hat{T}} = |K|^{-1} (\dive \mathbb{S} \bdd{u}, \dive \bdd{v})_{K} = 0
	\end{align*}
	for all $\bdd{v} \in \bdd{X}_I(K)$ by \cref{eq:stokes extension 2}. Choosing $\bdd{v} = \bdd{u}_I$ in \cref{eq:proof:uihat equation} and using the Cauchy-Schwarz inequality then gives
	\begin{align*}
		\| \bdd{\varepsilon}(\check{\bdd{u}}_I) \|_{\hat{T}}^2 + \lambda\mu^{-1} \|\dive \check{\bdd{u}}_I\|_{\hat{T}}^2  \leq \| \bdd{\varepsilon}(\check{\bdd{s}}) \|_{\hat{T}} \| \bdd{\varepsilon}(\check{\bdd{u}}_I) \|_{\hat{T}} \leq \frac{1}{2} \left\{   \| \bdd{\varepsilon}(\check{\bdd{s}}) \|_{\hat{T}}^2 +    \| \bdd{\varepsilon}(\check{\bdd{u}}_I) \|_{\hat{T}}^2 \right\}.
	\end{align*}
	Thanks to \cref{eq:proof:hatv one sided eps bound 2} and the relation $\dive \check{\bdd{u}}_I = (\dive \bdd{u}_I) \circ \bdd{F}_K$, we have
	\begin{align*}
		\| \bdd{\varepsilon}(\bdd{u}_I) \|_{K}^2 + \lambda\mu^{-1} \|\dive \bdd{u}_I\|_{K}^2  \leq C |K| \| \bdd{\varepsilon}(\check{\bdd{s}}) \|_{\hat{T}}^2 \leq C |\mathbb{S} \bdd{u} |_{1,K}^2,
	\end{align*}
	where $C$ depends only on shape regularity and we used that \cref{eq:proof:hatv one sided eps bound} holds for any $\bdd{v} \in \bdd{H}^1(K)$. Arguing as in the proof of \cref{thm:elastic harmonic continuity element}, we obtain $\Pi_I \dive \check{\mathbb{H}} \bdd{u}|_{K} = \dive \bdd{u}_I$, and the triangle inequality then gives
	\begin{align*}
		\| \bdd{\varepsilon}(\check{\mathbb{H}} \bdd{u}) \|_{K}^2 + \lambda\mu^{-1} \|\Pi_I \dive \check{\mathbb{H}} \bdd{u}\|_{K}^2  \leq C |\mathbb{S} \bdd{u} |_{1,K}^2.
	\end{align*}
	For any rigid body motion $\bdd{r} \in \bdd{RM}$, $\check{\bdd{r}}$ is a linear function satisfying $\dive \check{\bdd{r}} = \dive \bdd{r} = 0$, and so for any $\bdd{v} \in \bdd{X}_I(K)$, there holds
	\begin{align*}
		b_{\lambda, K}(\bdd{r}, \bdd{v}) 
		=  2\mu (\bdd{\varepsilon}(\check{\bdd{r}}),  \nabla \check{\bdd{v}} )_{\hat{T}}
		= 2\mu (\dive \bdd{\varepsilon}(\check{\bdd{r}}), \check{\bdd{v}})_{\hat{T}} = 0.
	\end{align*}
	Consequently,  $\check{\mathbb{H}} \bdd{r} = \bdd{r}$. Since $\mathbb{S} \bdd{r} = \bdd{r}$, we obtain
	\begin{align*}
		\| \bdd{\varepsilon}(\check{\mathbb{H}} \bdd{u}) \|_{K}^2 + \lambda\mu^{-1} \|\Pi_I \dive \check{\mathbb{H}} \bdd{u}\|_{K}^2 \leq C \inf_{\bdd{r} \in \bdd{RM}} |\mathbb{S} \bdd{u} - \bdd{r} |_{1,K}^2.
	\end{align*}
	\Cref{eq:ehhat seminorm continuity element} now follows from Korn's inequality \cref{eq:h1 korn inequality} and \cref{eq:stokes seminorm continuity element}.
\end{proof}	

\section{Auxiliary Interpolation Operators}

Let $I = (-1, 1)$ and let $H^{1/2}_{L}(I)$ and $H^{1/2}_R(I)$ denote the spaces
\begin{align*}
	H^{1/2}_{L}(I) &:= \{ v \in H^{1/2}(I) : (1+t)^{-1/2} v \in L^2(I) \}, \\
	H^{1/2}_R(I)  &:= \{ v \in H^{1/2}(I) : (1-t)^{-1/2} v \in L^2(I) \}, 
\end{align*}
equipped with the norms
\begin{align*}
	\| v\|_{H^{1/2}_L(I)}^2 &:= \| v \|_{H^{1/2}(I)}^2 + \int_{I} \frac{|v(t)|^2}{1 + t} \ dt, \\
	\| v\|_{H^{1/2}_R(I)}^2 &:= \| v \|_{H^{1/2}(I)}^2 + \int_{I} \frac{|v(t)|^2}{1-t} \ dt.
\end{align*}
In addition, the space $H^{1/2}_{00}(I)$ is defined to be the intersection of $H^{1/2}_{L}(I)$ and $H^{1/2}_R(I)$. 

We formally define operators $\mathcal{A}_R$ and  $\mathcal{M}_{R}$ by the rules
\begin{align*}
	\mathcal{A}_R v(t) := \frac{1}{1+t} \int_{-1}^{t} v(s) \ ds -  \frac{1 + t}{2} \bar{v} \quad \text{and} \quad \mathcal{M}_{R} v(t) = \frac{1-t}{2} v(t), \qquad t \in I,
\end{align*}
where $\bar{v}$ denotes the average value of $v$ on $I$. In particular, if $v \in \mathcal{P}_p(I)$, $p \geq 1$, then $\mathcal{A}_R v \in \mathcal{P}_p(I)$ and $\mathcal{M}_{R} v \in \mathcal{P}_{p+1}(I)$ satisfy $\mathcal{A}_R v(-1) = \mathcal{M}_{R} v(-1) = v(-1)$ and $\mathcal{A}_R v(1) = \mathcal{M}_{R} v(1) = 0$. The operators $\mathcal{A}_R$ and $\mathcal{M}_R$ have the following useful mapping properties:
\begin{lemma}
	\label{lem:hardy0 properties}
	The operators $\mathcal{A}_{R} : H^{1/2}(I) \to H_{R}^{1/2}(I)$, $(I - \mathcal{A}_R) : H^{1/2}(I) \to H^{1/2}_{L}(I)$, and $(\mathcal{M}_{R} - \mathcal{A}_R) : H^{1/2}(I) \to H^{1/2}_{00}(I)$ are continuous: i.e. for all $v \in H^{1/2}(I)$, there holds
	\begin{align}
		\label{eq:hardy0 h12 h1200}
		\| \mathcal{A}_R v \|_{H_{R}^{1/2}(I)} + \| (I - \mathcal{A}_R) v \|_{H^{1/2}_L(I)} + \| (\mathcal{M}_{R} - \mathcal{A}_R) v \|_{H^{1/2}_{00}(I)} \leq C \| v\|_{1/2, I}.
	\end{align}
\end{lemma}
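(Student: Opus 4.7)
The plan is to exploit the simple structure of the operator $\mathcal{A}_R$ by splitting
\[
\mathcal{A}_R v(t) = \mathcal{H}v(t) - \tfrac{1+t}{2}\bar v, \qquad \mathcal{H}v(t) := \frac{1}{1+t}\int_{-1}^t v(s)\,ds,
\]
so that $\mathcal{A}_R$ is an averaging (Hardy-type) operator corrected by a linear polynomial whose sole purpose is to kill the value at the right endpoint. A direct calculation gives $\mathcal{A}_R v(1)=0$ and, by L'Hôpital's rule, $\mathcal{A}_R v(-1)=v(-1)$ whenever the traces make sense; consequently $(I-\mathcal{A}_R)v$ vanishes at $t=-1$ and, since $\mathcal{M}_R v(\pm 1)$ agrees with $v(-1)$ and $0$ respectively, $(\mathcal{M}_R-\mathcal{A}_R)v$ vanishes at both endpoints. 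These endpoint identities are what motivate the three target spaces and will be the skeleton of the argument.

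First I would establish continuity of each map into $H^{1/2}(I)$ itself. Multiplication by the linear factor $(1\pm t)/2$ and the rank-one map $v\mapsto \bar v\,(1+t)/2$ are obviously bounded on $H^{1/2}(I)$, so it suffices to show $\mathcal{H}:H^{1/2}(I)\to H^{1/2}(I)$ is bounded. This I would obtain by interpolation between the endpoints $L^2(I)$ and $H^1(I)$: on $L^2$, Hardy's classical inequality in the shifted form $\int_I|\mathcal{H}v|^2\,dt\le C\|v\|_{L^2}^2$ is standard, while on $H^1$ a routine differentiation of $(1+t)\mathcal{H}v$ combined with Poincaré's inequality gives $\|\mathcal{H}v\|_{H^1}\le C\|v\|_{H^1}$. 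Real interpolation at parameter $1/2$ then yields the $H^{1/2}$ bound, which handles the plain $H^{1/2}$-seminorm part of all three estimates in \cref{eq:hardy0 h12 h1200}.

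The core of the proof is then the three weighted $L^2$ bounds. For $\mathcal{A}_Rv\in H^{1/2}_R$, I would prove the Hardy-type inequality
\[
\int_I \frac{|\mathcal{A}_R v(t)|^2}{1-t}\,dt \le C\|v\|_{1/2,I}^2
\]
by writing $\int_{-1}^t v\,ds = 2\bar v - \int_t^1 v\,ds$ to obtain $\mathcal{A}_R v(t) = -\frac{1}{1+t}\int_t^1 v\,ds + \bar v\bigl(\frac{2}{1+t}-\frac{1+t}{2}\bigr)$. The bracketed quantity equals $\tfrac{(1-t)(3+t)}{2(1+t)}$ which vanishes linearly at $t=1$, so the $\bar v$-contribution to the weighted integral is controlled by $\|v\|_{L^2}^2$. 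The remaining term $\frac{1}{1+t}\int_t^1 v\,ds$ is a reversed Hardy operator whose $(1-t)^{-1}$-weighted $L^2$ norm is bounded by $\|v\|_{L^2}^2$ via Fubini and Cauchy--Schwarz. The bound $\|(I-\mathcal{A}_R)v\|_{H^{1/2}_L}\le C\|v\|_{1/2,I}$ is the mirror image, using the original representation $\mathcal{A}_R v(t) = \frac{1}{1+t}\int_{-1}^t v\,ds - \tfrac{1+t}{2}\bar v$ to extract the factor vanishing at $t=-1$ in $v-\mathcal{A}_R v$. Finally, for $\mathcal{M}_R-\mathcal{A}_R$, both endpoints are available: the right weight bound is immediate since $\int_I |\mathcal{M}_R v|^2/(1-t)\,dt = \tfrac14\int_I(1-t)|v|^2\,dt\le \|v\|_{L^2}^2$, which combined with the bound on $\mathcal{A}_R v$ gives the right-weighted estimate, while the left-weighted estimate follows from $\|(I-\mathcal{A}_R)v\|_{H^{1/2}_L}$ together with the observation that $(I-\mathcal{M}_R)v = \tfrac{1+t}{2}v$ trivially satisfies $\int_I|(I-\mathcal{M}_R)v|^2/(1+t)\,dt\le \tfrac12\|v\|_{L^2}^2$.

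The main obstacle is to rigorously justify the Hardy inequalities on $H^{1/2}(I)$ without invoking pointwise endpoint traces, which are not well-defined on this borderline space. I would bypass this by establishing the bounds first on the dense subset $C^\infty(\bar I)$, where every manipulation above is legitimate, and then extending by continuity using the $H^{1/2}\to H^{1/2}$ bounds already obtained. The interpolation step for $\mathcal{H}$ is routine, but the weighted Fubini argument must be executed carefully so that the constants remain independent of $v$ and the resulting estimate genuinely controls the weighted $L^2$ norms of $\mathcal{A}_R v$, $(I-\mathcal{A}_R)v$ and their difference with $\mathcal{M}_R v$.
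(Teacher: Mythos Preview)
Your interpolation argument for the plain $H^{1/2}\to H^{1/2}$ bound is fine and matches the paper's first step. The gap is in your direct attack on the weighted $L^2$ pieces. In the decomposition
\[
\mathcal{A}_R v(t)=-\frac{1}{1+t}\int_t^1 v\,ds+\bar v\,\frac{(1-t)(3+t)}{2(1+t)},
\]
each summand is singular like $(1+t)^{-1}$ at the \emph{left} endpoint, and that singularity is not cancelled when you pass to the weight $(1-t)^{-1}$. Concretely, for $v\equiv 1$ the first term equals $\frac{1-t}{1+t}$, so $\int_I \frac{1}{1-t}\bigl(\frac{1-t}{1+t}\bigr)^2\,dt=\int_I\frac{1-t}{(1+t)^2}\,dt=+\infty$, and the $\bar v$-term diverges in exactly the same way; only the full $\mathcal{A}_R v=(1-t)/2$ produces a finite weighted integral. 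The same obstruction kills the ``Fubini and Cauchy--Schwarz'' bound you claim for $\frac{1}{1+t}\int_t^1 v$, and it cannot be repaired by density since the divergence already occurs for smooth $v$. In short, splitting $\mathcal{A}_R v$ destroys a cancellation at $t=-1$ that is essential for the $H^{1/2}_R$ estimate, and the analogous issue arises at the other endpoint for the other two operators.

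The paper sidesteps this entirely: having shown $\mathcal{A}_R:L^2\to L^2$ and $\mathcal{A}_R:H^1\to H^1$, it observes that for $v\in H^1(I)$ one has the honest endpoint values $\mathcal{A}_R v(1)=0$, $(I-\mathcal{A}_R)v(-1)=0$, $(\mathcal{M}_R-\mathcal{A}_R)v(\pm1)=0$, so the $H^1$ bounds are actually into $H^1_R(I)$, $H^1_L(I)$ and $H^1_0(I)$ respectively. One then interpolates the \emph{pair} $(L^2,H^1_R)$ (and its variants) rather than $(L^2,H^1)$; since $[L^2(I),H^1_R(I)]_{1/2}=H^{1/2}_R(I)$ and $[L^2(I),H^1_0(I)]_{1/2}=H^{1/2}_{00}(I)$, the weighted part of the target norm comes out of the interpolation for free, with no need to estimate it separately.
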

\begin{proof}
	Define the Hardy averaging operator $\mathcal{A}$ by the rule
	\begin{align*}
		\mathcal{A} v(t) := \frac{1}{1+t} \int_{-1}^{t} v(s) \ ds, \qquad t \in I,
	\end{align*} 
	so that $A_R v(t) = \mathcal{A} v(t) - (1+t)\bar{v}/2$. For $n \in \mathbb{N}_0$, the Cauchy-Schwarz inequality and \cite[Lemma 3.1]{Ain09poly} give
	\begin{align}
		\label{eq:hardy0 hn}
		\| \mathcal{A}_R v \|_{n, I} \leq C \left\{ \|\mathcal{A} v\|_{n, I} +  |\bar{v}_I| \right\} \leq C_n  \left\{ \|v\|_{n, I} + \|v\|_I \right\}  \leq C_n \|v\|_{n,I} \qquad \forall v \in H^n(I).
	\end{align}	
	
	Now let $v \in H^1(I)$. Thanks to the embedding $H^1(I) \hookrightarrow C^0(\bar{I})$, $\mathcal{A}_R v(1) = (I - \mathcal{A}_R) v(-1) = (\mathcal{M}_{R} - \mathcal{A}_R) v(\pm 1) = 0$. Consequently, $\mathcal{A}_R$ is a bounded map of $H^1(I)$ into $H^1_R(I) :=  \{ w \in H^1(I) : w(1) = 0 \}$, $(I - \mathcal{A}_R)$ is a bounded map of $H^1(I)$ into $H^1_L(I) := \{ w \in H^1(I) : w(1) = 0 \}$, and $(\mathcal{M}_{R} - \mathcal{A}_R)$ is a bounded map of $H^1(I)$ into $H^1_0(I)$. \Cref{eq:hardy0 h12 h1200} then follows from \cref{eq:hardy0 hn} and interpolation (see e.g. \cite[p. 66, Theorem 11.7]{Lions12}). 
\end{proof}

\subsection{Endpoint Modifications of Averaging Operator}

\begin{lemma}
	\label{lem:hardy00 properties}
	Let $p \geq 4$. For all $v \in\mathcal{P}_p(I)$, there exists a polynomial $\mathcal{B}_R v \in \mathcal{P}_p(I)$ satisfying $(\mathcal{B}_{R} v)^{(n)}(-1) = v^{(n)}(-1)$ and $(\mathcal{B}_{R} v)^{(n)}(1) = 0$ for $n \in \{0, 1\}$, $\int_{I} \mathcal{B}_R v(t) \ dt = 0$, and  % run into text
	\begin{align}
		\label{eq:hardy00 h12 h1200}
		\| \mathcal{B}_R v \|_{H_{R}^{1/2}(I)} + \| (I - \mathcal{B}_R) v \|_{H^{1/2}_L(I)} + \| (\mathcal{M}_{R} - \mathcal{B}_R) v \|_{H^{1/2}_{00}(I)}
		\leq C \| v\|_{1/2, I}.
	\end{align}
\end{lemma}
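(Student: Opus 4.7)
The approach is to augment $u_0 := \mathcal{A}_R v$ from \cref{lem:hardy0 properties} by three polynomial correction terms, enforcing respectively the two endpoint-derivative constraints and the zero-mean constraint. Define
\[
\mathcal{B}_R v := u_0 + c_- \Phi_- + c_+ \Phi_+ + c_0 \Phi_0,
\]
where $\Phi_-, \Phi_+, \Phi_0 \in \mathcal{P}_p(I)$ all vanish at $\pm 1$, and the corrections are designed with a triangular structure: $\Phi_-'(-1) \neq 0$, $\Phi_-'(1)=0$; $\Phi_+'(-1)=0$, $\Phi_+'(1)\neq 0$; $\Phi_0'(\pm 1)=0$ and $\int_I \Phi_0 \neq 0$. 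A fixed choice $\Phi_0(t)=(1-t^2)^2$ suffices for the mean adjustment. The three conditions then decouple and the coefficients are given explicitly by
\[
c_- = \frac{v'(-1) - u_0'(-1)}{\Phi_-'(-1)}, \quad c_+ = -\frac{u_0'(1)}{\Phi_+'(1)}, \quad c_0 = -\frac{\int_I(u_0 + c_-\Phi_- + c_+\Phi_+)}{\int_I \Phi_0}.
\]
By construction $\mathcal{B}_R v \in \mathcal{P}_p(I)$ and satisfies all the stated vanishing and moment conditions.

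The crux of the argument is the choice of $\Phi_\pm$: since $|v'(\pm 1)|$ is \emph{not} controlled by $\|v\|_{1/2,I}$, fixed $p$-independent bumps cannot work. We take $\Phi_\pm$ as polynomial bubbles (for instance, suitably rescaled Jacobi-weighted polynomials of degree $p$) localized at the natural Chebyshev scale $\epsilon_p \sim 1/p^2$ near the respective endpoint. By the scale-invariance of the Gagliardo seminorm, together with direct estimation of the weighted integrals $\int_I |\Phi_\pm|^2/(1\pm t)\,dt$, one obtains the uniform-in-$p$ bound
\[
\|\Phi_\pm\|_{H^{1/2}_R(I)} + \|\Phi_\pm\|_{H^{1/2}_L(I)} + \|\Phi_\pm\|_{H^{1/2}_{00}(I)} \leq C,
\]
while the endpoint derivative satisfies $|\Phi_\pm'(\mp 1)| \gtrsim 1/\epsilon_p \sim p^2$. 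Combined with a sharp polynomial inverse estimate $|w'(\pm 1)| \leq Cp^\alpha \|w\|_{H^{1/2}(I)}$ for $w \in \mathcal{P}_p(I)$ with $\alpha < 2$ (the Legendre polynomials show $\alpha = 7/4$ is sharp, obtained by interpolating $\|L_n\|_{L^2}\sim n^{-1/2}$ and $\|L_n\|_{H^1}\sim n$ against $|L_n'(\pm 1)|\sim n^2$), applied to both $v$ and $u_0 = \mathcal{A}_R v$ (the latter via $\|u_0\|_{H^{1/2}(I)}\leq C\|v\|_{1/2,I}$ from \cref{lem:hardy0 properties}), this yields $|c_\pm| \cdot \|\Phi_\pm\|_{H^{1/2}_R(I)} \leq Cp^{\alpha-2}\|v\|_{1/2,I} \leq C\|v\|_{1/2,I}$, and similarly for the other two weighted norms. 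The coefficient $c_0$ is bounded analogously using $\|u_0\|_{L^2(I)} \leq C\|v\|_{1/2,I}$ together with the bounds on $c_\pm$.

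The three norm bounds in \cref{eq:hardy00 h12 h1200} then follow from the triangle inequality, \cref{lem:hardy0 properties} applied to $u_0$, $v-u_0$, and $\mathcal{M}_R v - u_0$, and the uniform estimates on the three corrections. The main technical obstacle is verifying the sharp Markov--Bernstein-type inverse inequality with exponent $\alpha < 2$: the naive chain $\|w\|_{L^\infty}\leq C\sqrt{p}\|w\|_{L^2}$ followed by Markov only delivers $\alpha = 5/2$, which is insufficient to be absorbed by the localization scale $1/p^2$ afforded by $\mathcal{P}_p$. An argument exploiting the $H^{1/2}(I)$ structure directly, e.g.\ via weighted Jacobi expansions, is needed to extract the sharper exponent and close the estimate uniformly in $p$.
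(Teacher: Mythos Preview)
Your overall architecture---start from $\mathcal{A}_R v$, add two localized polynomial bubbles to fix the endpoint derivatives, then add a multiple of $(1-t^2)^2$ to kill the mean---is exactly the paper's construction. The difference is only in how the endpoint-derivative correction is balanced, and here you have created an obstacle that is not really there.

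You normalize the bubbles so that $\|\Phi_\pm\|_{H^{1/2}_{00}(I)}\le C$ and $|\Phi_\pm'(\text{endpoint})|\gtrsim p^{2}$, and then assert that the argument closes only if the inverse estimate $|w'(\pm1)|\le Cp^{\alpha}\|w\|_{1/2,I}$ holds with $\alpha<2$. But look at your own bookkeeping: $|c_\pm|\,\|\Phi_\pm\|\le Cp^{\alpha-2}\|v\|_{1/2,I}$, so $\alpha=2$ already gives a uniform bound. There is no need for a strict inequality, and hence no need for the sharp $\alpha=7/4$ Markov--Bernstein result you flag as unresolved. The exponent $\alpha=2$ is immediate from standard inverse inequalities: $\|w'\|_{L^\infty(I)}\le Cp\,\|w'\|_{L^2(I)}\le Cp\,\|w\|_{H^1(I)}\le Cp^{2}\|w\|_{1/2,I}$.

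The paper makes this balance transparent by the opposite normalization. It takes the explicit Jacobi bubble
\[
\Phi_p(t)=\tfrac14(1-t^2)(1-t)\,\frac{P_{p-3}^{(3,3)}(t)}{P_{p-3}^{(3,3)}(-1)},
\]
which satisfies $\Phi_p(-1)=0$, $\Phi_p'(-1)=1$, $\Phi_p(1)=\Phi_p'(1)=0$, and $\|\Phi_p\|_{L^2(I)}\le Cp^{-3}$. Interpolating with one inverse step gives $\|\Phi_p\|_{H^{1/2}_{00}(I)}\le Cp^{-2}$. Then the coefficient in front of $\Phi_p$ is $(v-\mathcal{A}_Rv)'(-1)$ (and $(\mathcal{A}_Rv)'(1)$ for the reflected bubble), bounded by $Cp^{2}\|v\|_{1/2,I}$ via $\|\cdot\|_{W^{1,\infty}}\le Cp^{2}\|\cdot\|_{1/2,I}$, and the product is $O(1)\,\|v\|_{1/2,I}$. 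This is the same trade-off as yours, just with the factor of $p^{2}$ placed on the bubble's norm rather than on its derivative; either way only $\alpha=2$ is required.
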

\begin{proof}
	For $n \in \mathbb{N}_0$, let $P_n^{(\alpha,\beta)}$ denote the Jacobi polynomial \cite{Szego75} and define $\Phi_{p} \in \mathcal{P}_p(I)$ as follows:
	\begin{align*}
		\Phi_{p}(t) := \frac{1}{4} (1-t^2) (1-t) \frac{P_{p-3}^{(3,3)}(t)}{P^{(3,3)}_{p-3}(-1)}, \qquad t \in I.
	\end{align*}
	It may be shown \cite[Lemma B.1]{AinCP19Precon} that $\Phi_{p}$ satisfies
	\begin{align}
		\label{eq:Phi properties}
		\Phi_{p}^{(n)}(-1) = \delta_{ m n}, \quad \Phi_{p}^{(n)}( 1) = 0, \ 0 \leq n \leq 1, \quad \text{with} \quad \|\Phi_{p}\|_{I} \leq C p^{-3}.
	\end{align}
	The fact that $\Phi_{p}$ has a repeated zero of (at least) order $1$ at $\pm 1$ means that $\Phi_{p} \in H^{1/2}_{00}(I)$. By interpolation, the inverse inequality  \cite[p. 95, Theorem 5.1]{Bern07}, and \cref{eq:Phi properties}, we obtain
	\begin{align}
		\label{eq:Phi hm1200 decay}
		\| \Phi_{p} \|_{H^{1/2}_{00}(I)} \leq C \|\Phi_{p} \|_{I}^{1/2} \| \Phi_{p} \|_{1,I}^{1/2} \leq C p \|\Phi_{p} \|_{I} \leq C p^{-2}.
	\end{align}
	
	Now let $v \in\mathcal{P}_p(I)$ be given and define
	\begin{alignat*}{2}
		\mathcal{C}_{R} v(t) &:= \mathcal{A}_R v(t) + (v - \mathcal{A}_R v)'(-1) \Phi_{p}(t) +  (\mathcal{A}_R v)'(1) \Phi_{p}(-t), \qquad & &t \in I, \\
		\mathcal{B}_R v(t) &:= \mathcal{C}_R v(t) - \overline{\mathcal{C}_R v} \frac{(1-t^2)^2}{\int_{I} (1-t^2)^2 \ dt} \qquad & & t \in I.
	\end{alignat*}
	The properties $(\mathcal{B}_{R} v)^{(n)}(-1) = v^{(n)}(-1)$ and $(\mathcal{B}_{R} v)^{(n)}(1) = 0$ for $n \in \{0, 1\}$ and $\int_{I} \mathcal{B}_R v(t) \ dt = 0$ follow from \cref{eq:Phi properties}.
	
	Thanks to \cref{eq:Phi properties,eq:hardy0 h12 h1200}, there holds
	\begin{align*}
		\| \mathcal{C}_R v \|_{H_{R}^{1/2}(I)}  &\leq \|\mathcal{A}_{R} v\|_{H_{R}^{1/2}(I)} + \left\{  \|v\|_{W^{1,\infty}(I)} + \|\mathcal{A}_R v\|_{W^{1,\infty}(I)} \right\} \| \Phi_{1,p} \|_{H^{1/2}_{00}(I)} \\
		&\leq C \left( \|v\|_{1/2, I} +  \left\{  \|v\|_{W^{1,\infty}(I)} + \|\mathcal{A}_R v\|_{W^{1,\infty}(I)} \right\} p^{-2} \right).
	\end{align*}
	Applying the inverse inequality \cite[p. 95, Theorem 5.1]{Bern07}  gives
	\begin{align*}
		\|v\|_{W^{1,\infty}(I)} \leq C p^{2} \|v\|_{1/2, I} \quad \text{and} \quad  \|\mathcal{A}_R v\|_{W^{1,\infty}(I)} \leq C p^2 \|\mathcal{A}_R v\|_{1/2, I}.
	\end{align*} 
	Using \cref{eq:hardy0 h12 h1200} once more shows that $\| \mathcal{C}_R v \|_{H_{R}^{1/2}(I)} \leq  C \|v\|_{1/2, I}$. Since $(1-t^2)^2$ vanishes at $\pm 1$, $(1-t^2)^2 \in H^{1/2}_{00}(I)$, and so
	\begin{align*}
		\| \mathcal{B}_R v \|_{H_{R}^{1/2}(I)}  &\leq \| \mathcal{C}_R v \|_{H_{R}^{1/2}(I)}  + C \| \mathcal{C}_R v\|_{I} \leq C \|v\|_{1/2, I}.
	\end{align*}
	The remaining terms in \cref{eq:hardy00 h12 h1200} may be bounded analogously using \cref{eq:hardy0 h12 h1200}.	
\end{proof}

\subsection{Interpolation Operators on Triangles}

Let $\hat{T}$ be the reference element labeled as in \cref{fig:reference triangle} and let $\{\xi_i\}_{i=1}^{3}$ be the barycentric coordinates on $\hat{T}$. Given a polynomial $\bdd{v} \in \bm{\mathcal{P}}_{p}(\hat{T})$, we parameterize the traces of $\bdd{v}$ on $\hat{\gamma}_2$ and $\hat{\gamma}_3$ as follows:
\begin{alignat*}{2}
	\bdd{v}|_{\hat{\gamma}_2}(t) &= \bdd{v}_2(t) = \bdd{v}\left( \frac{1-t}{2} \hat{\bdd{a}}_1 + \frac{1+t}{2} \hat{\bdd{a}}_3 \right), \qquad & & t \in I, \\ 
	\bdd{v}|_{\hat{\gamma}_3}(t) &= \bdd{v}_3(t) = \bdd{v} \left( \frac{1-t}{2} \hat{\bdd{a}}_1 + \frac{1+t}{2} \hat{\bdd{a}}_2 \right), \qquad & &t \in I.
\end{alignat*}

\begin{theorem}
	\label{thm:c0 vertex interp ref element}
	For each $\bdd{v} \in \bm{\mathcal{P}}_{p}(\hat{T})$, there exists a polynomial $\mathcal{G} \bdd{v} \in \bm{\mathcal{P}}_p(T)$ satisfying
	\begin{enumerate}
		\item[(1)] $D^{\alpha} \mathcal{G} \bdd{v}(\hat{\bdd{a}}_1) = D^{\alpha} \bdd{v}(\hat{\bdd{a}}_1)$, $|\alpha| \leq 1$, and 
		$\mathcal{G} \bdd{v}|_{\hat{\gamma}_i} = \begin{cases}
			\bdd{0} & i = 0, \\
			\mathcal{B}_R \bdd{v}_i & i = 1, 2.
		\end{cases}$
			
		\item[(2)] $\| \mathcal{G} \bdd{v} \|_{1,\hat{T}} \leq C \|\bdd{v}\|_{1, \hat{T}}$.
		
		\item[(3)]$\| \xi_1 \bdd{v} -  \mathcal{G} \bdd{v} \|_{\bdd{H}^{1/2}_{00}(\hat{\gamma}_i)} \leq C \| \bdd{v} \|_{1/2, \hat{\gamma}_i}$, $i=2,3$.
		
		\item[(4)] If $\bdd{v}|_{\hat{\gamma}_i} = \bdd{0}$, then $\mathcal{G} \bdd{v}|_{\hat{\gamma}_i} = \bdd{0}$.
	\end{enumerate}
\end{theorem}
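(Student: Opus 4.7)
My plan is to construct $\mathcal{G}\bdd v$ by prescribing its boundary traces on $\partial\hat T$ and then invoking a stable polynomial extension. Specifically, I would set $g|_{\hat\gamma_1}\equiv\bdd 0$ and $g|_{\hat\gamma_i}:=\mathcal{B}_R\bdd v_i$ for $i=2,3$, with $\mathcal{B}_R$ from \cref{lem:hardy00 properties} applied componentwise. Its endpoint properties guarantee continuity across $\partial\hat T$: at $\hat{\bdd a}_1$ both incident traces equal $\bdd v(\hat{\bdd a}_1)$ since $\mathcal{B}_R$ reproduces values at $-1$, while at $\hat{\bdd a}_2$ and $\hat{\bdd a}_3$ the incident trace $\mathcal{B}_R\bdd v_i$ vanishes because $\mathcal{B}_R$ sends $+1$ to zero, matching $g|_{\hat\gamma_1}\equiv\bdd 0$.

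I would then define $\mathcal{G}\bdd v\in\bm{\mathcal P}_p(\hat T)$ as the image of $g$ under the $H^{1/2}(\partial\hat T)$-to-$H^1(\hat T)$ stable polynomial extension in \cite[Theorem~7.4]{BCMP91}, applied componentwise. Property (1) then follows by construction: the edge traces hold by definition; $\mathcal{G}\bdd v(\hat{\bdd a}_1)=\bdd v(\hat{\bdd a}_1)$ from the matching edge trace; and $\nabla\mathcal{G}\bdd v(\hat{\bdd a}_1)=\nabla\bdd v(\hat{\bdd a}_1)$ because the two tangential derivatives $(\mathcal{B}_R\bdd v_i)'(-1)=\bdd v_i'(-1)$ recover $\partial_x\bdd v(\hat{\bdd a}_1)$ and $\partial_y\bdd v(\hat{\bdd a}_1)$ along the two linearly independent tangent directions to $\hat\gamma_3$ and $\hat\gamma_2$ at $\hat{\bdd a}_1$.

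For property (2), the extension gives $\|\mathcal{G}\bdd v\|_{1,\hat T}\le C\|g\|_{H^{1/2}(\partial\hat T)}$, and the stronger $H^{1/2}_R$ bound from \cref{lem:hardy00 properties} together with the trace theorem $\|\bdd v_i\|_{1/2,\hat\gamma_i}\le C\|\bdd v\|_{1,\hat T}$ control both the individual edge norms and the singular vertex-compatibility integrals at $\hat{\bdd a}_2$, $\hat{\bdd a}_3$ (these collapse into the $H^{1/2}_R$ norm of $\mathcal{B}_R\bdd v_i$ precisely because $g\equiv\bdd 0$ on $\hat\gamma_1$ and $\mathcal{B}_R\bdd v_i$ vanishes at the endpoint opposite $\hat{\bdd a}_1$). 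For property (3), parameterising $\xi_1|_{\hat\gamma_i}(t)=(1-t)/2$ for $i=2,3$ yields $(\xi_1\bdd v)|_{\hat\gamma_i}=\mathcal{M}_R\bdd v_i$, so
\begin{equation*}
(\xi_1\bdd v-\mathcal{G}\bdd v)|_{\hat\gamma_i}=\mathcal{M}_R\bdd v_i-\mathcal{B}_R\bdd v_i,
\end{equation*}
whose $\bdd{H}^{1/2}_{00}(\hat\gamma_i)$ bound is exactly the third estimate of \cref{lem:hardy00 properties} combined with the trace theorem. Finally, property (4) follows immediately from the linearity of $\mathcal{B}_R$ and of the polynomial extension operator.

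The main obstacle, as I see it, is arranging the $p$-robustness of the $H^1$ stability in property (2). This hinges on the delicate endpoint modification built into $\mathcal{B}_R$, which places the resulting edge traces in $H^{1/2}_R$ rather than merely $H^{1/2}$; this extra weighted-$L^2$ control is what tames the singular vertex-compatibility terms in $\|g\|_{H^{1/2}(\partial\hat T)}$ uniformly in $p$, and it meshes with the polynomial extension of \cite[Theorem~7.4]{BCMP91}. A naive replacement of $\mathcal{B}_R$ by $\mathcal{M}_R$ or by a generic polynomial lifting would simultaneously break property (3) and fail to give $p$-independent vertex-compatibility bounds, so the particular construction of $\mathcal{B}_R$ in \cref{lem:hardy00 properties} is essential rather than cosmetic.
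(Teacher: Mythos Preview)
Your construction and the overall route are exactly the paper's: prescribe the boundary datum as $\mathcal{B}_R\bdd v_i$ on $\hat\gamma_2,\hat\gamma_3$ and zero on $\hat\gamma_1$, extend via \cite[Theorem~7.4]{BCMP91}, and read off (1), (3), (4) from \cref{lem:hardy00 properties}.

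There is, however, one genuine gap in your argument for (2). You correctly note that the $H^{1/2}_R$ bound on $\mathcal{B}_R\bdd v_i$ controls the vertex-compatibility integrals at $\hat{\bdd a}_2$ and $\hat{\bdd a}_3$ (where the other incident trace is identically zero). But you say nothing about the compatibility term at $\hat{\bdd a}_1$, namely
\[
\int_I \frac{|\mathcal{B}_R\bdd v_2(t)-\mathcal{B}_R\bdd v_3(t)|^2}{1+t}\,dt,
\]
and your closing paragraph suggests you believe the $H^{1/2}_R$ control handles all three vertices. It does not: the weight here is $(1+t)^{-1}$, which is the $H^{1/2}_L$ side, and nothing in $\|\mathcal{B}_R\bdd v_i\|_{H^{1/2}_R(I)}$ bounds it. The paper closes this by writing
\[
\mathcal{B}_R\bdd v_2-\mathcal{B}_R\bdd v_3=(\bdd v_2-\bdd v_3)-(I-\mathcal{B}_R)\bdd v_2+(I-\mathcal{B}_R)\bdd v_3,
\]
so that the \emph{second} estimate of \cref{lem:hardy00 properties}, $\|(I-\mathcal{B}_R)\bdd v_i\|_{H^{1/2}_L(I)}\le C\|\bdd v_i\|_{1/2,I}$, absorbs the last two pieces, while $\int_I|\bdd v_2-\bdd v_3|^2/(1+t)\,dt$ is already part of $\|\bdd v\|_{1/2,\partial\hat T}^2$ and hence bounded by $\|\bdd v\|_{1,\hat T}^2$ via the trace theorem. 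Without invoking the $H^{1/2}_L$ bound on $I-\mathcal{B}_R$, the $p$-uniform estimate in (2) is incomplete.
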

\begin{proof}
	Let $\bdd{v} \in \bm{\mathcal{P}}_{p}(\hat{T})$ and define $\bdd{f} : \bdd{L}^2(\partial \hat{T}) \to \mathbb{R}^2$ by the rule
	\begin{align*}
		\bdd{f}|_{\hat{\gamma}_i}(t) = \mathcal{B}_{R} \bdd{v}_i(t), \qquad i=2, 3 \quad \text{and} \quad \bdd{f}|_{\hat{\gamma}_1} = \bdd{0}.
	\end{align*}
	Thanks to \cref{lem:hardy00 properties}, $\bdd{f}|_{\hat{\gamma}_i}(\hat{\bdd{a}}_1) = \bdd{v}(\hat{\bdd{a}}_1)$ for $i=2,3$ and $\bdd{f}$ vanishes at $\hat{\bdd{a}}_2$ and $\hat{\bdd{a}}_3$. Consequently $\bdd{f}$ is a continuous piecewise polynomial on $\partial \hat{T}$. Thanks to \cite[Theorem 7.4]{BCMP91} applied component-wise, there exists $\mathcal{G} \bdd{v} \in \bm{\mathcal{P}}_p(\hat{T})$ satisfying
	\begin{align*}
		\mathcal{G} \bdd{v}|_{\partial \hat{T}} = \bdd{f} \quad \text{and} \quad \| \mathcal{G} \bdd{v} \|_{1, \hat{T}} \leq C \|\bdd{f}\|_{1/2, \partial T}.
	\end{align*}
	(1) now follows from \cref{lem:hardy00 properties} on noting that the gradient of $\bdd{v}$ can be expressed in terms of $\bdd{v}_2'$ and $\bdd{v}_3'$. To show (2), we express the $\bdd{H}^{1/2}(\partial \hat{T})$ as follows:
	\begin{align*}
		\|{\bdd{f}}\|_{1/2, \partial \hat{T}}^2 &\leq C \left\{  \sum_{i \in \{2, 3\}} \|\bdd{f}_i\|_{H^{1/2}_{R}(I)}^2 +  \int_{I} \frac{|{\bdd{f}}_2(t) - {\bdd{f}}_3(t)|^2}{1+t} \ dt   \right\}  \\
		&\leq C \left\{  \sum_{i \in \{2, 3\}} \left( \|\mathcal{B}_R \bdd{v}_i\|_{H^{1/2}_{R}(I)}^2 + \| (I - \mathcal{B}_R) \bdd{v}_i \|_{H^{1/2}_{L}(I)}^2 \right) \right. \\
		&\qquad \qquad  \left. \vphantom{\sum_{i \in \{2, 3\}}}  + \int_{I} \frac{|\bdd{v}_2(t) - \bdd{v}_3(t)|^2}{1+t} \ dt \right\}  \\
		&\leq C \|\bdd{v} \|_{1/2, \partial \hat{T}}^2 \leq C \|\bdd{v} \|_{1, \hat{T}}^2,
	\end{align*}
	where we used \cref{eq:hardy00 h12 h1200} and the trace theorem. (3) follows from \cref{eq:hardy00 h12 h1200}, while (4) is a consequence of the identity $\mathcal{B}_R \bdd{0} = \bdd{0}$.
\end{proof}

\Cref{thm:c0 vertex interp ref element} remains true if the reference element is replaced by an element $K \in \mathcal{T}$. Specifically, let $\gamma \in \mathcal{E}$ be an edge with endpoints $\bdd{a}, \bdd{b} \in \mathcal{V}$ and parameterize $\gamma$ as follows:
\begin{align*}
	\varphi_{\gamma, \bdd{a}}(t) = \frac{1-t}{2} \bdd{a} + \frac{1+t}{2}  \bdd{b}, \qquad t \in I.
\end{align*}
Define the operator $\mathcal{B}_{\gamma}^{\bdd{a}} : \bdd{X} \to \bm{\mathcal{P}}_p (\gamma)$ by the rule
\begin{align*}
	\mathcal{B}_{\gamma}^{\bdd{a}} \bdd{v} = \mathcal{B}_{R} (\bdd{v} \circ \varphi_{\gamma, \bdd{a}}) \circ \varphi_{\gamma, \bdd{a}}^{-1}.
\end{align*}
The following corollary is an immediate consequence of \cref{thm:c0 vertex interp ref element} and a standard scaling argument:
\begin{corollary}
	\label{thm:c0 vertex interp element}
	Let $K \in \mathcal{T}$, $\bdd{a} \in \mathcal{V}_K$. There exists a linear operator $\mathcal{I}_{K}^{\bdd{a}} : \bdd{X} \to \bm{\mathcal{P}}_{p}(K)$ satisfying
	\begin{enumerate}
		\item[(1)] $D^{\alpha} \mathcal{I}_K^{\bdd{a}} \bdd{v}(\bdd{a}) = D^{\alpha} \bdd{v}(\bdd{a})$, $|\alpha| \leq 1$, and $\mathcal{I}_K^{\bdd{a}} \bdd{v}|_{\gamma} = \begin{cases}
			\mathcal{B}_{\gamma}^{\bdd{a}} \bdd{v} & \gamma \in \mathcal{E}_{K} \cap \mathcal{E}_{\bdd{a}}, \\
			\bdd{0} & \gamma \in \mathcal{E}_{K} \setminus \mathcal{E}_{\bdd{a}},
		\end{cases}$ 
		
		\item[(2)] $h_K^{-1} \| \mathcal{I}_{K}^{\bdd{a}} \bdd{v} \|_{K} + | \mathcal{I}_{K}^{\bdd{a}} \bdd{v} |_{1,K} \leq C \{ h_K^{-1} \| \bdd{v} \|_{K} +  | \bdd{v} |_{1,K}  \}$.
		
		\item[(3)] For  $\gamma \in \mathcal{E}_K \cap \mathcal{E}_{\bdd{a}}$, there holds
		\begin{multline}
			\label{eq:c0 vertex interp h1200}
			|\gamma|^{-1/2} \| \xi_{\bdd{a}} \bdd{v} -  \mathcal{I}_{K}^{\bdd{a}} \bdd{v} \|_{\gamma} + | \xi_{\bdd{a}} \bdd{v} -  \mathcal{I}_{K}^{\bdd{a}} \bdd{v} |_{\bdd{H}^{1/2}_{00}(\gamma)} \leq C \{ |\gamma|^{-1/2} \| \bdd{v} \|_{\gamma} + | \bdd{v} |_{1/2, \gamma} \}.
		\end{multline}
		
		\item[(4)] If $\bdd{v}|_{\gamma} = \bdd{0}$ for some $\gamma \in \mathcal{E}_K$, then $\mathcal{I}_{K}^{\bdd{a}} \bdd{v}|_{\gamma} = \bdd{0}$.
	\end{enumerate}
\end{corollary}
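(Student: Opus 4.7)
The plan is to construct $\mathcal{I}_K^{\bdd{a}}$ as a pullback-pushforward of the reference operator $\mathcal{G}$ from \cref{thm:c0 vertex interp ref element}, and then transfer each of the four properties via standard affine scaling. First I fix an invertible affine map $F_K : \hat{T} \to K$ that sends the reference vertex $\hat{\bdd{a}}_1$ to $\bdd{a}$, and such that the parametrizations of the reference edges $\hat{\gamma}_2, \hat{\gamma}_3$ (which both emanate from $\hat{\bdd{a}}_1$) are transported via $F_K$ into the parametrizations $\varphi_{\gamma,\bdd{a}}$ used to define $\mathcal{B}_\gamma^{\bdd{a}}$. Then I define
\begin{equation*}
\mathcal{I}_K^{\bdd{a}} \bdd{v} := \bigl(\mathcal{G}(\bdd{v} \circ F_K)\bigr) \circ F_K^{-1}, \qquad \bdd{v} \in \bdd{X}.
\end{equation*}
Linearity is immediate from linearity of $\mathcal{G}$ and of composition with a fixed affine map.

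For property (1), the identity $\mathcal{I}_K^{\bdd{a}}\bdd{v}(\bdd{a}) = \bdd{v}(\bdd{a})$ follows directly from $\mathcal{G}\hat{\bdd{v}}(\hat{\bdd{a}}_1) = \hat{\bdd{v}}(\hat{\bdd{a}}_1)$ in \cref{thm:c0 vertex interp ref element} (1). For the gradient, the chain rule gives $\nabla(\mathcal{I}_K^{\bdd{a}}\bdd{v})(\bdd{a}) = DF_K^{-T}\nabla(\mathcal{G}(\bdd{v}\circ F_K))(\hat{\bdd{a}}_1) = DF_K^{-T}\nabla(\bdd{v}\circ F_K)(\hat{\bdd{a}}_1) = \nabla\bdd{v}(\bdd{a})$. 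For the boundary trace, if $\gamma \in \mathcal{E}_K \cap \mathcal{E}_{\bdd{a}}$ then $F_K^{-1}(\gamma)$ is one of $\hat{\gamma}_2, \hat{\gamma}_3$, and the choice of $F_K$ ensures the parametrizations align so that the reference trace formula $\mathcal{G}\hat{\bdd{v}}|_{\hat{\gamma}_i} = \mathcal{B}_R\hat{\bdd{v}}_i$ pulls forward to $\mathcal{I}_K^{\bdd{a}}\bdd{v}|_\gamma = \mathcal{B}_\gamma^{\bdd{a}}\bdd{v}$. If $\gamma \in \mathcal{E}_K \setminus \mathcal{E}_{\bdd{a}}$, then $F_K^{-1}(\gamma) = \hat{\gamma}_1$, and the reference statement gives $\mathcal{G}\hat{\bdd{v}}|_{\hat{\gamma}_1} = \bdd{0}$. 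Property (4) is an immediate consequence of \cref{thm:c0 vertex interp ref element} (4).

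For the quantitative bounds (2) and (3) I apply standard affine scaling. For (2), the bounds $|K|\simeq h_K^2$, $\|DF_K\| \simeq h_K$, and $\|DF_K^{-1}\| \simeq h_K^{-1}$ (from shape regularity) together with the reference $H^1$-continuity $\|\mathcal{G}\hat{\bdd{v}}\|_{1,\hat{T}} \leq C\|\hat{\bdd{v}}\|_{1,\hat{T}}$ yield the stated estimate after transferring the $L^2$ and $H^1$-seminorm between $K$ and $\hat{T}$. For (3), the scaling of the $L^2$ and $H^{1/2}_{00}$ seminorms on an edge is $\|\cdot\|_{\gamma} \simeq |\gamma|^{1/2} \|\cdot\|_{\hat{\gamma}}$ and $|\cdot|_{\bdd{H}^{1/2}_{00}(\gamma)} \simeq |\cdot|_{\bdd{H}^{1/2}_{00}(\hat{\gamma})}$ (the latter being scale-invariant); applying these together with the reference bound $\|\xi_{\hat{\bdd{a}}_1}\hat{\bdd{v}} - \mathcal{G}\hat{\bdd{v}}\|_{\bdd{H}^{1/2}_{00}(\hat{\gamma}_i)} \leq C\|\hat{\bdd{v}}\|_{1/2,\hat{\gamma}_i}$ and noting that the barycentric coordinate $\xi_{\bdd{a}}$ is preserved under the affine map (since $\xi_{\bdd{a}} = \hat{\xi}_{\hat{\bdd{a}}_1} \circ F_K^{-1}$) gives (3).

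The whole argument is mechanical once the reference theorem is in hand; the only small point requiring care is ensuring the parametrizations match, which is why the affine map $F_K$ must be chosen to send $\hat{\bdd{a}}_1$ to $\bdd{a}$ rather than an arbitrary vertex. No step is a genuine obstacle, consistent with the author's claim that the result is an immediate corollary.
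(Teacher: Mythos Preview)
Your proposal is correct and follows exactly the approach the paper indicates: the paper states only that the corollary ``is an immediate consequence of \cref{thm:c0 vertex interp ref element} and a standard scaling argument,'' and your pullback--pushforward construction $\mathcal{I}_K^{\bdd{a}}\bdd{v} := (\mathcal{G}(\bdd{v}\circ F_K))\circ F_K^{-1}$ together with the affine scaling of the $L^2$, $H^1$, and $H^{1/2}_{00}$ (semi)norms is precisely that argument spelled out.
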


Finally, we require the analogue of \cref{thm:c0 vertex interp element} that interpolates vertex gradients for functions in $\Sigma$:
\begin{theorem}
	\label{thm:c1 vertex interp element}
	Let $K \in \mathcal{T}$, $\bdd{a} \in \mathcal{V}_K$, and parameterize $\gamma \in \mathcal{E}_K \cap \mathcal{E}_{\bdd{a}}$ by $\varphi_{\gamma, \bdd{a}}$. There exists a linear operator $\mathcal{J}_{K}^{\bdd{a}} : \Sigma \to \mathcal{P}_{p}(K)$ satisfying
	\begin{enumerate}
		\item[(1)] $\mathcal{J}_{K}^{\bdd{a}} \psi(\bdd{a}) = 0$, $D^{\alpha} \mathcal{J}_{K}^{\bdd{a}} \psi(\bdd{a}) = D^{\alpha} \psi(\bdd{a})$, $1 \leq |\alpha| \leq 2$, and
		\begin{align*}
			\mathcal{J}_K^{\bdd{a}} \psi|_{\gamma}(t) &= \begin{cases}
				\int_{-1}^{t} \mathcal{B}_R (( \psi \circ \varphi_{\gamma, \bdd{a}} )')(s) \ ds & \gamma \in \mathcal{E}_{K} \cap \mathcal{E}_{\bdd{a}}, \\
				0 &  \gamma \in \mathcal{E}_K \setminus \mathcal{E}_{\bdd{a}}, 
			\end{cases} \\
			\partial_{n_{\gamma}} \mathcal{J}_K^{\bdd{a}} \psi|_{\gamma}(t) &= \begin{cases}
				\mathcal{B}_R ( (\partial_{n_{\gamma}} \psi) \circ \varphi_{\gamma, \bdd{a}} )(t) & \gamma \in \mathcal{E}_{K} \cap \mathcal{E}_{\bdd{a}}, \\
				0 &  \gamma \in \mathcal{E}_K \setminus \mathcal{E}_{\bdd{a}}.
			\end{cases}
		\end{align*}		
		
		\item[(2)] $h_K^{-2} \| \mathcal{J}_{K}^{\bdd{a}} \psi \|_{K} + h_K^{-1} | \mathcal{J}_{K}^{\bdd{a}} \psi |_{1,K} + | \mathcal{J}_{K}^{\bdd{a}} \psi |_{2, K} \leq C \{ h_K^{-1} | \psi |_{1,K} + | \psi |_{2, K} \}$.
		
		\item[(3)] For  $\gamma \in \mathcal{E}_{\bdd{a}} \cap \mathcal{E}_K$, there holds
		\begin{multline}
			\label{eq:c1 vertex interp h1200}
			|\gamma|^{-1/2} \| \xi_{\bdd{a}} \nabla \psi - \nabla \mathcal{J}_{K}^{\bdd{a}} \psi \|_{\gamma} + | \xi_{\bdd{a}} \nabla \psi - \nabla \mathcal{J}_{K}^{\bdd{a}} \psi |_{\bdd{H}^{1/2}_{00}(\gamma)} \\ 
			\leq C \{ |\gamma|^{-1/2} \| \nabla \psi\|_{\gamma} + |\nabla  \psi|_{1/2, \gamma} \}.
		\end{multline}
		
		\item[(4)] If $\nabla \psi|_{\gamma} = \bdd{0}$ for some $\gamma \in \mathcal{E}_{K}$, then $D^{\beta} \mathcal{J}_K^{\bdd{a}} \psi|_{\gamma} = 0$ for all $|\beta| \leq 1$.
	\end{enumerate}
\end{theorem}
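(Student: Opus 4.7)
The plan is to follow the template of \cref{thm:c0 vertex interp ref element,thm:c0 vertex interp element} (the $C^0$ analogue), replacing the averaging-based boundary trace by a $C^1$-compatible pair of trace and normal-derivative data, and using the $H^2$-conforming polynomial lifting from \cite{AinCP19Precon} (the same one already invoked in the proof of \cref{eq:biharmonic continuity element}) in place of the $C^0$-lifting of \cite{BCMP91}. By an affine change of variables, it suffices to construct a reference operator $\mathcal{G}:\Sigma\to\mathcal{P}_{p+1}(\hat T)$ with distinguished vertex $\hat{\bdd{a}}_1$; properties (2) and (3) on a general element then follow from a standard scaling argument mirroring \cref{thm:c0 vertex interp element}.

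Given $\psi\in\Sigma$, I would prescribe the boundary data on each edge of $\hat T$ as follows: on the two edges $\hat\gamma_i$, $i\in\{2,3\}$, incident to $\hat{\bdd a}_1$, set
\begin{align*}
  \omega_i(t):=\int_{-1}^{t}\mathcal{B}_R\bigl((\psi\circ\varphi_i)'\bigr)(s)\,ds,\qquad \theta_i(t):=\mathcal{B}_R\bigl(\partial_{\hat n_i}\psi\circ\varphi_i\bigr)(t),
\end{align*}
and on the opposite edge $\hat\gamma_1$, prescribe both trace and normal derivative to vanish. The endpoint-preservation properties of $\mathcal{B}_R$ from \cref{lem:hardy00 properties} — namely $(\mathcal{B}_R v)^{(n)}(-1)=v^{(n)}(-1)$ and $(\mathcal{B}_R v)^{(n)}(1)=0$ for $n\in\{0,1\}$, together with $\int_I \mathcal{B}_R v\,dt=0$ — imply that $(\omega_i,\theta_i)$ vanish together with their tangential derivatives at $\hat{\bdd a}_2,\hat{\bdd a}_3$ and that $\omega_i'(-1)$ and $\theta_i(-1)$ reproduce the tangential and normal components of $\nabla\psi(\hat{\bdd a}_1)$, so that the data is $C^1$-compatible everywhere on $\partial\hat T$. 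Apply the $H^2$-lifting to obtain $\chi\in\mathcal{P}_{p+1}(\hat T)$ with $\chi|_{\hat\gamma_i}=\omega_i$, $\partial_{\hat n_i}\chi|_{\hat\gamma_i}=\theta_i$, and a continuity bound of the form $|\chi|_{2,\hat T}\leq C\sum_i\{\|\omega_i\|_{3/2,\hat\gamma_i}+\|\theta_i\|_{1/2,\hat\gamma_i}\}$, and set $\mathcal{G}\psi:=\chi$.

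The conclusions then follow: (1) is read off from the construction (with $D^2$-matching handled below); (4) is immediate from $\mathcal{B}_R 0=0$; (2) is obtained by bounding $\|\omega_i\|_{3/2,\hat\gamma_i}+\|\theta_i\|_{1/2,\hat\gamma_i}$ via the continuity of $\mathcal{B}_R$ asserted in \cref{lem:hardy00 properties} and the trace theorem; and for (3), I would decompose $\xi_{\bdd a}\nabla\psi-\nabla\mathcal{G}\psi$ in the edge tangent/normal frame on each $\hat\gamma_i$ — noting that $\xi_{\bdd a}$ restricts to $(1-t)/2$ under the parameterization $\varphi_i$, i.e.\ to the action of $\mathcal{M}_R$ — so that the difference becomes $(\mathcal{M}_R-\mathcal{B}_R)$ applied componentwise to $\nabla\psi\circ\varphi_i$, and the third term of \cref{eq:hardy00 h12 h1200} supplies the required $\bdd H^{1/2}_{00}$-bound.

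The hard part will be confirming that $D^2\chi(\hat{\bdd a}_1)=D^2\psi(\hat{\bdd a}_1)$ holds automatically, since only trace and normal-derivative data are prescribed by the lifting. I expect this to reduce to two observations: first, that $(\mathcal{B}_R v)'(-1)=v'(-1)$ forces $\omega_i''(-1)=(\psi\circ\varphi_i)''(-1)$ and $\theta_i'(-1)=(\partial_{\hat n_i}\psi\circ\varphi_i)'(-1)$, so that the second tangential derivatives and mixed tangential-normal derivatives of $\chi$ on $\hat\gamma_2,\hat\gamma_3$ at $\hat{\bdd a}_1$ agree with those of $\psi$; and second, that these four scalars — overdetermined but compatible by virtue of $\psi\in C^2$ at $\hat{\bdd a}_1$ — uniquely determine the three independent entries of $D^2\chi(\hat{\bdd a}_1)$ via the change of basis between the orthonormal frames $(\hat{\unitvec t}_2,\hat{\unitvec n}_2)$ and $(\hat{\unitvec t}_3,\hat{\unitvec n}_3)$ at $\hat{\bdd a}_1$.
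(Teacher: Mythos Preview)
Your construction is essentially the paper's own, phrased slightly differently. The paper sets $\bdd v:=\nabla(\psi\circ\bdd F_K)$ and applies the $C^0$ reference interpolant $\mathcal{G}$ of \cref{thm:c0 vertex interp ref element} to the \emph{vector} $\bdd v$; the trace $f$ is then the path integral of $\mathcal{G}\bdd v\cdot\hat{\unitvec t}$ from $\hat{\bdd a}_1$ and the normal data is $g=\mathcal{G}\bdd v\cdot\hat{\unitvec n}$. Unwinding this gives exactly your $\omega_i,\theta_i$. Your handling of the $D^2$ compatibility at $\hat{\bdd a}_1$ is correct and matches the paper's verification of condition (iii) for the lifting.

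There is one repairable gap. The lifting the paper actually uses is \cite[Corollary 2.3]{AinCP19Extension}, not the result from \cite{AinCP19Precon} invoked for \cref{eq:biharmonic continuity element}, and its continuity estimate reads $\|\rho\|_{2,\hat T}\le C\{\|f\|_{\partial\hat T}+\|\bdd\sigma\|_{1/2,\partial\hat T}\}$ with $\bdd\sigma:=\partial_t f\,\hat{\unitvec t}+g\,\hat{\unitvec n}$ the full gradient trace. This is a \emph{global} $\bdd H^{1/2}(\partial\hat T)$ norm, not the edge-wise sum $\sum_i\{\|\omega_i\|_{3/2,\hat\gamma_i}+\|\theta_i\|_{1/2,\hat\gamma_i}\}$ you wrote; the latter does not control the corner cross-term at $\hat{\bdd a}_1$. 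The paper's phrasing via $\mathcal{G}$ is precisely what closes this: since $\bdd\sigma=\mathcal{G}\bdd v|_{\partial\hat T}$, one gets $\|\bdd\sigma\|_{1/2,\partial\hat T}\le C\|\mathcal{G}\bdd v\|_{1,\hat T}\le C\|\bdd v\|_{1,\hat T}$ directly from \cref{thm:c0 vertex interp ref element}(2) and the trace theorem. Your route would have to re-estimate the corner term at $\hat{\bdd a}_1$ by hand using the $(I-\mathcal{B}_R)$ bound of \cref{lem:hardy00 properties}, which works but simply reproduces the proof of \cref{thm:c0 vertex interp ref element}(2).
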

\begin{proof}
	Let $K \in \mathcal{T}$, $\bdd{a} \in \mathcal{V}_K$ and $\psi \in \Sigma$. Let $\bdd{v} = \nabla (\psi \circ \bdd{F}_K) \in \bm{\mathcal{P}}_{p}(\hat{T})$, where $\bdd{F}_K : \hat{T} \to K$ is any invertible affine map with $\bdd{F}_K(\hat{\bdd{a}}_1) = \bdd{a}$, and let $\mathcal{G} \bdd{v} \in \bm{\mathcal{P}}_p(\hat{T})$ be given by \cref{thm:c0 vertex interp ref element}. We define $f, g : L^2(\partial \hat{T}) \to \mathbb{R}$ by the rules
	\begin{align*}
		f(\bdd{x}) = \int_{\hat{\bdd{a}}_1}^{\bdd{x}} \mathcal{G} \bdd{v} \cdot \hat{\unitvec{t}} \ dS \quad \text{and} \quad g(\bdd{x}) = (\mathcal{G} \bdd{v} \cdot \hat{\unitvec{n}})(\bdd{x}), \qquad \bdd{x} \in \partial \hat{T},
	\end{align*}
	where the path integral is taken in the counter-clockwise direction around $\partial \hat{T}$. Clearly $f$ and $g$ are piecewise polynomial functions and satisfy the following:
	\begin{enumerate}
		\item[(i)] Thanks to \cref{thm:c0 vertex interp ref element} (1) and \cref{lem:hardy00 properties}, $\int_{\partial \hat{T}} \mathcal{G} \bdd{v} \cdot \hat{\unitvec{t}} \ dS = 0$ and so $f$ is continuous.
		
		\item[(ii)] $\bdd{\sigma} := \partial_t f \hat{\unitvec{t}} + g \hat{\unitvec{n}} = \mathcal{G} \bdd{v}$ on $\partial \hat{T}$ is continuous since $\mathcal{G} \bdd{v}$ is polynomial.  
		
		\item[(iii)] For $\gamma, \gamma' \in \mathcal{E}_{\bdd{a}} \cap \mathcal{E}_K$, there holds
		\begin{align*}
			\partial_{t_{\gamma}} \bdd{\sigma}|_{\gamma}(\bdd{a}) \cdot \unitvec{t}_{\gamma'} &= \partial_{t_{\gamma}}  \mathcal{G} \bdd{v}(\bdd{a}) \cdot \unitvec{t}_{\gamma'} = \partial_{t_{\gamma}} \nabla (\psi \circ \bdd{F}_K)(\bdd{a}) \cdot \unitvec{t}_{\gamma'} \\
			&= \partial_{t_{\gamma'}} \nabla (\psi \circ \bdd{F}_K)(\bdd{a}) \cdot \unitvec{t}_{\gamma} = \partial_{t_{\gamma}'}  \mathcal{G} \bdd{v}(\bdd{a}) \cdot \unitvec{t}_{\gamma =} \partial_{t_{\gamma'}} \bdd{\sigma} |_{\gamma'}(\bdd{a}) \cdot \unitvec{t}_{\gamma}
		\end{align*}
		by \cref{thm:c0 vertex interp ref element} (1) and \cref{lem:hardy00 properties}. Moreover, $\partial_{t_{\gamma}} \bdd{\sigma}_{\gamma}|_{\gamma}(\bdd{b}) = \partial_{t_{\gamma}}  \mathcal{G} \bdd{v}(\bdd{b}) = \bdd{0}$ for $\gamma \in \mathcal{E}_K$ and $\bdd{b} \in \mathcal{V}_K \setminus \{\bdd{a}\}$ by \cref{lem:hardy00 properties}.
	\end{enumerate}
	Thanks to \cite[Corollary 2.3]{AinCP19Extension}, there exists $\rho \in \mathcal{P}_p(\hat{T})$ satisfying
	\begin{align*}
		\rho|_{\partial \hat{T}} = f, \quad \partial_n \rho|_{\partial \hat{T}} = g, \quad \text{and} \quad \|\rho\|_{2,\hat{T}} \leq C \{ \|f\|_{\partial \hat{T}} + \|\bdd{\sigma}\|_{1/2, \partial \hat{T}} \}.
	\end{align*}
	Applying the Cauchy-Schwarz inequality and using the relation $\bdd{\sigma} := \mathcal{G} \bdd{v}$ on $\partial \hat{T}$ and \cref{thm:c0 vertex interp ref element} (2), we obtain
	\begin{align*}
		\|f\|_{\partial \hat{T}} + \|\bdd{\sigma}\|_{1/2, \partial \hat{T}} \leq C \| \mathcal{G} \bdd{v} \|_{1/2, \partial \hat{T}} \leq C \| \mathcal{G} \bdd{v}\|_{1, \hat{T}} \leq C \| \bdd{v}\|_{1, \hat{T}} = \| \nabla (\psi \circ \bdd{F}_K) \|_{1, \hat{T}}.
	\end{align*}
	Defining $\mathcal{J}_K^{\bdd{a}} \psi := \rho \circ \bdd{F}_K^{-1}$ and applying a standard scaling argument give
	\begin{multline*}
		h_K^{-2} \| \mathcal{J}_{K}^{\bdd{a}} \psi \|_{K} + h_K^{-1} | \mathcal{J}_{K}^{\bdd{a}} \psi |_{1,K} + | \mathcal{J}_{K}^{\bdd{a}} \psi |_{2, K} \\
		\leq C \| \rho\|_{2, \hat{T}} \leq C \| \nabla (\psi \circ \bdd{F}_K) \|_{1, \hat{T}} \leq C \{ h_K^{-1} | \psi |_{1,K} + | \psi |_{2, K} \}.
	\end{multline*}
	(1), (3), and (4) now follow from \cref{thm:c0 vertex interp ref element} (1), (3), and (4), respectively.
\end{proof}

\section{Stability of the Decompositions}

\begin{lemma}
	\label{lem:general stability}
	Let $Y^0 := \tilde{\bdd{X}}_B$ and $Y^1 := \tilde{\Sigma}_B$ be partitioned as follows:
	\begin{align*}
		Y^i = Y^i_C + \sum_{ \bdd{a} \in \mathcal{V}} Y^i_{\bdd{a}},
	\end{align*}
	where $Y^0_C = \tilde{\bdd{X}}_C$, $Y^1_C = \tilde{\Sigma}_C$, $Y^0_{\bdd{a}} = \tilde{\bdd{X}}_{\bdd{a}}$, and $Y^1_{\bdd{a}} = \tilde{\Sigma}_{\bdd{a}}$. For $i \in \{0, 1\}$ and $y \in Y^i$, there exist $y_C \in Y_C^i$ and $y_{\bdd{a}} \in Y_{\bdd{a}}^i$, $\bdd{a} \in \mathcal{V}$, satisfying
	\begin{align}
		\label{eq:general stability}
		y = y_C + \sum_{ \bdd{a} \in \mathcal{V}} y_{\bdd{a}} \quad \text{and} \quad \| \bdd{\varepsilon}(\vcurl^i y_C) \|^2 + \sum_{ \bdd{a} \in \mathcal{V}}  \| \bdd{\varepsilon}(\vcurl^i y_{\bdd{a}}) \|^2 \leq C \| \bdd{\varepsilon}(\vcurl^i y) \|^2,
	\end{align}
	where $\vcurl^0 := I$ and $\vcurl^1 := \vcurl$.
\end{lemma}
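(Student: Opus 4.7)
The plan is a three-layer decomposition of $y$: a coarse correction $y_C$, a vertex-patch correction at each mesh vertex, and an edge-patch correction at each mesh edge. Since $\mathcal{T}_\gamma \subseteq \mathcal{T}_{\bdd{a}}$ for either endpoint $\bdd{a}$ of an edge $\gamma$, every edge correction can be absorbed into a single vertex-patch subspace $Y_{\bdd{a}}^i$, so after reassembly we obtain the required decomposition $y = y_C + \sum_{\bdd{a}} y_{\bdd{a}}$. The construction is parallel for $i=0$ and $i=1$: substitute $(\mathcal{I}_C, \mathcal{I}_{\bdd{a}}, \mathcal{I}_\gamma, \mathbb{S})$ with $(\mathcal{J}_C, \mathcal{J}_{\bdd{a}}, \mathcal{J}_\gamma, \mathbb{B})$ and measure everything through $\vcurl^i$.

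First, I would set $y_C := \mathcal{I}_C y$ for $i=0$ (resp.\ $\mathcal{J}_C y$ for $i=1$), so that $y_C \in Y_C^i$ and, by \cref{eq:coarse continuity 2} (resp.\ \cref{eq:h2 coarse continuity}), the residual $z := y - y_C$ is controlled element-wise in the appropriate Sobolev seminorm by $\|\bdd{\varepsilon}(\vcurl^i y)\|_{\mathcal{T}_K}$. Next, for each $\bdd{a}\in\mathcal{V}$, I would set $z_{\bdd{a}} := \mathcal{I}_{\bdd{a}} z \in Y_{\bdd{a}}^i$ (resp.\ $\mathcal{J}_{\bdd{a}} z$). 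The trace vanishing on edges of $\mathcal{E}_K\setminus\mathcal{E}_{\bdd{a}}$ built into $\mathcal{I}_K^{\bdd{a}}$ (\cref{thm:c0 vertex interp element}) and its $C^1$ analogue $\mathcal{J}_K^{\bdd{a}}$ (\cref{thm:c1 vertex interp element}) forces $z_{\bdd{a}'}$ to vanish at every vertex different from $\bdd{a}'$ to the relevant order, so the residual $r := z - \sum_{\bdd{a}} z_{\bdd{a}}$ lies in $\bdd{X}_{\mathcal{E}}$ (resp.\ $\Sigma_{\mathcal{E}}$). The continuity bounds \cref{eq:c0 vertex comp aux h1 cont} (resp.\ \cref{eq:c1 vertex comp aux h2 cont}) together with the finite overlap of vertex patches then yield
\begin{equation*}
    \sum_{\bdd{a}\in\mathcal{V}} \|\bdd{\varepsilon}(\vcurl^i z_{\bdd{a}})\|^2 + \|\bdd{\varepsilon}(\vcurl^i r)\|^2 \leq C \|\bdd{\varepsilon}(\vcurl^i y)\|^2.
\end{equation*}

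For the edge step I would exploit the observation that the lift underlying $\mathcal{I}_\gamma$ (resp.\ $\mathcal{J}_\gamma$) is only pinned down on $\gamma$ itself (see \cref{rem:stokes extension boundary} and \cref{rem:biharmonic traces}) and pick it to vanish on every edge of $\mathcal{T}_\gamma$ other than $\gamma$; this zero extension is compatible at the vertices of $\gamma$ precisely because $r$ vanishes there to the required order. The resulting edge piece $r_\gamma$ is supported in $\mathcal{T}_\gamma \subseteq \mathcal{T}_{\bdd{a}_\gamma}$ and thus lies in $Y_{\bdd{a}_\gamma}^i$ for an arbitrarily chosen endpoint $\bdd{a}_\gamma$ of $\gamma$. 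Comparing boundary traces on each $\partial K$ shows that $r$ and $\sum_\gamma r_\gamma$ agree on all element boundaries; since both belong to $\tilde{\bdd{X}}_B$ (resp.\ $\tilde{\Sigma}_B$) and the extension operators act as the identity there, $r = \sum_\gamma r_\gamma$. Setting $y_{\bdd{a}} := z_{\bdd{a}} + \sum_{\gamma : \bdd{a}_\gamma = \bdd{a}} r_\gamma \in Y_{\bdd{a}}^i$ then produces the decomposition in \cref{eq:general stability}.

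The main technical obstacle will be controlling $\sum_\gamma \|\bdd{\varepsilon}(\vcurl^i r_\gamma)\|^2$ by $\|\bdd{\varepsilon}(\vcurl^i y)\|^2$. The edge continuity estimate \cref{eq:c0 edge interp continuity} (resp.\ \cref{eq:h2 edge comp continuity}) bounds each summand by $|r|_{\bdd{H}^{1/2}_{00}(\gamma)}$ (resp.\ $|\nabla r|_{\bdd{H}^{1/2}_{00}(\gamma)}$), which is strictly stronger than the corresponding trace $H^{1/2}$ seminorm; this is the only place where the internal structure of the vertex corrections really matters. The crucial algebraic observation is that because $z_{\bdd{a}'}|_\gamma = 0$ for every vertex $\bdd{a}'$ not an endpoint of $\gamma$, and because $\xi_{\bdd{a}} + \xi_{\bdd{b}} \equiv 1$ on $\gamma$ when $\bdd{a},\bdd{b}$ are the endpoints,
\begin{equation*}
    r|_\gamma = (\xi_{\bdd{a}} z - z_{\bdd{a}})|_\gamma + (\xi_{\bdd{b}} z - z_{\bdd{b}})|_\gamma
\end{equation*}
(with the analogous gradient identity for $i=1$). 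Applying the $\bdd{H}^{1/2}_{00}$ bounds \cref{eq:c0 vertex comp aux h1200 cont} (resp.\ \cref{eq:c1 vertex comp aux h1200 cont}) to each summand, followed by the trace theorem, a standard scaling argument, and the finite overlap of the patches $\{\mathcal{T}_\gamma\}$ and $\{\mathcal{T}_{\bdd{a}}\}$, reduces the sum over edges to an elementwise quantity already controlled by the first-step bound on $z$. Combining the three layered estimates produces \cref{eq:general stability}.
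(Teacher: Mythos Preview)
Your proposal is correct and follows essentially the same three-layer strategy as the paper: coarse correction via $\mathcal{I}_C$/$\mathcal{J}_C$, vertex correction via $\mathcal{I}_{\bdd{a}}$/$\mathcal{J}_{\bdd{a}}$, edge correction via $\mathcal{I}_\gamma$/$\mathcal{J}_\gamma$, with the crucial $H^{1/2}_{00}$ control on edges obtained from the barycentric identity $r|_\gamma = (\xi_{\bdd{a}} z - z_{\bdd{a}})|_\gamma + (\xi_{\bdd{b}} z - z_{\bdd{b}})|_\gamma$ together with \cref{eq:c0 vertex comp aux h1200 cont}/\cref{eq:c1 vertex comp aux h1200 cont}. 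The only cosmetic difference is the reassembly: the paper distributes each edge piece symmetrically as $y_{\bdd{a}} := z_{\bdd{a}} + \tfrac{1}{2}\sum_{\gamma \in \mathcal{E}_{\bdd{a}}} z_\gamma$, whereas you assign each $r_\gamma$ to a single chosen endpoint; both are valid since each vertex has boundedly many incident edges.
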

\begin{proof}
	Let
	\begin{align*}
		\mathcal{R}_C^i := \begin{cases}
			\mathcal{I}_C & i = 0, \\
			\mathcal{J}_C & i= 1,
		\end{cases}
		\ \ 
		\mathcal{R}_{\bdd{a}}^i := \begin{cases}
			\mathcal{I}_{\bdd{a}} & i = 0, \\
			\mathcal{J}_{\bdd{a}} & i= 1,
		\end{cases} 
		\quad \bdd{a} \in \mathcal{V},
		\ \
		\mathcal{R}_{\gamma}^i := \begin{cases}
			\mathcal{I}_{\gamma} & i = 0, \\
			\mathcal{J}_{\gamma} & i= 1,
		\end{cases} 
		\quad \gamma \in \mathcal{E},
	\end{align*}
	where the operators are given by \cref{lem:coarse comp 2,lem:c0 vertex comp aux,lem:c0 edge interp,lem:h2 coarse comp,lem:c1 vertex comp aux,lem:h2 edge comp}, and define
	\begin{align*}
		y_C := \mathcal{R}_C^i y, \quad z_{\bdd{a}} := \mathcal{R}_{\bdd{a}}^i (y - y_C), \quad \text{and} \quad z_{\gamma} := \mathcal{R}_{\gamma}^i \left(y - y_C - \sum_{ \bdd{a} \in \mathcal{V}} y_{\bdd{a}} \right),  
	\end{align*}
	where $\bdd{a} \in \mathcal{V}$ and $\gamma \in \mathcal{E}$. Let $z := y_C + \sum_{ \bdd{a} \in \mathcal{V}} z_{\bdd{a}} + \sum_{ \gamma \in \mathcal{E}} z_{\gamma}$. On each edge $\gamma \in \mathcal{E}$, $D^{\alpha} z = D^{\alpha} y$ for all $|\alpha| \leq i$ by construction. Thus, $z \equiv y$ by \cref{rem:stokes extension boundary,rem:biharmonic traces}.
	
	We now show that this decomposition is stable. Let $K \in \mathcal{T}$. Applying \cref{eq:coarse continuity 2}, \cref{eq:c0 vertex comp aux h1 cont}, \cref{eq:h2 coarse continuity}, and \cref{eq:c1 vertex comp aux h2 cont} give $\|\bdd{\varepsilon}(\vcurl^{i} y_C)\|_{K}^2 \leq C \|\bdd{\varepsilon}(\vcurl^{i} y)\|_{\mathcal{T}_K}^2$ and
	\begin{align*}
		\sum_{ \bdd{a} \in \mathcal{V}_K} \| \bdd{\varepsilon}(\vcurl^{i} z_{\bdd{a}}) \|_{K}^2 \leq C |y - \mathcal{R}_C^i y|_{i,K}^2 \leq C  \|\bdd{\varepsilon}(\vcurl^i y) \|_{\mathcal{T}_K}^2.
	\end{align*}
	Moreover, using \cref{eq:c0 vertex comp aux h1200 cont}, \cref{eq:c1 vertex comp aux h1200 cont}, the trace theorem, \cref{eq:coarse continuity 2}, and \cref{eq:h2 coarse continuity}, we obtain
	\begin{align*}
		\sum_{ \gamma \in \mathcal{E}_K} \| \bdd{\varepsilon}(\vcurl^{i} z_{\gamma})\|_{K}^2 &\leq C \sum_{ \gamma \in \mathcal{E}_K} \left| \nabla^{i} (y - y_C) - \sum_{ \bdd{a} \in \partial \gamma} \nabla^{i} z_{\bdd{a}}  \right|_{{H}^{1/2}_{00}(\gamma)}^2 \\
		&\leq C \sum_{ \gamma \in \mathcal{E}_K} \sum_{ \bdd{a} \in \partial \gamma} |\xi_{\bdd{a}} \nabla^i (y - y_C - z_{\bdd{a}})|_{{H}^{1/2}_{00}(\gamma)}^2  \\
		&\leq C \{ h_K^{-1/2} \| \nabla^{i}(y - y_C) \|_{\partial K} + |\nabla^{i} (y -y_C)|_{1/2,\partial K} \} \\
		&\leq C \{ h_K^{-1} \| \nabla^{i} (y - y_C) \|_{K} + |\nabla^{i} (y - y_C)|_{1,K} \} \\
		&\leq C  \|\bdd{\varepsilon}(\vcurl^{i} y) \|_{\mathcal{T}_K}^2. 
	\end{align*}
	Collecting results gives
	\begin{align}
		\label{eq:proof:general stable decomp aux}
		&\|\bdd{\varepsilon}(\vcurl^i y_C) \|_{K}^2 + 	\sum_{ \bdd{a} \in \mathcal{V}}  \| \bdd{\varepsilon}(\vcurl^i z_{\bdd{a}}) \|_{K}^2 + \sum_{ \gamma \in \mathcal{E}} \| \bdd{\varepsilon}(\vcurl^i z_{\gamma})\|_{K}^2 \leq C \|\bdd{\varepsilon}(\vcurl^i y) \|_{\mathcal{T}_K}^2. 
	\end{align}
	
	Let $y_{\bdd{a}} := z_{\bdd{a}} + \frac{1}{2} \sum_{ \gamma \in \mathcal{E}_{\bdd{a}}} z_{\gamma}$. By \cref{lem:c0 vertex comp aux}, \cref{lem:c1 vertex comp aux}, \cref{eq:c0 edge interp prop}, and \cref{eq:h2 edge comp prop}, $y_{\bdd{a}} \in Y_{\bdd{a}}$. Moreover, the relation $z = z_C + \sum_{ \bdd{a} \in \mathcal{V}} z_{\bdd{a}}$ follows by construction, and \cref{eq:general stability} follows from \cref{eq:proof:general stable decomp aux} and summing over the elements.
\end{proof}

%    Bibliographies can be prepared with BibTeX using amsplain,
%    amsalpha, or (for "historical" overviews) natbib style.
\bibliographystyle{amsplain}
%    Insert the bibliography data here.
\bibliography{references}

\providecommand{\bysame}{\leavevmode\hbox to3em{\hrulefill}\thinspace}
\providecommand{\MR}{\relax\ifhmode\unskip\space\fi MR }
% \MRhref is called by the amsart/book/proc definition of \MR.
\providecommand{\MRhref}[2]{%
  \href{http://www.ams.org/mathscinet-getitem?mr=#1}{#2}
}
\providecommand{\href}[2]{#2}
\begin{thebibliography}{10}

\bibitem{Ain96b}
Mark Ainsworth, \emph{A hierarchical domain decomposition preconditioner for
  {$h$-$p$} finite element approximation on locally refined meshes}, SIAM J.
  Sci. Comput. \textbf{17} (1996), no.~6, 1395--1413.

\bibitem{Ain96}
\bysame, \emph{A preconditioner based on domain decomposition for {$h$-$p$}
  finite-element approximation on quasi-uniform meshes}, SIAM J. Numer. Anal.
  \textbf{33} (1996), no.~4, 1358--1376.

\bibitem{Ain09poly}
Mark Ainsworth and Leszek Demkowicz, \emph{Explicit polynomial preserving trace
  liftings on a triangle}, Math. Nachr. \textbf{282} (2009), no.~5, 640--658.

\bibitem{AinCP19Extension}
Mark Ainsworth and Charles Parker, \emph{{$H^2$}-stable polynomial liftings on
  triangles}, SIAM J. Numer. Anal. \textbf{58} (2020), no.~3, 1867--1892.

\bibitem{AinCP19Precon}
\bysame, \emph{Preconditioning high order {$H^2$} conforming finite elements on
  triangles}, Numer. Math. \textbf{148} (2021), 223--254.

\bibitem{AinCP19StokesIII}
\bysame, \emph{{A} mass conserving mixed \lowercase{$hp$}-{FEM} scheme for
  {S}tokes flow. {P}art \uppercase{\textup{iii}}: {I}mplementation and
  preconditioning}, SIAM J. Numer. Anal. \textbf{60} (2022), no.~3, 1574--1606.

\bibitem{AinCP22SCIP}
\bysame, \emph{Statically condensed iterated penalty method for high order
  finite element discretizations of incompressible flow}, preprint, 2022,
  \url{http://arxiv.org/abs/2301.01818}.

\bibitem{AinCP21LE}
\bysame, \emph{Unlocking the secrets of locking: {Finite} element analysis in
  planar linear elasticity}, Comput. Methods Appl. Mech. Engrg. \textbf{395}
  (2022), 115034.

\bibitem{Arn97}
Douglas Arnold, Richard Falk, and Ragnar Winther, \emph{Preconditioning in
  {$\boldmath{H}(\mathrm{div}$} and applications}, Math. Comp. \textbf{66}
  (1997), no.~219, 957--984.

\bibitem{Arn00}
Douglas~N Arnold, Richard~S Falk, and Ragnar Winther, \emph{Multigrid in
  {$H$}(div) and {$H$}(curl)}, Numer. Math. \textbf{85} (2000), no.~2,
  197--217.

\bibitem{BCMP91}
Ivo Babu{\v{s}}ka, Alan Craig, Jan Mandel, and Juhani Pitk{\"a}ranta,
  \emph{Efficient preconditioning for the p-version finite element method in
  two dimensions}, SIAM J. Numer. Anal. \textbf{28} (1991), 624--661.

\bibitem{BabSuri92b}
Ivo Babu{\v{s}}ka and M.~Suri, \emph{Locking effects in the finite element
  approximation of elasticity problems}, Numer. Math. \textbf{62} (1992),
  439--463.

\bibitem{Bern07}
Christine Bernardi, Monique Dauge, and Yvon Maday, \emph{Polynomials in the
  {S}obolev world}, 2007.

\bibitem{BoffiBrezziFortin13}
D.~Boffi, F.~Brezzi, and M.~Fortin, \emph{Mixed finite element methods and
  applications}, Springer Ser. Comput. Math., Springer, Berlin, 2013.

\bibitem{Braess07}
D.~Braess, \emph{Finite elements: Theory, fast solvers, and applications in
  solid mechanics}, 3rd ed., Cambridge University Press, Cambridge, 2007.

\bibitem{Brenner08}
S.~C. Brenner and L.~R. Scott, \emph{The mathematical theory of finite element
  methods}, 3rd ed., Texts in Applied Mathematics, vol.~15, Springer-Verlag,
  New York, 2008.

\bibitem{Brenner04}
Susanne~C Brenner, \emph{Korn's inequalities for piecewise {H}1 vector fields},
  Math. of Comp. (2004), 1067--1087.

\bibitem{FortinGlow83}
M.~Fortin and R.~Glowinski, \emph{Augmented lagrangian methods: Applications to
  the numerical solution of boundary-value problems}, Stud. Math. Appl. 15,
  North-Holland, Amsertdam, 1983.

\bibitem{Girault02}
Vivette Girault and L~Scott, \emph{Hermite interpolation of nonsmooth functions
  preserving boundary conditions}, Math. Comp. \textbf{71} (2002), no.~239,
  1043--1074.

\bibitem{Glowinski84}
R.~Glowinski, \emph{Numerical methods for nonlinear variational problems},
  Spring-Verlag, New York, 1984.

\bibitem{Golub}
G.~H. Golub and C.~F.~Van Loan, \emph{Matrix computations}, Johns Hopkins Stud.
  Math. Sci. 3, Johns Hopkins University Press, Baltimore, 2012.

\bibitem{Guo96}
Benqi Guo and Weiming Cao, \emph{A preconditioner for the {$ h $-$ p $} version
  of the finite element method in two dimensions}, Numer. Math. \textbf{75}
  (1996), no.~1, 59--77.

\bibitem{Hiptmair98}
Ralf Hiptmair, \emph{Multigrid method for maxwell's equations}, SIAM J. Numer.
  Anal. \textbf{36} (1998), 204--225.

\bibitem{Hiptmair07}
Ralf Hiptmair and Jinchao Xu, \emph{Nodal auxiliary space preconditioning in
  {H}(curl) and {H}(div) spaces}, SIAM J. Numer. Anal. \textbf{11} (2007),
  2483--2509.

\bibitem{Lee09}
Young-Ju Lee, Jinbiao Wu, and Jinru Chen, \emph{Robust multigrid method for the
  planar linear elasticity problems}, Numer. Math. \textbf{113} (2009), no.~3,
  473--496.

\bibitem{Lee07}
Young-Ju Lee, Jinbiao Wu, Jinchao Xu, and Ludmil Zikatanov, \emph{Robust
  subspace correction methods for nearly singular systems}, Math. Models
  Methods Appl. Sci. \textbf{17} (2007), no.~11, 1937--1963.

\bibitem{Lions12}
Jacques~Louis Lions and Enrico Magenes, \emph{Non-homogeneous boundary value
  problems and applications}, Springer-Verlag, Berlin, 1972.

\bibitem{Moffatt64}
H.~K. Moffatt, \emph{Viscous and resistive eddies near a sharp corner}, J.
  Fluid Mech. \textbf{18} (1964), 1--18.

\bibitem{MorganScott75}
John Morgan and Ridgway Scott, \emph{A nodal basis for {$C^1$} piecewise
  polynomials of degree $n \geq 5$}, Math. Comp. \textbf{29} (1975), no.~131,
  736--740.

\bibitem{Pavarino93}
Luca~F Pavarino, \emph{Additive {S}chwarz methods for the{$p$}-version finite
  element method}, Numer. Math. \textbf{66} (1993), no.~1, 493--515.

\bibitem{Pavarino98}
\bysame, \emph{Preconditioned mixed spectral element methods for elasticity and
  {S}tokes problems}, SIAM J. Sci. Comput. \textbf{19} (1998), no.~6,
  1941--1957.

\bibitem{Pavarino99}
Luca~F Pavarino and Olof~B Widlund, \emph{Iterative substructuring methods for
  spectral element discretizations of elliptic systems. {II}: {M}ixed methods
  for linear elasticity and {S}tokes flow}, SIAM J. Numer. Anal. \textbf{37}
  (1999), no.~2, 375--402.

\bibitem{SchMelPechZag08}
J.~Sch{\"{o}}berl, J.~M. Melenk, C.~Pechstein, and S.~Zaglmayr, \emph{Addition
  {S}chwarz preconditioning for p-version triangular and tetrahedral finite
  elements}, IMA J. Numer. Anal. \textbf{28} (2008), 1--24.

\bibitem{Scott90}
L~Ridgway Scott and Shangyou Zhang, \emph{Finite element interpolation of
  nonsmooth functions satisfying boundary conditions}, Math. Comp. \textbf{54}
  (1990), no.~190, 483--493.

\bibitem{Szego75}
G.~Szeg\H{o}, \emph{Orthogonal polynomials}, Cambridge University Press,
  Cambridge, 2001.

\bibitem{vonmises13}
R~von Mises, \emph{Mechanik der festen k{\"o}rper im plastisch-deformablen
  zustand}, Nachrichten von der Gesellschaft der Wissenschaften zu
  G{\"o}ttingen, Mathematisch-Physikalische Klasse \textbf{1913} (1913), no.~4,
  582--592.

\bibitem{Wu2014}
Jinbiao Wu and Hui Zheng, \emph{Parallel subspace correction methods for nearly
  singular systems}, J. Comput. Appl. Math. \textbf{271} (2014), 180--194.

\bibitem{Zhang92multilevel}
Xuejun Zhang, \emph{Multilevel {S}chwarz methods}, Numer. Math. \textbf{63}
  (1992), no.~1, 521--539.

\end{thebibliography}
\end{document}